\documentclass[a4paper, 12pt]{amsart}
\setlength{\textheight}{23cm}\setlength{\textwidth}{16cm}\setlength{\oddsidemargin}{0cm}\setlength{\evensidemargin}{0cm}\setlength{\topmargin}{0cm}
\usepackage{amssymb, amsmath, amsthm, color, mathrsfs, mathtools}
\usepackage[colorlinks=true, breaklinks=true, linkcolor=black, citecolor=blue, urlcolor=red]{hyperref} 
\usepackage[english]{babel}
\usepackage{tikz-cd}
\usepackage{tikz}
\usetikzlibrary{shapes}

\usepackage{enumitem}

\numberwithin{equation}{section}
\setcounter{tocdepth}{2}
\setlength\parindent{0pt}

\newtheorem{theorem}{Theorem}[section]

\newtheorem{lemma}[theorem]{Lemma}

\newtheorem{proposition}[theorem]{Proposition}

\newtheorem{observation}[theorem]{Observation}

\newtheorem{example}[theorem]{Example}


\newtheorem{mainthm}{Theorem}

\newtheorem{maincor}[mainthm]{Corollary}

\theoremstyle{definition} 
\newtheorem{definition}[theorem]{Definition}
\newtheorem{remark}[theorem]{Remark}

\newcommand{\act}{\curvearrowright}

\DeclareMathOperator{\ab}{ab}

\newcommand{\al}{\alpha}
\newcommand{\cA}{\mathcal{A}}

\newcommand{\cC}{\mathcal C}
\newcommand{\fC}{\mathfrak C}

\DeclareMathOperator{\Cat}{Cat}

\DeclareMathOperator{\Cone}{Cone}

\DeclareMathOperator{\End}{End}
\newcommand{\cF}{\mathcal F}

\DeclareMathOperator{\Frac}{Frac}

\DeclareMathOperator{\FM}{FM}
\DeclareMathOperator{\FS}{FS}
\DeclareMathOperator{\Forest}{Forest}
\DeclareMathOperator{\ForestOre}{Forest_{Ore}}

\DeclareMathOperator{\Gr}{Gr}

\DeclareMathOperator{\Groupoid}{Groupoid}

\newcommand{\Ga}{\Gamma}

\DeclareMathOperator{\Germ}{Germ}
\newcommand{\cH}{\mathcal H}

\DeclareMathOperator{\Homeo}{Homeo}

\DeclareMathOperator{\id}{id}
\newcommand{\into}{\hookrightarrow}
\newcommand{\la}{\langle}
\newcommand{\cL}{\mathcal{L}}
\DeclareMathOperator{\Leaf}{Leaf}

\newcommand{\cM}{\mathcal{M}}

\DeclareMathOperator{\Mon}{Mon}

\newcommand{\N}{\mathbf{N}}
\newcommand{\cNUT}{\mathcal{NUT}}

\DeclareMathOperator{\ob}{ob}
\newcommand{\onto}{\twoheadrightarrow}
\newcommand{\ot}{\otimes}
\newcommand{\ov}{\overline}

\DeclareMathOperator{\prune}{prune}
\newcommand{\Q}{\mathbf{Q}}
\newcommand{\cQ}{\mathcal Q}
\newcommand{\R}{\mathbf{R}}

\newcommand{\ra}{\rangle}

\DeclareMathOperator{\Root}{Root}

\DeclareMathOperator{\Set}{Set}

\DeclareMathOperator{\supp}{supp}

\newcommand{\cT}{\mathcal T}
\newcommand{\ti}{\tilde}

\newcommand{\cU}{\mathcal U}
\newcommand{\cUF}{\mathcal{UF}}
\newcommand{\cUT}{\mathcal{UT}}
\DeclareMathOperator{\Ver}{Ver}

\newcommand{\Z}{\mathbf{Z}}



\newcommand{\ClearySkeinA}{
	\begin{tikzpicture}
		\draw[thick] (-1.25,0) -- (-2,0.75);
		\draw[thick] (-1.25,0) -- (0.25,1.5);
		\draw[thick] (-0.5,0.75) -- (-1.25,1.5);
		\draw[black,fill=white,thick] (-1.25,0) circle [radius=0.25] node {$a$};
		\draw[black,fill=white,thick] (-0.5,0.75) circle [radius=0.25] node {$a$};
		\draw[fill=black] (-2,0.75) circle [radius=0.06];
		\draw[fill=black] (-1.25,1.5) circle [radius=0.06];
		\draw[fill=black] (0.25,1.5) circle [radius=0.06];
	\end{tikzpicture}
}
\newcommand{\ClearySkeinB}{
	\begin{tikzpicture}
		\draw[thick] (-1.25,0) -- (-2,0.75);
		\draw[thick] (-1.25,0) -- (-0.5,0.75);
		\draw[thick] (-2,0.75) -- (-2.75,1.5);
		\draw[thick] (-2,0.75) -- (-1.25,1.5);
		\draw[black,fill=white,thick] (-1.25,0) circle [radius=0.25] node {$b$};
		\draw[black,fill=white,thick] (-2,0.75) circle [radius=0.25] node {$b$};
		\draw[fill=black] (-2.75,1.5) circle [radius=0.06];
		\draw[fill=black] (-1.25,1.5) circle [radius=0.06];
		\draw[fill=black] (-0.5,0.75) circle [radius=0.06];
	\end{tikzpicture}
}
\newcommand{\GroupElementA}{
	\begin{tikzpicture}
		\draw[thick] (0,0) -- (0.7,0.75);
		\draw[thick] (0,0) -- (-0.7,0.75);
		\draw[black,fill=white,thick] (0,0) circle [radius=0.25] node {$a$};
		\draw[fill=black] (-0.7,0.75) circle [radius=0.06];
		\draw[fill=black] (0.7,0.75) circle [radius=0.06];
		\draw[thick] (-0.7,0.75) -- (0.7,1.75);
		\draw[thick] (0.7,0.75) -- (-0.7,1.75);
		\draw[thick] (0.7,1.75) -- (0,2.5);
		\draw[thick] (-0.7,1.75) -- (0,2.5);
		\draw[black,fill=white,thick] (0,2.5) circle [radius=0.25] node {$b$};
		\draw[fill=black] (0.7,1.75) circle [radius=0.06];
		\draw[fill=black] (-0.7,1.75) circle [radius=0.06];
	\end{tikzpicture}
}
\newcommand{\GroupElementB}{
	\begin{tikzpicture}
		\draw[thick] (0,0) -- (0.6,0.65);
		\draw[thick] (0,0) -- (-1.2,1.25);
		\draw[thick] (0.6,0.65) -- (1.2,1.25);
		\draw[thick] (0.6,0.65) -- (0,1.25);
		\draw[black,fill=white,thick] (0,0) circle [radius=0.25] node {$a$};
		\draw[black,fill=white,thick] (0.6,0.65) circle [radius=0.25] node {$a$};
		\draw[fill=black] (1.2,1.25) circle [radius=0.06];
		\draw[fill=black] (0,1.25) circle [radius=0.06];
		\draw[fill=black] (-1.2,1.25) circle [radius=0.06];
		\draw[thick] (-1.2,1.25) -- (1.2,2.25);
		\draw[thick] (0,1.25) -- (-1.2,2.25);
		\draw[thick] (1.2,1.25) -- (0,2.25);
		\draw[thick] (0,3.5) -- (-0.6,2.85);
		\draw[thick] (0,3.5) -- (1.2,2.25);
		\draw[thick] (-0.6,2.85) -- (0,2.25);
		\draw[thick] (-0.6,2.85) -- (-1.2,2.25);
		\draw[black,fill=white,thick] (-0.6,2.9) circle [radius=0.25] node {$a$};
		\draw[black,fill=white,thick] (0,3.5) circle [radius=0.25] node {$b$};
		\draw[fill=black] (1.2,2.25) circle [radius=0.06];
		\draw[fill=black] (0,2.25) circle [radius=0.06];
		\draw[fill=black] (-1.2,2.25) circle [radius=0.06];
	\end{tikzpicture}
}
\newcommand{\GroupElementC}{
	\begin{tikzpicture}
		\draw[thick] (0,0) -- (1.2,1.25);
		\draw[thick] (0,0) -- (-0.6,0.65);
		\draw[thick] (-0.6,0.65) -- (-1.2,1.25);
		\draw[thick] (-0.6,0.65) -- (0,1.25);
		\draw[black,fill=white,thick] (0,0) circle [radius=0.25] node {$b$};
		\draw[black,fill=white,thick] (-0.6,0.65) circle [radius=0.25] node {$b$};
		\draw[fill=black] (1.2,1.25) circle [radius=0.06];
		\draw[fill=black] (0,1.25) circle [radius=0.06];
		\draw[fill=black] (-1.2,1.25) circle [radius=0.06];
		\draw[thick] (-1.2,1.25) -- (1.2,2.25);
		\draw[thick] (0,1.25) -- (-1.2,2.25);
		\draw[thick] (1.2,1.25) -- (0,2.25);
		\draw[thick] (0,3.5) -- (-0.6,2.85);
		\draw[thick] (0,3.5) -- (1.2,2.25);
		\draw[thick] (-0.6,2.85) -- (0,2.25);
		\draw[thick] (-0.6,2.85) -- (-1.2,2.25);
		\draw[black,fill=white,thick] (-0.6,2.85) circle [radius=0.25] node {$a$};
		\draw[black,fill=white,thick] (0,3.5) circle [radius=0.25] node {$b$};
		\draw[fill=black] (1.2,2.25) circle [radius=0.06];
		\draw[fill=black] (0,2.25) circle [radius=0.06];
		\draw[fill=black] (-1.2,2.25) circle [radius=0.06];
	\end{tikzpicture}
}
\newcommand{\InfiniteBinaryTree}{
	\begin{tikzpicture}
		\draw[fill=black] (0,0) circle [radius=0.06];
		\node[below] at (0,0) {$\varepsilon$};
		\draw[thick] (0,0)--(-2,2);
		\draw[thick] (0,0)--(2,2);
		\draw[fill=black] (-2,2) circle [radius=0.06];
		\node[left] at (-2,2) {$l$};
		\draw[thick] (-2,2)--(-3,3);
		\draw[thick] (-2,2)--(-1,3);
		\draw[fill=black] (2,2) circle [radius=0.06];
		\node[right] at (2,2) {$r$};
		\draw[thick] (2,2)--(3,3);
		\draw[thick] (2,2)--(1,3);
		\draw[fill=black] (-3,3) circle [radius=0.06];
		\node[left] at (-3,3) {$ll$};
		\node[above] at (-3,3) {$\vdots$};
		\draw[fill=black] (-1,3) circle [radius=0.06];
		\node[right] at (-1,3) {$lr$};
		\node[above] at (-1,3) {$\vdots$};
		\draw[fill=black] (1,3) circle [radius=0.06];
		\node[left] at (1,3) {$rl$};
		\node[above] at (1,3) {$\vdots$};
		\draw[fill=black] (3,3) circle [radius=0.06];
		\node[right] at (3,3) {$rr$};
		\node[above] at (3,3) {$\vdots$};
	\end{tikzpicture}
}
\newcommand{\ColouredTreeExample}{
	\begin{tikzpicture}
		\draw[thick] (-1.25,0) -- (-2,0.75);
		\draw[thick] (-1.25,0) -- (0.25,1.5);
		\draw[thick] (-2,0.75) -- (-2.75,1.5);
		\draw[thick] (-2,0.75) -- (-1.25,1.5);
		\draw[black,fill=white,thick] (-1.25,0) circle [radius=0.25] node {$a$};
		\draw[black,fill=white,thick] (-2,0.75) circle [radius=0.25] node {$b$};
		\draw[fill=black] (-2.75,1.5) circle [radius=0.06];
		\draw[fill=black] (-1.25,1.5) circle [radius=0.06];
		\draw[fill=black] (0.25,1.5) circle [radius=0.06];
	\end{tikzpicture}   
}
\newcommand{\ColouredForestExample}{
	\begin{tikzpicture}
		\draw[thick] (0,0) -- (0,0.75);
		\draw[fill=black] (0,0) circle [radius=0.06];
		\draw[fill=black] (0,0.75) circle [radius=0.06];
		\draw[thick] (1.5,0) -- (1.5,0.75);
		\draw[fill=black] (1.5,0) circle [radius=0.06];
		\draw[fill=black] (1.5,0.75) circle [radius=0.06];
		\draw[thick] (3,0) -- (2.5,0.75);
		\draw[thick] (3,0) -- (3.5,0.75);
		\draw[black,fill=white,thick] (3,0) circle [radius=0.25] node {$b$};
		\draw[fill=black] (2.5,0.75) circle [radius=0.06];
		\draw[fill=black] (3.5,0.75) circle [radius=0.06];
	\end{tikzpicture}  
}
\newcommand{\CompositionA}{
	\begin{tikzpicture}
		\draw[thick] (-1.25,0) -- (-2,0.75);
		\draw[thick] (-1.25,0) -- (0.25,1.5);
		\draw[thick] (-2,0.75) -- (-2.75,1.5);
		\draw[thick] (-2,0.75) -- (-1.25,1.5);
		\draw[black,fill=white,thick] (-1.25,0) circle [radius=0.25] node {$a$};
		\draw[black,fill=white,thick] (-2,0.75) circle [radius=0.25] node {$b$};
		\draw[fill=black] (-2.75,1.5) circle [radius=0.06];
		\draw[fill=black] (-1.25,1.5) circle [radius=0.06];
		\draw[fill=black] (0.25,1.5) circle [radius=0.06];
		\draw[thick,dotted] (-2.75,2.25) -- (-2.75,1.5);
		\draw[thick,dotted] (-1.25,2.25) -- (-1.25,1.5);
		\draw[thick,dotted] (0.25,2.25) -- (0.25,1.5);
		\draw[fill=black] (-2.75,2.25) circle [radius=0.06];
		\draw[fill=black] (-1.25,2.25) circle [radius=0.06];
		\draw[thick] (0.25,2.25) -- (-0.25,3);
		\draw[thick] (0.25,2.25) -- (0.75,3);
		\draw[thick] (-2.75,2.25) -- (-2.75,3);
		\draw[thick] (-1.25,2.25) -- (-1.25,3);
		\draw[black,fill=white,thick] (0.25,2.25) circle [radius=0.25] node {$b$};
		\draw[fill=black] (-0.25,3) circle [radius=0.06];
		\draw[fill=black] (0.75,3) circle [radius=0.06];
		\draw[fill=black] (-2.75,3) circle [radius=0.06];
		\draw[fill=black] (-1.25,3) circle [radius=0.06];
	\end{tikzpicture}  
}
\newcommand{\CompositionB}{
	\begin{tikzpicture}
		\draw[thick] (-1.25,0) -- (-2,0.75);
		\draw[thick] (-1.25,0) -- (0.25,1.5);
		\draw[thick] (-2,0.75) -- (-2.75,1.5);
		\draw[thick] (-2.25,1) -- (-1.75,1.5);
		\draw[thick] (-0.25,1) -- (-0.75,1.5);
		\draw[black,fill=white,thick] (-1.25,0) circle [radius=0.25] node {$a$};
		\draw[black,fill=white,thick] (-2.25,1) circle [radius=0.25] node {$b$};
		\draw[black,fill=white,thick] (-0.25,1) circle [radius=0.25] node {$b$};
		\draw[fill=black] (-2.75,1.5) circle [radius=0.06];
		\draw[fill=black] (-1.75,1.5) circle [radius=0.06];
		\draw[fill=black] (-0.75,1.5) circle [radius=0.06];
		\draw[fill=black] (0.25,1.5) circle [radius=0.06];
	\end{tikzpicture}  
}
\newcommand{\MonochromaticCaret}{
	\begin{tikzpicture}
		\draw[thick] (0,0) -- (-0.5,0.75);
		\draw[thick] (0,0) -- (0.5,0.75);
		\draw[black,fill=black,thick] (0,0) circle [radius=0.06];
		\draw[fill=black] (-0.5,0.75) circle [radius=0.06];
		\draw[fill=black] (0.5,0.75) circle [radius=0.06];
	\end{tikzpicture}  
}
\newcommand{\MonochromaticVineA}{
	\begin{tikzpicture}
		\draw[thick] (0,0) -- (-0.5,0.75);
		\draw[thick] (0,0) -- (0.5,0.75);
		\draw[thick] (-0.5,0.75) -- (-1,1.5);
		\draw[thick] (-0.5,0.75) -- (0,1.5);
		\draw[black,fill=black,thick] (0,0) circle [radius=0.06];
		\draw[fill=black] (-0.5,0.75) circle [radius=0.06];
		\draw[fill=black] (-1,1.5) circle [radius=0.06];
		\draw[fill=black] (0,1.5) circle [radius=0.06];
		\draw[fill=black] (0.5,0.75) circle [radius=0.06];
	\end{tikzpicture}  
}
\newcommand{\MonochromaticVineB}{
	\begin{tikzpicture}
		\draw[thick] (0,0) -- (-0.5,0.75);
		\draw[thick] (0,0) -- (0.5,0.75);
		\draw[thick] (-0.5,0.75) -- (-1,1.5);
		\draw[thick] (-0.5,0.75) -- (0,1.5);
		\draw[thick] (-1,1.5) -- (-1.5,2.25);
		\draw[thick] (-1,1.5) -- (-0.5,2.25);
		\draw[black,fill=black,thick] (0,0) circle [radius=0.06];
		\draw[fill=black] (-0.5,0.75) circle [radius=0.06];
		\draw[fill=black] (-1,1.5) circle [radius=0.06];
		\draw[fill=black] (-1.5,2.25) circle [radius=0.06];
		\draw[fill=black] (-0.5,2.25) circle [radius=0.06];
		\draw[fill=black] (0,1.5) circle [radius=0.06];
		\draw[fill=black] (0.5,0.75) circle [radius=0.06];
	\end{tikzpicture}  
}
\newcommand{\MonochromaticCompleteA}{
	\begin{tikzpicture}
		\draw[thick] (0,0) -- (-0.75,0.75);
		\draw[thick] (0,0) -- (0.75,0.75);
		\draw[thick] (-0.75,0.75) -- (-1.25,1.5);
		\draw[thick] (-0.75,0.75) -- (-0.25,1.5);
		\draw[thick] (0.75,0.75) -- (1.25,1.5);
		\draw[thick] (0.75,0.75) -- (0.25,1.5);
		\draw[black,fill=black,thick] (0,0) circle [radius=0.06];
		\draw[fill=black] (0.75,0.75) circle [radius=0.06];
		\draw[fill=black] (-0.75,0.75) circle [radius=0.06];
		\draw[fill=black] (-1.25,1.5) circle [radius=0.06];
		\draw[fill=black] (-0.25,1.5) circle [radius=0.06];
		\draw[fill=black] (0.25,1.5) circle [radius=0.06];
		\draw[fill=black] (1.25,1.5) circle [radius=0.06];
	\end{tikzpicture}  
}
\newcommand{\MonochromaticCompleteB}{
	\begin{tikzpicture}
		\draw[thick] (0,0) -- (-1,0.75);
		\draw[thick] (0,0) -- (1,0.75);
		\draw[thick] (-1,0.75) -- (-1.5,1.5);
		\draw[thick] (-1,0.75) -- (-0.5,1.5);
		\draw[thick] (1,0.75) -- (1.5,1.5);
		\draw[thick] (1,0.75) -- (0.5,1.5);
		\draw[thick] (-1.5,1.5) -- (-1.75,2.25);
		\draw[thick] (-1.5,1.5) -- (-1.25,2.25);
		\draw[thick] (-0.5,1.5) -- (-0.75,2.25);
		\draw[thick] (-0.5,1.5) -- (-0.25,2.25);
		\draw[thick] (1.5,1.5) -- (1.75,2.25);
		\draw[thick] (1.5,1.5) -- (1.25,2.25);
		\draw[thick] (0.5,1.5) -- (0.75,2.25);
		\draw[thick] (0.5,1.5) -- (0.25,2.25);
		\draw[black,fill=black,thick] (0,0) circle [radius=0.06];
		\draw[fill=black] (1,0.75) circle [radius=0.06];
		\draw[fill=black] (-1,0.75) circle [radius=0.06];
		\draw[fill=black] (-1.5,1.5) circle [radius=0.06];
		\draw[fill=black] (-0.5,1.5) circle [radius=0.06];
		\draw[fill=black] (0.5,1.5) circle [radius=0.06];
		\draw[fill=black] (1.5,1.5) circle [radius=0.06];
		\draw[fill=black] (-1.75,2.25) circle [radius=0.06];
		\draw[fill=black] (-1.25,2.25) circle [radius=0.06];
		\draw[fill=black] (-0.75,2.25) circle [radius=0.06];
		\draw[fill=black] (-0.25,2.25) circle [radius=0.06];
		\draw[fill=black] (1.75,2.25) circle [radius=0.06];
		\draw[fill=black] (1.25,2.25) circle [radius=0.06];
		\draw[fill=black] (0.75,2.25) circle [radius=0.06];
		\draw[fill=black] (0.25,2.25) circle [radius=0.06];
	\end{tikzpicture}  
}
\newcommand{\QuasiVine}{
	\begin{tikzpicture}
		\draw[thick] (0,0) -- (-1,1.5);
		\draw[thick] (0,0) -- (1,1.5);
		\draw[thick] (0.5,0.75) -- (0,1.5);
		\draw[thick] (-1,1.5) -- (0,3);
		\draw[thick] (-1,1.5) -- (-2,3);
		\draw[thick] (-0.5,2.25) -- (-1,3);
		\draw[black,fill=black,thick] (0,0) circle [radius=0.06];
		\draw[fill=black] (0.5,0.75) circle [radius=0.06];
		\draw[fill=black] (-0.5,2.25) circle [radius=0.06];
		\draw[fill=black] (-1,1.5) circle [radius=0.06];
		\draw[fill=black] (0,1.5) circle [radius=0.06];
		\draw[fill=black] (1,1.5) circle [radius=0.06];
		\draw[fill=black] (-2,3) circle [radius=0.06];
		\draw[fill=black] (-1,3) circle [radius=0.06];
		\draw[fill=black] (0,3) circle [radius=0.06];
	\end{tikzpicture}  
}
\newcommand{\Cell}{
	\begin{tikzpicture}
		\draw[thick] (0,0) -- (-1,1.5);
		\draw[thick] (0,0) -- (1,1.5);
		\draw[thick] (0.5,0.75) -- (0,1.5);
		\draw[black,fill=black,thick] (0,0) circle [radius=0.06];
		\draw[fill=black] (0.5,0.75) circle [radius=0.06];
		\draw[fill=black] (-1,1.5) circle [radius=0.06];
		\draw[fill=black] (0,1.5) circle [radius=0.06];
		\draw[fill=black] (1,1.5) circle [radius=0.06];
	\end{tikzpicture}
}
\newcommand{\Skeleton}{
	\begin{tikzpicture}
		\draw[thick] (0,0) -- (-1,1.5);
		\draw[thick] (0,0) -- (0,1.5);
		\draw[thick] (0,0) -- (1,1.5);
		\draw[thick] (-1,1.5) -- (0,3);
		\draw[thick] (-1,1.5) -- (-1,3);
		\draw[thick] (-1,1.5) -- (-2,3);
		\draw[black,fill=black,thick] (0,0) circle [radius=0.06];
		\draw[black,fill=black,thick] (1,1.5) circle [radius=0.06];
		\draw[fill=black] (-1,1.5) circle [radius=0.06];
		\draw[fill=black] (0,1.5) circle [radius=0.06];
		\draw[fill=black] (0,3) circle [radius=0.06];
		\draw[fill=black] (-1,3) circle [radius=0.06];
		\draw[fill=black] (-2,3) circle [radius=0.06];
	\end{tikzpicture}  
}
\newcommand{\TernaryA}{
	\begin{tikzpicture}
		\draw[thick] (-1.25,0) -- (-2,0.75);
		\draw[thick] (-1.25,0) -- (-0.5,0.75);
		\draw[thick] (-2,0.75) -- (-2.75,1.5);
		\draw[thick] (-2,0.75) -- (-1.25,1.5);
		\draw[black,fill=white,thick] (-1.25,0) circle [radius=0.25] node {$b$};
		\draw[black,fill=white,thick] (-2,0.75) circle [radius=0.25] node {$a$};
		\draw[fill=black] (-2.75,1.5) circle [radius=0.06];
		\draw[fill=black] (-1.25,1.5) circle [radius=0.06];
		\draw[fill=black] (-0.5,0.75) circle [radius=0.06];
	\end{tikzpicture}
}
\newcommand{\TernaryB}{
	\begin{tikzpicture}
		\draw[thick] (-1.25,0) -- (-2,0.75);
		\draw[thick] (-1.25,0) -- (0.25,1.5);
		\draw[thick] (-0.5,0.75) -- (-1.25,1.5);
		\draw[black,fill=white,thick] (-1.25,0) circle [radius=0.25] node {$a$};
		\draw[black,fill=white,thick] (-0.5,0.75) circle [radius=0.25] node {$b$};
		\draw[fill=black] (-2,0.75) circle [radius=0.06];
		\draw[fill=black] (-1.25,1.5) circle [radius=0.06];
		\draw[fill=black] (0.25,1.5) circle [radius=0.06];
	\end{tikzpicture}
}
\newcommand{\PermutationA}{
	\begin{tikzpicture}
		\draw[thick] (1,1) -- (0,2);
		\draw[thick] (0,1) -- (2,2);
		\draw[thick] (2,1) -- (1,2);
		\draw[thick] (0,0) -- (1,1);
		\draw[thick] (1,0) -- (0,1);
		\draw[thick] (2,0) -- (2,1);
		\draw[fill=black] (0,0) circle [radius=0.06];
		\draw[fill=black] (1,0) circle [radius=0.06];
		\draw[fill=black] (2,0) circle [radius=0.06];
		\draw[fill=black] (0,1) circle [radius=0.06];
		\draw[fill=black] (1,1) circle [radius=0.06];
		\draw[fill=black] (2,1) circle [radius=0.06];
		\draw[fill=black] (0,2) circle [radius=0.06];
		\draw[fill=black] (1,2) circle [radius=0.06];
		\draw[fill=black] (2,2) circle [radius=0.06];
	\end{tikzpicture}
}
\newcommand{\PermutationB}{
	\begin{tikzpicture}
		\draw[thick] (0,0) -- (0,1);
		\draw[thick] (1,0) -- (2,1);
		\draw[thick] (2,0) -- (1,1);
		\draw[fill=black] (0,0) circle [radius=0.06];
		\draw[fill=black] (1,0) circle [radius=0.06];
		\draw[fill=black] (2,0) circle [radius=0.06];
		\draw[fill=black] (0,1) circle [radius=0.06];
		\draw[fill=black] (1,1) circle [radius=0.06];
		\draw[fill=black] (2,1) circle [radius=0.06];
	\end{tikzpicture}
}
\newcommand{\SymmetricColouredForest}{
	\begin{tikzpicture}
		\node at (-3,0) {$((I,I,Y_b),(23))$};
		\node at (-1,0) {$=$};
		\draw[thick] (0,0) -- (0,0.75);
		\draw[fill=black] (0,0) circle [radius=0.06];
		\draw[fill=black] (0,0.75) circle [radius=0.06];
		\draw[thick] (1.5,0) -- (1.5,0.75);
		\draw[fill=black] (1.5,0) circle [radius=0.06];
		\draw[fill=black] (1.5,0.75) circle [radius=0.06];
		\draw[thick] (3,0) -- (2.5,0.75);
		\draw[thick] (3,0) -- (3.5,0.75);
		\draw[black,fill=white,thick] (3,0) circle [radius=0.25] node {$b$};
		\draw[fill=black] (2.5,0.75) circle [radius=0.06];
		\draw[fill=black] (3.5,0.75) circle [radius=0.06];
		\draw[thick] (0,0.75) -- (0,1.75);
		\draw[thick] (2.5,0.75) -- (1.5,1.75);
		\draw[thick] (1.5,0.75) -- (2.5,1.75);
		\draw[thick] (3.5,0.75) -- (3.5,1.75);
		\draw[fill=black] (0,1.75) circle [radius=0.06];
		\draw[fill=black] (2.5,1.75) circle [radius=0.06];
		\draw[fill=black] (1.5,1.75) circle [radius=0.06];
		\draw[fill=black] (3.5,1.75) circle [radius=0.06];
	\end{tikzpicture}  
}
\newcommand{\ZappaSzep}{
	\begin{tikzpicture}
		\node at (-3,0) {$(123)\circ(Y_a,I,I)$};
		\node at (-1,0) {$=$};
		\node at (2.75,0) {$=$};
		\node at (7,0) {$=$};
		\node at (9,0) {$(I,Y_a,I)\circ(1234)$.};
		\draw[thick] (1,0) -- (0,1);
		\draw[thick] (0,0) -- (2,1);
		\draw[thick] (2,0) -- (1,1);
		\draw[fill=black] (0,0) circle [radius=0.06];
		\draw[fill=black] (1,0) circle [radius=0.06];
		\draw[fill=black] (2,0) circle [radius=0.06];
		\draw[thick] (0,1) -- (-0.5,1.75);
		\draw[thick] (0,1) -- (0.5,1.75);
		\draw[thick] (1,1) -- (1,1.75);
		\draw[thick] (2,1) -- (2,1.75);
		\draw[fill=black] (-0.5,1.75) circle [radius=0.06];
		\draw[fill=black] (0.5,1.75) circle [radius=0.06];
		\draw[fill=black] (1,1.75) circle [radius=0.06];
		\draw[fill=black] (2,1.75) circle [radius=0.06];
		\draw[black,fill=white,thick] (0,1) circle [radius=0.25] node {$a$};
		\draw[fill=black] (1,1) circle [radius=0.06];
		\draw[fill=black] (2,1) circle [radius=0.06];
		\draw[thick] (4.5,0.75) -- (3.5,1.75);
		\draw[thick] (5.5,0.75) -- (4.5,1.75);
		\draw[thick] (6.5,0.75) -- (5.5,1.75);
		\draw[thick] (3.5,0.75) -- (6.5,1.75);
		\draw[fill=black] (3.5,1.75) circle [radius=0.06];
		\draw[fill=black] (4.5,1.75) circle [radius=0.06];
		\draw[fill=black] (5.5,1.75) circle [radius=0.06];
		\draw[fill=black] (6.5,1.75) circle [radius=0.06];
		\draw[thick] (3.5,0) -- (3.5,0.75);
		\draw[fill=black] (3.5,0) circle [radius=0.06];
		\draw[fill=black] (3.5,0.75) circle [radius=0.06];
		\draw[thick] (5,0) -- (4.5,0.75);
		\draw[thick] (5,0) -- (5.5,0.75);
		\draw[fill=black] (4.5,0.75) circle [radius=0.06];
		\draw[fill=black] (5.5,0.75) circle [radius=0.06];
		\draw[black,fill=white,thick] (5,0) circle [radius=0.25] node {$a$};
		\draw[thick] (6.5,0) -- (6.5,0.75);
		\draw[fill=black] (6.5,0) circle [radius=0.06];
		\draw[fill=black] (6.5,0.75) circle [radius=0.06];
	\end{tikzpicture}
}
\newcommand{\PointedCaretA}{
	\begin{tikzpicture}
		\draw[thick] (0,0) -- (-0.5,0.75);
		\draw[thick] (0,0) -- (0.5,0.75);
		\draw[black,fill=white,thick] (0,0) circle [radius=0.25] node {$a$};
		\draw[fill=black] (0.5,0.75) circle [radius=0.06];
		\draw[thick,fill=white] (-0.5,0.75) circle [radius=0.12];
	\end{tikzpicture}  
}
\newcommand{\PointedCaretB}{
	\begin{tikzpicture}
		\draw[thick] (0,0) -- (-0.5,0.75);
		\draw[thick] (0,0) -- (0.5,0.75);
		\draw[black,fill=white,thick] (0,0) circle [radius=0.25] node {$b$};
		\draw[fill=black] (-0.5,0.75) circle [radius=0.06];
		\draw[thick,fill=white] (0.5,0.75) circle [radius=0.12];
	\end{tikzpicture}  
}
\newcommand{\PointedTreeCompositionA}{
	\begin{tikzpicture}
		\draw[thick,dotted] (-0.5,0.75) -- (-0.5,1.5);
		\draw[thick] (0,0) -- (-0.5,0.75);
		\draw[thick] (0,0) -- (0.5,0.75);
		\draw[thick] (-0.5,1.5) -- (-1,2.25);
		\draw[thick] (-0.5,1.5) -- (0,2.25);
		\draw[black,fill=white,thick] (0,0) circle [radius=0.25] node {$a$};
		\draw[black,fill=white,thick] (-0.5,1.5) circle [radius=0.25] node {$b$};
		\draw[fill=black] (-1,2.25) circle [radius=0.06];
		\draw[fill=black] (0.5,0.75) circle [radius=0.06];
		\draw[thick,fill=white] (0,2.25) circle [radius=0.12];
		\draw[thick,fill=white] (-0.5,0.75) circle [radius=0.12];
	\end{tikzpicture}  
}
\newcommand{\PointedTreeCompositionB}{
	\begin{tikzpicture}
		\draw[thick] (0,0) -- (-0.5,0.75);
		\draw[thick] (0,0) -- (0.5,0.75);
		\draw[thick] (-0.5,0.75) -- (-1,1.5);
		\draw[thick] (-0.5,0.75) -- (0,1.5);
		\draw[black,fill=white,thick] (0,0) circle [radius=0.25] node {$a$};
		\draw[black,fill=white,thick] (-0.5,0.75) circle [radius=0.25] node {$b$};
		\draw[fill=black] (-1,1.5) circle [radius=0.06];
		\draw[fill=black] (0.5,0.75) circle [radius=0.06];
		\draw[thick,fill=white] (0,1.5) circle [radius=0.12];
	\end{tikzpicture}  
}
\newcommand{\PointedTreeActionA}{
	\begin{tikzpicture}
		\draw[thick,dotted] (-0.5,0.75) -- (-0.5,1.5);
		\draw[thick] (0,0) -- (-0.5,0.75);
		\draw[thick] (0,0) -- (0.5,0.75);
		\draw[thick] (-0.5,1.5) -- (-1,2.25);
		\draw[thick] (-0.5,1.5) -- (0,2.25);
		\draw[black,fill=white,thick] (0,0) circle [radius=0.25] node {$a$};
		\draw[black,fill=white,thick] (-0.5,1.5) circle [radius=0.25] node {$b$};
		\draw[fill=black] (-1,2.25) circle [radius=0.06];
		\draw[fill=black] (0.5,0.75) circle [radius=0.06];
		\draw[fill=black] (0,2.25) circle [radius=0.06];
		\draw[thick,fill=white] (-0.5,0.75) circle [radius=0.12];
	\end{tikzpicture}  
}
\newcommand{\PointedTreeActionB}{
	\begin{tikzpicture}
		\draw[thick] (0,0) -- (-0.5,0.75);
		\draw[thick] (0,0) -- (0.5,0.75);
		\draw[thick] (-0.5,0.75) -- (-1,1.5);
		\draw[thick] (-0.5,0.75) -- (0,1.5);
		\draw[black,fill=white,thick] (0,0) circle [radius=0.25] node {$a$};
		\draw[black,fill=white,thick] (-0.5,0.75) circle [radius=0.25] node {$b$};
		\draw[fill=black] (-1,1.5) circle [radius=0.06];
		\draw[fill=black] (0.5,0.75) circle [radius=0.06];
		\draw[fill=black] (0,1.5) circle [radius=0.06];
	\end{tikzpicture}  
}
\newcommand{\NarrowA}{
	\begin{tikzpicture}
		\draw[thick] (0,0) -- (-0.5,0.75);
		\draw[thick] (0,0) -- (0.5,0.75);
		\draw[thick] (-0.5,0.75) -- (-1,1.5);
		\draw[thick] (-0.5,0.75) -- (0,1.5);
		\draw[thick] (-1,1.5) -- (-1.5,2.25);
		\draw[thick] (-1,1.5) -- (-0.5,2.25);
		\draw[black,fill=white,thick] (0,0) circle [radius=0.25] node {$b$};
		\draw[black,fill=white,thick] (-0.5,0.75) circle [radius=0.25] node {$b$};
		\draw[black,fill=white,thick] (-1,1.5) circle [radius=0.25] node {$a$};
		\draw[fill=black] (-0.5,2.25) circle [radius=0.06];
		\draw[fill=black] (0.5,0.75) circle [radius=0.06];
		\draw[fill=black] (0,1.5) circle [radius=0.06];
		\draw[thick,fill=white] (-1.5,2.25) circle [radius=0.12];
	\end{tikzpicture}  
}
\newcommand{\NarrowB}{
	\begin{tikzpicture}
		\draw[thick] (0,0) -- (-0.75,0.75);
		\draw[thick] (0,0) -- (0.75,0.75);
		\draw[thick] (-0.75,0.75) -- (-1.25,1.5);
		\draw[thick] (-0.75,0.75) -- (-0.25,1.5);
		\draw[thick] (0.75,0.75) -- (1.25,1.5);
		\draw[thick] (0.75,0.75) -- (0.25,1.5);
		\draw[black,fill=white,thick] (0,0) circle [radius=0.25] node {$a$};
		\draw[black,fill=white,thick] (-0.75,0.75) circle [radius=0.25] node {$a$};
		\draw[black,fill=white,thick] (0.75,0.75) circle [radius=0.25] node {$a$};
		\draw[fill=black] (1.25,1.5) circle [radius=0.06];
		\draw[fill=black] (0.25,1.5) circle [radius=0.06];
		\draw[fill=black] (-0.25,1.5) circle [radius=0.06];
		\draw[thick,fill=white] (-1.25,1.5) circle [radius=0.12];
	\end{tikzpicture}  
}
\newcommand{\FSimpleA}{
	\begin{tikzpicture}
		\draw[thick] (0,0) -- (-1,1);
		\draw[thick] (0,0) -- (0.5,0.5);
		\draw[thick] (-0.5,0.5) -- (0,1);
		\draw[black,fill=black,thick] (0,0) circle [radius=0.06];
		\draw[black,fill=black,thick] (-0.5,0.5) circle [radius=0.06];
		\draw[black,fill=black,thick] (-1,1) circle [radius=0.06];
		\draw[black,fill=black,thick] (0,1) circle [radius=0.06];
		\draw[black,fill=black,thick] (0.5,0.5) circle [radius=0.06];
	\end{tikzpicture}
}
\newcommand{\FSimpleB}{
	\begin{tikzpicture}
		\draw[thick] (0,0) -- (-1,1);
		\draw[thick] (0,0) -- (0.5,0.5);
		\draw[thick] (-0.5,0.5) -- (0,1);
		\draw[thick] (0,1) -- (-1,2);
		\draw[thick] (0,1) -- (0.5,1.5);
		\draw[thick] (-0.5,1.5) -- (0,2);
		\draw[black,fill=black,thick] (0,0) circle [radius=0.06];
		\draw[black,fill=black,thick] (-0.5,0.5) circle [radius=0.06];
		\draw[black,fill=black,thick] (0,1) circle [radius=0.06];
		\draw[black,fill=black,thick] (-0.5,1.5) circle [radius=0.06];
		\draw[black,fill=black,thick] (-1,1) circle [radius=0.06];
		\draw[black,fill=black,thick] (-1,2) circle [radius=0.06];
		\draw[black,fill=black,thick] (0.5,1.5) circle [radius=0.06];
		\draw[black,fill=black,thick] (0,2) circle [radius=0.06];
		\draw[black,fill=black,thick] (0.5,0.5) circle [radius=0.06];
	\end{tikzpicture}
}
\newcommand{\FSimpleC}{
	\begin{tikzpicture}
		\draw[thick] (0,0) -- (-1,1);
		\draw[thick] (0,0) -- (1,1);
		\draw[thick] (-0.25,0.25) -- (0.5,1);
		\draw[thick] (0,0.5) -- (-0.5,1);
		\draw[thick] (-0.25,0.75) -- (0,1);
		\draw[black,fill=black,thick] (0,0) circle [radius=0.03];
		\draw[black,fill=black,thick] (-0.25,0.25) circle [radius=0.03];
		\draw[black,fill=black,thick] (0,0.5) circle [radius=0.03];
		\draw[black,fill=black,thick] (-0.25,0.75) circle [radius=0.03];
		\draw[black,fill=black,thick] (-1,1) circle [radius=0.03];
		\draw[black,fill=black,thick] (-0.5,1) circle [radius=0.03];
		\draw[black,fill=black,thick] (0,1) circle [radius=0.03];
		\draw[black,fill=black,thick] (0.5,1) circle [radius=0.03];
		\draw[black,fill=black,thick] (1,1) circle [radius=0.03];
		\draw[thick] (-1,1) -- (0,2);
		\draw[thick] (1,1) -- (0,2);
		\draw[thick] (-0.5,1.5) -- (0,1);
		\draw[thick] (-0.75,1.25) -- (-0.5,1);
		\draw[thick] (0.75,1.25) -- (0.5,1);
		\draw[black,fill=black,thick] (0,2) circle [radius=0.03];
		\draw[black,fill=black,thick] (-0.5,1.5) circle [radius=0.03];
		\draw[black,fill=black,thick] (-0.75,1.25) circle [radius=0.03];
		\draw[black,fill=black,thick] (0.75,1.25) circle [radius=0.03];
		\draw[thick] (0,2) -- (-0.5,2.5);
		\draw[thick] (0,2) -- (0.5,2.5);
		\draw[thick] (-0.25,2.25) -- (0,2.5);
		\draw[black,fill=black,thick] (-0.25,2.25) circle [radius=0.03];
		\draw[black,fill=black,thick] (-0.5,2.5) circle [radius=0.03];
		\draw[black,fill=black,thick] (0.5,2.5) circle [radius=0.03];
		\draw[black,fill=black,thick] (0,2.5) circle [radius=0.03];
		\draw[thick] (-0.5,2.5) -- (-0.5,3.5);
		\draw[thick] (0,2.5) -- (0,3.5);
		\draw[thick] (0.5,2.5) -- (0.5,3.5);
		\node[diamond,draw,black,fill=white,thick,scale=1.25] at (0,3) {};
		\node at (0,3) {$l$};
		\draw[thick] (-0.5,3.5) -- (0,4);
		\draw[thick] (0.5,3.5) -- (0,4);
		\draw[thick] (0,3.5) -- (-0.25,3.75);
		\draw[black,fill=black,thick] (-0.5,3.5) circle [radius=0.03];
		\draw[black,fill=black,thick] (0.5,3.5) circle [radius=0.03];
		\draw[black,fill=black,thick] (0,3.5) circle [radius=0.03];
		\draw[black,fill=black,thick] (-0.25,3.75) circle [radius=0.03];
		\draw[black,fill=black,thick] (0,4) circle [radius=0.03];
		\draw[thick] (0,4) -- (-1,5);
		\draw[thick] (0,4) -- (1,5);
		\draw[thick] (-0.5,4.5) -- (0,5);
		\draw[thick] (0.75,4.75) -- (0.5,5);
		\draw[thick] (-0.75,4.75) -- (-0.5,5);
		\draw[black,fill=black,thick] (-0.5,4.5) circle [radius=0.03];
		\draw[black,fill=black,thick] (0.75,4.75) circle [radius=0.03];
		\draw[black,fill=black,thick] (-0.75,4.75) circle [radius=0.03];
		\draw[black,fill=black,thick] (-1,5) circle [radius=0.03];
		\draw[black,fill=black,thick] (-0.5,5) circle [radius=0.03];
		\draw[black,fill=black,thick] (0,5) circle [radius=0.03];
		\draw[black,fill=black,thick] (0.5,5) circle [radius=0.03];
		\draw[black,fill=black,thick] (1,5) circle [radius=0.03];
		\draw[thick] (-1,5) -- (0,6);
		\draw[thick] (1,5) -- (0,6);
		\draw[thick] (0.5,5) -- (-0.25,5.75);
		\draw[thick] (-0.5,5) -- (0,5.5);
		\draw[thick] (0,5) -- (-0.25,5.25);
		\draw[black,fill=black,thick] (0,6) circle [radius=0.03];
		\draw[black,fill=black,thick] (-0.25,5.75) circle [radius=0.03];
		\draw[black,fill=black,thick] (0,5.5) circle [radius=0.03];
		\draw[black,fill=black,thick] (-0.25,5.25) circle [radius=0.03];
	\end{tikzpicture}
}
\newcommand{\FSimpleD}{
	\begin{tikzpicture}
		\draw[thick] (0,0) -- (-1,1);
		\draw[thick] (0,0) -- (1,1);
		\draw[thick] (-0.25,0.25) -- (0.5,1);
		\draw[thick] (0,0.5) -- (-0.5,1);
		\draw[thick] (-0.25,0.75) -- (0,1);
		\draw[black,fill=black,thick] (0,0) circle [radius=0.03];
		\draw[black,fill=black,thick] (-0.25,0.25) circle [radius=0.03];
		\draw[black,fill=black,thick] (0,0.5) circle [radius=0.03];
		\draw[black,fill=black,thick] (-0.25,0.75) circle [radius=0.03];
		\draw[black,fill=black,thick] (-1,1) circle [radius=0.03];
		\draw[black,fill=black,thick] (-0.5,1) circle [radius=0.03];
		\draw[black,fill=black,thick] (0,1) circle [radius=0.03];
		\draw[black,fill=black,thick] (0.5,1) circle [radius=0.03];
		\draw[black,fill=black,thick] (1,1) circle [radius=0.03];
		\draw[thick] (-1,1) -- (-0.75,1.25);
		\draw[thick] (-0.5,1) -- (-0.75,1.25);
		\draw[thick] (1,1) -- (0.75,1.25);
		\draw[thick] (0.5,1) -- (0.75,1.25);
		\draw[thick] (-1,2) -- (-0.75,1.75);
		\draw[thick] (-0.5,2) -- (-0.75,1.75);
		\draw[thick] (1,2) -- (0.75,1.75);
		\draw[thick] (0.5,2) -- (0.75,1.75);
		\draw[thick] (-0.75,1.25) -- (-0.75,1.75);
		\draw[thick] (0,1) -- (0,2);
		\draw[thick] (0.75,1.25) -- (0.75,1.75);
		\draw[black,fill=black,thick] (-0.75,1.25) circle [radius=0.03];
		\draw[black,fill=black,thick] (0.75,1.25) circle [radius=0.03];
		\draw[black,fill=black,thick] (-0.75,1.75) circle [radius=0.03];
		\draw[black,fill=black,thick] (0.75,1.75) circle [radius=0.03];
		\node[diamond,draw,black,fill=white,thick,scale=1.25] at (0,1.5) {};
		\node at (0,1.5) {$l$};
		\draw[thick] (-1,2) -- (0,3);
		\draw[thick] (1,2) -- (0,3);
		\draw[thick] (0.5,2) -- (-0.25,2.75);
		\draw[thick] (-0.5,2) -- (0,2.5);
		\draw[thick] (0,2) -- (-0.25,2.25);
		\draw[black,fill=black,thick] (0,3) circle [radius=0.03];
		\draw[black,fill=black,thick] (-0.25,2.75) circle [radius=0.03];
		\draw[black,fill=black,thick] (0,2.5) circle [radius=0.03];
		\draw[black,fill=black,thick] (-0.25,2.25) circle [radius=0.03];
		\draw[black,fill=black,thick] (-1,2) circle [radius=0.03];
		\draw[black,fill=black,thick] (-0.5,2) circle [radius=0.03];
		\draw[black,fill=black,thick] (0,2) circle [radius=0.03];
		\draw[black,fill=black,thick] (0.5,2) circle [radius=0.03];
		\draw[black,fill=black,thick] (1,2) circle [radius=0.03];
	\end{tikzpicture}
}
\newcommand{\FSimpleE}{
	\begin{tikzpicture}
		\draw[thick] (0,0) -- (-1,1);
		\draw[thick] (0,0) -- (1,1);
		\draw[thick] (-0.25,0.25) -- (0.5,1);
		\draw[thick] (0,0.5) -- (-0.5,1);
		\draw[thick] (-0.25,0.75) -- (0,1);
		\draw[black,fill=black,thick] (0,0) circle [radius=0.03];
		\draw[black,fill=black,thick] (-0.25,0.25) circle [radius=0.03];
		\draw[black,fill=black,thick] (0,0.5) circle [radius=0.03];
		\draw[black,fill=black,thick] (-0.25,0.75) circle [radius=0.03];
		\draw[black,fill=black,thick] (-1,1) circle [radius=0.03];
		\draw[black,fill=black,thick] (-0.5,1) circle [radius=0.03];
		\draw[black,fill=black,thick] (0.5,1) circle [radius=0.03];
		\draw[black,fill=black,thick] (1,1) circle [radius=0.03];
		\draw[black,fill=black,thick] (0,1) circle [radius=0.03];
		\draw[thick] (-1,1) -- (-1,2);
		\draw[thick] (-0.5,1) -- (-0.5,2);
		\draw[thick] (0,1) -- (0,2);
		\draw[thick] (0.5,1) -- (0.5,2);
		\draw[thick] (1,1) -- (1,2);
		\node[diamond,draw,black,fill=white,thick,scale=1.25] at (0,1.5) {};
		\node at (0,1.5) {$l$};
		\draw[thick] (0,3) -- (-1,2);
		\draw[thick] (0,3) -- (1,2);
		\draw[thick] (-0.25,2.75) -- (0.5,2);
		\draw[thick] (0,2.5) -- (-0.5,2);
		\draw[thick] (-0.25,2.25) -- (0,2);
		\draw[black,fill=black,thick] (0,3) circle [radius=0.03];
		\draw[black,fill=black,thick] (-0.25,2.75) circle [radius=0.03];
		\draw[black,fill=black,thick] (0,2.5) circle [radius=0.03];
		\draw[black,fill=black,thick] (-0.25,2.25) circle [radius=0.03];
		\draw[black,fill=black,thick] (-1,2) circle [radius=0.03];
		\draw[black,fill=black,thick] (-0.5,2) circle [radius=0.03];
		\draw[black,fill=black,thick] (0,2) circle [radius=0.03];
		\draw[black,fill=black,thick] (0.5,2) circle [radius=0.03];
		\draw[black,fill=black,thick] (1,2) circle [radius=0.03];
	\end{tikzpicture}
}
\newcommand{\VineDecompositionA}{
	\begin{tikzpicture}
		\draw[thick] (0,0) -- (-0.75,0.75);
		\draw[thick] (0,0) -- (0.75,0.75);
		\draw[thick] (-0.75,1.5) -- (-1.25,2.25);
		\draw[thick] (-0.75,1.5) -- (-0.25,2.25);
		\draw[thick] (0.75,0.75) -- (1.25,1.5);
		\draw[thick] (0.75,0.75) -- (0.25,1.5);
		\draw[thick] (-0.75,0.75) -- (-0.75,1.5);
		\draw[thick] (0.25,1.5) -- (0.25,2.5);
		\draw[thick] (0.25,2.5) -- (-0.25,3.25);
		\draw[thick] (0.25,2.5) -- (0.75,3.25);
		\draw[black,fill=black,thick] (0,0) circle [radius=0.06];
		\draw[fill=black] (0.75,0.75) circle [radius=0.06];
		\draw[fill=black] (-0.75,0.75) circle [radius=0.06];
		\draw[fill=black] (-1.25,2.25) circle [radius=0.06];
		\draw[fill=black] (-0.25,2.25) circle [radius=0.06];
		\draw[fill=black] (0.25,1.5) circle [radius=0.06];
		\draw[fill=black] (1.25,1.5) circle [radius=0.06];
		\draw[fill=black] (-0.25,3.25) circle [radius=0.06];
		\draw[fill=black] (0.75,3.25) circle [radius=0.06];
		\draw[black,fill=white,thick] (0,0) circle [radius=0.25] node {$a$};
		\draw[black,fill=white,thick] (-0.75,1.5) circle [radius=0.25] node {$c$};
		\draw[black,fill=white,thick] (0.75,0.75) circle [radius=0.25] node {$b$};
		\draw[black,fill=white,thick] (0.25,2.5) circle [radius=0.25] node {$a$};
	\end{tikzpicture}  
}
\newcommand{\VineDecompositionB}{
	\begin{tikzpicture}
		\draw[thick] (0,0) -- (-0.75,0.75);
		\draw[thick] (0,0) -- (0.75,0.75);
		\draw[thick] (-0.75,0.75) -- (-1.25,1.5);
		\draw[thick] (-0.75,0.75) -- (-0.25,1.5);
		\draw[thick] (0.75,1.5) -- (1.25,2.25);
		\draw[thick] (0.75,1.5) -- (0.25,2.25);
		\draw[thick] (0.75,0.75) -- (0.75,1.5);
		\draw[thick] (0.25,2.25) -- (-0.25,3);
		\draw[thick] (0.25,2.25) -- (0.75,3);
		\draw[black,fill=black,thick] (0,0) circle [radius=0.06];
		\draw[fill=black] (0.75,0.75) circle [radius=0.06];
		\draw[fill=black] (-0.75,0.75) circle [radius=0.06];
		\draw[fill=black] (-1.25,1.5) circle [radius=0.06];
		\draw[fill=black] (-0.25,1.5) circle [radius=0.06];
		\draw[fill=black] (1.25,2.25) circle [radius=0.06];
		\draw[fill=black] (-0.25,3) circle [radius=0.06];
		\draw[fill=black] (0.75,3) circle [radius=0.06];
		\draw[black,fill=white,thick] (0,0) circle [radius=0.25] node {$a$};
		\draw[black,fill=white,thick] (-0.75,0.75) circle [radius=0.25] node {$c$};
		\draw[black,fill=white,thick] (0.75,1.5) circle [radius=0.25] node {$b$};
		\draw[black,fill=white,thick] (0.25,2.25) circle [radius=0.25] node {$a$};
	\end{tikzpicture}  
}
\newcommand{\FaithfulSkeinA}{
	\begin{tikzpicture}
		\draw[thick] (-1.25,0) -- (-2,0.75);
		\draw[thick] (-1.25,0) -- (-0.5,0.75);
		\draw[thick] (-0.5,0.75) -- (-1.25,1.5);
		\draw[thick] (-0.5,0.75) -- (0.25,1.5);
		\draw[thick] (-1.25,1.5) -- (-2,2.25);
		\draw[thick] (-1.25,1.5) -- (-0.5,2.25);
		\draw[black,fill=white,thick] (-1.25,0) circle [radius=0.25] node {$a$};
		\draw[black,fill=white,thick] (-0.5,0.75) circle [radius=0.25] node {$a$};
		\draw[black,fill=white,thick] (-1.25,1.5) circle [radius=0.25] node {$a$};
		\draw[fill=black] (-2,0.75) circle [radius=0.06];
		\draw[fill=black] (0.25,1.5) circle [radius=0.06];
		\draw[fill=black] (-2,2.25) circle [radius=0.06];
		\draw[fill=black] (-0.5,2.25) circle [radius=0.06];
	\end{tikzpicture}
}
\newcommand{\FaithfulSkeinB}{
	\begin{tikzpicture}
		\draw[thick] (-1.25,0) -- (-2,0.75);
		\draw[thick] (-1.25,0) -- (-0.5,0.75);
		\draw[thick] (-2,0.75) -- (-2.75,1.5);
		\draw[thick] (-2,0.75) -- (-1.25,1.5);
		\draw[thick] (-2.75,1.5) -- (-3.5,2.25);
		\draw[thick] (-2.75,1.5) -- (-2,2.25);
		\draw[black,fill=white,thick] (-1.25,0) circle [radius=0.25] node {$b$};
		\draw[black,fill=white,thick] (-2,0.75) circle [radius=0.25] node {$b$};
		\draw[black,fill=white,thick] (-2.75,1.5) circle [radius=0.25] node {$b$};
		\draw[fill=black] (-2,2.25) circle [radius=0.06];
		\draw[fill=black] (-3.5,2.25) circle [radius=0.06];
		\draw[fill=black] (-1.25,1.5) circle [radius=0.06];
		\draw[fill=black] (-0.5,0.75) circle [radius=0.06];
	\end{tikzpicture}
}
\newcommand{\SimpleFaithfulA}{
	\begin{tikzpicture}
		\draw[thick] (0,0) -- (-1,1);
		\draw[thick] (0,0) -- (1,1);
		\draw[thick] (0.5,0.5) -- (0,1);
		\draw[thick] (-0.75,0.75) -- (-0.5,1);
		\draw[thick] (0.75,0.75) -- (0.5,1);
		\draw[black,fill=black,thick] (0,0) circle [radius=0.03];
		\draw[black,fill=black,thick] (0.5,0.5) circle [radius=0.03];
		\draw[black,fill=black,thick] (-0.75,0.75) circle [radius=0.03];
		\draw[black,fill=black,thick] (0.75,0.75) circle [radius=0.03];
		\draw[black,fill=black,thick] (-1,1) circle [radius=0.03];
		\draw[black,fill=black,thick] (-0.5,1) circle [radius=0.03];
		\draw[black,fill=black,thick] (0,1) circle [radius=0.03];
		\draw[black,fill=black,thick] (0.5,1) circle [radius=0.03];
		\draw[black,fill=black,thick] (1,1) circle [radius=0.03];
		\draw[thick] (-1,1) -- (0,2);
		\draw[thick] (1,1) -- (0,2);
		\draw[thick] (-0.5,1.5) -- (0,1);
		\draw[thick] (-0.75,1.25) -- (-0.5,1);
		\draw[thick] (0.75,1.25) -- (0.5,1);
		\draw[black,fill=black,thick] (0,2) circle [radius=0.03];
		\draw[black,fill=black,thick] (-0.5,1.5) circle [radius=0.03];
		\draw[black,fill=black,thick] (-0.75,1.25) circle [radius=0.03];
		\draw[black,fill=black,thick] (0.75,1.25) circle [radius=0.03];
		\draw[thick] (0,2) -- (-1,3);
		\draw[thick] (0,2) -- (1,3);
		\draw[thick] (-0.75,2.75) -- (-0.5,3);
		\draw[thick] (0.75,2.75) -- (0.5,3);
		\draw[black,fill=black,thick] (-0.75,2.75) circle [radius=0.03];
		\draw[black,fill=black,thick] (0.75,2.75) circle [radius=0.03];
		\draw[black,fill=black,thick] (-1,3) circle [radius=0.03];
		\draw[black,fill=black,thick] (-0.5,3) circle [radius=0.03];
		\draw[black,fill=black,thick] (0.5,3) circle [radius=0.03];
		\draw[black,fill=black,thick] (1,3) circle [radius=0.03];
		\draw[thick] (-1,3) -- (-1,4);
		\draw[thick] (-0.5,3) -- (-0.5,4);
		\draw[thick] (0.5,3) -- (0.5,4);
		\draw[thick] (1,3) -- (1,4);
		\node[diamond,draw,black,fill=white,thick,scale=1.25] at (-0.5,3.5) {};
		\node at (-0.5,3.5) {$g$};
		\draw[thick] (-1,4) -- (0,5);
		\draw[thick] (1,4) -- (0,5);
		\draw[thick] (-0.5,4) -- (-0.75,4.25);
		\draw[thick] (0.5,4) -- (0.75,4.25);
		\draw[black,fill=black,thick] (-1,4) circle [radius=0.03];
		\draw[black,fill=black,thick] (-0.5,4) circle [radius=0.03];
		\draw[black,fill=black,thick] (0.5,4) circle [radius=0.03];
		\draw[black,fill=black,thick] (1,4) circle [radius=0.03];
		\draw[black,fill=black,thick] (-0.75,4.25) circle [radius=0.03];
		\draw[black,fill=black,thick] (0.75,4.25) circle [radius=0.03];
		\draw[black,fill=black,thick] (0,5) circle [radius=0.03];
		\draw[thick] (0,5) -- (-1,6);
		\draw[thick] (0,5) -- (1,6);
		\draw[thick] (-0.5,5.5) -- (0,6);
		\draw[thick] (0.75,5.75) -- (0.5,6);
		\draw[thick] (-0.75,5.75) -- (-0.5,6);
		\draw[black,fill=black,thick] (-0.5,5.5) circle [radius=0.03];
		\draw[black,fill=black,thick] (0.75,5.75) circle [radius=0.03];
		\draw[black,fill=black,thick] (-0.75,5.75) circle [radius=0.03];
		\draw[black,fill=black,thick] (-1,6) circle [radius=0.03];
		\draw[black,fill=black,thick] (-0.5,6) circle [radius=0.03];
		\draw[black,fill=black,thick] (0,6) circle [radius=0.03];
		\draw[black,fill=black,thick] (0.5,6) circle [radius=0.03];
		\draw[black,fill=black,thick] (1,6) circle [radius=0.03];
		\draw[thick] (-1,6) -- (0,7);
		\draw[thick] (1,6) -- (0,7);
		\draw[thick] (0,6) -- (0.5,6.5);
		\draw[thick] (0.5,6) -- (0.75,6.25);
		\draw[thick] (-0.5,6) -- (-0.75,6.25);
		\draw[black,fill=black,thick] (0.5,6.5) circle [radius=0.03];
		\draw[black,fill=black,thick] (0,7) circle [radius=0.03];
		\draw[black,fill=black,thick] (0.75,6.25) circle [radius=0.03];
		\draw[black,fill=black,thick] (-0.75,6.25) circle [radius=0.03];
	\end{tikzpicture}
}
\newcommand{\SimpleFaithfulB}{
	\begin{tikzpicture}
		\draw[thick] (0,0) -- (-1,1);
		\draw[thick] (0,0) -- (1,1);
		\draw[thick] (-0.75,0.75) -- (-0.5,1);
		\draw[thick] (0.75,0.75) -- (0.5,1);
		\draw[thick] (0.5,0.5) -- (0,1);
		\draw[black,fill=black,thick] (0,0) circle [radius=0.03];
		\draw[black,fill=black,thick] (-0.75,0.75) circle [radius=0.03];
		\draw[black,fill=black,thick] (0.75,0.75) circle [radius=0.03];
		\draw[black,fill=black,thick] (0.5,0.5) circle [radius=0.03];
		\draw[black,fill=black,thick] (-1,1) circle [radius=0.03];
		\draw[black,fill=black,thick] (-0.5,1) circle [radius=0.03];
		\draw[black,fill=black,thick] (0,1) circle [radius=0.03];
		\draw[black,fill=black,thick] (0.5,1) circle [radius=0.03];
		\draw[black,fill=black,thick] (1,1) circle [radius=0.03];
		\draw[thick] (-1,1) -- (-0.75,1.25);
		\draw[thick] (-0.5,1) -- (-0.75,1.25);
		\draw[thick] (-0.75,1.25) -- (-0.75,1.75);
		\draw[thick] (-0.75,1.75) -- (-1,2);
		\draw[thick] (-0.75,1.75) -- (-0.5,2);
		\draw[thick] (0,1) -- (0,2);
		\draw[thick] (0.5,1) -- (0.5,2);
		\draw[thick] (1,1) -- (1,2);
		\node[diamond,draw,black,fill=white,thick,scale=1.25] at (0,1.5) {};
		\node at (0,1.5) {$g$};
		\draw[black,fill=black,thick] (-0.75,1.25) circle [radius=0.03];
		\draw[black,fill=black,thick] (-0.75,1.75) circle [radius=0.03];
		\draw[thick] (0,3) -- (-1,2);
		\draw[thick] (0,3) -- (1,2);
		\draw[thick] (-0.5,2) -- (-0.75,2.25);
		\draw[thick] (0,2) -- (0.5,2.5);
		\draw[thick] (0.5,2) -- (0.75,2.25);
		\draw[black,fill=black,thick] (-0.75,2.25) circle [radius=0.03];
		\draw[black,fill=black,thick] (0.75,2.25) circle [radius=0.03];
		\draw[black,fill=black,thick] (0.5,2.5) circle [radius=0.03];
		\draw[black,fill=black,thick] (0,3) circle [radius=0.03];
		\draw[black,fill=black,thick] (-1,2) circle [radius=0.03];
		\draw[black,fill=black,thick] (-0.5,2) circle [radius=0.03];
		\draw[black,fill=black,thick] (0,2) circle [radius=0.03];
		\draw[black,fill=black,thick] (0.5,2) circle [radius=0.03];
		\draw[black,fill=black,thick] (1,2) circle [radius=0.03];
	\end{tikzpicture}
}
\newcommand{\SimpleFaithfulC}{
	\begin{tikzpicture}
		\draw[thick] (0,0) -- (-1,1);
		\draw[thick] (0,0) -- (1,1);
		\draw[thick] (-0.75,0.75) -- (-0.5,1);
		\draw[thick] (0.75,0.75) -- (0.5,1);
		\draw[black,fill=black,thick] (0,0) circle [radius=0.03];
		\draw[black,fill=black,thick] (-0.75,0.75) circle [radius=0.03];
		\draw[black,fill=black,thick] (0.75,0.75) circle [radius=0.03];
		\draw[black,fill=black,thick] (-1,1) circle [radius=0.03];
		\draw[black,fill=black,thick] (-0.5,1) circle [radius=0.03];
		\draw[black,fill=black,thick] (0.5,1) circle [radius=0.03];
		\draw[black,fill=black,thick] (1,1) circle [radius=0.03];
		\draw[thick] (-1,1) -- (-1,2);
		\draw[thick] (-0.5,1) -- (-0.5,2);
		\draw[thick] (0.5,1) -- (0.5,2);
		\draw[thick] (1,1) -- (1,2);
		\node[diamond,draw,black,fill=white,thick,scale=1.25] at (0.5,1.5) {};
		\node at (0.5,1.5) {$g$};
		\draw[thick] (0,3) -- (-1,2);
		\draw[thick] (0,3) -- (1,2);
		\draw[thick] (-0.5,2) -- (-0.75,2.25);
		\draw[thick] (0.5,2) -- (0.75,2.25);
		\draw[black,fill=black,thick] (0,3) circle [radius=0.03];
		\draw[black,fill=black,thick] (-0.75,2.25) circle [radius=0.03];
		\draw[black,fill=black,thick] (0.75,2.25) circle [radius=0.03];
		\draw[black,fill=black,thick] (-1,2) circle [radius=0.03];
		\draw[black,fill=black,thick] (-0.5,2) circle [radius=0.03];
		\draw[black,fill=black,thick] (0.5,2) circle [radius=0.03];
		\draw[black,fill=black,thick] (1,2) circle [radius=0.03];
	\end{tikzpicture}
}

\setcounter{page}{1}

\providecommand{\keywords}[1]{\tbf{\textit{Index terms---}} #1}

\begin{document}
	
	\title[Forest-skein groups III]{Forest-skein groups III: simplicity}
	
	\thanks{
		AB is supported by the Australian Research Council Grant DP200100067.\\
		RS is supported by an Australian Government Research Training Program (RTP) Scholarship.}
	\author{Arnaud Brothier and Ryan Seelig}
	\address{Arnaud Brothier, Ryan Seelig\\ School of Mathematics and Statistics, University of New South Wales, Sydney NSW 2052, Australia}
	\email{arnaud.brothier@gmail.com\endgraf
		\url{https://sites.google.com/site/arnaudbrothier/}}
	
	\begin{abstract}
		An Ore forest-skein category provides three forest-skein groups equipped with a powerful diagrammatic calculus analogous to Richard Thompson's groups $F\subset T\subset V$.
		We investigate when forest-skein groups have simple derived subgroups and establish two characterisations: a dynamical one and a categorical one. We then construct two classes of examples. The first associates two finitely presented simple groups to every finite binary tree and the second associates two simple groups to every $n$-ary Higman--Thompson group.
	\end{abstract}
	\maketitle
	
	\keywords{{\bf Keywords:} 
		R.~Thompson's groups, infinite simple groups, finite presentation, calculus of fractions.}
	
	\section*{Introduction}
	
	As part of the program to reconstruct conformal field theories from subfactors, Vaughan Jones discovered deep connections between Richard Thompson's groups $F\subset T\subset V$, knot theory, and physics \cite{Jones17}, see also the survey \cite{Brothier20}.
	To strengthen these connections, the first author has introduced {\it forest-skein} (FS) groups \cite{Brothier22,Brothier23c}, which are loosely a mixture of Thompson's groups and Jones' planar algebra \cite{Jones21}.
	
	\medskip{\bf Forest-skein categories and groups.} 
	Thompson's groups consist of piecewise linear bijections of the unit interval $[0,1]$ with dyadic rational breakpoints and power of $2$ slopes. Subsequently, each element of Thompson's groups can be represented by a pair of binary trees having the same number of leaves and a permutation of those leaves \cite{Brown87,Cannon-Floyd-Parry96}. In other words, $F$, $T$, and $V$ are isotropy groups of fraction groupoids of certain categories of binary forests with permutations. Other groups admit a similar tree-pair description.
	Burillo, Nucinkis, and Reeves showed that Cleary's irrational slope Thompson group $F_\tau$ arises from binary forests made from {\it two} types of caret (binary tree with two leaves) $\{Y_a,Y_b\}$ modulo the {\it skein relation}
	\begin{align*}
		\ClearySkeinA
		\quad\sim\quad
		\ClearySkeinB,
	\end{align*}
	and moreover they defined $T$ and $V$-versions of $F_\tau$ \cite{Cleary00,Burillo-Nucinkis-Reeves21,Burillo-Nucinkis-Reeves22}. A {\it forest-skein (FS) category} is a category of forests made from a non-empty set $S$ of binary carets modulo any set $R$ of skein relations (pairs of coloured binary trees with the same number of leaves). 
	The data $\la S|R\ra$ is called a {\it skein presentation} and we interpret $S$ as a set of {\it colours} for the monochromatic binary caret. 
	When an FS category $\cF$ is an Ore category (i.e.~is left-cancellative and has common right-multiples) one can construct three {\it types} of {\it forest-skein (FS) group}: $G_\cF\subset G_\cF^T\subset G_\cF^V$ 
	(there is also a braided type that we don't consider in this article).
	An element of an FS group is a equivalence class of planar diagrams: $t\circ\pi\circ s^{-1}$, where $t$ and $s$ are coloured trees with the same number of leaves and $\pi$ is a permutation of their leaves (taken up certain growing relations and skein relations). 
	Here is an example for the above skein relation:
	\begin{align*}
		Y_a\circ(12)\circ Y_b^{-1}\quad=\quad
		\GroupElementA
		\quad\sim_{grow}\quad
		\GroupElementB
		\quad\sim_{skein}\quad
		\GroupElementC.
	\end{align*}
	The class of FS groups is vast and contains many new groups as well as old ones in different forms. 
For instance, any homogeneously presented Ore monoid produces FS groups that decompose as wreath products \cite{Brothier23c}. Moreover, any family of monochromatic trees with same number of leaves gives an FS group \cite{Brothier22}. 	
	
\medskip{\bf Properties of forest-skein groups.} 
Every FS group $G^X$, with $X$ being $F,T,$ or $V$, contains a copy of $X$, hence $G^X$ has exponential growth and infinite geometric dimension. Moreover, every $T$ and $V$-type FS group has torsion and contains a copy of the non-abelian free group $\Z*\Z.$
	There are explicit examples of $F$-type FS groups with and without the following properties: being orderable, bi-orderable, torsion-free, having torsion-free abelianisation, having trivial centre, containing a copy of $\Z*\Z$. Hence many FS groups are not Guba--Sapir diagram groups, despite both families being defined by similar looking planar diagrams (diagram groups are all bi-orderable with torsion-free abelianisation, see \cite{Wiest03,Guba-Sapir06} and \cite[Corollary 9.9]{Guba-Sapir97}). Moreover, any (countable) FS group has vanishing first $\ell^2$-Betti number.
	
An Ore FS category with a finite skein presentation produces finitely presented FS groups and a large class of them enjoy topological finiteness property F$_\infty$. 
Though, we know of no examples that are finitely presented but not of type F$_\infty$.
	
	There are infinitely many FS groups satisfying the Haagerup property (using \cite{Brothier23a}), though we don't know if there exists an FS group without it, or even one that cannot act properly on a CAT(0) cube complex. 
	
	\medskip{\bf Infinite simple groups.}
	In comparison to {\it finite} simple groups, little is known about the class of {\it infinite} (discrete) simple groups. Thompson's groups $T$ and $V$ (defined in the $1960$'s) are famous for being the first examples of finitely presented infinite simple groups. 
	Today there are a large number of constructions of finitely presented simple groups that are essentially all obtained using dynamics 	\cite{Higman74,Brown87,Stein92,Burger-Mozes97,Rover99, Cleary00,Brin04, Caprace-Remy09, Brin10, Juschenko-Monod13, Matui15, Lodha-Moore16, Skipper-Witzel-Zaremsky19,Hyde-Lodha19,Lawson-Vdovina20, Burillo-Nucinkis-Reeves22,Belk-Zaremsky22, Belk-Bleak-Matucci-Zaremsky23,Tarocchi24}. We insist this list is far from being exhaustive. 
	Many basic questions remain open and new examples of infinite simple groups are needed \cite{Khukhro-Mazurov22}.
	
	\medskip{\bf Motivations and aims.} In this article we initiate a study of simple groups arising within the FS formalism, our primary motivation being to witness many new examples of finitely presented simple groups satisfying exotic properties. In this article we characterise when an FS category produces FS groups having simple derived subgroups and give two concrete classes of examples. In a future article \cite{Brothier-Seelig24} we will address the novelty of these groups via a deeper dynamical analysis.
	
	\medskip{\bf Content of the article and main results.}
	In Section \ref{sec:preliminaries} we recall the basics of the FS formalism and note some key results.
	
	In Section \ref{sec:actions} we further investigate the canonical group action $G\act Q$ of an FS group on its ``$Q$-space'' which was introduced in \cite{Brothier22}. Recall, $G\act Q$ is conjugate to the homogeneous action $G\act G^T/G$ and extends canonically to an action $G^V\act Q$. 
	We introduce a new point of view with {\it pointed trees} (i.e.~a tree with a distinguished leaf) forming a monoid $\cT_p$ via gluing. The $Q$-space is then a quotient of $\cT_p$ and we get a ``canonical monoid action" $\beta:\cT_p\act Q$ which extends (using matrices) to an action $\beta$ of the whole FS category $\cF.$ The action $\beta:\cT_p\act Q$ captures the local information of $\al:G\act Q$ which morally consists of ``prefix" replacement (see Remark \ref{rem:prefix}).
	
	Next, we extend these actions to a topological completion $\fC$ of $Q$.
	This will be mostly used in future articles, however, it already simplifies the exposition in this article.
	We construct $\fC$ in two different ways: first using Jones' technology and second using uniform structures. These actions on $Q$ and $\fC$ are analogous to the actions of Thompson's groups on the dyadic rationals and Cantor space, though
	a crucial difference in the FS world is:
	
	\begin{center}
		{\bf The canonical group action is not necessarily faithful.}    
	\end{center}
	
	This is because FS groups are defined {\it diagramatically} and not {\it dynamically} (see Example \ref{ex:non-faithful-action}).
	After that, we discuss topological and order properties of the completed action $\al:G^V\act\fC$. In particular, $\fC$ is always a totally ordered profinite space with smallest and largest elements denoted $o$ and $\omega$, respectively. 
	We define $K$ (resp. $K_o$, $K_\omega$) to be the normal subgroup of $G$ consisting of elements acting trivially on a neighbourhood of $\{o,\omega\}$ (resp. $o$, $\omega$) via $\al:G\act\fC$. 
	With this, we obtain a dynamical characterisation of simplicity in Section \ref{sec:dynamical-characterisation}.
	
	\begin{mainthm}[Theorems \ref{theo:simple-F}, \ref{theo:simple-T-V}, \ref{theo:simple-implies-faithful}]\label{theo:main-simple-dynamical-equivalence}
		Let $\cF$ be an Ore FS category with FS groups $G$, $G^T$, $G^V$, and normal subgroups $K_o,K_\omega,K\lhd G$. The following are equivalent:
		\begin{enumerate}
			\item The homogeneous action $G\act G^T/G$ is faithful.
			\item One (resp.~each) of the derived subgroups $[K,K]$, $[G^T,G^T]$, $[G^V,G^V]$, is simple.
		\end{enumerate}
		Moreover, the following are equivalent:
		\begin{enumerate}
			\item[(3)] The homogeneous action $G\act G^T/G$ is faithful and the two germ groups $G/K_o$ and $G/K_\omega$ are abelian.
			\item[(4)] The derived subgroup $[G,G]$ is simple.
		\end{enumerate}
	\end{mainthm}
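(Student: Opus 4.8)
The plan is to identify the homogeneous action $G\act G^T/G$ with the canonical action $\al:G\act Q$ and its extension to the completed totally ordered profinite space $\fC$ (with extremal points $o$ and $\omega$), and then to reduce simplicity of each derived subgroup to a Higman--Epstein-type dynamical criterion for the action $G^V\act\fC$, with faithfulness of the homogeneous action being precisely the hypothesis that activates the criterion. Accordingly the first task is to record, as lemmas, the combinatorial heart coming from the Ore property (common right-multiples) together with the abundance of $V$-type permutations: the action $G^V\act\fC$ is ``cylinder-transitive'' in the sense that any two basic clopen sets (those indexed by leaves of coloured trees) can be swapped by an element of $G$ supported on their union, and clopen sets have nontrivial rigid stabilisers. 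With this language, faithfulness of $G\act G^T/G$ simply means that $G$ (equivalently $G^V$) acts faithfully on $\fC$.

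For the implication $(1)\Rightarrow(2)$ (Theorems~\ref{theo:simple-T-V} and~\ref{theo:simple-F}) I would run the standard transitivity/commutator machine uniformly for the three types. Given faithfulness, if $N\lhd[G^V,G^V]$ contains a nontrivial $g$, then faithfulness produces a clopen set moved off itself by $g$, cylinder-transitivity lets me conjugate $g$ so that finitely many conjugates have supports tiling a neighbourhood, and commutators of these conjugates with suitable elements of $[G^V,G^V]$ generate all of $[G^V,G^V]$; hence $[G^V,G^V]$ is simple. The $T$-type case is identical using the cyclic order on $\fC$, and $[K,K]$ is treated by restricting to the interior $\fC\setminus\{o,\omega\}$, on which $K$ retains the same richness. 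Nontriviality of these derived subgroups follows from the copies of Thompson's $T\subset G^T$, $V\subset G^V$ and from the richness of the interior action. Since the argument is uniform, the three ``simple'' conditions in $(2)$ become equivalent to one another and to $(1)$.

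For $(2)\Rightarrow(1)$ (Theorem~\ref{theo:simple-implies-faithful}) I argue by contraposition. If the homogeneous action is not faithful, let $N\neq\{1\}$ be the kernel of $\al:G\act Q$ (the situation of Example~\ref{ex:non-faithful-action}); passing to the reduced FS data yields quotients of $G^T,G^V$ acting faithfully on their completions, whose derived subgroups are simple and nontrivial by the faithful case. Thus each of $[K,K]$, $[G^T,G^T]$, $[G^V,G^V]$ surjects onto a nontrivial group, so simplicity would force it to intersect the kernel trivially, i.e.\ to be a complement; but the kernel meets the derived subgroup nontrivially, being normal and normalised by the cylinder-transitive action, so no derived subgroup is simple. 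For $(3)\Leftrightarrow(4)$ (Theorem~\ref{theo:simple-F}), the new feature is that $[G,G]$, unlike $[K,K]$, need not act trivially near $o$ and $\omega$: it surjects onto the derived subgroups of the germ groups $G/K_o$ and $G/K_\omega$. If either germ group is non-abelian, then $[G,G]$ strictly contains the nontrivial normal subgroup $[K,K]$, hence is not simple; if both germ groups are abelian and the action is faithful, then the abelianisation of $G$ is captured by the two germs, so $[G,G]=[K,K]$, which is simple by $(1)\Rightarrow(2)$; and if the action is not faithful, $(2)\Rightarrow(1)$ already rules out simplicity of every derived subgroup, in particular $[G,G]$.

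The main obstacle, I expect, is twofold. First, establishing cylinder-transitivity and the rigid-stabiliser lemmas for an \emph{arbitrary} Ore FS category, and then running the $T$-, $F$-, and $K$-arguments correctly near the distinguished endpoints: one must show that faithfulness on $\fC$ descends to faithfulness on the interior, that the germs at $o$ and $\omega$ are exactly isolated by $K_o$ and $K_\omega$, and that the abelianisation of $G$ is detected by these two germs (this last point is what forces $[G,G]=[K,K]$ under $(3)$). Second, in the converse direction, pinning down where the kernel $N$ sits — in particular verifying that it genuinely meets $[K,K]$, $[G^T,G^T]$ and $[G^V,G^V]$ — is the delicate step, and is presumably where the explicit non-faithful examples of Section~\ref{sec:actions} are used.
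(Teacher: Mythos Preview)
Your plan for $(1)\Rightarrow(2)$ is essentially the paper's: CO-transitivity (your ``cylinder-transitivity'') plus the Higman--Epstein commutator machine, run on $\fC$ for $G^T,G^V$ and on $\fC\setminus\{o,\omega\}$ for $K$. That part is fine.

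The real gap is in your $(2)\Rightarrow(1)$. You propose to pass to the quotient FS data and then derive a contradiction from the claim that ``the kernel meets the derived subgroup nontrivially''. You yourself flag this as the delicate step, and indeed it is not established anywhere in your outline --- nor is it how the paper proceeds. The paper's Theorem~\ref{theo:simple-implies-faithful} goes in the opposite direction: starting from a simple $\Gamma$ with $[K,K]\subset\Gamma\subset G^V$, one first shows $\ker(\alpha)\cap\Gamma=\{e\}$ (using that $\Gamma$ contains a copy of $[F,F]$, which acts faithfully). From this one deduces that $\ker(\alpha)$ commutes with $K$, hence with $G$, i.e.\ $\ker(\alpha)\subset Z(G)$. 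The punchline is then a short \emph{diagrammatic} computation (Lemma~\ref{lem:kernel-cell} plus a Temperley--Lieb--style conjugation): for $g\in\ker(\alpha)$, centrality forces $e\ot g\ot e\ot e=e\ot e\ot g\ot e$ in $\Frac(\cF)$, whence $g=e$. Your quotient-category argument never reaches this; without a proof that $\ker(\alpha)\cap[K,K]\neq\{e\}$ (which is false in spirit --- the paper's argument shows this intersection \emph{is} trivial and then works harder), your contraposition does not close.

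There is a secondary gap in $(3)\Rightarrow(4)$. Abelian germ groups give $[G,G]\subset K$, but the equality $[G,G]=[K,K]$ does not follow from ``the abelianisation of $G$ is captured by the two germs''. What is needed is $[G,G]\cap K=[K,K]$, which the paper proves as a separate claim (Claim~3 of Theorem~\ref{theo:simple-F}) via an explicit conjugation: one pushes an arbitrary $g\in[G,G]\cap K$ into a cone, recognises it as $\eta_Z(l)$, and uses a hand-built $\gamma\in G$ to show $\eta_Z(l)=\eta_W(\eta_W(l))\in[G_W,G_W]\subset[K,K]$. This is another diagrammatic trick, not a germ computation.
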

	
	If $G\act G^T/G$ is faithful, we quickly deduce that $\al:G^V\act \fC$ is faithful too.
	Then a thin analysis (see Theorems \ref{theo:simple-F} and \ref{theo:simple-T-V}) of the action $\al:G^V\act\fC$ permits us to apply a Higman--Epstein-type argument (see \cite{Higman54,Epstein70}) to deduce $(1)\Rightarrow(2)$. This does {\it not} give simplicity of $[G,G]$ since $[G,G]/[K,K]$ is a proper quotient of $[G,G]$. However, when the homogeneous action is faithful, $[G,G]/[K,K]$ is the {\it only} proper quotient of $[G,G]$, and it is trivial if and only if $G/K_o$ and $G/K_\omega$ are abelian (see Theorem \ref{theo:simple-F}). 
	
There is a (strong) converse: the existence of {\it any} simple group $\Gamma$ satisfying $[K,K]\subset\Gamma\subset G^V$ implies the homogeneous action must be faithful. The proof is elementary using diagrams inspired by relations appearing in the Temperley--Lieb--Jones planar algebra (see Theorem \ref{theo:simple-implies-faithful}).
	
	In Section \ref{sec:categorical-characterisation} we witness a striking rigidity phenomena: 
	a large class of subgroups of $G$ normalised by $G^T$ come from quotients of the FS category $\cF$. 
	In particular, the subgroup $\ker(G\act G^T/G)$ arises as a quotient of $\cF$. 
	From there we deduce a categorical characterisation of faithfulness of the canonical group action.
	
	\begin{mainthm}[Theorem \ref{theo:Ore-simple}]\label{theo:main-category}
		Let $\cF$ be an Ore FS category with FS groups $G$ and $G^T$. The following are equivalent:
		\begin{enumerate}
			\item The homogeneous action $G\act G^T/G$ is faithful.
			\item The FS category $\cF$ admits no proper quotients in the category of Ore FS categories.
			\item The FS category $\cF$ admits no proper quotients that are left-cancellative.
		\end{enumerate}
	\end{mainthm}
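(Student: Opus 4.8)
The plan is to prove the cycle of implications $(1)\Rightarrow(2)\Rightarrow(3)\Rightarrow(1)$; only the last step needs the rigidity results of Section~\ref{sec:categorical-characterisation}. The starting point is a purely group-theoretic remark: $\ker(G\act G^T/G)$ is the normal core $\bigcap_{h\in G^T}hGh^{-1}$ of $G$ in $G^T$. Indeed $g\in G$ fixes every coset $hG$ iff $h^{-1}gh\in G$ for all $h\in G^T$, and the intersection is automatically contained in $G$ (the term $h=e$). Hence $\ker(G\act G^T/G)$ is the \emph{largest} subgroup of $G$ normalised by $G^T$; call it $C$, so that $(1)$ is equivalent to $C=\{1\}$.

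$(1)\Rightarrow(2)$, contrapositively. Let $\pi\colon\cF\onto\cF'$ be a proper quotient of Ore FS categories. As $\pi$ is the identity on objects and surjective on morphisms, being proper (not an isomorphism) means it is not injective on morphisms, so there are forests $a\neq b$ in $\cF$ with the same source $m$ and target $n$ and $\pi(a)=\pi(b)$. Pick any tree $L\colon 1\to n$ and put $g:=L^{-1}\circ a\circ b^{-1}\circ L\in G$. Since $\cF$ is Ore, $\cF\into\Frac(\cF)$ is faithful, so $a\neq b$ forces $a\circ b^{-1}\neq 1$ and hence $g\neq 1$; on the other hand $\pi(g)=\pi(L)^{-1}\pi(a)\pi(b)^{-1}\pi(L)=1$ in $G_{\cF'}$. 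Thus $N:=\ker\bigl(G\to G_{\cF'}\bigr)$ is non-trivial, and it is normalised by $G^T$: for $h\in G^T$ and $x\in N$ one has $hxh^{-1}\in G$ (because $G\lhd G^T$) and $\pi(hxh^{-1})=1$. By the first paragraph $N\subseteq C$, so $C\neq\{1\}$ and $(1)$ fails.

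$(2)\Rightarrow(3)$: it is enough that for FS categories being Ore is the same as being left-cancellative. Free FS categories are Ore, every FS category is a quotient of a free one, and having common right multiples is inherited by quotients (lift, take a common right multiple, push down), so every FS category---and every quotient of one---automatically has common right multiples. Hence a proper left-cancellative quotient of $\cF$ is a proper Ore quotient, i.e.\ $\neg(3)\Rightarrow\neg(2)$. $(3)\Rightarrow(1)$, contrapositively: if $(1)$ fails then $C\neq\{1\}$, and by the rigidity established in Section~\ref{sec:categorical-characterisation} the subgroup $C=\ker(G\act G^T/G)$ is realised as $\ker\bigl(G\to G_{\cF_0}\bigr)$ for an Ore quotient $\cF\onto\cF_0$. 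Since this homomorphism is not injective, $\cF\to\cF_0$ is not an isomorphism, so $\cF_0$ is a proper Ore quotient and in particular a proper left-cancellative quotient; thus $(3)$ fails. This closes the cycle.

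The only substantive ingredient is the rigidity used in $(3)\Rightarrow(1)$, proved earlier in Section~\ref{sec:categorical-characterisation}, and this is where I expect the real work to lie. Given a subgroup $N\lhd G$ normalised by $G^T$, one must manufacture a congruence on $\cF$---declare trees $t,s$ with the same number of leaves equivalent when $t\circ s^{-1}\in N$, then propagate through composition and horizontal juxtaposition of forests---and prove that the resulting quotient FS category is still left-cancellative while the congruence stays non-trivial whenever $N$ is; the normalisation by all of $G^T$, rather than mere normality in $G$, is what is needed to keep the quotient left-cancellative. With that correspondence in hand, the three implications above are bookkeeping.
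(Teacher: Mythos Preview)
Your overall architecture matches the paper's, but two of your justifications are false and need repair.

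\textbf{First gap.} In $(1)\Rightarrow(2)$ you write ``$hxh^{-1}\in G$ (because $G\lhd G^T$)''. The subgroup $G$ is \emph{not} normal in $G^T$: already for Thompson's groups $F$ is not normal in the simple group $T$. What is true is that $\ker(\gamma_{G^T})\subset G$, where $\gamma_{G^T}:G^T\to G'^T$ is the induced map. This holds because $\gamma$ preserves leaf counts, hence the extension to $\cF^T$ sends $(t,\sigma)\mapsto(\gamma(t),\sigma)$ with the permutation unchanged; if $[\gamma(t)\sigma,\gamma(s)]=e$ in $G'^T$ then after growing one finds $\sigma^f=e$ for some forest $f$, forcing $\sigma=e$. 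Once you know $\ker(\gamma_{G^T})\subset G$, your $N$ equals $\ker(\gamma_{G^T})$, which is genuinely normal in $G^T$. The paper bypasses this entirely: it shows (Proposition~\ref{prop:gamma}) that the induced map $\gamma_Q:Q\to\ti Q$ is \emph{injective} and equivariant, whence $\ker(\alpha)=\gamma_G^{-1}(\ker(\ti\alpha))\supset\ker(\gamma_G)$ directly.

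\textbf{Second gap.} In $(2)\Rightarrow(3)$ you claim ``free FS categories are Ore''. This is false whenever $|S|\geqslant 2$: in $\FS\langle a,b\rangle$ the carets $Y_a$ and $Y_b$ have no common right-multiple, since any such would force the root colours to agree. Consequently your assertion that ``every FS category automatically has common right multiples'' is also false. The fix is immediate and is exactly what the paper does: you only need that quotients of \emph{this particular} $\cF$ inherit common right-multiples, and $\cF$ has them by hypothesis.

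With these two patches your cycle closes. Your final paragraph correctly identifies that the substantive work is the rigidity (Proposition~\ref{prop:F-modulo-kerbeta-simple}); the paper builds the quotient as $\cF/\ker(\beta)$ rather than starting from the congruence generated by $N$, but these are equivalent here since $\ker(\alpha)$ and $\ker(\beta)$ determine one another. Your intuition that normalisation by $G^T$ (not just $G$) is what secures left-cancellativity of the quotient is right: this is precisely where Lemma~\ref{lem:kernel-cell} enters.
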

	Item (3) means that adding any ``new" skein relation to a given Ore FS category $\cF$ yields a category that is no longer left-cancellative.
	Combined with algorithmic tools of Dehornoy to detect left-cancellativity (such as the strong cube condition see \cite{Dehornoy03} and \cite[Section 2]{Brothier22}) this may give a useful way to deduce faithfulness of $G\act G^T/G$, however, we have been more successful working directly with the dynamics.
	
	Recall that an Ore category is left-cancellative and has common right-multiples. Since the latter property is preserved under taking quotients we immediately deduce $(2) \Leftrightarrow(3)$.
	Now, if there exists a proper quotient $\cF\onto \ti\cF$ of Ore FS categories, with respective FS groups $G,G^T$ and $\ti{G},\ti{G}^T$,
	then $G\act G^T/G$ factors through $\ti G\act \ti{G}^T/\ti G$ giving that $G\act G^T/G$ is not faithful.
	The surprising direction is the implication $(2)\Rightarrow (1)$ which comes from the rigidity phenomena described above. 
	Better yet, we obtain that $G/\ker(\alpha)$ is always an FS group whose underlying category is $\cF/\ker(\beta)$ where $\beta$ is the canonical category action of $\cF.$ 
We deduce the following (see Proposition \ref{prop:F-modulo-kerbeta-simple}).
		
\begin{maincor}\label{cor:FS-quotient}
Let $\cF$ be an Ore FS category with FS group $G$ and canonical actions $\alpha,\beta$.
There exists an FS group $\ti G$ which is a quotient of $G$ and for which the canonical action is faithful.
Namely, $G\onto \ti G$ is the quotient map $G\onto G/\ker(\alpha)$ and $G/\ker(\alpha)$ is the FS group of $\cF/\ker(\beta)$.
\end{maincor}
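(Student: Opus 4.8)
The plan is to realise $G/\ker(\alpha)$ as the FS group of the quotient category $\cF/\ker(\beta)$, so that faithfulness of its canonical action falls out for free. I would start by recording that $\ker(\alpha)=\ker(G\act G^T/G)$: by Section \ref{sec:actions} the action $\alpha\colon G\act Q$ is conjugate to the homogeneous action, and enlarging $Q$ to its completion $\fC$ changes nothing, since $Q$ is dense in $\fC$ and $G$ acts by homeomorphisms. Hence it is enough to show that $\cF/\ker(\beta)$ is an Ore FS category with canonical $Q$-space the same $Q$, whose FS group is $G/\ker(G\act Q)$, and whose canonical action is faithful.

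The heart of the matter is proving that $\cF/\ker(\beta)$ is an Ore FS category. Common right-multiples descend to any quotient of $\cF$, so two things remain. First, $\cF/\ker(\beta)$ should be an \emph{FS} category, i.e.\ presented by coloured carets modulo skein relations; the natural route is to check that the congruence $\ker(\beta)$ is generated by pairs of coloured trees with the same number of leaves, which should follow from the fact that $\beta$ is reconstructed from the pointed-tree monoid $\cT_p$ and extended over $\cF$ by matrices, so that $\ker(\beta)$ is governed by its ``tree entries''. This would give $\cF/\ker(\beta)=\la S\mid R\cup R'\ra$ with $R'$ the set of tree-pairs lying in $\ker(\beta)$. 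Second, and more delicate, $\cF/\ker(\beta)$ must be left-cancellative: here I would use that it acts on $Q$ \emph{faithfully} (by construction of $\ker(\beta)$) and by \emph{injective} prefix-replacement maps (Remark \ref{rem:prefix}); a category carrying a faithful functor into a category of injective maps is automatically left-cancellative, since $\bar f\bar g=\bar f\bar h$ forces $\beta(f)\beta(g)=\beta(f)\beta(h)$, hence $\beta(g)=\beta(h)$ by injectivity of $\beta(f)$, hence $\bar g=\bar h$ by faithfulness. I expect this left-cancellativity --- keeping the Ore property alive after imposing the congruence $\ker(\beta)$ --- to be the main obstacle; it (together with the identification of the $Q$-space) is exactly what Proposition \ref{prop:F-modulo-kerbeta-simple} has to supply.

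With that in hand I would assemble. As an Ore FS category, $\cF/\ker(\beta)$ has FS groups; let $\ti G$ be the one playing the role of $G$. The quotient functor $\cF\onto\cF/\ker(\beta)$ induces a surjection $G\onto\ti G$, and since the canonical action of $G$ on $Q$ factors through that of $\ti G$ (the $Q$-space being unchanged), we get $\ker(G\onto\ti G)\subseteq\ker(G\act Q)=\ker(\alpha)$. For the reverse inclusion one writes an element of $\ker(\alpha)$ as $t\circ s^{-1}$; acting trivially on $Q$ forces $\beta(t)=\beta(s)$, hence $(t,s)\in\ker(\beta)$, hence the element maps to $\id$ in $\ti G$ (this direct diagram computation is part of the analysis of Section \ref{sec:categorical-characterisation} underlying Theorem \ref{theo:main-category}). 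Thus $\ti G\cong G/\ker(\alpha)$ is the FS group of $\cF/\ker(\beta)$, via the quotient map $G\onto G/\ker(\alpha)$. Finally, faithfulness of the canonical action of $\ti G$ is now automatic: $\ti G=G/\ker(G\act Q)$ acts on its canonical $Q$-space, which is this same $Q$, with trivial kernel by construction; equivalently, an element of $\ti G$ acting trivially on $Q$ lifts to some $g\in G$ acting trivially on $Q$, so $g\in\ker(\alpha)=\ker(G\onto\ti G)$ and the element is trivial.
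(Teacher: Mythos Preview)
Your overall architecture matches the paper's: the corollary is an immediate consequence of Proposition~\ref{prop:F-modulo-kerbeta-simple}, and you have correctly identified left-cancellativity of $\cF/\ker(\beta)$ as the crux, with the identification $\ker(\gamma_G)=\ker(\alpha)$ and faithfulness of $\ti G\act\ti Q$ following from Proposition~\ref{prop:gamma}.

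Where you differ from the paper is in the proposed proof of left-cancellativity. The paper deduces it from Lemma~\ref{lem:kernel-cell}: given $[t\circ h,t\circ h']\in\ker(\alpha)$, one conjugates by cyclic elements $[t,t\pi]\in G^T$ and uses that the result remains in $G$ to force $\pi^h=\pi^{h'}$, hence $|\Leaf(h_j)|=|\Leaf(h'_j)|$, after which $[h_j,h'_j]\in\ker(\alpha)$ drops out. Your alternative---using that $\cF/\ker(\beta)$ acts faithfully by injective maps---does work, but the one-line justification ``$\beta(g)=\beta(h)$ by injectivity of $\beta(f)$'' is too quick. Injectivity of $\beta(f)\colon Q^r\to Q^l$ does not let you cancel from the right of $\beta(g)\circ\beta(f)=\beta(h)\circ\beta(f)$; that would need surjectivity, which fails badly (for a tree $t$ the image of $\beta(t)$ is a thin diagonal in $Q^n$). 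What actually makes your idea go through is the pointed-tree factorisation $\beta((tg),\ell)=\beta(t,j)\circ\beta(g_j,\ell')$ with $\beta(t,j)$ applied \emph{last}: disjointness of the cones $\Cone(t,j)$ (a consequence of $\cF$ being Ore) forces the tree-index $j$ to agree for $g$ and $h$, hence the leaf counts $|\Leaf(g_j)|=|\Leaf(h_j)|$ match, and then injectivity of each $\beta(t,j)\colon Q\to Q$ (again from Ore) lets you cancel to obtain $\beta(g_j,\ell')=\beta(h_j,\ell')$. Once spelled out this way, your route is a clean alternative to the paper's group-theoretic conjugation trick, and arguably more natural since it stays entirely inside the category action rather than passing through $G^T$.
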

		
	Our last two theorems immediately give the following simplicity characterisations.
	
	\begin{maincor}\label{cor:main-equivalences}
		Let $\cF$ be an Ore FS category with FS groups $G,G^T,G^V$ and normal subgroup $K\lhd G$.
		The following are equivalent:
		\begin{enumerate}
			\item One (resp.~each) of the groups $[K,K]$, $[G^T,G^T]$, $[G^V,G^V]$ is simple;
			\item The homogeneous action $G\act G^T/G$ (resp.~the canonical group action $G^V\act Q$ or its completion $G^V\act \fC)$ is faithful; and
			\item There are no surjective morphisms $\cF\onto\ti\cF$ with $\ti\cF$ Ore.
		\end{enumerate}
	\end{maincor}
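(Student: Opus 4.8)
The plan is to assemble the corollary directly from Theorems \ref{theo:main-simple-dynamical-equivalence} and \ref{theo:main-category}; the only extra ingredient is the observation that the three flavours of the canonical action appearing in item (2) all have the same kernel, so that the clause ``(resp.~$\dots$)'' is harmless.

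First I would record the chain of equivalences among the faithfulness statements of (2). Since $G\act Q$ is conjugate to the homogeneous action $G\act G^T/G$, these two have equal kernels. The action $G^V\act Q$ restricts on the subgroup $G\leq G^V$ to the action $G\act Q$, and because $Q$ is dense in its completion $\fC$ while both $G^V\act Q$ and $G^V\act\fC$ are by homeomorphisms of Hausdorff spaces, an element of $G^V$ acts trivially on $Q$ if and only if it acts trivially on $\fC$; thus $\ker(G^V\act Q)=\ker(G^V\act\fC)$, and either being trivial forces the restriction $G\act Q\cong G\act G^T/G$ to be faithful. For the reverse direction I would invoke the fact noted after Theorem \ref{theo:main-simple-dynamical-equivalence} that faithfulness of $G\act G^T/G$ yields faithfulness of $\al:G^V\act\fC$, hence also of $G^V\act Q$. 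This closes the loop, so all four faithfulness conditions in (2) are equivalent.

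Next, the equivalence (1)$\Leftrightarrow$(2) is precisely Theorem \ref{theo:main-simple-dynamical-equivalence}: it states that $G\act G^T/G$ is faithful if and only if one (equivalently, each) of $[K,K]$, $[G^T,G^T]$, $[G^V,G^V]$ is simple. Combined with the previous paragraph this gives the full biconditional between the simplicity alternatives of (1) and the faithfulness alternatives of (2). Note that the direction ``one of these derived subgroups is simple $\Rightarrow$ faithfulness'' is already built into Theorem \ref{theo:main-simple-dynamical-equivalence} via Theorem \ref{theo:simple-implies-faithful} (the Temperley--Lieb--Jones-type diagram argument), so no separate work is needed there.

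Finally, the equivalence (2)$\Leftrightarrow$(3) is Theorem \ref{theo:main-category}: there, faithfulness of $G\act G^T/G$ is shown equivalent to $\cF$ admitting no proper quotient in the category of Ore FS categories, which is item (3) verbatim. By transitivity of logical equivalence we then obtain (1)$\Leftrightarrow$(2)$\Leftrightarrow$(3), completing the proof. There is no real obstacle here beyond bookkeeping---all of the substantive work lives in the two main theorems---and the only point requiring a moment of care is the density argument identifying $\ker(G^V\act Q)$ with $\ker(G^V\act\fC)$, which is routine once one knows $\fC$ is a Hausdorff completion of $Q$ on which $G^V$ acts by homeomorphisms.
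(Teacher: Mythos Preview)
Your proposal is correct and follows essentially the same route as the paper, which simply states that the corollary follows immediately from Theorems \ref{theo:main-simple-dynamical-equivalence} and \ref{theo:main-category}. The only minor difference is that you justify the equivalence of the various faithfulness conditions in (2) via a density argument, whereas the paper has already recorded this as Proposition \ref{prop:action-facts}(7), which states directly that $\ker(\alpha:G^V\act\fC)=\ker(\alpha:G\act Q)$; invoking that proposition would streamline your first paragraph.
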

	
	The $F$-type simple groups we produce (i.e.~$[K,K]$ that is sometimes equal to $[G,G]$) are (left-)orderable, never finitely-generated, and contain a copy of $F$. Some of them contain non-abelian free subgroups (see Example \ref{ex:germ-examples}).

	The $T$ and $V$-type simple groups we produce (i.e. $[G^T,G^T]$ and $[G^V,G^V]$) always contain Thompson's group $T$, thus have torsion and contain $\Z*\Z$.
	Often they have finite index in their overgroups, so inherit most properties from $G^T$ and $G^V$. 
	This leads us to study the abelianisations $G^T_{\ab}$ and $G^V_{\ab}$ coming from {\it any} Ore FS category. 
	We do this in Section \ref{sec:abelianisation} and obtain a friendly description for $G^T_{\ab}$ and $G^V_{\ab}$ that can be read directly off a skein presentation for associated FS category.
	The $F$-case is more subtle and we do not treat it in this article. 
	
	\begin{mainthm}[Theorem \ref{theo:abelianisation}]\label{theo:main-abelianisation}
		Let $\cF=\FS\la S|R\ra$ be a presented Ore FS category with FS groups $G^T,G^V$ and let $a\in S$ be a fixed colour.
		For each coloured tree $t$ define $\chi(t)\in \Z^S$ where $\chi(t)(b)$ equals to the number of $b$-coloured vertices in $t.$
		The map $\chi$ provides group isomorphisms
		$$G^T_{\ab}\simeq G^V_{\ab}\simeq\Z^S/\la\chi(u)=\chi(v),a=0:(u,v)\in R\ra.$$
	\end{mainthm}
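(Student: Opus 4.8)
The plan is to produce a surjection $\phi\colon G^V\to A$ by colour-counting (write $A:=\Z^S/\langle \chi(u)=\chi(v),\ a=0:(u,v)\in R\rangle$), then a surjection $\psi\colon A\to G^V_{\ab}$, and to check that the composite $A\xrightarrow{\psi}G^V_{\ab}\xrightarrow{\bar\phi}A$ is the identity on $A$; since $\psi$ is onto this forces both maps to be isomorphisms. Running the same argument with $G^T$ in place of $G^V$ gives $G^T_{\ab}\cong A$, and since $\bar\phi$ is induced on both abelianisations by the single map $\phi$, the inclusion $G^T\hookrightarrow G^V$ then induces the claimed isomorphism $G^T_{\ab}\cong G^V_{\ab}$. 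For the map $\phi$: the assignment $t\mapsto\chi(t)$ is additive under composition of forests, blind to the permutation part, and identifies $\chi(u)$ with $\chi(v)$ in $A$ for every $(u,v)\in R$; thus it is a monoid homomorphism from the morphisms of $\cF$ to the abelian group $A$ that is trivial on the symmetric part, and hence extends through the calculus of fractions to a group homomorphism $\phi\colon G^V\to A$, $t\circ\pi\circ s^{-1}\mapsto[\chi(t)-\chi(s)]$. As $A$ is abelian, $\phi$ factors through $G^V_{\ab}$; call the resulting map $\bar\phi$. For each colour $b$ the element $Y_b\circ Y_a^{-1}\in G$ has $\phi$-image $[e_b-e_a]=[e_b]$, and the classes $[e_b]$ generate $A$, so $\phi$ is onto.

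Next I identify generators of $G^V_{\ab}$. Let $V_\bullet\le G^V$ be the subgroup of (classes of) diagrams all of whose carets are coloured $a$; it is the image of a homomorphism from Thompson's group $V$, which is simple hence perfect, so $V_\bullet=[V_\bullet,V_\bullet]\subseteq[G^V,G^V]$ — in particular even the all-$a$ tree-pair elements lie in $[G^V,G^V]$, despite "looking like" $F$-type elements. Given $g=t\circ\pi\circ s^{-1}$, inserting $L\circ L^{-1}$ for a fixed all-$a$ tree $L$ of the right size writes $g$ as a product of two $F$-type elements and one element of $V_\bullet$; peeling the all-$a$ skeleton off each $F$-type factor (again through $V_\bullet$) then shows that, modulo $[G^V,G^V]$, every element is a product of single-caret recolourings $\gamma$: "change one $a$-caret of a tree into a $b$-caret". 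Stripping the surrounding context off of $\gamma$ by the growing relations exhibits $\gamma$ as a $G^V$-conjugate of $s_{<w}\circ(Y_b\circ Y_a^{-1})\circ s_{<w}^{-1}$ for a "path tree" $s_{<w}$; moving the distinguished leaf $w$ by conjugating with a suitable leaf-permutation — and using that the Zappa–Szép action on forests is colour-blind, so the induced leaf permutations depend only on shapes — shows the class of $\gamma$ in $G^V_{\ab}$ depends only on $b$. Write it $\delta_b$; then $\delta_a=0$ and $\{\delta_b:b\in S\}$ generates $G^V_{\ab}$.

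Then I check the skein relations hold in $G^V_{\ab}$ and conclude. For $(u,v)\in R$ we have $u=v$ in $\cF$, so $u\circ v^{-1}$ is the identity of $G^V$; decomposing it through the all-$a$ trees of the shapes of $u$ and $v$ as above and reading the resulting identity in $G^V_{\ab}$ (the middle, all-$a$ factor vanishing in $[G^V,G^V]$) gives $\sum_b\bigl(c_b(u)-c_b(v)\bigr)\delta_b=0$, where $c_b$ counts $b$-coloured vertices; that is, $\chi(u)-\chi(v)$ maps to $0$. Hence $e_b\mapsto\delta_b$ descends to a surjection $\psi\colon A\to G^V_{\ab}$. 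Finally $\bar\phi(\psi([e_b]))=\bar\phi(\delta_b)=\phi(Y_b\circ Y_a^{-1})=[e_b]$, so $\bar\phi\circ\psi=\id_A$; as $\psi$ is onto it is an isomorphism with inverse $\bar\phi$, so $G^V_{\ab}\cong A$. Replacing $V$ by $T$ throughout (Thompson's $T$ is also simple) gives $G^T_{\ab}\cong A$, and compatibility of the two maps $\bar\phi$ with $G^T\hookrightarrow G^V$ yields $G^T_{\ab}\cong G^V_{\ab}$.

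The main obstacle is the position-independence of the caret-recolourings: that a "change one $a$-caret to a $b$-caret" diagram determines a well-defined class $\delta_b\in G^V_{\ab}$ regardless of where and inside which coloured tree it is performed. Here one cannot argue dynamically, since the canonical action on $\fC$ need not be faithful, and must instead produce honest group-level conjugacies while tracking the Zappa–Szép/growing bookkeeping precisely (crucially, that the induced leaf permutations are insensitive to colours) and accounting for the fact that a skein relation may identify coloured trees of different underlying shapes. Everything else — well-definedness and surjectivity of $\phi$, the inclusion $V_\bullet\subseteq[G^V,G^V]$, and the closing diagram chase — is routine.
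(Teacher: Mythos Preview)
Your approach is genuinely different from the paper's and in some ways more conceptual. The paper fixes an explicit infinite generating set $\{a_j,x_j,\hat x_j,\hat\tau_j,\hat\sigma\}$ built from $a$-coloured right vines and a short list of Cannon--Floyd--Parry style relations (labelled (F1)--(F2), (T1)--(T4), (V1) there); it then manipulates these relations directly to show that in $G^Y_{\ab}$ every generator collapses either to $0$ or to the class of some $x_1$, so only the colour count survives. Your route instead exploits the perfection of the embedded copy $T_\bullet$ (resp.\ $V_\bullet$) to kill the all-$a$ skeleton in one stroke and reduce everything to single-caret recolourings $\gamma_{r,j,b}=[r\,b_{j,n},\,r\,a_{j,n}]$. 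This is cleaner and explains \emph{why} the answer is colour-counting, whereas the paper's computation is more opaque but entirely explicit.

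There is, however, a real gap precisely at the step you flag. The leaf-permutation conjugacy (using colour-blindness of the Zappa--Sz\'ep action) shows $\gamma_{r,j,b}\sim\gamma_{r,j',b}$ for a \emph{fixed} $r$, and combined with growing at leaves $\neq j$ plus common right-multiples this yields position-independence among all $(r,j)$ with $r$ having at least two leaves. It does \emph{not} reach the root recolouring $\gamma_{I,1,b}=[Y_b,Y_a]$: there is no leaf to permute on the trivial tree $I$. Yet this case arises whenever the root caret of $t$ is non-$a$ in your decomposition of $[t,t_a]$, and your verification that the skein relations hold among the $\delta_b$ also implicitly invokes it whenever $u$ or $v$ has non-$a$ root. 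The paper meets exactly the same obstacle (its $\hat x_1\sim_T x_1$ step) and resolves it by a direct appeal to common right-multiples: write $Y_b\circ p=Y_a\circ q$, so $[Y_b,Y_a]=[Y_a\circ q,\,Y_a\circ p]$, and now both trees have $a$-root so the remaining decomposition involves only non-root recolourings, whose classes are already known to be well-defined. Equivalently, in your framework: before decomposing, use Ore to pre-grow the representative $[t\pi,s]$ so that both $t$ and $s$ have $a$-coloured root; then no root recolouring ever appears and your argument goes through. Without inserting this Ore step somewhere, $\psi$ is not yet shown to be well-defined.
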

	Let $Y$ be either $T$ or $V$. It is rather straightforward to check that ``counting colours" provides a morphism inside an abelian group with generator set $S.$ Since the trees $t,s$ in a group element $t\circ\pi\circ s^{-1}\in G^Y$ have same number of vertices, we may disregard one colour, say $a\in S$, and deduce a well-defined surjective group morphism 
	$$G^Y\ni t\circ \pi\circ s^{-1}\mapsto \chi(t)-\chi(s)\in \Z^S/\la \chi(u)=\chi(v),a=0:(u,v)\in R\ra.$$ 
	Conversely, using relations of $G^Y$ we are able to conclude that all morphisms valued in an abelian group factor through the one above.

Proving that a given skein presentation produces an Ore FS category with no proper Ore quotient (or has faithful canonical group action) is a difficult task and in most situations we don't know if this is true.
Sections \ref{sec:examples-via-dynamics} and \ref{sec:examples-from-simple-groups} are dedicated to find such examples.
	
	\medskip{\bf Generalising Cleary's irrational slope Thompson group.} 
	Our first class of examples is inspired by the Burillo--Nucinkis--Reeves presentation for Cleary's irrational slope Thompson group
	$$\FS\la a,b| Y(a)(I\ot Y(a))=Y(b)(Y(b)\ot I)\ra,$$
	whose relation is the equality of two vines of length $2$ (see the diagram on the first page) \cite{Burillo-Nucinkis-Reeves21}. The FS formalism suggests a natural generalisation by considering two vines of length $n$ rather than length $2$ as explained in \cite[Section 3.6.3]{Brothier22}.
	Our technology permits us to treat even more general examples of the form:
	$$\cF_z:=\FS\la a,b| Y(a)(I\ot z(a))= \lambda_n(b)\ra$$
	where $z$ is {\it any} tree with $n$ leaves and $\lambda_n(b)$ is the $b$-coloured left-vine with $n+1$ leaves, see (\ref{ex:skein-relation}) for an example of this skein relation.
	
	\begin{maincor}\label{maincor-Cleary}
		For any monochromatic tree $z$ with $n$ leaves we have that $\cF_z$ is an Ore FS category giving the FS groups $G_z,G_z^T,G_z^V$.
		The action $G_z\act G_z^T/G_z$ is faithful and the germ groups of $G_z$ at $o,\omega$ are abelian. Moreover,
		$(G^T_{z})_{\ab}\simeq(G^V_{z})_{\ab}\simeq\Z/n\Z.$
		Hence:
		\begin{enumerate}
			\item the derived subgroups $[K_z,K_z]$ and $[G_z,G_z]$ are equal, simple; and
			\item the derived subgroups $[G^T_{z},G^T_{z}]$ and $[G^V_{z},G^V_{z}]$ are simple and of type \textnormal{F}$_\infty$.
		\end{enumerate}
	In particular $[G^T_{z},G^T_{z}]$ and $[G^V_{z},G^V_{z}]$ are finitely presented simple groups.
	\end{maincor}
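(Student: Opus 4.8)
The plan is to reduce the statement to four facts about $\cF_z$ and then feed them into the main theorems: (i) $\cF_z$ is an Ore FS category (so the groups $G_z,G_z^T,G_z^V$ exist); (ii) the homogeneous action $G_z\act G_z^T/G_z$ is faithful; (iii) $(G_z^T)_{\ab}\simeq(G_z^V)_{\ab}\simeq\Z/n\Z$; and (iv) the germ groups $G_z/K_o$ and $G_z/K_\omega$ are abelian. Granting these, the conclusions are mechanical. By Corollary~\ref{cor:main-equivalences}, (ii) makes $[K_z,K_z]$, $[G_z^T,G_z^T]$, $[G_z^V,G_z^V]$ simple. By Theorem~\ref{theo:main-simple-dynamical-equivalence}, (ii)+(iv) make $[G_z,G_z]$ simple, and since for a faithful homogeneous action $[G_z,G_z]/[K_z,K_z]$ is the unique proper quotient of $[G_z,G_z]$ — trivial precisely when the germ groups are abelian — (iv) forces $[K_z,K_z]=[G_z,G_z]$. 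Finally, $\cF_z$ has a finite skein presentation, so $G_z^T$ and $G_z^V$ are finitely presented and, by the finiteness results for FS groups with nice finite skein presentations, of type F$_\infty$; by (iii) their derived subgroups have index $n$, hence are finitely presented and of type F$_\infty$ as well. This yields (1) and (2).

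\textbf{The Ore property.} Every FS category has common right-multiples, so only left-cancellativity must be checked. I would do this with Dehornoy's subword-reversing / strong cube condition in the form adapted to FS categories in \cite{Brothier22}: it reduces to a finite verification on the single skein relation $Y(a)(I\ot z(a))=\lambda_n(b)$, namely resolving the overlaps of two copies of the left vine $\lambda_n(b)$ (nested inside longer $b$-vines) and of a $b$-vine with a copy of $z(a)$. For $z=Y$ this is exactly the check made for Cleary's category in \cite{Burillo-Nucinkis-Reeves21, Brothier22}, and the argument should carry over with $z$ replacing the caret; I do not expect genuine difficulty here.

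\textbf{Abelianisation and germs.} Point (iii) is a direct application of Theorem~\ref{theo:main-abelianisation}: since the binary tree $z$ has $n$ leaves it has $n-1$ internal vertices, so $Y(a)(I\ot z(a))$ has $n$ vertices, all coloured $a$, whereas $\lambda_n(b)$ has $n$ vertices, all coloured $b$; hence the presentation $\Z^{\{a,b\}}/\la\chi(Y(a)(I\ot z(a)))=\chi(\lambda_n(b)),\ a=0\ra$ collapses to $\Z\la[b]\ra/\la n[b]\ra\simeq\Z/n\Z$. For (iv): the endpoints $o$ and $\omega$ are addressed by eventually constant colour words, and the germ at such a point of a tree pair $t\circ s^{-1}$ is read off the abelianised difference of the leftmost, respectively rightmost, branch words of $t$ and $s$; comparing the two sides of the skein relation along these branches, I expect $G_z/K_o$ and $G_z/K_\omega$ to come out cyclic (generated by the colour $b$, with $a$ a multiple of it), in particular abelian.

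\textbf{Faithfulness --- the hard part.} This is where the real work lies, and the intended route is dynamical. Using the canonical monoid action $\beta:\cT_p\act Q$ and its completion $\al:G_z^V\act\fC$, the plan is to show $\ker(\al)$ is trivial, i.e.\ that distinct reduced coloured tree pairs induce distinct homeomorphisms of $\fC$. The crucial point is that the only address identifications imposed are the global ones forced by the single relation $Y(a)(I\ot z(a))=\lambda_n(b)$, and these cannot erase the difference between two inequivalent subdivisions; the vine-versus-tree shape of the relation is exactly what prevents a collapse. Alternatively, by Theorem~\ref{theo:main-category} it would suffice to show $\cF_z$ admits no proper left-cancellative quotient: any additional skein relation, once confronted with the colour-counting constraint of Theorem~\ref{theo:main-abelianisation} and the rigidity phenomena of Section~\ref{sec:categorical-characterisation}, should force some $fg=fh$ with $g\ne h$, destroying left-cancellativity. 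Either way, isolating and pushing through this faithfulness step is the genuine obstacle; once it is secured, everything else assembles from Theorems~\ref{theo:main-simple-dynamical-equivalence}, \ref{theo:main-category} and \ref{theo:main-abelianisation} as above.
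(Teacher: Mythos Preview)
Your high-level assembly is right: once (i)--(iv) hold, the conclusions follow from Theorems~\ref{theo:main-simple-dynamical-equivalence} and~\ref{theo:main-abelianisation} and Theorem~\ref{theo:previous-FS-groups}(2) exactly as you say. The abelianisation computation is correct.

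But two points are genuine gaps.

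\textbf{The Ore property.} Your assertion that ``every FS category has common right-multiples'' is false (many do not), so you cannot reduce to left-cancellativity alone. More importantly, you do not need cube conditions here at all: $\cF_z$ is of the form $\FS\la a,b\mid t_a(a)=t_b(b)\ra$ with $t_a=Y(I\otimes z)$ and $t_b=\lambda_n$ two monochromatic trees with $n+1$ leaves, and Theorem~\ref{theo:previous-FS-groups}(2) directly gives that such categories are Ore (complete $a$-trees are cofinal) and that the associated FS groups are of type F$_\infty$. This is also what feeds the F$_\infty$ conclusion in (2).

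\textbf{Faithfulness.} You correctly identify this as the hard part, but your sketch is not a proof: neither the dynamical paragraph nor the ``no proper left-cancellative quotient'' paragraph isolates any mechanism specific to the relation $Y(a)(I\otimes z(a))=\lambda_n(b)$ that would actually force injectivity of $\alpha$. The paper's argument is concrete and combinatorial: it builds a \emph{semi-normal form} (Theorem~\ref{theo:canonical-action-faithful}). Call a free left-vine \emph{good} if it is coloured $a^pb^q$ with $0\le q<n$; a free tree is good if every vine in its left-vine decomposition is. One shows (by growing with $a$-forests and repeatedly trading $x(a)\leftrightarrow y(b)$) that every $g\in G_z$ can be written as $[t,s]$ with $t$ good and $s$ a good $a$-tree. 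Then if $g\in\ker(\alpha)$, comparing $\beta(t,1)=\beta(s,1)$ and using $A:=\beta(Y_a,1)=B^n$ with $B:=\beta(Y_b,1)$ forces the first vines of $t$ and $s$ to agree; Lemma~\ref{lem:kernel-cell} lets you peel that vine off and recurse on strictly shorter vine decompositions, giving $t\sim s$. The same theorem also reads off the germ groups via the prune presentations of Section~\ref{sec:germ-presentation}: $\Germ(G_z,o)\simeq\Gr\la a,b\mid a=b^n\ra\simeq\Z$ and $\Germ(G_z,\omega)\simeq\Gr\la a,b\mid a^m=b\ra\simeq\Z$ (with $m$ the number of vertices on the right side of $z$), which supplies your (iv) precisely rather than heuristically.
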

	Most of the proof resides in proving faithfulness of $G_z\act G_z^T/G_z$. This is done by constructing a semi-normal form for elements of $G_z$ using a similar methods to those appearing in \cite{Cannon-Floyd-Parry96} and nicely extended in \cite[Section 7]{Burillo-Nucinkis-Reeves21} (see Theorem \ref{theo:canonical-action-faithful}). 
	For more general skein presentations this strategy needs to be substantially refined as we will explain in \cite{Brothier-Seelig24}.

\medskip{\bf Higman--Thompson forest-skein categories.} 
Our second class of examples is inspired by the ternary Higman--Thompson group $F_3$ obtained in \cite{Brothier22} using the FS category 
$$\FS\la a,b| Y(a)(I\ot Y(b))=Y(b)(Y(a)\ot I)\ra.$$ 
For all $n\geqslant2$ define the FS category
$$\cH_n:=\FS\la S_n |Y(r)(I\otimes Y(s)) = Y(s)(Y(r)\otimes I) : r<s\ra$$
where $S_n$ is a totally ordered set with $n-1$ elements.
\begin{maincor}\label{maincor-Higman}
	For all $n\geqslant 2$ we have that $\cH_n$ is Ore with FS groups denoted $H_n,H_n^T,H_n^V.$
	The homogeneous action $H_n\act H_n^T/H_n$ is faithful.
	The group $H_n$ is isomorphic to the $n$-ary Higman--Thompson group (usually denoted $F_n$).
	We have the abelianisation
	\begin{align*}
		(H^T_{n})_{\ab}\simeq(H^V_{n})_{\ab}\simeq\Z^{n-2}.
	\end{align*}
	The derived subgroups $[H^T_{n},H^T_{n}]$ and $[H^V_{n},H^V_{n}]$ are simple.
\end{maincor}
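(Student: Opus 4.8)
The plan is to establish three facts in sequence --- (a) $\cH_n$ is an Ore FS category; (b) its $F$-type group $H_n$ is isomorphic to the $n$-ary Higman--Thompson group $F_n$; (c) the homogeneous action $H_n\act H_n^T/H_n$ is faithful --- and then read off the remaining assertions. For the abelianisation: once (a) provides the presented Ore category $\cH_n=\FS\la S_n\mid R\ra$, apply Theorem~\ref{theo:main-abelianisation}; each relation $(u,v)\in R$ equates two coloured trees each carrying exactly one $r$-vertex and one $s$-vertex for the relevant pair $r<s$, so $\chi(u)=\chi(v)$ already holds in $\Z^{S_n}$ and the only surviving relation is $a=0$. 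Hence $(H^T_n)_{\ab}\simeq(H^V_n)_{\ab}\simeq\Z^{S_n}/\la a=0\ra\simeq\Z^{n-2}$. Simplicity of $[H^T_n,H^T_n]$ and $[H^V_n,H^V_n]$ then follows immediately from Corollary~\ref{cor:main-equivalences} together with (c). (We do not claim $[H_n,H_n]$ is simple: by Theorem~\ref{theo:main-simple-dynamical-equivalence} that would additionally require the germ groups of $H_n$ to be abelian, which need not hold for $n\geqslant 3$.)

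\textbf{Step (a): $\cH_n$ is Ore.} Common right-multiples are automatic: $\cH_n$ is a quotient of the free FS category $\FS\la S_n\ra$, which has common right-multiples (grow two forests to a shared large tree), and this property descends to quotients. For left-cancellativity I would orient each skein relation as a rewrite $Y(r)(I\ot Y(s))\to Y(s)(Y(r)\ot I)$ for $r<s$ and check that the resulting rewriting system on coloured trees is complete --- equivalently, that the presentation $\la S_n\mid R\ra$ satisfies Dehornoy's (strong) cube condition; see \cite{Dehornoy03} and \cite[Section~2]{Brothier22}. Completeness yields left-cancellativity of $\cH_n$ together with a semi-normal form for its forests. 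Handling general $n$ here requires real care --- termination of this ``sorting'' rewrite and the resolution of its critical pairs are not entirely formal.

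\textbf{Steps (b) and (c): identification with $F_n$ and faithfulness.} Exploiting the semi-normal form from (a), I would follow the strategy used for the ternary case $n=3$ in \cite{Brothier22}: exhibit inside $H_n$ a generating family realising the standard infinite presentation of $F_n$ and verify that no further relations survive, giving $H_n\simeq F_n$. Faithfulness of $H_n\act H_n^T/H_n$ can then be read directly off the semi-normal form, exactly as for the examples of Corollary~\ref{maincor-Cleary}: a nontrivial element has nontrivial semi-normal form, whence one produces a point of the $Q$-space that it moves. Alternatively, granting (b) one argues abstractly: were $H_n\act H_n^T/H_n$ not faithful, then $\ker(\al)\neq 1$ and, by Corollary~\ref{cor:FS-quotient}, $H_n/\ker(\al)$ would be the $F$-type FS group of the proper Ore quotient $\cH_n/\ker(\beta)$, hence would contain a copy of Thompson's group $F$ and be non-abelian; but it is a quotient of $H_n\simeq F_n$ by a nontrivial normal subgroup, so it is abelian, since every nontrivial normal subgroup of $F_n$ contains $[F_n,F_n]$ (classical; see \cite{Brown87}) --- a contradiction.

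\textbf{Main obstacle.} Essentially all of the work lies in Steps (a) and (b): verifying the cube condition for arbitrary $n$, and --- the harder part --- controlling the semi-normal form tightly enough to pin $H_n$ down as $F_n$ (and to extract faithfulness from it). Once those are in place the abelianisation is the one-line consequence of Theorem~\ref{theo:main-abelianisation} recorded above, and simplicity of $[H^T_n,H^T_n]$ and $[H^V_n,H^V_n]$ drops out of Corollary~\ref{cor:main-equivalences}. For $n=2$ everything is classical: $\cH_2$ is the plain binary forest category, $H_2=F$, $H_2^T=T$, $H_2^V=V$, and the action $F\act T/F$ on the dyadic rationals is faithful.
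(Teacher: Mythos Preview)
Your approach to Steps (a) and (b) contains a genuine error and is substantially more laborious than the paper's route.

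\textbf{The error.} Your argument for common right-multiples in $\cH_n$ is incorrect. The free FS category $\FS\la S_n\ra$ on $|S_n|=n-1\geqslant 2$ colours does \emph{not} have common right-multiples: if $a\neq b$ are distinct colours then $Y_a\circ f$ and $Y_b\circ g$ can never be equal in $\FS\la S_n\ra$, since their root colours differ and there are no relations to change them. So the right-Ore property for $\cH_n$ is not automatic from a quotient argument; it must come from the skein relations themselves and is part of what needs to be proved.

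\textbf{The paper's route.} The paper sidesteps both this issue and any cube-condition check by a much cleaner device: it exhibits an explicit monoid isomorphism $\varphi_n:\cH_{n,\infty}\to\cF_{n,\infty}$ between the FS monoid of $\cH_n$ and the monoid of infinite $n$-ary forests, sending the generator $r_j$ (the $r$-caret at position $j$, $1\leqslant r<n$) to the $n$-ary generator $z_{(j-1)(n-1)+r}$. A direct comparison of presentations shows this is a bijection of generators carrying the Thompson-type relations and the skein relations of $\cH_{n,\infty}$ exactly onto the Thompson relations $z_kz_j=z_jz_{k+(n-1)}$ of $\cF_{n,\infty}$. Since $\cF_{n,\infty}$ is classically Ore, so is $\cH_{n,\infty}$, hence $\cH_n$ is Ore --- this gives (a) with no rewriting-system work. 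The same isomorphism gives $\Frac(\cH_{n,\infty})\simeq\Frac(\cF_{n,\infty})=F_n$, and a short argument (the skein relations let one push any non-last colour off the right side of a tree) shows $H_n=\Frac(\cH_n,1)$ has the CGP at the last colour, whence $H_n\simeq\Frac(\cH_{n,\infty})\simeq F_n$. This settles (b) with no semi-normal form.

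\textbf{Faithfulness and the rest.} Your ``alternative'' argument for (c) is essentially the paper's: once $H_n\simeq F_n$, Brown's theorem that $[F_n,F_n]$ is simple feeds directly into Theorem~\ref{theo:main-simple-dynamical-equivalence} (the simplicity-implies-faithfulness direction) to force faithfulness of $H_n\act H_n^T/H_n$. Your abelianisation computation and the deduction of simplicity for $[H_n^T,H_n^T]$ and $[H_n^V,H_n^V]$ match the paper. One aside: since $[H_n,H_n]\simeq[F_n,F_n]$ \emph{is} simple by Brown, Theorem~\ref{theo:main-simple-dynamical-equivalence} actually forces the germ groups of $H_n$ to be abelian, contrary to your parenthetical.
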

Denote by $F_n\subset T_n\subset V_n$  the $n$-ary Higman--Thompson groups. The main work is to show $H_n\simeq F_n$. From there, we prove faithfulness of the canonical group action for $\cH_n$ indirectly. Indeed, Brown proved that $[F_n,F_n]$ is simple, which implies via Theorem \ref{theo:main-simple-dynamical-equivalence} that $H_n\act H_n^T/H_n$ is faithful \cite{Brown87}. Using again Theorem \ref{theo:main-simple-dynamical-equivalence} (but in the other direction) we deduce that the derived subgroups $[H_n^T,H_n^T]$ and $[H_n^V,H_n^V]$ are simple, however we can say nothing immediately about their finiteness properties.

Even though $H_n\simeq F_n$ we have $H_n^T\not\simeq T_n$ and $H_n^V\not\simeq V_n$ (and similarly for the derived subgroups which we will prove in \cite{Brothier-Seelig24}). 
Hence, we obtain ``new" simple groups using ``old ones".
There are obvious inclusions between colour sets which provide inclusions of the FS groups, and in particular, between the $F_n$. These embeddings seem new to us. Finally, we may construct many more categories and groups by replacing $S_n$ by a general poset, though most of these will have non-trivial quotients (adding skein relations until one gets $\cH_n$ for instance). 

\medskip{\bf Classification and future work.} It is natural to ask if any of the simple groups we produce are isomorphic to any known simple groups. 
In \cite{Brothier-Seelig24} we will show all FS groups with faithful homogeneous action admit a {\it rigid action} that can be completely reconstructed from the group (as in \cite{McCleary78, Rubin89}).
This provides refined dynamical invariants for such FS groups.
Many of the germ groups we encounter for $F$-type FS groups will contain non-abelian free groups and hence will serve as an obstruction to admitting a piecewise linear action on the real line (see Example \ref{ex:germ-examples}). We are also going to develop methods for proving the canonical group action is faithful for more FS categories.

\medskip{\bf Acknowledgements.} We warmly thank Christian de Nicola Larsen for many helpful conversations and his feedback regarding this work. 
We are grateful to Matt Brin, Yash Lodha, Brita Nucinkis, Dilshan Wijesena, and Matt Zaremsky for their enthusiastic encouragements and comments.

\section{Forest-skein categories, monoids, and groups}\label{sec:preliminaries}

In this section we recall the basics of the forest-skein (FS) formalism. We refer the reader to the first article on the subject \cite{Brothier22} for a comprehensive introduction. In this article we will {\it almost} exclusively consider {\it binary} forests and trees. The only digression to $n$-ary forests will appear in Section \ref{sec:examples-from-simple-groups}.

\medskip{\bf Conventions.}
We write natural numbers $\N=\{1,2,3,\dots\}$, $|A|$ for the cardinality of a set $A$, $\Mon\la S|R\ra$ (resp. $\Gr\la S|R\ra$) for the monoid (resp. group) with presentation $\la S|R\ra$, and $\gamma:\Gamma\act X$ for the action of a group or monoid $\Gamma$ on a set or space $X$. For a group $\Gamma$ its {\it derived subgroup} $[\Gamma,\Gamma]$ is the subgroup of $\Gamma$ generated $ghg^{-1}h^{-1}$ for all $g,h\in\Gamma$. It is standard that $[\Gamma,\Gamma]$ is normal and $\Gamma_{\ab}=\Gamma/[\Gamma,\Gamma]$ is its {\it abelianisation}. 

\subsection{Forest-skein categories and monoids}\label{sec:FS-cat-and-mon}

\subsubsection{Trees and forests}\label{sec:trees-and-forests} A {\it tree} $t$ is a finite connected subgraph of the infinite binary tree (see Figure \ref{fig:infinite-binary-tree}) containing the root and satisfying that every vertex of $t$ has either zero or two children.
\begin{figure}[h]
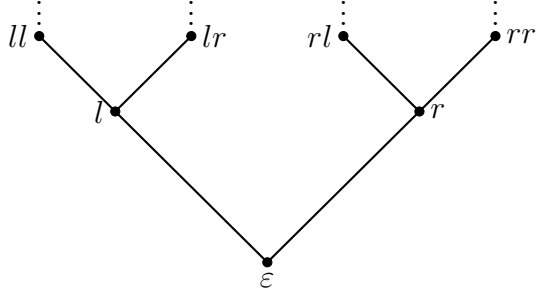
\label{fig:infinite-binary-tree}
	\centering
	\InfiniteBinaryTree
	\caption{The infinite binary tree. Its vertices are labelled by words in the alphabet $\{l,r\}$ and its root is the empty word $\varepsilon$.}
\end{figure}
The vertices of a tree $t$ with no children are called the {\it leaves} of $t$ and form a set $\Leaf(t)$. Leaves are totally ordered from left to right. Vertices of $t$ with two children are called {\it interior} and form a set $\Ver(t)$. The unique tree $I$ with one vertex is called the {\it trivial} tree, though for graphical reasons we draw $I$ as a vertical line segment in the plane as we will eventually identify it with the trivial permutation (see Section \ref{sec:permutations} and \ref{sec:FS-T-and-V}). The unique tree $Y$ with two leaves is called the {\it caret}. 

\medskip{\bf Coloured trees and forests.} Let $S$ be a {\bf non-empty} set of {\it colours}. A {\it coloured tree} (over $S$), or {\it tree} for short, is a pair $(t,c)$ where $t$ is a tree and $c:\Ver(t)\to S$ is a {\it colour function}, which labels each interior vertex of $t$. For example
\begin{align}\label{prelim:tree}
	\ColouredTreeExample
\end{align}
is a coloured tree over $S=\{a,b\}$. Given a tree $(t,c)$ we call $\nu\in\Ver(t)$ an {\it $a$-vertex} if $c(\nu)=a$ and a tree with only $a$-vertices is called an {\it $a$-tree}. Often we just write $t$ for a coloured tree having the colour function be implicit. A {\it coloured forest} (over $S$), or {\it forest} for short, is a list $f=(f_1,f_2,\dots,f_n)$ of coloured trees. Graphically, $f$ is the disjoint union of its trees, drawn with $f_i$ to the left of $f_{j}$ if $i<j$. 
For example
\begin{align}\label{prelim:forest}
	\ColouredForestExample
\end{align}
is the coloured forest $(I,I,Y_b)$ where $Y_b$ denotes the caret whose root is a $b$-vertex. The {\it roots} of $f=(f_1,f_2,\dots,f_n)$ are elements of $\Root(f)=\{1,2,\dots,n\}$ and the {\it leaves} of $f$ are elements of $\Leaf(f)=\sqcup_{i=1}^n\Leaf(f_i)$. The leaves of $f$ inherit a total order from $\Root(f)$ and each of the leaf sets $\Leaf(f_i)$. 
We may identify $\Leaf(f)$ with $\{1,2,\dots,m\}$, where $m=|\Leaf(f)|$ is the number of leaves of $f$. An {\it infinite forest} is an infinite list of trees that is the trivial tree for all but finitely many entries. 
We denote the infinite forest all of whose trees are trivial by $I^{\ot \infty}$. 

\medskip{\bf Composing forests.} We may {\it compose} a forest $f$ with a forest $g$ if the number of leaves of $f$ equals the number of roots of $g$. When this holds, we form a new forest $f\circ g$ by gluing the $i$th tree of $g$ to the $i$th leaf of $f$ for every $i\in\Root(g)$. For example let $t$ be the tree in (\ref{prelim:tree}) and $f$ be the forest in (\ref{prelim:forest}). Since $t$ has $3$ leaves and $f$ has $3$ roots we may form $t\circ f$, which is the tree:
\begin{align}\label{prelim:composition}
	\CompositionA
	\quad
	=
	\quad
	\CompositionB
\end{align}
Similarly one may compose {\it any} two infinite forests by (vertical) stacking. Often we refer to going from $t$ to $t\circ f$ as {\it growing $t$ by $f$} and we will mostly write $fg$ instead of $f\circ g$. 

\medskip{\bf Tensor product of forests.} Let $f=(f_1,f_2,\dots,f_n)$ and $g=(g_1,g_2,\dots,g_m)$ be forests. We may form their {\it tensor product} by concatenating their lists, that is
\begin{align*}
	f\otimes g:=(f_1,f_2,\dots,f_n,g_1,g_2,\dots,g_m).
\end{align*}
Graphically $f\ot g$ is obtained by drawing $f$ to the left of $g$. 

\medskip{\bf Particular types of trees and forests.} Monochromatic {\it left-vines} are defined inductively as follows: the first left-vine is the caret $Y$ and the $n$th left-vine is the $(n-1)$th left-vine with a caret glued to its first leaf. For example, the first three monochromatic left-vines are
\begin{align}\label{eqn:vines}
	\MonochromaticCaret,\quad\quad
	\MonochromaticVineA,\quad\quad \textnormal{and}\quad\quad
	\MonochromaticVineB.
\end{align}
The tree in (\ref{prelim:tree}) is a coloured left-vine. Right-vines are defined similarly. The $n$th {\it complete tree} defined inductively as follows: the first complete tree is the caret $Y$ and the $n$th complete tree is formed from the $(n-1)$th complete tree by attaching a caret to each of its leaves. For example, the first three monochromatic complete trees are
\begin{align*}
	\MonochromaticCaret,\quad\quad
	\MonochromaticCompleteA,\quad\quad \textnormal{and}\quad\quad
	\MonochromaticCompleteB.
\end{align*}
The tree in (\ref{prelim:composition}) is a coloured complete tree.

Given a tree $t$ and $1\leqslant j\leqslant n$ we often write $t_{j,n}$ for the $n$-rooted forest whose only non-trivial tree is $t$ rooted at $j$. When $t$ is a caret, we call $t_{j,n}$ an {\it elementary forest} and write $a_{j,n}$ instead of $(Y_a)_{j,n}$. Observe that any forest can be written as a composition of elementary forests. 

\medskip{\bf Subtrees.} Let $t$ be a tree and $\nu\in\Ver(t)$ be an interior vertex (i.e. not a leaf). We say $s$ is a {\it subtree of $t$ rooted at $\nu$} if $s$ is a tree (in our sense) that embeds in $t$ whose root is $\nu$. Moreover, $s$ is a {\it rooted subtree of $t$} if $s$ is a subtree rooted at $\varepsilon$. This is equivalent to say there exists a forest $f$ for which $t=s\circ f$. The set of subtrees of $t$ rooted at $\nu$ are partially ordered by $s\leqslant t$ if $s$ is a rooted subtree of $t$. 

\medskip{\bf Quasi-trees.} Generally, one may consider $k$-ary forests whose non-leaf vertices have $k$ outgoing edges, rather than just $2$. Let $t$ be a $k$-ary tree and $s$ be a binary tree with $k$ leaves. A {\it quasi-tree} with {\it skeleton} $t$ and {\it cell $s$} is the binary tree formed by replacing every non-leaf vertex in $t$ by a copy of $s$. A {\it quasi-left-vine} is a quasi-tree whose skeleton is a left-vine. For example the quasi-left-vine
\begin{align*}
	\QuasiVine\quad\textnormal{has cell}\quad\Cell\quad\textnormal{and skeleton}\quad\Skeleton.
\end{align*}

\subsubsection{Forest-skein categories and monoids} Let $S$ be a set of colours that we will always assume to be {\bf non-empty}. We write $\FS\la S\ra$ (resp. $\FS\la S\ra_\infty$) for the set of all forests (resp. infinite forests) coloured by $S$. For a forest $f$ with $r$ roots and $l$ leaves we write $f:l\to r$. Composition and tensor products of forests endows $\FS\la S\ra$ the structure of a monoidal category whose objects are the natural numbers $\N$. We call $\FS\la S\ra$ the {\it free FS category over $S$}. Similarly $(\FS\la S\ra_\infty,\circ)$ is a monoid called the {\it free FS monoid over $S$}.

\medskip{\bf Skein relations.} A {\it skein relation} over $S$ is a pair of trees $(u,v)\in\FS\la S\ra^2$ having the same number of leaves. We often write $u\sim v$ if $(u,v)$ is a skein relation, for instance
\begin{align}\label{ex:colour-preserving-skein}
	\TernaryA
	\quad
	\sim
	\quad
	\TernaryB
\end{align}
is a skein relation (in fact it gives the ternary Thompson group $F_3$, see Section \ref{sec:examples-from-simple-groups}). Given a set of skein relations $R\subset\FS\la S\ra^2$ we let $\overline{R}$ be the congruence relation on $(\FS\la S\ra,\circ,\otimes)$ generated by $R$ (i.e.~the smallest equivalence relation containing $R$ and closed under $\circ$ and $\ot$). 
Since performing a skein relation doesn't change number of leaves, the monoidal category structure of $\FS\la S\ra$ passes to the quotient $\FS\la S\ra/\overline{R}$. A similar construction applies for the free FS monoid. Write $f\ot I^{\ot\infty}$ for the infinite forest beginning with the forest $f$ and let 
\begin{align*}
	R_\infty=\{(I^{\ot n}\otimes u\otimes I^{\ot \infty},I^{\ot n}\otimes v\otimes I^{\ot \infty}):(u,v)\in R,n\geqslant0\}    
\end{align*}
and write $\ov{R}_\infty$ for the congruence on $(\FS\la S\ra_\infty,\circ)$ generated by $R_\infty$. The monoid structure of $\FS\la S\ra_\infty$ passes to the quotient $\FS\la S\ra_\infty/\overline{R}_\infty$.

\begin{definition}
	\begin{enumerate}
		\item A {\it forest-skein (FS) presentation} or {\it skein presentation} is a pair $\la S|R\ra$ where $S$ is a set of colours and $R$ is a set of skein relations over $S$.
		\item A {\it presented forest-skein (FS) category} is the category
		\begin{align*}
			\FS\la S|R\ra:=\FS\la S\ra/\overline{R}.
		\end{align*}
		An {\it abstract forest-skein (FS) category} is a category $\cF$ isomorphic to a presented FS category. A morphism in $\cF$ from $2$ to $1$ is called a {\it caret} or a {\it colour} of $\cF$.
		\item A {\it presented forest-skein (FS) monoid} is the monoid
		\begin{align*}
			\FM\la S|R\ra:=\FS\la S\ra_\infty/\overline{R}_\infty.
		\end{align*}
		An {\it abstract forest-skein (FS) monoid} is a monoid isomorphic to a presented FS monoid.
	\end{enumerate}
\end{definition}

\begin{remark}\label{rem:abstract-FS}
	An abstract FS category is a small monoidal category $(\cF,\circ,\ot)$ together with a favorite object $1\in\ob(\cF)$. We demand that a morphism between FS categories is monoidal and sends $1$ to $1.$ This assures that the number of roots and leaves are preserved.
	Moreover, note that a skein presentation for $\cF$ is not a category presentation in the usual sense (see \cite{Dehornoy-Digne-Godelle-Krammer-Michel15}), but is rather a presentation for $\cF$ as a {\it monoidal} category. 
\end{remark}

\subsubsection{Notation}\label{sec:notation} We typically denote a generic FS category by $\cF$, its set of trees by $\cT$, and its associated FS monoid by $\cF_\infty$. We often write $f$ for the class of a forest $f$ inside a presented FS category $\FS\la S|R\ra$. Trees are typically notated by $r,s,t,x,y,z$, while forests by $f,g,h,k,p,q$. If $(t,c)$ is a coloured tree with $c(\nu)=a$ for all $\nu\in\Ver(t)$ (i.e. $(t,c)$ is an $a$-tree), then we write $(t,c)=t(a)$. In the case when $t$ is a caret we often write $Y(a)=Y_a$. It will often be convenient to adopt the following compact notation for trees: writing a tree $t$ as a composition of elementary forests we suppress the root index, for example, we write $a_1b_1c_3$ instead of $a_{1,1}b_{1,2}c_{3,3}$ or $Y_a(Y_b\ot Y_c)$. This notation is frequently used for writing skein presentations. We will also sometimes write $a_1^n$ for $a_{1,1}a_{1,2}\cdots a_{1,n}$.

\subsubsection{Permutations}\label{sec:permutations}

For each integer $n\geqslant1$ let $C_n$ be the  $n$th cyclic group, $\Sigma_n$ the $n$th symmetric group, and let us identify $C_n$ as a subgroup of $\Sigma_n$.
We will view these as groups of planar diagrams. Indeed, for $i,j\in\N$ write $\ell(i,j)$ for the line segment between the points $(i,0)$ and $(j,1)$ in $\R\times[0,1]$. For each $n\geqslant1$ we associate to $\sigma\in\Sigma_n$ the union of line segments $\cup_{j=1}^n\ell(\sigma(j),j)$ inside $\R\times[0,1]$. 
The product $\sigma\circ\tau$ of two permutations $\sigma$ and $\tau$ corresponds to stacking $\tau$ on top of $\sigma$ and reducing. For instance, in $\Sigma_3$ we have:
\begin{align*}
	(12)\circ(123)\quad=\quad\PermutationA\quad=\quad\PermutationB\quad=\quad(23).
\end{align*}
Similar can be done for the group $\Sigma_\infty$ of all finitely supported permutations of $\N$ by drawing infinitely many vertical lines to the right of a diagram. For any two permutations $\sigma\in\Sigma_n$, $\tau\in\Sigma_m$ we can form their horizontal concatenation $\sigma\otimes\tau\in\Sigma_{n+m}$. Writing $I$ for the vertical line $\ell(1,1)$, the neutral element in $\Sigma_n$ is the diagram $I^{\otimes n}$. 

\subsubsection{Forest-skein categories and monoids: $T$ and $V$-type.}\label{sec:FS-T-and-V} 
The formal way to construct the analogues of Thompson's groups $T$ and $V$ (and other interesting generalisations \cite{Witzel-Zaremsky18}) associated to an FS category $\cF$ is to form Brin--Zappa--Sz\'{e}p products of $\cF$ with certain groupoids, as Brin did to define the braided Thompson group $BV$ \cite{Brin05,Brin07}. This is rather technical in full generality, so we present a concrete construction adapted to our purposes.

\medskip{\bf Symmetric forests.} Let $\cF$ be an FS category. A {\it symmetric forest} over $\cF$ is a pair $(f,\pi)$ where $f\in\cF$ is a forest with $l$ leaves and $\pi\in\Sigma_l$. The notion of roots and leaves naturally extends to symmetric forests. Pictorially, a symmetric forest is a forest with a permutation stacked on top, for instance:
\begin{align*}
	\SymmetricColouredForest.
\end{align*}
We write $\cF^V$ for the set of symmetric forests over $\cF$. We now explain how the category structure of $\cF$ extends to $\cF^V$. 

\medskip{\bf Mutual actions of forests and permutations.} Let $f:l\to r$ be a forest in $\cF$ and let $\pi\in\Sigma_r$. Write $l_j$ for the number of leaves in the $j$th tree of $f$. We set $f^\pi:l\to r$ to be the forest in $\cF$ whose $\pi(j)$th tree is the $j$th tree of $f$ for all $1\leqslant j\leqslant n$ and set $\pi^f\in\Sigma_l$ to be the permutation formed from $\pi$ by ``splitting'' the $j$th line segment $\ell(\pi(j),j)$ into $l_j$ parallel line segments for all $1\leqslant j\leqslant n$. Composition and tensor product in $\cF^V$ are then defined as
\begin{align*}
	(f,\sigma)\circ(g,\tau):=(f\circ g^\sigma,\sigma^g\circ\tau)\quad\textnormal{and}\quad(f,\sigma)\otimes(g,\tau):=(f\otimes g,\sigma\otimes\tau)
\end{align*}
where $\sigma\otimes\tau$ is horizontal concatenation of permutations. It is easy to check $(\cF^V,\circ,\otimes)$ is a monoidal category containing $(\cF,\circ,\otimes)$ as monoidal subcategory. We identify a forest $f\in\cF$ having $l$ leaves with $(f,I^{\otimes l})\in\cF^V$ and a permutation $\pi\in\Sigma_l$ with $(I^{\otimes l},\pi)\in\cF^V$, hence $(f,\pi)=f\pi$. We have so-called ``Zappa--Sz\'{e}p'' relations $\sigma\circ h=h^\sigma\circ\sigma^h$ when the number of strands of $\sigma$ equals the number of roots of $h$. These are best understood pictorially, for example
\begin{align*}
	\ZappaSzep
\end{align*}
We see that dragging the caret down creates two parallel strands as per the definition of $\sigma^h$. A symmetric forest $(f,\pi)\in\cF^V$ is called {\it cyclic} if $\pi$ is a cyclic permutation. The set $\cF^T$ of all cyclic forests is a subcategory of $\cF^V$, however it is not closed under $\otimes$, so is not a \emph{monoidal} subcategory.
It will sometimes be convenient to notate $\cF^F=\cF$.
The above construction can be applied to $\cF_\infty$ and $\Sigma_\infty$ to produce a $V$-type FS monoid $\cF_\infty^V$, however there is no $T$-type monoid as the family of cyclic groups admits no directed structures.
Once again we may notate $\cF_\infty^F=\cF_\infty$.
\begin{observation}
	Let $\cF$ be an FS category and $\cF_\infty$ its associated FS monoid.
	\begin{enumerate}
		\item $(\cF^V,\circ,\otimes)$ is a monoidal category called the \emph{$V$-type FS category associated to $\cF$}.
		\item $(\cF^T,\circ)$ is a category called the \emph{$T$-type category associated to $\cF$}.
		\item $(\cF^V_\infty,\circ)$ is a monoid called the \emph{$V$-type monoid associated to $\cF$}.
	\end{enumerate}
\end{observation}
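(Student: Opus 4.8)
The plan is to verify the category, monoidal-category, and monoid axioms directly; the entire content lies in the compatibility between the two ``mutual actions'' $(f,\pi)\mapsto f^{\pi}$ and $(f,\pi)\mapsto \pi^{f}$, this being a concrete instance of a Brin--Zappa--Sz\'{e}p product of $\cF$ with the groupoid of symmetric groups. I would treat part (1) first and deduce (2) and (3) from it. The preliminary step is to record the elementary identities satisfied by the two actions, each checked by a direct look at planar diagrams (equivalently, by induction on the number of carets): the functoriality identities for composing several permutations into a single action; the two ``cocycle'' identities expressing $(f\circ g)^{\pi}$ and $\pi^{f\circ g}$ through the actions on $f$ and on $g$ separately, so that pushing a permutation upward past a stacked forest is unambiguous; the Zappa--Sz\'{e}p relation $\pi\circ h = h^{\pi}\circ\pi^{h}$ already drawn in the text; and the compatibilities with the tensor product, $(f\ot g)^{\pi\ot\rho}=f^{\pi}\ot g^{\rho}$ and $(\pi\ot\rho)^{f\ot g}=\pi^{f}\ot\rho^{g}$. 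I would also note that $f^{\pi}$ only reorders the trees of $f$ while $\pi^{f}$ depends on $f$ only through its tuple of leaf-counts; since a skein relation changes neither, both actions descend to $\cF$, and hence composition and tensor product in $\cF^{V}$ are well defined on skein classes.

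Granting these identities, part (1) follows formally. Associativity of $\circ$ on $\cF^{V}$ is a routine diagram manipulation: starting from either bracketing of $(f,\sigma)\circ(g,\tau)\circ(h,\rho)$, one repeatedly applies the Zappa--Sz\'{e}p relation and the cocycle identities to move every permutation above every forest, and the two bracketings terminate at the same normal form. The pair $(I^{\ot n},\id)$ is a two-sided identity at the object $n$ by the functoriality identities. For the monoidal structure one checks that $\otimes$ is a bifunctor: its unitality and associativity are inherited from those in $\cF$ and in the symmetric groups, and the interchange law reduces, via the interchange law in $\cF$, exactly to the two compatibility-with-$\otimes$ identities above. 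Finally $f\mapsto(f,\id)$ is a faithful monoidal functor, so $\cF$ sits inside $\cF^{V}$ as a monoidal subcategory.

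For (2) it suffices to see that the cyclic forests are closed under $\circ$ and contain the identities. The latter is clear since $\id\in C_{n}$. For the former, in $(f,\sigma)\circ(g,\tau)=(f\circ g^{\sigma},\sigma^{g}\circ\tau)$ with $\sigma$ and $\tau$ cyclic, the permutation $\sigma^{g}$ is again cyclic, because a rotation of the consecutive root-blocks of $g$ (of sizes $l_{1},\dots,l_{r}$) induces a rotation of the $l_{1}+\dots+l_{r}$ leaves; then $\sigma^{g}\circ\tau\in C_{l}$ since $C_{l}$ is a group. That $\cF^{T}$ is not closed under $\otimes$ is visible already for permutations, as $C_{n}\ot C_{m}\not\subseteq C_{n+m}$. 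For (3) one repeats the construction with $\cF_{\infty}$ and the group $\Sigma_\infty$ of finitely supported permutations of $\N$ in place of $\cF$ and the finite symmetric groups; there being no objects to track, the associativity and unitality part of the argument for (1) yields a monoid, the only extra remark needed being that $\pi^{f}$ still has finite support whenever $\pi$ does and $f$ has finitely many nontrivial trees. As noted in the text there is no $T$-type monoid, since the groups $C_{n}$ do not assemble into the directed system required by the construction.

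The only genuinely delicate point is the bookkeeping in the preliminary step: choosing the composition and inversion conventions for the two actions so that every functoriality and cocycle identity holds on the nose, and then carrying them correctly through the associativity chase. Once these identities are pinned down, every remaining verification is purely formal, which is why I expect the observation to be dispatched quickly.
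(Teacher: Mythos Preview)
Your proposal is correct. The paper does not supply a proof of this observation at all: the text preceding it simply says ``It is easy to check $(\cF^V,\circ,\otimes)$ is a monoidal category containing $(\cF,\circ,\otimes)$ as monoidal subcategory'' and then records the observation without further argument, deferring the Brin--Zappa--Sz\'{e}p formalism to the references. Your sketch is precisely the standard verification one would carry out to fill in these omitted details, and every step you list (the cocycle identities for the mutual actions, the normal-form argument for associativity, the closure of cyclic permutations under $\sigma\mapsto\sigma^{g}$ for part~(2), and the finite-support remark for part~(3)) is correct.
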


\subsection{Forest-skein groups}

\subsubsection{Calculus of fractions}
We are interested in categories $\cC$ that admit a {\it right-calculus of fractions}.
This is assured by the two conditions:
\begin{enumerate}
	\item $\cC$ is left-cancellative, i.e.~$f\circ g=f\circ h$ implies $g=h$;
	\item $\cC$ has common right-multiples (often called {\it right-Ore property}), i.e.~if $f,g\in\cC$ have the same source, then there exist $h,k$ satisfying $f\circ h=g\circ k$.
\end{enumerate}
If $\cC$ is both left-cancellative and has common right-multiples we say $\cC$ is an {\it Ore category} and admits a {\it (right-)calculus of fractions}. If a monoid $M$ is an Ore category, then we say it is an {\it Ore monoid}.

\begin{observation}\label{obs:Ore-category}
Let $\cF$ be an FS category with associated FS monoid $\cF_\infty$. The following are equivalent.
	\begin{enumerate}
	\item One of $\cF$, $\cF^T$, $\cF^V$, $\cF_\infty$, $\cF_\infty^V$ is Ore.
	\item Each of $\cF$, $\cF^T$, $\cF^V$, $\cF_\infty$, $\cF_\infty^V$ is Ore.
	\end{enumerate}
\end{observation}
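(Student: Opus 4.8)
The plan is to split the Ore condition into its two ingredients -- left-cancellativity and existence of common right-multiples -- and to show for each ingredient separately that all five objects satisfy it as soon as one does. I would use three bridges. First, the subcategory inclusions $\cF\subseteq\cF^T\subseteq\cF^V$ (and $\cF_\infty\subseteq\cF_\infty^V$). Second, the mutual-action composition formula $(f,\sigma)\circ(g,\tau)=(f\circ g^\sigma,\sigma^g\circ\tau)$, which realises $\cF^V$ and $\cF^T$ as built from $\cF$ together with permutations. Third, the dictionary between finite forests and eventually-trivial infinite forests, which relates $\cF$ to $\cF_\infty$ and $\cF^V$ to $\cF_\infty^V$ (as in \cite{Brothier22}).

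Left-cancellativity is the easy half. A subcategory of a left-cancellative category is left-cancellative, so the inclusions above give: left-cancellativity of $\cF^V$ implies that of $\cF^T$ implies that of $\cF$. For the converse I would read the composition formula directly: if $(f,\sigma)\circ(g,\tau)=(f,\sigma)\circ(g',\tau')$, then $f\circ g^\sigma=f\circ (g')^\sigma$ and $\sigma^g\circ\tau=\sigma^{g'}\circ\tau'$; left-cancelling $f$ in $\cF$ and using that reordering the trees of a forest by a fixed permutation is a bijection yields $g=g'$, hence $\sigma^g=\sigma^{g'}$, and then cancelling the permutation $\sigma^g$ gives $\tau=\tau'$. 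Finally, left-cancellativity of $\cF$ (resp.\ $\cF^V$) is equivalent to that of $\cF_\infty$ (resp.\ $\cF_\infty^V$): an equation of eventually-trivial infinite forests, read off along the leaves of the nontrivial part of the left factor, decomposes into a finite equation in $\cF$ (to which left-cancellativity of $\cF$ applies) together with an equality of infinite tails; the converse is immediate by restriction.

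Common right-multiples is the substantive half; here subcategory inclusions are useless since the property is not inherited by subcategories, so everything runs through the composition formula and the finite/infinite dictionary. Assume $\cF$ has common right-multiples and take symmetric forests $(f,\sigma),(g,\tau)$ with the same number of roots. Choose a common right-multiple $w=f\circ u=g\circ v$ of the underlying forests $f,g$ in $\cF$, and put $h_1:=u^{\sigma^{-1}}$, $h_2:=v^{\tau^{-1}}$, so that $(f,\sigma)\circ(h_1,\pi_1)$ and $(g,\tau)\circ(h_2,\pi_2)$ both have underlying forest $w$ for \emph{any} permutations $\pi_1,\pi_2$, with permutation parts $\sigma^{h_1}\circ\pi_1$ and $\tau^{h_2}\circ\pi_2$; taking $\pi_2$ trivial and $\pi_1:=(\sigma^{h_1})^{-1}\circ\tau^{h_2}$ makes the two symmetric forests equal. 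This proves $\cF^V$ has common right-multiples. The identical construction works for $\cF^T$ once one checks the output lies in $\cF^T$, i.e.\ that $(\sigma^{h_1})^{-1}\circ\tau^{h_2}$ is again cyclic: this holds because the ``split'' $\sigma^{g}$ of a cyclic permutation $\sigma$ is again cyclic (a rotation by the total number of leaves lying under a block of consecutive roots of $g$), so $\sigma^{h_1}$ and $\tau^{h_2}$ lie in a common cyclic group and hence so does $\pi_1$. The reverse implications (common right-multiples in $\cF^V$ or $\cF^T$ implies common right-multiples in $\cF$) follow by specialising to $\sigma=\tau=\mathrm{id}$ and reading off the underlying-forest equation, and the equivalences with $\cF_\infty$ and $\cF_\infty^V$ follow from the finite/infinite dictionary exactly as for left-cancellativity. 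Chaining all of these closes the cycle of equivalences.

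There is no deep obstacle; the proof is bookkeeping with the mutual actions. The two places requiring genuine care are: the verification that splitting preserves cyclicity, which is precisely what keeps the $\cF^T$-case inside $\cF^T$ and is the natural point of failure if one is careless with the ``$\sigma^g$''-manipulation; and the finite/infinite dictionary, which I would set up cleanly once and then invoke, or simply quote from \cite{Brothier22}.
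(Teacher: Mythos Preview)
The paper states this as an observation without proof, so there is no argument to compare against. Your proposal is correct and is the natural way to establish the observation: separate the Ore condition into left-cancellativity and common right-multiples, use the Zappa--Sz\'ep formula $(f,\sigma)\circ(g,\tau)=(f\circ g^\sigma,\sigma^g\circ\tau)$ to pass between $\cF$ and $\cF^V$ (resp.~$\cF^T$), and use the eventually-trivial truncation to pass between $\cF$ and $\cF_\infty$. You correctly isolate the one genuinely nontrivial point, namely that if $\sigma$ is cyclic then so is $\sigma^h$; this is what keeps the common-right-multiple construction inside $\cF^T$, and your justification (a rotation of blocks splits into a rotation of the constituent leaves, so $\sigma^{h_1}$ and $\tau^{h_2}$ both lie in the cyclic group $C_{|\Leaf(w)|}$ and hence so does their quotient) is sound. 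The finite/infinite transfer is routine bookkeeping as you say, and is carried out in \cite{Brothier22}.
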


\subsubsection{Forest-skein groupoids and groups}\label{sec:FS-groups} A category $\cC$ with a (right-)calculus of fractions admits a {\it fraction groupoid} $\Frac(\cC)$. This is the set of all pairs $(f,g)$ with $f,g\in\cC$ having the same target, up to $(f,g)\sim(f\circ p,g\circ p)$ where $p$ is composable with $f$ and $g$. The calculus of fractions for $\cC$ provides a partial binary operation on $\Frac(\cC)$. Indeed, when the targets of $g$ and $h$ are equal, we set
\begin{align*}
	[f,g]\circ[h,k]&=[f\circ p,g\circ p]\circ[h\circ q,k\circ q]=[f\circ p,k\circ q],
\end{align*}
where $g\circ p=h\circ q$ is given by existence of common right-multiples. Left-cancellation in $\cC$ implies $\circ$ is well-defined. Now $(\Frac(\cC),\circ)$ is a groupoid with $[f,g]^{-1}=[g,f]$. The {\it isotropy group} at an object $c$ of $\cC$ is
\begin{align*}
\Frac(\cC,c):=\{[f,g]\in\Frac(\cC):\textnormal{$f$ and $g$ have source $c$}\}.
\end{align*} 
It is standard that $(\Frac(\cC,c),\circ)$ is a group. If $M$ is an Ore monoid (thought of as a one object category) then $\Frac(M)$ is already a group.

\begin{definition}\label{def:FS-group}
	Let $\cF$ be an Ore FS category, $\cF_\infty$ the corresponding Ore FS monoid, $X\in\{F,T,V\}$, and $Y\in\{F,V\}$. 
	\begin{enumerate}
		\item The {\it forest-skein (FS) groupoid} of $\cF^X$ is the fraction groupoid $\Frac(\cF^X)$.
		\item The {\it forest-skein (FS) group} of $\cF^X$ is the isotropy group $\Frac(\cF^X,1)$. 
		\item The {\it forest-skein (FS) group} of $\cF_\infty^Y$ is the fraction group $\Frac(\cF_\infty^Y)$. 
	\end{enumerate}
\end{definition}

\begin{remark}
	Let $\cF$ be an Ore FS category.
	\begin{enumerate}
		\item We have $\Frac(\cF,1)\subset\Frac(\cF^T,1)\subset\Frac(\cF^V,1)$.
		\item It is often convenient to identify $[f,g]\in\Frac(\cF)$ with $f\circ g^{-1}$ and this can be viewed diagrammatically as the forest $f$ with the horizontal reflection of $g$ stacked on top (see Section \ref{sec:dynamical-characterisation}). For $[f,g]\mapsto f\circ g^{-1}$ to be one-to-one $\cF$ also needs to be {\it right}-cancellative (as well as being left-cancellative). The groupoid operation is then diagrammatic: if $g$ and $h$ have the same number of roots, then $h\circ k^{-1}$ can be stacked on top of $f\circ g^{-1}$, which can be reduced to a fraction by isotoping carets, applying skein relations in $\cF$, and applying {\it universal skein relations} $Y_a\circ Y_a^{-1}=I$ and $Y_a^{-1}\circ Y_a=I\ot I$ for every colour $a$ of $\cF$.
		\item Any $g\in\Frac(\cF^V,1)$ is of the form $g=[t\pi,s]$ for some trees $t,s\in\cF$ and some permutation $\pi$ of their leaves. Indeed, by definition $g=[t\sigma,s\tau]$ for some permutations $\sigma,\tau$, and so by setting $\pi=\sigma\tau^{-1}$ we have $[t\sigma,s\tau]=[t\sigma\tau,s\tau\tau^{-1}]=[t\pi,s]$.
		\item For having a right-calculus of fractions we could relax the first axiom by only requiring that if $f\circ g=f\circ h$, then there exists $f'$ satisfying $g\circ f'=h\circ f'$, see \cite{Gabriel-Zisman67}.
		However, we demand the stronger axiom of left-cancellativity for reasons inherent to the FS category formalism.
	\end{enumerate}
\end{remark}

\subsubsection{Relationship between FS category group and FS monoid group}\label{sec:CGP} Let $\cF$ be an Ore FS category with associated Ore FS monoid $\cF_\infty$. Write $G=\Frac(\cF,1)$ and $H=\Frac(\cF_\infty)$. 
It was shown in \cite[Proposition 3.14]{Brothier22} that $G$ and $H$ always embed in each other. Moreover, sufficient conditions for $G$ and $H$ to be isomorphic were outlined. The relevant property is the {\it (right) colour generating property} (CGP) at a colour $a\in\cF$. An FS group $G$ has the CGP at $a$ if any element $g\in G$ can be represented as a fraction of trees whose right-side is $a$-coloured. When $G$ has the CGP at $a$ there is an isomorphism $G\simeq H$ (depending on $a$). We use the CGP in Section \ref{sec:examples-from-simple-groups}.

\subsection{Previous results}

We recall some results on FS groups that we will need in this article. They can be found in the first article on the subject \cite{Brothier22}.

\begin{theorem}\cite[Corollary 3.9, 
	Theorem 8.3]{Brothier22} \label{theo:previous-FS-groups}
	Consider an Ore FS category $\cF$ with associated FS groups $G$, $G^T$, and $G^V$. The following assertions are true.
	\begin{enumerate}
		\item By colouring monochromatic forests with a fixed colour we deduce an embedding $V\into G^V$ sending $F,T$ inside $G,G^T$, respectively.
		Up to identifying $T$ and its copy in $G^T$ we obtain that $G^T\act G^T/G$ restricts into $T\act T/F.$
		\item Let $S$ be a colour set (countable or not) and let $\tau=(t_s:s\in S)$ be a family of monochromatic trees having the same number of leaves. Consider the FS category
		\begin{align*}
			\cF_\tau=\FS\la S|t_r(r)=t_s(s):r\not=s\ra    
		\end{align*}
		and fix a colour $a\in S$. Then
		\begin{itemize}
			\item $\cF_\tau$ is left-cancellative; and
			\item the sequence $(c_n(a))_{n\geqslant1}$ of complete $a$-trees is cofinal in $\cF_\tau$ (i.e. $c_n(a)\leqslant c_m(a)$ for $n\leqslant m$ and for all $t\in\cT$ there exists $k\geqslant1$ for which $t\leqslant c_k(a)$).
		\end{itemize}
		Thus $\cF_\tau$ is an Ore FS category. 
		When $S$ is finite, the corresponding FS groups $G_\tau$, $G_\tau^T$, and $G_\tau^V$ all have topological finiteness property F$_\infty$ (i.e.~for each $d\geqslant 1$ there exists a classifying space with finite $d$-skeleton). In particular, these groups are finitely presented.
	\end{enumerate}
\end{theorem}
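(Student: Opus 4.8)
Following \cite{Brothier22}, here is the route I would take. \emph{For item (1)}, I would fix a colour $a\in S$ and build the monoidal functor from the one-colour free FS category into $\cF$ that sends the unique caret to $Y_a$ and then projects to $\cF=\FS\la S\ra/\ov R$; extending it by the identity on permutations yields compatible functors on the $T$- and $V$-completions which send cyclic forests to cyclic forests. The content is that this functor is \emph{faithful}, i.e.\ the monochromatic $a$-subcategory of $\cF$ is free: a chain of skein moves joining two $a$-trees can apply a relation of $R$ inside an all-$a$ region only in a constrained way, and a confluence argument on the moves shows no genuine identification results. Since a faithful functor between Ore categories induces injections on fraction groupoids and on isotropy groups, one obtains $V\into G^V$ restricting to $F\into G$ and $T\into G^T$; and as the copy of $T$ only rearranges the $a$-coloured part of a coset diagram, $G^T\act G^T/G$ restricts on it to the standard action $T\act T/F$.

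\emph{For item (2)}, write $k=|\Leaf(t_s)|$ and assume $k\geqslant 2$ (for $k=1$ the category is free, hence not Ore unless $|S|=1$). For left-cancellativity I would invoke Dehornoy's calculus for presented categories: verify that the presentation $\la S\mid t_r(r)=t_s(s):r\ne s\ra$ is right-complemented and satisfies the (strong) cube condition — a finite combinatorial check exploiting the explicit ``generalised vine'' shape of the relations — so that by \cite{Dehornoy03} and \cite[Section 2]{Brothier22} the categories $\cF_\tau,\cF_\tau^T,\cF_\tau^V,(\cF_\tau)_\infty,(\cF_\tau^V)_\infty$ are all left-cancellative, using Observation \ref{obs:Ore-category}. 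I expect this cube-condition verification to be the main obstacle.

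Next I would establish cofinality. Every non-trivial tree has a root caret, so $Y_b\leqslant t_b(b)$; but $t_b(b)=t_a(a)$ in $\cF_\tau$, and $t_a(a)$, being an $a$-tree, lies below some complete $a$-tree, whence $Y_b\leqslant c_m(a)$ for every colour $b$. Feeding this ``local'' common right-multiple of distinct carets into the reversing process already set up for left-cancellativity, one rewrites any finite configuration toward an all-$a$ one and thereby obtains that any two trees — hence any two forests with the same source — admit a common right-multiple dominated by some $c_n(a)$. Thus $(c_n(a))_{n\geqslant 1}$ is cofinal in $\cF_\tau$ and $\cF_\tau$ has common right-multiples, so it is an Ore FS category and Definition \ref{def:FS-group} produces $G_\tau,G_\tau^T,G_\tau^V$.

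Finally, for type F$_\infty$ when $S$ is finite, I would run the Brown--Stein--Farley machinery on the poset of trees of $\cF_\tau$ relative to the cofinal sequence $(c_n(a))_n$: construct the associated Stein--Farley cube complex on which $G_\tau^X$ acts, prove its contractibility from cofinality and the lattice-like structure of $\leqslant$, and check that the action is cocompact in every dimension with stabilisers of type F$_\infty$. Finiteness of $S$ is precisely what makes the descending links finite complexes, since only finitely many carets can be attached at each stage; Brown's criterion then yields F$_\infty$, hence finite presentability.
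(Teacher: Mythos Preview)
This theorem is not proved in the present paper at all: it is quoted as a ``previous result'' from \cite{Brothier22} (Corollary~3.9 and Theorem~8.3 there), so there is no proof here against which to compare your proposal. That said, let me flag two points where your sketch would need repair before it could stand on its own.

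For item~(1), your faithfulness argument has a gap. You say that a chain of skein moves joining two $a$-trees ``can apply a relation of $R$ inside an all-$a$ region only in a constrained way'', but nothing prevents the chain from passing through trees with vertices of \emph{other} colours: one could have $t(a)\sim u\sim s(a)$ with $u$ genuinely multi-coloured, so a confluence argument restricted to the $a$-region does not suffice. The clean route is to use the Ore hypothesis rather than the combinatorics of moves: once you know that $G^T\act G^T/G$ restricts to $T\act T/F$ on the image of $T$ (a direct check on cosets), faithfulness of $T\act T/F$ forces $T\to G^T$ to be injective, and the $F$- and $V$-cases follow.

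For item~(2), your reference to ``the explicit `generalised vine' shape of the relations'' is incorrect: the trees $t_s$ in the statement are \emph{arbitrary} monochromatic trees with a common leaf count, not vines. So the cube-condition verification is not the finite combinatorial check you describe; in \cite{Brothier22} it is handled for general shapes. Your cofinality and F$_\infty$ outlines are along the right lines (the latter is indeed a Brown--Stein type argument), but the left-cancellativity step is where the actual work lies and your sketch underestimates it.
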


\section{Pointed trees and actions}\label{sec:actions}

Let $\cF$ be an FS category and $\cT$ its set of trees. In this section we construct several actions associated to $\cF$ that will be use throughout the article. Fix skein presentation $\la S|R\ra$ for $\cF$. For convenience, we will notate the free FS category by $\cUF:=\FS\la S\ra$ and refer to elements in $\cUF$ as ``free forests''. We write $P_\cF:\cUF\onto\cF$ for the canonical quotient functor and $\cUT$ for the set of trees in $\cUF$. Similarly, elements of $\cUT$ will be referred to as ``free trees''.

\subsection{Monoids of pointed trees}\label{sec:pointed-tree}

Initially we do not assume $\cF$ is left-cancellative nor has common right-multiples.

\medskip{\bf The monoid of pointed trees of $\cF$.}
Let $\cT_p$ the set of pairs $(t,\nu)$ where $t\in\cT$ is a tree and $\nu\in\Leaf(t)$ is a leaf of $t$.
We say $(t,\nu)$ is a {\it pointed tree} of $\cF$ and that $\nu$ is the {\it distinguished leaf} of $t$.
We also write $(t,j)$ for the pointed tree $(t,\nu)$ where $\nu$ is the $j$th leaf of $t$. A pointed tree can be thought diagrammatically as a tree with its distinguished leaf drawn slightly larger than the others and coloured white. For example 
\begin{align*}
	(Y_a,1)=\PointedCaretA\quad\quad\textnormal{and}\quad\quad
	(Y_b,2)=
	\PointedCaretB.
\end{align*}
We can {\it compose} any two pointed trees. Indeed, for $(t,j),(s,k)\in\cT_p$ we write $(t,j)\circ (s,k)$ for the pointed tree obtained by gluing the root of $s$ to the $j$th leaf of $t$ and keeping the distinguished leaf of $s$. For example
\begin{align*}
	(Y_a,1)\circ(Y_b,2)\quad=
	\PointedTreeCompositionA
	=
	\PointedTreeCompositionB=\quad(Y_a(Y_b\ot I),2).
\end{align*}
This defines an associative binary operation on $\cT_p$ with neutral element $(I,1)$ which consists in the trivial tree $I$ with its unique leaf distinguished. Hence, the algebraic structure $(\cT_p,\circ)$ is a monoid that we call the {\it monoid of pointed trees of $\cF$}. 

\medskip{\bf Action of pointed trees on trees.} There is a natural action of pointed trees on trees. For a pointed tree $(t,j)\in\cT_p$ and a tree $s\in\cT$ we write $(t,j)\bullet s$ for the tree formed by gluing the root of $t$ to the $j$th leaf of $s$. For example
\begin{align*}
	(Y_a,1)\bullet Y_b=
	\PointedTreeActionA
	=
	\PointedTreeActionB
	=Y_a(Y_b\otimes I).
\end{align*}
It is not hard to see that this furnishes a monoid action $\cT_p\act \cT$, i.e. $\bullet$ is compatible with $\circ$ and $(I,1)$ acts trivially.

\medskip{\bf The monoid of free pointed trees.} Similarly, we define $(\cUT_p,\circ)$ to be the monoid of pointed trees of the free FS category $\cUF$.
An element of $\cUT_p$ is called a {\it free pointed tree} and $\cUT_p$ the {\it monoid of free pointed trees} (over the colour set $S$). 

It is not hard to see that the assignment $\cF\mapsto \cT_p$ defines a functor from the category of FS categories (see Remark \ref{rem:abstract-FS}) to the category of monoids.
In particular, the quotient functor $P_\cF:\cUF\onto\cF$ provides a surjective monoid morphism 
$$P_{\cT_p}:\cUT_p\onto \cT_p, \ (t,j)\mapsto (P_\cF(t),j).$$
Moreover, note that $\cT_p$ is never finitely generated. We will now define a submonoid of $\cUT_p$ that is often finitely generated.

\medskip{\bf The monoid of narrow free pointed trees.}
Let $\cNUT_p$ be the submonoid of $\cUT_p$ generated by the free pointed carets $(Y_a,j)$ with $a\in S, j\in\{1,2\}.$
An element of $\cNUT_p$ is called a {\it narrow free pointed tree} and $\cNUT_p$ the {\it monoid of narrow free pointed trees}.
Observe that $\cNUT_p$ is isomorphic to the free monoid over the set $S\sqcup S$, so is finitely generated when $S$ is finite.
Moreover, a free pointed tree $(t,\nu)\in \cUT_p$ is narrow when all the interior vertices of $t$ are on the path from the root of $t$ to the leaf $\nu$.

\medskip{\bf Prune morphism.}
Consider a free pointed tree $(t,\nu)\in \cUT_p$. Let $t'\leqslant t$ be the rooted subtree of $t$ whose interior vertices are those vertices on the path from the root to $\nu$ in $t$. 
We set $\prune(t,\nu):=(t',\nu)$ and note that this free pointed tree is narrow.
This operation defines a surjective monoid morphism
$$\prune:\cUT_p\onto \cNUT_p$$
which restricts to the identity on $\cNUT_p$.

\begin{remark}
	Consider the FS category $\cF=\FS\la a,b|a_1a_2=b_1b_1\ra$ with pointed tree monoid $\cT_p$. Consider the free pointed trees
	\begin{align*}
		(t_0,1)=\NarrowA\quad\textnormal{and}\quad (t_1,1)=\NarrowB.
	\end{align*} 
	We have that $t_0$ and $t_1$ are equivalent in $\cF$.
	However, $(t_0,1)$ is narrow, while $(t_1,1)$ is not.
	Moreover, the image inside $\cT_p$ of $\prune(t_0,1)$ and $\prune(t_1,1)$ are not equal. 
	Hence, a notion of narrow pointed tree (dropping freeness) and prune morphism is not well-defined for most FS categories. 
	However, these notions may become useful for {\it shape-preserving} FS categories (see Example \ref{ex:non-faithful-action} and \cite[Section 2]{Brothier23c}). 
\end{remark}

\medskip{\bf Pruned relations and pruned equivalence.}
Recall that $\la S|R\ra$ is a skein presentation for the FS category $\cF$. 
Let $(u,v)\in R$ be a skein relation over $S$ and let $n$ be the common number of leaves for $u,v\in\cUT$.
We define a so-called {\it prune relation}
$$(\prune(u,j),\prune(v,j)) \text{ for each leaf } 1\leqslant j\leqslant n.$$
Denote by $\sim_{\prune}$ the congruence relation on the monoid $(\cNUT_p,\circ)$ generated by all the prune relations. If $(t,j)\sim_{\prune}(s,k)$, then we say that $(t,j)$ is {\it pruned equivalent} to $(s,k)$.
The quotient $\cNUT_p/\sim_{\prune}$ is again a monoid with generators the pointed carets and relations all the prune relations.

\subsection{Canonical group action}\label{sec:canonical-action}
In \cite[Section 6]{Brothier22} we defined for each Ore FS category $\cF$ a set $Q$ and the so-called canonical group action $\alpha:\Frac(\cF^V,1)\act Q$.
We now extend the construction of $Q$ to {\it any} FS category $\cF$ (relaxing the existence of a calculus of fractions). 
With this we construct a monoid action $\beta:\cT_p\act Q$ and a category action of $\cF$ for {\it any} FS category $\cF$.
We will then show compatibility of $\alpha$ and $\beta$ when $\cF$ is an {\it Ore} category.

\subsubsection{Construction of the space $Q$}\label{sec:Q-space}
Recall that $\cF$ is an FS category. 
Consider the monoid of pointed trees $\cT_p$ as a set. 
Let $(t,j)\in\cT_p$ be a pointed tree, whose tree $t$ has $n$ leaves, and let $f\in\cF$ be a forest with $n$ roots. We form a new pointed tree $(tf, j^f)\in\cT_p$ where $j^f$ is the first leaf of the $j$th tree of $f$.
In other words, $(tf,j^f)$ is the pointed tree obtained by stacking $f$ on top of $t$ and letting the distinguished leaf of $t$ go up to the left until it reaches a leaf of $f$.
Define $\sim_Q$ to be the equivalence relation on the set $\cT_p$ (that is not required to be compatible with $\circ$) generated by 
$$(t,j)\sim_Q (tf, j^f)$$
for all pointed trees $(t,j)\in\cT_p$ and forests $f$ composable with $t$. We write $[t,j]$ for the class of $(t,j)$ and write $Q$ or $Q_\cF$ for the quotient set $\cT_p/\sim_Q$.
We call $Q$ the {\it $Q$-space} of $\cF$ and denote by $\End(Q)$ the monoid of all maps from $Q$ to $Q$.

\begin{observation}\label{obs:prune}
	For any pointed tree $(t,j)\in\cT_p$ and any of its free representatives $(t_0,j)\in\cUT_p$, we have that 
	$$P_{\cT_p}\circ\prune(t_0,j)\sim_Q (t,j).$$
	Hence, we may always assume that a class $[t,j]\in Q$ is represented by a narrow free pointed tree.
\end{observation}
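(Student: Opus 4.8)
\medskip\noindent\textbf{Proof proposal.} The plan is to realise $(t,j)$ as the result of applying a single ``growing'' move — that is, a defining generator of $\sim_Q$ — to the pruned pointed tree, after transporting everything across the quotient functor $P_\cF$.

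First I would fix a free representative $(t_0,j)\in\cUT_p$ of $(t,j)$, so that $P_\cF(t_0)=t$; this exists because $P_\cF$ is surjective and preserves the number of leaves, so ``the $j$th leaf'' is unambiguous in $t_0$. Write $\nu$ for the distinguished leaf, let $t_0'\leqslant t_0$ be the rooted subtree returned by $\prune$, and let $k$ be the index of $\nu$ viewed as a leaf of $t_0'$, so $\prune(t_0,j)=(t_0',k)\in\cNUT_p$ is narrow. A routine check confirms that $t_0'$ is a tree in our sense: each interior vertex $w$ of $t_0'$ lies on the root-to-$\nu$ path, one of its two children in $t_0$ again lies on that path (or equals $\nu$) while the other does not, and truncating the subtree below the latter turns it into a leaf of $t_0'$.

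Next, since $t_0'$ is a rooted subtree of $t_0$, there is a free forest $f_0$ with $t_0=t_0'\circ f_0$. The crucial point is that the $k$th tree of $f_0$ is the trivial tree, since nothing is grown above the leaf $\nu$. Applying $P_\cF$, which is monoidal and preserves leaf counts tree by tree, yields $t=t'\circ f$ with $t':=P_\cF(t_0')$ and $f:=P_\cF(f_0)$, whose $k$th tree is again trivial. Because $f_0$ and $f$ have the same vector of leaf counts, the reindexing of the distinguished leaf under growing is the same for both; as it sends $k$ to $j$ for $f_0$ (the $k$th tree being trivial and $\nu$ having index $j$ in $t_0$), it sends $k$ to $j$ for $f$ as well, i.e.\ $k^f=j$. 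Hence, by the definition of $\sim_Q$ on $\cT_p$, $(t',k)\sim_Q(t'\circ f,k^f)=(t,j)$. Since $(t',k)=P_{\cT_p}(\prune(t_0,j))$ and $\prune(t_0,j)$ is narrow, this is the displayed equivalence, and the final assertion follows: every class in $Q$ is represented by the image of a narrow free pointed tree.

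I do not anticipate a genuine difficulty; the one step needing care is the leaf bookkeeping, since the distinguished leaf generally changes index under both $\prune$ and the subsequent regrowing, and one must verify that these two reindexings cancel — which is precisely the observation that the tree of $f_0$ (equivalently of $f$) sitting over the distinguished leaf is trivial, together with the fact that $P_\cF$ preserves leaf counts.
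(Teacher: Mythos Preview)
Your argument is correct and is precisely the natural one: factor $t_0=t_0'\circ f_0$ with $f_0$ having trivial $k$th tree, push through $P_\cF$, and invoke the generating relation $(t',k)\sim_Q(t'\circ f,k^f)$. The paper states this as an observation without proof, so there is nothing to compare against; your write-up supplies exactly the verification one would expect, with the leaf-index bookkeeping handled carefully.
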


\subsubsection{Canonical monoid action}
We now define how the monoid of pointed trees $\cT_p$ acts on the $Q$-space of $\cF$.

\begin{proposition}\label{prop:pointed-tree-action}
	The composition of pointed trees factorises into a monoid action 
	$$\beta:\cT_p\to\End(Q),\ \beta(t,j)([s,k]):= [(t,j)\circ (s,k)].$$
	We call $\beta$ the \emph{canonical (monoid) action} of the pointed trees of $\cF$.
	The action $\beta$ factors through the quotient monoid $\cNUT_p/\sim_{\prune}$ and 
	$$\beta(t,j) = \beta(P_{\cT_p}\circ\prune(t_0,j))$$
	where $(t,j)\in\cT_p$ and $(t_0,j)\in\cUT_p$ is one of its free representatives.
	Hence, we have the following commutative diagram of monoid morphisms:
	$$\begin{tikzcd}
	\cUT_p \arrow[r, "\prune", shift left=.2cm] \arrow[d, two heads,"P_{\cT_p}"] & \cNUT_p \arrow[l , hook]  \arrow[d, two heads] \\ 
	\cT_p \arrow[d, "\beta"]& \cNUT_p/\sim_{\prune} \arrow[dl, bend left] \\
	\End(Q) &  
	\end{tikzcd}$$
	In particular, the monoid $\beta(\cT_p)$ is a quotient of $\cNUT_p/\sim_{\prune}$ and thus generated by pointed carets.
\end{proposition}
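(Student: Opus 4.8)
The plan is to prove the proposition in three stages: that $\beta$ is a well-defined monoid morphism; a lemma controlling when two $\sim_Q$-equivalent pointed trees induce the same endomorphism of $Q$; and the factorisation through $\cNUT_p/\sim_{\prune}$ together with the diagram and the generation statement. For the first stage I would fix $(t,j)\in\cT_p$ and check that $[s,k]\mapsto[(t,j)\circ(s,k)]$ is independent of the representative of $[s,k]$: it suffices to treat one generating move $(s,k)\sim_Q(sf,k^f)$, and then unwinding the definition of composition of pointed trees shows that $(t,j)\circ(sf,k^f)$ is exactly $(t,j)\circ(s,k)$ grown by the forest equal to $f$ on the block of leaves coming from $s$ and trivial on the leaves coming from $t$, with matching distinguished leaf after a short index computation; hence the two classes agree in $Q$. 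Given this, $\beta((t,j)\circ(t',j'))=\beta(t,j)\circ\beta(t',j')$ follows from associativity of $\circ$ on $\cT_p$ and $\beta(I,1)=\id_Q$, so $\beta$ is a monoid morphism into $\End(Q)$.

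The crux is the following lemma: if $(r,m)\sim_Q(rg,m^g)$ is a generating move in which the $m$-th tree of $g$ is trivial, equivalently the distinguished leaf is not grown, then $\beta(r,m)=\beta(rg,m^g)$. One must keep in mind that plain $\sim_Q$-equivalence is \emph{not} enough to force equality of induced endomorphisms --- growing \emph{at} the distinguished leaf genuinely changes the map --- which is why the triviality hypothesis on $g$ is essential. To prove the lemma, note that since $g$ is trivial at leaf $m$ the tree $(rg,m^g)\circ(s,k)$ is precisely $(r,m)\circ(s,k)$ grown by the forest obtained from $g$ by inserting trivial trees along the block of leaves coming from $s$; hence $[(rg,m^g)\circ(s,k)]=[(r,m)\circ(s,k)]$ in $Q$ for every $[s,k]$, which is the claim.

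Now let $\gamma$ be the monoid morphism $\cNUT_p\hookrightarrow\cUT_p\xrightarrow{P_{\cT_p}}\cT_p\xrightarrow{\beta}\End(Q)$. For a skein relation $(u,v)\in R$ and a leaf $l$, Observation~\ref{obs:prune} gives $P_{\cT_p}(\prune(u,l))\sim_Q(P_\cF(u),l)$, and this equivalence is of the type covered by the lemma, because $\prune$ only removes side branches off the spine and so leaves the distinguished leaf ungrown; the lemma then yields $\gamma(\prune(u,l))=\beta(P_\cF(u),l)$, and likewise $\gamma(\prune(v,l))=\beta(P_\cF(v),l)$. Since $P_\cF(u)=P_\cF(v)$ in $\cF$ with the $l$-th leaves matched, these coincide, so $\gamma$ agrees on every pair generating the congruence $\sim_{\prune}$; being a monoid morphism it descends to $\bar\beta\colon\cNUT_p/\sim_{\prune}\to\End(Q)$ with $\bar\beta\circ q=\gamma$, where $q$ is the quotient map. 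Commutativity of the diagram then reduces to the identity $\beta(P_{\cT_p}(t_0,j))=\beta(P_{\cT_p}(\prune(t_0,j)))$ for a free pointed tree $(t_0,j)$, which is again Observation~\ref{obs:prune} together with the lemma. Finally, surjectivity of $\prune$ onto $\cNUT_p$ and of $P_{\cT_p}$ onto $\cT_p$ forces $\beta(\cT_p)=\bar\beta(\cNUT_p/\sim_{\prune})$; as $\cNUT_p$, hence $\cNUT_p/\sim_{\prune}$, is generated by the pointed carets $(Y_a,j)$, so is $\beta(\cT_p)$, and it is a monoid quotient of $\cNUT_p/\sim_{\prune}$ via $\bar\beta$.

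The main obstacle is the lemma of the second paragraph: one has to notice that $\sim_Q$-equivalence is too coarse to give equality of induced endomorphisms in general, isolate the benign case (growth that fixes the distinguished leaf), and observe that the prune / Observation~\ref{obs:prune} equivalences always lie in it. Everything else is diagram-chasing, with only mild bookkeeping needed to keep track of how leaf indices get renumbered under composition and growth of forests.
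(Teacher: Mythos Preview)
Your proof is correct and follows essentially the same route as the paper: both rely on Observation~\ref{obs:prune} and the fact that $\prune$ is a monoid morphism to obtain the factorisation through $\cNUT_p/\sim_{\prune}$. The paper's proof is a two-line sketch (``the rest follows easily from the fact that $\beta$ is well-defined, $\prune$ is a monoid morphism, and Observation~\ref{obs:prune}''), whereas you make explicit the key lemma that the paper leaves implicit---namely that a $\sim_Q$-generating move which does not grow the distinguished leaf preserves the induced endomorphism of $Q$---and correctly flag that this hypothesis is essential since growing \emph{at} the distinguished leaf does change the map.
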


\begin{proof}
	Consider a skein relation $(u,v)\in R$ and $q=[s,k]\in Q.$
		Since the monoid structure of $\cT_p$ is well-defined we have that $(u,j)\circ (s,k)$ and $(v,j)\circ (s,k)$ are equal in $\cT_p$ and thus their class in $Q$ are equal.
		This implies that $\beta:\cT_p\to\End(Q)$ is a well-defined monoid action.
		The rest of the proposition follows easily from the fact that $\beta$ is well-defined, $\prune:\cUT_p\to\cNUT_p$ is a monoid morphism, and Observation \ref{obs:prune}.
\end{proof}

\subsubsection{Canonical category action}
Using the monoid action $\beta:\cT_p\act Q$ constructed above we will build an action of the whole category $\cF$ in a rather obvious way using matrices.
Given a tree $t$ with leaf set $\Leaf(t)$ we define 
$$\beta(t):Q\to Q^{\Leaf(t)},\ q\mapsto (\ell\mapsto \beta(t,\ell)(q)).$$
Hence, if $t\in\cT$ has $n$ leaves, then $\beta(t)$ defines a map from $Q$ to $Q^n$ with $j$th component being $\beta(t,j).$ This map extends to all forests 
	such as
	$$\beta(f):Q^{\Root(f)}\to Q^{\Leaf(f)}, \ \beta(f)(q_r)(\ell):=\beta(f_r,\ell)q_r$$
	where $f_r$ is the $r$th tree of $f$, $\ell$ a leaf of $f_r$, and $q=(q_r)_r\in Q^{\Root(f)}.$

It is rather obvious from the definition that $\beta$ preserves composition: $\beta(p\circ q)=\beta(p)\circ \beta(q)$ for $p,q\in\cF$ composable forests.
We deduce that $\beta$ defines a functor $\beta:\cF\to \cQ$ where $\cQ$ is the category of functions between powers of $Q$.

\subsubsection{Canonical group action}\label{sec:group-action} 
We now define the action of an FS group on the $Q$-space of its category as described above. We also relate this to previous constructions of the canonical group action of an Ore FS category from \cite[Section 6]{Brothier22}. 
\begin{center}
	{\bf From now on assume that $\cF$ is an Ore FS category.}    
\end{center}
Let $Q$ be the $Q$-space of $\cF$ as constructed in Section \ref{sec:Q-space}. Since $\cF$ is an Ore category, we may define its FS groups $G$, $G^T$, and $G^V$. Given $g:=[t\pi,s]\in G^V$ and $q:=[s,j]\in Q$, we set
$$\al(g)(q)=g\cdot q = [t\pi,s]\cdot [s,j]:=[t,\pi(j)].$$
Having common right-multiples permits to extend this formula and left-cancellation gives well-definition. 
This gives the {\it canonical group action} $\al:G^V\act Q$ of $\cF$.

\medskip{\bf Jones' technology.} The above action is conjugate to the canonical group action appearing in \cite[Section 6]{Brothier22} constructed from Jones' technology via the (covariant) monoidal functor
$\Phi:(\cF,\otimes)\to(\Set,\sqcup)$ (where $(\Set,\sqcup)$ is the monoidal ``category'' of sets, functions, and disjoint union)
where $\Phi(f):\Root(f)\to\Leaf(f)$ sends the $j$th root of $f$ to the first leaf of its $j$th tree. 
Identifying $\Root(f)$ with $\Leaf(t)$ when $t$ and $f$ are composable, we may interpret
$Q$ as the direct limit $\varinjlim_{t\in\cT}\Leaf(t)$ 
with maps $\Phi(f):\Leaf(t)\to \Leaf(tf).$ 

\medskip{\bf Homogeneous action.} Note that the canonical group action $\al:G^V\act Q$ restricted to $G^T$ is conjugate to the homogeneous action $G^T\act G^T/G$ 
via the map $[t,j]\mapsto [t\pi,t]G$ where $\pi(1)=j$.

\medskip{\bf Compatibility with the canonical monoid action.}
We now relate the canonical group and monoid actions for $\cF$.
Consider an element $g=[t,s]\in G$. 
By having common right-multiples for $\cF$, any $q\in Q$ can be written in the form $[(s,j)\circ  (x,k)]$ for some leaf $j$ of $s$ and some pointed tree $(x,k)\in\cT_p$.
Now, by definition of $\al$, we have $$g\cdot[(s,j)\circ  (x,k)] = [(t,j)\circ (x,k)].$$
This operation was done in two steps:
$$[(s,j)\circ (x,k)] \mapsto [x,k] \mapsto [(t,j)\circ (x,k)].$$
The first step is the inverse of $\beta(s,j)$ (hence a map from the range of $\beta(s,j)$ to $Q$) and the second the map $\beta(t,j)$.
Hence, $\alpha(g)$ restricted to $\beta(s,j)(Q)$ is the map 
$$\beta(t,j)\circ \beta(s,j)^{-1}.$$
More generally, in the $V$-case, we may have a non-trivial permutation $\pi$ of the leaves and replace in the formula of above $\beta(t,j)$ by $\beta(t,\pi(j)).$

\begin{remark}\label{rem:prefix}
	The canonical group action $\alpha:G^V\act Q$ acts locally by changing prefixes, where here ``prefix" means ``pointed tree".
	Hence, for $t$ and $s$ trees with $n$ leaves
	$$\alpha([t,s])=\alpha(t\circ s^{-1}):\sqcup_{j=1}^n \beta(s,j)(Q)\to \sqcup_{j=1}^n\beta(t,j)(Q)$$ 
	is similar to the map 
	$$\beta(t)\circ \beta(s)^{-1} :\prod_{j=1}^n \beta(s,j)(Q)\to \prod_{j=1}^n\beta(t,j)(Q).$$ However, these two maps are rather different. The first realises a bijection from $Q$ to $Q$ while the second realise a bijections between two subsets of $Q^n$.
\end{remark}

\subsection{Completion of the canonical group action and some properties}
As above we keep the assumption that $\cF$ is an {\it Ore} FS category.

\subsubsection{Construction of a profinite set}
The canonical group action $G^V\act Q$ is analogous to the action of Thompson's group $V$ on {\it finitely supported} binary sequences.
We will now construct a completion $\fC$ of $Q$ and an action $G^V\act\fC$ analogous to $V$ acting on {\it all infinite} binary sequences.
We construct a topological completion of $Q$ by reversing the arrows in the functor $\Phi:\cF\to\Set$ from Section \ref{sec:group-action}.
Define the functor $$\Psi:\cF\to\Set$$ such that $\Psi(f):\Leaf(f)\to\Root(f)$ sends a leaf $\ell$ of $f$ to the root $r$ of the tree containing $\ell$. This functor is monoidal, hence
Jones' technology provides an action 
$G^V\act \fC$
on the inverse limit $\fC:=\varprojlim_{t\in\cT}\Leaf(t)$.
Concretely, $\fC$ is the profinite set consisting of all tuples
$$x=(x(t))_{t\in\cT}\in\prod_{t\in\cT}\Leaf(t)$$ satisfying that if $t\leqslant s$, then $x(s)$ is a descendent of $x(t)$.
If $g=[t\pi,s]\in G^V,$ then 
\begin{align*}
	(g\cdot x)(t)=\pi(x(s))
\end{align*}
where now $\Leaf(t)$ and $\Leaf(s)$ are identified with $\{1,\dots,n\}$ with $n$ being the common number of leaves of $t$ and $s$.
We will see below that $G^V\act \fC$ is an extension by continuity of the canonical group action $\al:G^V\act Q$ described in Section \ref{sec:group-action}. For this reason we will also notate this action $\al:G^V\act\fC$.

\subsubsection{Embedding of $Q$ inside $\fC$.}\label{sec:Q-embedding}
To each $q$ in $Q$ we want to define an element of $\fC$. 
To do this we need to assign to each tree $s\in\cT$ one of its leaves $\ell_s$ depending on $q$ so that the family $(\ell_s)_{s\in\cT}$ defines an element of $\fC$.
We do this in a canonical way using the two functors $\Phi$ and $\Psi$.
Let $q:=[t,j]\in Q$ and fix a tree $s\in\cT$. 
By having common right-multiples there exist some forests $f,h\in\cF$ such that $tf=sh$.
Noting that $\Phi(f)(j)=j^f$ we have $$q=[tf,\Phi(f)(j)]=[sh,\Phi(f)(j)].$$
Consider now the pointed tree $(sh,\Phi(f)(j))$.
Apply the functor $\Psi$ to reduce $sh$ to $s$ and set 
$$\ell_s:= \Psi(h)((\Phi(f)(j))).$$
One may check that $\ell_s$ does not depend on the choice of the representative of $q$ nor on the choice of forests $f,h$.
Moreover, one can check that the family $(\ell_s)_{s\in\cT}$ is indeed in $\fC$, i.e.~it satisfies that $\Psi(k)(\ell_{zk})=\ell_z$ for all trees $z\in\cT$ and forest $k\in\cF$. Informally, we have grown $t$ into $tf$ until it is larger than $s$ and let the distinguished leaf $j$ of $t$ grow up left. We then reduce $tf$ until it equals $s$ and letting the distinguished leaf of $tf$ go down in the unique possible way.

\subsubsection{Topology on $\fC$}
We equip $\fC$ with its topology inherited from the inverse limit $\varprojlim_{t\in\cT}\Leaf(t)$ taking $\Leaf(t)$ to have the discrete topology.
It is a profinite topological space and thus satisfies all the axioms of being a Cantor space except that it may or may not be second countable.
If $t\in\cT$ is a tree and $p_t:\fC\onto \Leaf(t)$ is the canonical projection, then the preimage of a leaf $j$ is open and closed ({\it clopen} for short).
We write $\Cone(t,j)$ for this preimage and call it a {\it cone}.
Hence, $\Cone(t,j)$ is the set of all $x=(x(s))_{s\in\cT}$ such that if $t\leqslant s,$ then $x(s)$ is a descendent of $j.$ We note that $\Cone(t,j)\cap Q$ is the range of $\beta(t,j)$: all classes of pointed trees of the form $[(t,j)\circ (z,k)].$
This justifies the terminology ``cone". 

\begin{example}\label{ex:non-cantor-C-space}
	If $\cF$ has countably many colours, then $\cT_p$ is countable and $\fC$ is a Cantor space since it is profinite and since the cones defines a countable basis of clopen spaces.
	If the colour-set is uncountable, then $\fC$ may be second countable or not as illustrated below.
	
	\begin{enumerate}
		\item Consider any non-empty set $S$, a natural number $n\geqslant 2$, and a family of monochromatic trees $(t_s:s\in S)$ indexed by $S$ so that each $t_s$ has $n$ leaves. Define a skein presentation with colour set $S$ and skein relations $t_r(r)\sim t_s(s)$ for all $r,s\in S$. This provides an Ore FS category $\cF$ by \cite[Theorem 8.3]{Brothier22} (see also item (2) of Theorem \ref{theo:previous-FS-groups}). Fix a colour $a\in S$. We have that any tree in $\cF$ is smaller than a tree with all vertices coloured by $a$. This implies that $Q$ and $\fC$ are in bijection with the usual spaces of finitely supported binary sequences and all infinite binary sequences. Hence, for these class of examples, whatever the cardinality of $S$ is, we obtain a Cantor space for $\fC$.
		\item Consider now $S=(0,1)$ the real open unit interval. For each $s\in S$ define the partition $[0,s)\sqcup [s,1)$ of the half-open real interval $[0,1)$. By identifying this partition with the caret $Y_s$ we deduce the skein relations $$Y_s\circ (Y_r\ot I) = Y_{sr} \circ (I\ot Y_x)$$ where $s=sr+ x(1-sr)$. This defines an Ore FS category with FS group $G$ isomorphic to the group of {\it all} piecewise linear order-preserving homeomorphisms of $[0,1]$. The $Q$-space is now the real open interval $(0,1)$. 
		One can prove that $\fC$ is not second countable similar to a proof that the Sorgenfrey line is not second countable.
	\end{enumerate}
\end{example}

\subsubsection{A second construction using uniform structures}
Consider an Ore FS category $\cF$ and the associated space $Q$.
We provide another equivalent definition for $\fC$ using Bourbaki's uniform structures rather than an inverse limit. 

For any tree $t\in\cT$ we consider the set 
$$U_t:=\{(q,q')\in Q\times Q:\ q,q'\in \Cone(t,\ell) \text{ for a leaf } \ell\}.$$
Observe that $U_t$ is symmetric and $U_t=U_t\circ U_t$ 
in the sense that if $(q,q'')\in Q\times Q$ satisfies that there exists $q'\in Q$ so that $(q,q')\in U_t$ and $(q',q'')\in U_t$, then $(q,q'')\in U_t.$ 
Indeed, this follows from the fact: $\Cone(t,j)\cap \Cone(t,k)=\varnothing$ when $j\neq k.$
Moreover, having common right-multiples for $\cF$ implies that given $t,s\in \cT$ there exists $z\in \cT$ so that $U_z\subset U_t\circ U_s.$
From there we easily deduce that the collection $\mathcal B$ of all the $U_t$ with $t\in\cT$ forms a fundamental system of entourage of the set $Q$ in the sense of \cite[Chapter 2]{BourbakiTG}.

Therefore, $\mathcal B$ defines a unique uniform structure $\cU$ on $Q$. Its associated topology is by definition the one generated by all cones $\Cone(t,j)$ and thus coincides with the topology that we previously defined.
Hence, we now have a uniform structure $\cU$ on $Q.$
The completion of $(Q,\cU)$ obtained by considering minimal Cauchy filters is isomorphic to $(\fC,\ti \cU)$ where $\ti \cU$ is similarly defined than $\cU$ using trees and cones.
This justifies to say that ``$\fC$ is the completion of $Q$".

When $\cF$ admits finitely (or even countably) many colours, then $Q$ admits countable basis of neighbourhoods and thus we can complete $Q$ using Cauchy sequences rather than Cauchy filters. 
We immediately obtained the following fact.

\begin{observation}
	The action $G^V\act \fC$ is the unique extension by continuity of $G^V\act Q.$ Similarly, we may extend the monoid action of $\cT_p$ and the category action of $\cF$ to $\fC$.
\end{observation}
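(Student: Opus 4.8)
The plan is to lean on the universal property of Bourbaki completions recalled in Section~\ref{sec:actions}: since $\fC$ is the completion of the uniform space $(Q,\cU)$ and is complete and Hausdorff, $Q$ is dense in $\fC$ (indeed every cone $\Cone(t,j)$ contains the point $[t,j]$), so a continuous map $\fC\to\fC$ is determined by its restriction to $Q$, and every uniformly continuous map $Q\to\fC$ extends uniquely to a continuous map $\fC\to\fC$. Thus it suffices to check that the maps we want to extend are uniformly continuous on $Q$, after which existence and uniqueness of the extensions are automatic, as is the preservation of the algebraic relations (group/monoid/functor laws), since those already hold on the dense subset $Q$.

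First I would treat $\alpha\colon G^V\act Q$. Fix $g=[t\pi,s]\in G^V$ with $t,s$ having $n$ leaves. Recall from Section~\ref{sec:group-action} that, growing $t\pi$ and $s$ by a common forest $f$ and using the Zappa--Sz\'{e}p relation, one may rewrite $g=[z\rho,z']$ with $z=t\circ f^{\pi}$, $z'=s\circ f$, $\rho=\pi^{f}$, and that $\alpha(g)$ then carries $\Cone(z',\ell)\cap Q$ bijectively onto $\Cone(z,\rho(\ell))\cap Q$ via the prefix-replacement $(z',\ell)\mapsto(z,\rho(\ell))$. Given a target entourage $U_w$, I would use common right-multiples to choose $f$ so that $z=t\circ f^{\pi}$ dominates $w$, that is $w\leqslant z$; then $U_{z}\subseteq U_w$ and $\alpha(g)\times\alpha(g)$ maps $U_{z'}$ into $U_{z}\subseteq U_w$. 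Hence $\alpha(g)$ is uniformly continuous; the same applied to $g^{-1}$ shows $\alpha(g)$ is a uniform automorphism of $(Q,\cU)$, so it extends to a homeomorphism $\widetilde{\alpha(g)}$ of $\fC$, and uniqueness of the extension forces $\widetilde{\alpha(g)}\,\widetilde{\alpha(h)}=\widetilde{\alpha(gh)}$ and $\widetilde{\alpha(g)}^{-1}=\widetilde{\alpha(g^{-1})}$, giving a continuous action $G^V\act\fC$. Finally, a direct computation with the embedding of Section~\ref{sec:Q-embedding} shows this extension restricts to $\alpha$ on $Q$; since the inverse-limit formula $(g\cdot x)(t)=\pi(x(s))$ from Section~\ref{sec:actions} also defines a continuous action of $G^V$ on $\fC$ restricting to $\alpha$ on the dense set $Q$, uniqueness identifies the two. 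This is precisely the assertion that $G^V\act\fC$ is the (unique) extension by continuity of $G^V\act Q$.

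The monoid action $\beta\colon\cT_p\act Q$ and the category action $\beta$ of $\cF$ on powers of $Q$ I would handle the same way. For a pointed tree $(t,j)$, the map $\beta(t,j)$ prepends $(t,j)$, so it carries $\Cone(z,\ell)\cap Q$ into a single cone of $Q$; given a target entourage $U_w$, pick a common right-multiple $z_0=t\circ k$ of $t$ and $w$ with $k=(k_1,\dots,k_n)$, and observe that each leaf of the $j$th tree $k_j$, once prefixed by the $j$th leaf of $t$, is a leaf of $z_0$ and hence descends from a single leaf of $w$; therefore $\beta(t,j)$ maps $U_{k_j}$ into $U_w$ and so is uniformly continuous. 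It thus extends uniquely to $\fC$, compatibility with $\circ$ (and with $P_{\cT_p}\circ\prune$) passes to the extensions by density, and we get $\beta\colon\cT_p\act\fC$. For a forest $f$, the map $\beta(f)\colon Q^{\Root(f)}\to Q^{\Leaf(f)}$ is assembled componentwise from the $\beta(f_r,\ell)$, hence uniformly continuous for the product uniformities, and since $\fC^{\Root(f)}$ and $\fC^{\Leaf(f)}$ are the completions of $Q^{\Root(f)}$ and $Q^{\Leaf(f)}$, extending componentwise and carrying functoriality across the dense subset yields the extended category action of $\cF$ on powers of $\fC$.

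The hard part is the bookkeeping, not the ideas: one must keep careful track of how growing the trees in a fraction $[t\pi,s]$ (respectively prepending a pointed tree) relabels leaves and cones, so that the uniform-continuity moduli written above are genuine, and one must verify the equivariance of the embedding $Q\hookrightarrow\fC$ of Section~\ref{sec:Q-embedding} so that the inverse-limit action is the continuous extension rather than merely some continuous action agreeing with $\alpha$ off a dense set. All of this rests on the single structural input that $\cF$ has common right-multiples, so that any finite family of trees admits a common refinement; no further obstacle arises.
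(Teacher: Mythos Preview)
Your proposal is correct and follows exactly the route the paper intends: the observation is stated immediately after establishing that $\fC$ is the Bourbaki completion of $(Q,\cU)$, and the paper offers no proof beyond the phrase ``We immediately obtained the following fact.'' You have supplied the standard argument the paper leaves implicit---density of $Q$, uniform continuity of each $\alpha(g)$ and $\beta(t,j)$ verified via common right-multiples, and the universal extension property of uniform completions---so there is nothing to compare.
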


\subsubsection{Order on the canonical space}
\medskip{\bf Total order.}
Having common right-multiples implies that for all $q_1,q_2\in Q$ there exists a tree $t$ and leaves $j_1,j_2\in\Leaf(t)$ satisfying $q_1=[t,j_1]$ and $q_2=[t,j_2]$.
Setting $q_1\leqslant q_2$ when $j_1\leqslant j_2$ defines a total order on $Q$.
This order extends to $\fC$.
Given two elements $x,y\in\fC$ we set $x\leqslant y$ when eventually $x(t)\leqslant y(t)$. 
If $\cF$ has finitely many colours, then this is equivalent to say that $x(t)\leqslant y(t)$ for all but finitely many trees $t\in\cT$.
This is again a total order on $\fC$.

\medskip{\bf Order topology and endpoints.}
Observe that $\Cone(t,j)$ is the half-open interval
$$\{q\in \fC:\ [t,j]\leqslant q < [t,j+1]\}$$
for the total order of $\fC.$
This implies that the topology of $\fC$ is the order topology given by $\leqslant$.
Since $\fC$ is compact and its topology is the order topology we deduce that it has two end points or ``extremities'' $o$ and $\omega$ where $$o=\min\fC \text{ and } \omega=\max\fC.$$

\medskip{\bf Two copies of $Q$ inside $\fC$.}
Similarly to $\fC$, a cone $\Cone(t,j)$ has two endpoints
$[t,j]=\min\Cone(t,j)$ and $\max\Cone(t,j).$ 
Hence, as an interval, $\Cone(t,j)$ is both closed and open (we say they are clopen intervals).
Given any $x\in Q\setminus\{o\}$ there exists a pointed tree $(t,j)$ such that $x=[t,j+1]$.
Then we may write $x^+$ for $[t,j+1]$ inside $\fC$ (for the embedding described in Section \ref{sec:Q-embedding}) and $x^-$ for $\max\Cone(t,j).$
We have that $x^-\leqslant x^+$ and in fact $x^+$ is the successor of $x^-$ for the total order of $\fC$ (i.e.~if $x^-\leqslant z\leqslant x^+$, then $z=x^-$ or $z=x^+$).
We obtain two copies of $Q\setminus\{o\}$ inside $\fC$. 
We write $Q^+$ for the collection of all $\min C$ with $C$ any cones and $Q^-$ for the collection of all $\max C.$
Note that $Q^+$ corresponds to all the $x^+$ defined as above plus the first point $o$ of $\fC$ while $Q^-$ is obtained by adding the last point $\omega$ of $\fC.$
We always have that $Q^+$ and $Q^-$ are disjoint.
We will continue to identify $Q$ with its copy $Q^+$ inside $\fC.$

\subsubsection{Summary}\label{sec:summary} 
We collect a number of facts and properties on the canonical actions. 
They all follow easily from \cite[Section 6]{Brothier22} and the discussions of this section.
\begin{proposition}\label{prop:action-facts}
	Let $\cF$ be an Ore FS category with FS groups $G$, $G^T$, $G^V$, canonical group actions $\alpha:G^V\act Q$ and $\alpha:G^V\act \fC$, monoid of pointed trees $\cT_p$, and canonical monoid action $\beta:\cT_p\act \fC.$
	The following assertions are true.
	\begin{enumerate}
		\item The space $(Q,\leqslant)$ is a totally ordered infinite set. If $\cF$ has countably many colours, then $Q$ is countable. 
		\item The action $\al:G^V\act Q$ restricts into the homogeneous action $G^T\act G^T/G.$
		\item The space $\fC$ is profinite, has a total order $\leqslant$, its topology is the order topology. It contains two disjoint copies of $Q$ denoted $Q^\pm$ and each copy is dense inside $\fC.$
		\item 
		The collection of cones $\Cone(t,j)$ forms a clopen basis of the topology. Additionally, a cone is a clopen interval for $\leqslant$ of the form $[x^+,y^-]$ with $x,y\in Q.$
		If $\cF$ has countably many colours, then the space $\fC$ is second countable (and thus is a Cantor space).
		\item The group $G$ acts in an order preserving way on $(\fC,\leqslant).$ Conversely $G$ is the subgroup of $g\in G^V$ that preserves $\leqslant.$
		\item The group $G^V$ acts by homeomorphisms on $\fC$.
		\item The kernel of $\al:G^V\act \fC$ is equal to the kernel of $\al:G\act Q.$ In particular, it is a subgroup of $G$ that is normalised by $G^V.$ Moreover, it is equal to $\cap_{h\in G^T} hGh^{-1}.$
		\item If $(t,j)\in \cT_p$, then $\beta(t,j)$ is an order preserving homeomorphism from $\fC$ to $\Cone(t,j).$ 
		\item If $g=[t\pi, s]$, then for each leaf $j$ of $s$ we have that 
		$$\alpha(g)\restriction_{\Cone(s,j)}=\beta(t,\pi(j)) \circ \beta(s,j)^{-1}$$
		realises an order preserving homeomorphism from $\Cone(s,j)$ to $\Cone(t,\pi(j)).$
	\end{enumerate}
\end{proposition}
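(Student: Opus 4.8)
The plan is to verify the nine assertions in turn; since the space $Q$, its completion $\fC$, the order $\leqslant$, the cones, and the three actions $\al,\beta$ have all been constructed and their mutual compatibilities checked earlier in this section, the argument is essentially one of assembly, with \cite[Section 6]{Brothier22} supplying whatever is not done here.

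First I would dispose of the ``soft'' assertions (1)--(4). The total orders on $Q$ and on $\fC$, the fact that the topology of $\fC$ is the order topology, the presentation of each $\Cone(t,j)$ as a clopen interval $[x^+,y^-]$, and the two disjoint copies $Q^\pm\subseteq\fC$ were all set up in the discussion above on the order of $\fC$; the set $Q$ is infinite because it contains a conjugate copy of the canonical space of $V$ via the embedding $V\into G^V$ of Theorem~\ref{theo:previous-FS-groups}, and when $\cF$ has countably many colours $\cT_p$ is countable so its quotient $Q$ is countable as well. Density of $Q^+$ (resp.\ $Q^-$) in $\fC$ follows at once from the fact that the cones form a basis and that every cone $\Cone(t,j)$ contains its left endpoint $[t,j]\in Q^+$ (resp.\ its right endpoint $\max\Cone(t,j)\in Q^-$). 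If there are only countably many colours there are only countably many cones, so $\fC$ has a countable clopen basis and, being profinite and totally ordered with the order topology (hence satisfying the remaining Cantor-space axioms as noted before Example~\ref{ex:non-cantor-C-space}), is a Cantor space. The identification in (2) of $\al\restriction_{G^T}$ with the homogeneous action $G^T\act G^T/G$ via $[t,j]\mapsto[t\pi,t]G$ (with $\pi(1)=j$) was recorded in Section~\ref{sec:group-action}.

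Next I would treat (6), (8) and (9) together. For (8): the map $\beta(t,j)$ restricts to an order-preserving bijection of $Q$ onto $\Cone(t,j)\cap Q$ (its range was identified in the subsection on the topology of $\fC$), and since the topology of $\fC$ is the order topology this bijection extends uniquely by continuity to an order-preserving homeomorphism $\fC\to\Cone(t,j)$, which coincides with $\beta(t,j)$ by the observation that $\beta$ extends to $\fC$; that it is order-preserving reflects the compatibility of growing trees with $\leqslant$. For (9), the identity $\al(g)\restriction_{\Cone(s,j)}=\beta(t,\pi(j))\circ\beta(s,j)^{-1}$ was derived on $Q$ in the paragraph on compatibility of $\al$ and $\beta$, and it passes to $\fC$ since $\al\colon G^V\act\fC$ is the continuous extension of $\al\colon G^V\act Q$; combined with (8) this exhibits $\al(g)$ as an order-preserving homeomorphism $\Cone(s,j)\to\Cone(t,\pi(j))$ on each block of the finite clopen partition $\fC=\bigsqcup_{j}\Cone(s,j)$. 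Since $\pi$ is a bijection, the target cones $\Cone(t,\pi(j))$ likewise partition $\fC$, so these blockwise homeomorphisms patch to a continuous bijection of the compact Hausdorff space $\fC$, hence a homeomorphism, with inverse $\al(g^{-1})$; this is (6). For (5): by (9), $\al([t\pi,s])$ sends $\Cone(s,j)$ to $\Cone(t,\pi(j))$ preserving order internally, so it preserves the global order $\leqslant$ exactly when $\pi$ preserves the order of the leaves, i.e.\ when $\pi=\id$, i.e.\ when $g\in G$; conversely every element of $G$ has $\pi=\id$ and so is order-preserving.

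The only assertion needing a little care is (7). Since the identity preserves $\leqslant$, part (5) already forces $\ker(\al\colon G^V\act\fC)\subseteq G$. As $Q$ is dense in $\fC$ and $\al\colon G^V\act\fC$ is the unique continuous extension of $\al\colon G^V\act Q$, an element acts trivially on $\fC$ iff it acts trivially on $Q$, so $\ker(\al\colon G^V\act\fC)=\ker(\al\colon G\act Q)$; being the kernel of an action of $G^V$ it is normalised by $G^V$. Finally, by (2) the restriction $\al\restriction_{G^T}$ is conjugate to $G^T\act G^T/G$, whose kernel is the normal core $\bigcap_{h\in G^T}hGh^{-1}$ of $G$ in $G^T$; since $\ker(\al\colon G^V\act\fC)\subseteq G\subseteq G^T$ we may intersect with $G^T$ to get $\ker(\al\colon G^V\act\fC)=\ker(\al\restriction_{G^T})=\bigcap_{h\in G^T}hGh^{-1}$. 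I expect this last step — keeping straight which kernel sits inside which and invoking the right conjugacy — to be the main (mild) obstacle; everything else is bookkeeping referring back to the constructions of this section and to \cite[Section 6]{Brothier22}.
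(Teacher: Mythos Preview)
Your proposal is correct and matches the paper's approach: the paper gives no proof at all for this proposition, stating only that the assertions ``all follow easily from \cite[Section 6]{Brothier22} and the discussions of this section,'' and your write-up is precisely the assembly of those discussions that the paper leaves implicit. Your handling of (7), which you flagged as the one point needing care, is exactly the intended argument (kernel lands in $G$ by order-preservation, equality of the two kernels by density plus continuity, and identification with the normal core via the conjugacy with $G^T\act G^T/G$).
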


\begin{example}
	When $\cF$ is the Ore FS category of monochromatic binary forests, we have 
	$(G,G^T,G^V)=(F,T,V)$, $\fC=\{0,1\}^\N$, $Q=Q^+$ the finitely supported sequences, $Q^-$ the sequences eventually constant equal to $1$, cones are of the form $C_w=w\cdot \fC$ with $w$ a finite word in alphabet $\{0,1\}$, and $\leqslant$ is the lexicographic order. 
	If $g\in V$, then it acts locally on $\fC$ by changing prefixes: $C_w\to C_v, w\cdot z\mapsto v\cdot z.$
\end{example}

The following example shows that when $\cF$ is not a Ore category we may have pathological canonical actions.

\begin{example}
	Consider the FS category $\cF=\FS\la a|a_1a_1=a_1a_2\ra$.
	An easy induction shows that any two free trees $t,s\in\FS\la a\ra$ with same number of leaves are equal in $\cF.$
	The category $\cF$ has common right-multiples (since it is the quotient of a right-Ore category) but is not left-cancellative since $Y(Y\ot I)=Y(I\ot Y)$, but $Y\ot I\not=I\ot Y$.
	Observe that
	\begin{align*}
		(Y,2)\sim_{grow}(Y(I\ot Y),2)
		\sim_{skein}(Y(Y\ot I),2)
	\end{align*}
	and hence
	$$ \Cone(Y,2) \ni [Y,2]=[Y(Y\ot I),2]\in \Cone(Y,1)$$
	and thus $\Cone(Y,1)\cap \Cone(Y,2)\neq\varnothing.$
	In fact, the space $Q$ has only two points that are $[Y,1]$ and $[Y,2]$.
	In particular, $\beta(Y,2):Q\to Q$ has for range the singleton $[Y,2]$ and is thus not injective.
\end{example}

\section{Dynamical characterisation of simplicity}\label{sec:dynamical-characterisation}

Let $\cF$ be an Ore FS category with FS groups $G$, $G^T$, $G^V$, monoid of pointed trees $\cT_p$, canonical group action $\alpha:G^V\act\fC$, and canonical monoid action $\beta:\cT_p\act\fC$ as discussed in the previous section. In this section we characterise simplicity of the derived subgroups $[G,G]$, $[G^T,G^T]$, and $[G^V,G^V]$ in terms of the canonical group action. 
For convenience we write $g\cdot x$ rather than $\alpha(g)(x)$ for $g\in G^V$ and $x\in\fC$ and warn the reader that $g\cdot x=x$ for all $x\in\fC$ implies $g=e$ only if the canonical group action is {\it faithful}. Moreover, we may write $g\restriction_A$ for the restriction of $\al(g)$ (or any map $g:\fC\to\fC$) to a subset $A\subset\fC.$

\subsection{Groups of germs and group $K$}\label{sec:germs}
Recall that $o$ and $\omega$ denote the smallest and largest elements of $\fC$ for its total order $\leqslant.$
If $g\in G$, then $g\cdot o=o$ and $g\cdot \omega=\omega.$
\begin{definition}\label{def:K-groups} Let $\cF$, $G^V$, and $\al:G^V\act\fC$ be as above.
	\begin{enumerate}
		\item For $g\in G^V$ we write $\supp(g)\subset\fC$ for the {\bf closure} (in the profinite topology) of $$\{x\in\fC:\ g\cdot x\not=x\}.$$
		This is called the {\it support} of $g$. We say $g$ has {\it proper support} if $\supp(g)$ is a proper subset of $\fC$. 
		\item If $C=\Cone(z,j)$, let $G_C$ be the set of all $g\in G$ with $\supp(g)\subset C$. The map
		\begin{align*}
		\eta_C:G\to G_C \  [t,s]\mapsto[(z,j)\bullet t,(z,j)\bullet s],
		\end{align*}  
		where $\bullet$ is the action of pointed trees on trees, is an injective homomorphism.
		\item 
		Let $K_o\lhd G$ (resp. $K_\omega\lhd G$) be the normal subgroup of elements acting trivially on a neighbourhood of $o$ (resp.~$\omega$).
		\item We set $K:=K_o\cap K_\omega\lhd G$, so $k\in K$ acts trivially on a neighbourhood of $o$ and $\omega$.
		\item Let $\Germ(G\act\fC,o):=G/K_o$ and $\Germ(G\act\fC,\omega):=G/K_\omega$ be the {\it germ groups} for the canonical group action $\alpha:G\act \fC$ at the points $o$ and $\omega$.
	\end{enumerate}
\end{definition}

\begin{remark}\label{rem:K-remarks}
	\begin{enumerate}
		\item The subgroups $K_o,K_\omega,K,[K,K],[G,G]$ are not finitely generated.
		\item If $\cF=\FS\la a\ra,$ then $G\simeq F$ is Thompson's group and $\fC$ is the usual Cantor space $\{0,1\}^\N$ with $o$ and $\omega$ being the constant binary strings $0^\infty$ and $1^\infty$. 
		In this case, the derived subgroup $[F,F]$ of $F$ is the subgroup of the elements acting trivially around $o$ and $\omega$: this is the group $K$ defined above.
		The groups of germs of $F$ at $o$ and at $\omega$ are isomorphic to $\Z$. If $F$ is interpreted as a group of homeomorphisms of $[0,1]$, then $K$ corresponds to those with slope $1$ at the real points $0$ and $1.$
		\item If $\cF=\FS\la a,b| a_1a_2=b_1b_1\ra,$ then $G\simeq F_\tau$ is the description of Cleary's group given by Burillo, Nucinkis, and Reeves \cite{Cleary00,Burillo-Nucinkis-Reeves21}. 
		When $F_\tau$ is interpreted as a group of homeomorphisms of $[0,1]$, $K$ is the subgroup of elements with trivial slopes at $0$ and $1$. However, $[F_\tau,F_\tau]$ is the index $2$ subgroup of $K$ of elements $g=[t,s]$ that have same number of $a$-carets modulo $2$ in $t$ and $s$.
		The group $K$ is known as the group of elements with ``bounded support'' in \cite[Definition 5.1]{Burillo-Nucinkis-Reeves21} and is denoted $F_\tau^c$.
	\end{enumerate}
\end{remark}

\subsection{Presentations of germ groups}\label{sec:germ-presentation}

We briefly outline how to calculate certain germ groups for a {\it faithful} canonical group action and discuss this in further detail in \cite{Brothier-Seelig24}. Fix an Ore FS category $\cF=\FS\la S|R\ra$ with FS group $G$ and normal subgroups $K_o,K_\omega\lhd G$ as in Definition \ref{def:K-groups}. Sending a free forest $f=(f_1,f_2,\dots,f_n)\in\FS\la S\ra$ to $\prune(f_1,1)$ furnishes a surjective (non-monoidal) functor
\begin{align*}
	\gamma_o:\cF\to\Mon\la S|\prune(u,1)=\prune(v,1):(u,v)\in R\ra
\end{align*}
and hence a surjective group morphism
\begin{align*}
	\gamma_o:G\to\Gr\la S|\prune(u,1)=\prune(v,1):(u,v)\in R\ra=:\Gamma_o.
\end{align*}
If the canonical group action is faithful, then elements of $K_o$ can be written as $[tp,tq]$ where $t\in\cT$ and the first trees of $p,q\in\cF$ are trivial. It follows that $\gamma_o$ factors through $\gamma_o:G/K_o\to\Gamma_o$. 
One can prove that, if $g=[t,s]$ satisfies that $\prune(t,1)=\prune(s,1)$, then $g\in K_o$. 
	Therefore, $G/K_o\to \Gamma_o$ is injective and thus is a group isomorphism.
Similar can be done for pruning at last leaves to define the presented group $\Gamma_\omega$ for the germ group $G/K_\omega$. It follows that when the canonical group action is faithful, we can easily read the germ groups at $o$ and $\omega$ from a skein presentation. As a quick example, consider the Ore FS category $\cF=\FS\la a,b|a_1a_1=b_1b_2\ra$ that gives Cleary's irrational slope Thompson group. In Section \ref{sec:examples-via-dynamics} we will demonstrate the canonical group action $\al:G\act\fC$ for $\cF$ is faithful. Hence $G/K_o\simeq\Gamma_o= \Gr\la a,b|aa=b\ra\simeq\Z$ and $G/K_\omega\simeq\Gamma_\omega=\Gr\la a,b|a=bb\ra\simeq\Z$.

\subsection{Faithfulness implies simplicity}

In this section we show faithfulness of the canonical group action of a given Ore FS category implies ``good simplicity properties'' of its FS groups, with a caveat in the $F$-case. In particular, we deduce when the statement:
\begin{align}\label{eqn:simple}
	\textit{Any non-trivial subgroup $\Lambda\subset \Gamma$ normalised by $[\Gamma,\Gamma]$ contains $[\Gamma,\Gamma]$.}
\end{align}
holds true for $\Gamma$ being $K$, $G$, $G^T$, or $G^V$. If (\ref{eqn:simple}) holds for $\Gamma$, then it is standard that
\begin{itemize}
	\item $[\Gamma,\Gamma]$ is simple; and
	\item all proper quotients of $\Gamma$ are abelian.
\end{itemize}
The proof scheme is due to Higman \cite{Higman54} and Epstein \cite{Epstein70} and has been used to show many Thompson-like groups are simple \cite{Brown87,Stein92,Brin04,Matui15,Kim-Koberda-Lodha19,Burillo-Nucinkis-Reeves21}. 

\medskip{\bf Notation and assumptions.} For the remainder of this section we consider an Ore FS category $\cF$ with FS groups $G$, $G^T$, $G^V$, canonical group action $\al:G^V\act\fC$, and canonical monoid action $\beta:\cT_p\act\fC$. For the remainder of the section:

\begin{center}
{\bf Assume the canonical group action of $\cF$ is faithful.}
\end{center}

Hence we identify $G^V$ with a subgroup of the homeomorphism group $\Homeo(\fC)$. We write $[g,h]=ghg^{-1}h^{-1}$ for the commutator of two group elements $g$ and $h$. So as not to confuse this notation with fractions of trees $[t,s]$, we reserve symbols $g,h,k,l,m,n$ for elements of $G^V$ and $r,s,t,z$ for trees in $\cF$. 
We also write $g^h=hgh^{-1}$ and $g\restriction_A$ for the restriction of $g:\fC\to\fC$ to a subset $A\subset\fC$.

We will use the following variant of Higman's theorem appearing in \cite[Section 2]{Kim-Koberda-Lodha19}.

\begin{theorem}\label{theo:well-transitive}
An action by homeomorphisms $\Gamma\act X$ is called {\it compact-open transitive} (CO-transitive for short) if for each proper compact $A\subset X$ and each non-empty open $B\subset X$ there exists $\gamma\in\Gamma$ with $\gamma(A)\subset B$. 
If $X$ is a non-compact Hausdorff space and the action $\Gamma\act X$ is faithful, CO-transitive, and acts by compactly supported homeomorphisms, then (\ref{eqn:simple}) holds for $\Gamma$.
\end{theorem}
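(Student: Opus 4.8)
The plan is to prove statement (\ref{eqn:simple}) — that a non-trivial subgroup $\Lambda\subset\Gamma$ normalised by $[\Gamma,\Gamma]$ contains $[\Gamma,\Gamma]$ — since the simplicity of $[\Gamma,\Gamma]$ and the abelianity of proper quotients then follow formally, as recorded just above. The argument is the Higman--Epstein scheme, adapted to the fact that $\Lambda$ is only assumed normalised by $[\Gamma,\Gamma]$, not by all of $\Gamma$. First I would fix $g\in\Lambda$ with $g\neq e$; faithfulness gives $x_0\in X$ with $g\cdot x_0\neq x_0$, and since $X$ is Hausdorff and $g$ is continuous I can shrink to a non-empty open $U\ni x_0$ with $g(U)\cap U=\varnothing$, which is moreover a proper subset of $X$ because $X$ is non-compact and $g$ has compact support. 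Throughout I would use CO-transitivity in the form: any proper compact subset of $X$ can be pushed inside any prescribed non-empty open set — so proper open sets are ``displaceable'' and the support of any compactly supported element of $\Gamma$ can be conjugated to lie anywhere. In particular $g^{-1}\in\Lambda$ displaces $g(U)$, and there are non-trivial elements of $[\Gamma,\Gamma]$ supported in $U$ (push the support of some non-trivial commutator of $\Gamma$ into $U$).

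The heart of the matter — and the place where ``normalised by $[\Gamma,\Gamma]$'' is exactly enough — is the following identity: for every $c\in[\Gamma,\Gamma]$,
\[
[g,c]\;=\;gcg^{-1}c^{-1}\;=\;g\cdot\bigl(cg^{-1}c^{-1}\bigr)\;\in\;\Lambda,
\]
since $cg^{-1}c^{-1}$ is a $[\Gamma,\Gamma]$-conjugate of $g^{-1}\in\Lambda$ and $g\in\Lambda$. Taking $c\in[\Gamma,\Gamma]$ with $\supp(c)\subset U$, the element $[g,c]$ is supported in $U\sqcup g(U)$ and restricts to $c^{-1}$ on $U$ and to $gcg^{-1}$ on $g(U)$. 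Running the same identity with $g^{-1}$ and an appropriate element of $[\Gamma,\Gamma]$ supported in $g(U)$ yields a second element of $\Lambda$ with the same $U$-part whose $g(U)$-part cancels the one above; the product then lies in $\Lambda$ and equals $c^{-1}$. Hence $\Lambda$ contains $[\Gamma,\Gamma]\cap\Gamma_U$, where $\Gamma_U$ is the set of elements supported in $U$.

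To finish I would propagate from $U$ to all of $[\Gamma,\Gamma]$. Using the commutator identities $[xy,z]={}^x[y,z]\,[x,z]$ and $[x,yz]=[x,y]\,{}^y[x,z]$ together with a fragmentation of arbitrary elements of $\Gamma$ into pieces of small support (again a consequence of CO-transitivity and compact supports), any commutator $[a,b]$ with $a,b\in\Gamma$ becomes a product of $\Gamma$-conjugates of commutators $[a',b']$ with $a'$ and $b'$ supported in a common small open set; each such set can be moved inside $U$, and the previous step applies. Assembling these while keeping careful track of which conjugations are legitimate then gives $[\Gamma,\Gamma]\subset\Lambda$.

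The hard part will be this assembly, not the topology. Because $\Lambda$ need not be normal in $\Gamma$, the commutator manipulations must be arranged so that every conjugation ever applied to an element of $\Lambda$ is by an element of $[\Gamma,\Gamma]$ (or of a subgroup already shown to normalise $\Lambda$), and so that the translated copy of a support produced when one displaces it is always cancelled afterwards. The extra left factor $g$ in $[g,c]=g\cdot{}^{c}(g^{-1})$ is precisely what renders the first constraint tractable, while CO-transitivity together with the compact-support hypothesis is what makes displacing and fragmenting supports possible; faithfulness is used only once, at the start, to convert $g\neq e$ into a genuinely moved point.
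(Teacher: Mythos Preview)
The paper does not prove this theorem; it cites \cite[Section 2]{Kim-Koberda-Lodha19} and moves on. So you are supplying a proof the paper omits, and I will assess it on its own terms.

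Your outline has the right shape but a real gap at the cancellation step. With $c\in[\Gamma,\Gamma]$ supported in $U$, the element $[g,c]\in\Lambda$ restricts to $c^{-1}$ on $U$ and to $gcg^{-1}$ on $g(U)$, exactly as you say. You then want a second element $[g^{-1},d]\in\Lambda$, with $d\in[\Gamma,\Gamma]$ supported in $g(U)$, whose product with $[g,c]$ equals $c^{-1}$. But $[g^{-1},d]$ restricts to $g^{-1}dg$ on $U$ and to $d^{-1}$ on $g(U)$; for the product to act as $c^{-1}$ on $U$ you would need $g^{-1}dg$ trivial on $U$, forcing $d=e$, while the only $d$ that cancels the $g(U)$-part is $d=gcg^{-1}$, which makes the whole product trivial. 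The two halves of $[g,c]$ are coupled and cannot be separated by this manoeuvre. Your later ``fragmentation'' step is also not available from the hypotheses: CO-transitivity lets you \emph{move} a compact support, but does not let you write an element of $\Gamma$ as a product of elements with small support.

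The argument that works --- and that the paper itself deploys in its concrete setting in the proofs of Theorems~\ref{theo:simple-F} and~\ref{theo:simple-T-V} --- is the double-commutator trick recorded as Proposition~\ref{prop:simplicity-facts}(2): if $k(\supp(a))\cap\supp(b)=\varnothing$ then $[a,b]=[[a,k],b]$. Given arbitrary $a,b\in\Gamma$ one uses CO-transitivity to rewrite $[a,b]=[[a,k],[b,\ell]]$ with both inner commutators of proper support; then, using Proposition~\ref{prop:simplicity-facts}(1), one manufactures $\gamma=[m,\lambda^n]\in\Lambda$ (with $\lambda\in\Lambda\setminus\{e\}$ and $m,n\in[\Gamma,\Gamma]$) that displaces $\supp([a,k])$ off $\supp([b,\ell])$, yielding $[a,b]=[[[a,k],\gamma],[b,\ell]]$. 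Since $[a,k],[b,\ell]\in[\Gamma,\Gamma]$ and $\gamma\in\Lambda$, every conjugation of a $\Lambda$-element in this expression is by an element of $[\Gamma,\Gamma]$, so the result lies in $\Lambda$. This avoids both the cancellation and the fragmentation problems in your sketch.
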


\subsubsection{Simplicity: $F$-case} 
Here are some useful facts.

\begin{proposition}\label{prop:simplicity-facts}
Let $g,h,k\in G^V$,	$A\subset\fC$ non-empty clopen, and $C\subset\fC$ a proper cone.
	\begin{enumerate}
		\item If $h(C)\cap C=\varnothing$ and $k^{-1}(\supp(g)\cup A)\subset C$, then $g\restriction_A=[g,h^k]\restriction_A$.
		\item If $k(\supp(g))\cap\supp(h)=\varnothing$, then $[g,h]=[[g,k],h]$.
	\end{enumerate}
\end{proposition}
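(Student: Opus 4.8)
Both claims are "conjugation spreads disjoint supports apart" identities that hold for any faithful action by homeomorphisms of a topological group on a space; faithfulness is only used so that equality of elements follows from equality of the maps they induce on $\fC$. So the strategy in each case is: reduce to a pointwise statement about the homeomorphisms $g,h,k:\fC\to\fC$, split $\fC$ into the relevant region and its complement, and check the identity separately on each piece using only the hypothesis on supports/cones together with the elementary facts $\supp(aba^{-1})=a(\supp(b))$ and "if $x\notin\supp(a)$ then $a(x)=x$".

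\emph{Part (1).} I would write $f=h^k=khk^{-1}$, so $\supp(f)=k(\supp(h))$; from $h(C)\cap C=\varnothing$ one gets $\supp(h)\subset C\cup h(C)$ hmm — more directly I will argue on the nose. Set $D:=k(C)$, a proper cone, with $f(D)=k(h(C))$ disjoint from $D=k(C)$ since $h(C)\cap C=\varnothing$. The hypothesis $k^{-1}(\supp(g)\cup A)\subset C$ says $\supp(g)\cup A\subset D$. Now evaluate $[g,f]=gfg^{-1}f^{-1}$ on a point $x\in A$. Since $x\in D$ and $f(D)\cap D=\varnothing$, the point $y:=f^{-1}(x)$ lies outside $D$, hence outside $\supp(g)$, so $g^{-1}$ — wait, I need $g(y)=y$: indeed $y\notin\supp(g)$ gives $g^{-1}(y)=y$ and $g(y)=y$. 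Trace through: $f^{-1}(x)=y\notin\supp(g)$, so $g^{-1}f^{-1}(x)=y$; then $fg^{-1}f^{-1}(x)=f(y)=x\in\supp(g)$ is allowed; then $gfg^{-1}f^{-1}(x)=g(x)$. So $[g,f](x)=g(x)$ for all $x\in A$, i.e. $[g,h^k]\restriction_A=g\restriction_A$. The one point to be careful about is that $A$ and $\supp(g)$ might not be inside the \emph{same} cone $D$ but the hypothesis bundles them together as $\supp(g)\cup A\subset D$, so this is fine; I'd just present the computation cleanly.

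\emph{Part (2).} Let $g':=[g,k]=gkg^{-1}k^{-1}$, so $\supp(g')\subset \supp(g)\cup k(\supp(g))$ (the support of a product lies in the union of supports, and $\supp(kgk^{-1})=k(\supp(g))$). The hypothesis $k(\supp(g))\cap\supp(h)=\varnothing$ does \emph{not} immediately give $\supp(g')\cap\supp(h)=\varnothing$ because of the $\supp(g)$ term, so instead I want to show $[g,h]=[g',h]$ directly by a pointwise check, using that $g$ and $g'$ agree off a set disjoint from $\supp(h)$. Concretely: off $k(\supp(g))$ one has $kg^{-1}k^{-1}=\id$, so there $g'=gkg^{-1}k^{-1}$ acts as... hmm, that's not quite "$g'=g$". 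The cleaner route: verify $[[g,k],h]=[g,h]$ by expanding. Note $[g,k]$ and $g$ differ by the factor $kg^{-1}k^{-1}$, and this factor is supported on $k(\supp(g))$, which is $h$-invariant pointwise-fixed by $h$, i.e. disjoint from $\supp(h)$. Set $m:=kg^{-1}k^{-1}$, so $[g,k]=gm$ with $\supp(m)\cap\supp(h)=\varnothing$. Then $h$ commutes with $m$ as homeomorphisms (disjoint supports: $hmh^{-1}=m$), hence $[gm,h]=gmh m^{-1}g^{-1}h^{-1}=gmm^{-1}hg^{-1}h^{-1}$... I need $hm^{-1}=m^{-1}h$, true, giving $=g\,h g^{-1}h^{-1}\cdot$(something) — I'd reorganize as $[gm,h]=(gm)h(gm)^{-1}h^{-1}=g(mhm^{-1})g^{-1}h^{-1}=g h g^{-1}h^{-1}=[g,h]$, using $mhm^{-1}=h$. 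So $[[g,k],h]=[g,h]$, as desired.

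\textbf{Main obstacle.} Nothing here is deep; the only thing to get right is the bookkeeping of which cone/region contains which support after conjugation, and the two small lemmas ($\supp(aba^{-1})=a(\supp(b))$ and "disjoint supports $\Rightarrow$ commute") which I'd state once at the top of the proof. The mild subtlety is in Part (1): making sure that the chain $x\rightsquigarrow f^{-1}(x)\rightsquigarrow g^{-1}f^{-1}(x)\rightsquigarrow\cdots$ stays in the correct regions at every step, which hinges on $f(D)\cap D=\varnothing$ and $\supp(g)\cup A\subset D$; I'd lay out the four applications of $g^{\pm1},f^{\pm1}$ explicitly and note at each stage why the point is inside or outside the relevant support.
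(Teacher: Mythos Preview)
Your proposal is correct and follows essentially the same approach as the paper. In Part (1) the paper derives $(h^k)^{-1}(\supp(g)\cup A)\cap(\supp(g)\cup A)=\varnothing$ and then traces the same four-step computation you outline; your use of $D=k(C)$ is just a relabelling. In Part (2) the paper writes $[g,k]=g(g^{-1})^k$, observes $(g^{-1})^k$ commutes with $h$ by disjoint supports, and expands $[[g,k],h]$ directly---exactly your argument with $m=(g^{-1})^k$.
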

\begin{proof}
	(1) It follows that
	\begin{align*}
		(h^k)^{-1}(\supp(g)\cup A)\cap(\supp(g)\cup A)=\varnothing.
	\end{align*}
	In particular $g$ acts trivially on $(h^k)^{-1}(A)$. Thus, for all $a\in A$
	\begin{align*}
		[g,h^k](a)&=gh^kg^{-1}(h^k)^{-1}(a)
		=gh^k(h^k)^{-1}(a)
		=g(a)
	\end{align*}
	which gives the result.
	
	(2) It follows that $\supp(g^k)$ and $\supp(h)$ are disjoint, hence $g^k$ and $h$ commute. We have
	\begin{align*}
		[[g,k],h]&=g(g^{-1})^khg^kg^{-1}h^{-1}
		=gh(g^{-1})^kg^kg^{-1}h^{-1}
		=ghg^{-1}h^{-1}
		=[g,h]
	\end{align*}
	as required.
\end{proof}
\begin{theorem}\label{theo:simple-F} Let $\cF$ be an Ore FS category with FS group $G$, canonical group action $\al:G\act\fC$, and normal subgroups $K_o,K_\omega,K\lhd G$. Suppose $\al:G\act\fC$ is faithful.
	\begin{enumerate}
		\item For any non-trivial subgroup $\Lambda\subset K$ normalised by $[K,K]$ we have $[K,K]\subset\Lambda$. 
		\item If the germ groups $G/K_o$ and $G/K_\omega$ are abelian, then item (1) is true if one replaces $K$ with $G$.
	\end{enumerate}
\end{theorem}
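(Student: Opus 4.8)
The strategy is to verify the hypotheses of Theorem \ref{theo:well-transitive} for the relevant non-compact spaces, and then handle the $F$-case subtlety with the germ-group hypothesis separately. For part (1), I would work with the space $X = \fC\setminus\{o,\omega\}$ equipped with its subspace topology; this is non-compact Hausdorff. The group $K$ acts on $X$ because every $k\in K$ fixes a neighbourhood of $o$ and of $\omega$, and moreover this fixing means $\supp(k)$ is a proper compact subset of $X$ — so $K$ acts by compactly supported homeomorphisms on $X$. Faithfulness of $K\act X$ follows from faithfulness of $G\act\fC$ together with the fact that $o,\omega$ are fixed by all of $G$, so the action on $\fC\setminus\{o,\omega\}$ loses no information. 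The main thing to check is CO-transitivity: given a proper compact $A\subset X$ and a non-empty open $B\subset X$, I want $k\in K$ with $k(A)\subset B$. Since cones form a clopen basis and $A$ is compact and proper in $\fC\setminus\{o,\omega\}$, $A$ is contained in a finite union of cones avoiding neighbourhoods of $o$ and $\omega$, hence (by refining via common right-multiples) inside a single proper cone $C=\Cone(z,j)$ with $1<j<|\Leaf(z)|$, so that $C$ itself is bounded away from $o$ and $\omega$. Shrinking $B$ we may assume $B$ is a cone $C'=\Cone(z',j')$ likewise bounded away from the endpoints. Then I need an element of $G$ supported in a proper cone that carries $C$ into $C'$; such an element automatically lies in $K$. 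This is where Proposition \ref{prop:action-facts}(8)--(9) and the homogeneity of the $G$-action (Theorem \ref{theo:previous-FS-groups}(1), giving a copy of $F$ acting) come in: I can first move $C$ inside $C'$ using an element of $G$ fixing everything outside a common enclosing proper cone, arranging it to be the identity near $o$ and $\omega$.

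For part (2), assume $G/K_o$ and $G/K_\omega$ are abelian and let $\Lambda\subset G$ be non-trivial and normalised by $[G,G]$. The plan is to reduce to part (1) by showing $\Lambda\cap K\neq\{e\}$ and $[G,G]\subset K$, plus that $\Lambda$ must then contain $[G,G]$. The abelianness of the germ groups gives exactly $[G,G]\subset K_o$ and $[G,G]\subset K_\omega$, hence $[G,G]\subset K$. Now take $g\in\Lambda$, $g\neq e$; by faithfulness there is $x\in\fC$ with $g\cdot x\neq x$, and since $g$ preserves the order and fixes $o,\omega$, one can find a proper cone $C$ with $g(C)\cap C=\varnothing$ (choose $C$ a small cone around $x$ or a suitable image). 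Then for any $h\in G$ supported in a proper cone $D$ with $k^{-1}(\supp(g)\cup D')$ inside a cone disjoint from its $g$-image — here I would invoke Proposition \ref{prop:simplicity-facts}(1): picking $h\in[G,G]\subset K$ and $k\in G$ appropriately gives that $[g,h^k]$ agrees with a non-trivial element of $[G,G]$ on a clopen set while lying in $\Lambda$ (as $\Lambda$ is normalised by $[G,G]$ and $g\in\Lambda$). More precisely, the commutator trick shows $\Lambda$ contains an element whose restriction to some non-empty clopen $A$ equals that of a prescribed $h\in[K,K]$; iterating and using that $[K,K]\subset K\subset G$ acts CO-transitively (part (1) applied with $\Lambda\cap K$) forces $[K,K]=[G,G]\subset\Lambda$. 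One must separately observe $[G,G]=[K,K]$ under the germ hypothesis: since $[G,G]\subset K$ we get $[G,G]=[[G,G],[G,G]]\subset[K,K]$ using that $[G,G]$ is perfect once it is simple-mod-trivial — or more carefully, part (1) already yields $[K,K]$ simple, and $[G,G]$ being a non-trivial subgroup of $K$ normalised by $[K,K]$ forces $[K,K]\subset[G,G]$, while $[G,G]\subset K$ gives $[G,G]=[G,G]'\subset[K,K]$ only after knowing $[G,G]$ is perfect; the clean route is: $[G,G]\subseteq K$, so $[K,K]\supseteq$ its own derived subgroup and applying part (1) to the non-trivial normalised subgroup $[G,G]\cap K=[G,G]$ yields $[K,K]\subseteq[G,G]$, and the reverse containment is automatic since $K\subseteq G$.

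\textbf{Main obstacle.} The delicate point is the reduction showing that an arbitrary proper compact $A$ can be squeezed into an arbitrary cone $B$ by an element that is genuinely the identity near \emph{both} endpoints $o$ and $\omega$ simultaneously — i.e. lands in $K$, not merely in $G$. A proper compact subset of $\fC\setminus\{o,\omega\}$ need not itself avoid a \emph{cone}-neighbourhood of the endpoints in an obvious uniform way; I need to use compactness to cover $A$ by finitely many cones, then use common right-multiples (Observation \ref{obs:Ore-category} and the Ore property) to find a single tree $z$ refining all of them, so that $A\subset\Cone(z,j_1)\cup\dots\cup\Cone(z,j_m)$ with none of the $j_i$ equal to $1$ or $|\Leaf(z)|$; equivalently $A$ sits inside $\Cone(z,j_1)\cup\dots\cup\Cone(z,j_m)$ whose complement contains $\Cone(z,1)$ and $\Cone(z,|\Leaf(z)|)$, giving the needed endpoint buffer. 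Then transporting into $B$ uses the embedded copy of Thompson's $F$ (Theorem \ref{theo:previous-FS-groups}(1)) acting on the intermediate leaves of a common refinement, which does act CO-transitively on the appropriate interval and can be taken trivial on the outer cones. Getting this buffer argument precise, and verifying the transported element is both in $G$ (order-preserving) and trivial near $o,\omega$, is the technical heart; everything else is a routine application of Proposition \ref{prop:simplicity-facts} and Theorem \ref{theo:well-transitive}.
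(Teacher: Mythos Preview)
Your outline for part~(1) is essentially the paper's argument: restrict to $\fC^*=\fC\setminus\{o,\omega\}$, check that $K$ acts faithfully by compactly supported homeomorphisms, prove CO-transitivity by covering the compact set by finitely many cones $\Cone(z,j_i)$ with $j_i$ never the first or last leaf, and then build an element of $K$ carrying these into a target cone. (Your earlier claim that $A$ sits in a \emph{single} proper cone is false in general, but your ``Main obstacle'' paragraph has the correct picture with a finite union, so this is only a slip.) The paper constructs the moving element directly as a fraction $[t,s]$ with matching first and last branches rather than invoking the embedded copy of $F$; your route via $F$ would need more justification, since the monochromatic copy of $F$ need not act transitively on arbitrary cones of $\fC$, but the direct construction is available.

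There is a genuine gap in part~(2). You correctly observe that abelian germ groups give $[G,G]\subset K$, and that $K\subset G$ gives $[K,K]\subset[G,G]$. You then need the \emph{reverse} inclusion $[G,G]\subset[K,K]$, and your sentence ``the reverse containment is automatic since $K\subseteq G$'' is simply wrong: $K\subseteq G$ yields $[K,K]\subseteq[G,G]$ again, the same direction. Nor can you get it from perfectness of $[G,G]$ without circularity, as you yourself note. This inclusion is not formal: taking $\Lambda=[K,K]$ in the statement of~(2) shows that the conclusion of~(2) \emph{forces} $[G,G]=[K,K]$, so a proof of~(2) must establish this equality by some independent means.

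The paper handles this with a separate argument (its Claim~3, proving $[G,G]\cap K=[K,K]$). Given $g\in[G,G]\cap K$, one first conjugates $g$ by an element of $K$ into $G_Z$ for a fixed cone $Z\subset\fC^*$, writes the result as $\eta_Z(l)$ for some $l\in G$, and then uses an explicit diagrammatic computation with a specific element $\gamma\in G$ to show $\eta_Z(l)=\gamma\,\eta_W(l)\,\gamma^{-1}$ for a larger cone $W$. This exhibits $\eta_Z(l)=\eta_W(\eta_W(l))$ as lying in $\eta_W([G,G])\subset[G_W,G_W]\subset[K,K]$, and conjugating back gives $g\in[K,K]$. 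This ``nesting'' trick is the missing idea in your proposal; once you have $[G,G]=[K,K]$, the rest of your sketch (CO-transitivity of $[G,G]$ on $\fC^*$, then the commutator manipulation from Proposition~\ref{prop:simplicity-facts}(2) with $\lambda\in\Lambda$) goes through as in the paper.
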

\begin{proof}
	(1) Equip $\fC^*:=\fC\setminus\{o,\omega\}$ with the subspace topology, which we note is Hausdorff and non-compact. Since $K$ fixes a neighbourhood of $o$ and $\omega$ in $\fC$ we get a restricted action $\al:K\act\fC^*$ by compactly supported homeomorphisms. Hence, to prove (1) it is sufficient to show $\al:K\act\fC^*$ is CO-transitive by Theorem \ref{theo:well-transitive}. 
	
	\medskip{\bf Claim 1: The action $\al:K\act\fC^*$ is CO-transitive.}
	
	Fix a compact subset $A\subset\fC^*$ and a non-empty open subset $B\subset\fC^*$. Having common right-multiples implies that  $A\subset\cup_{\ell\in L}\Cone(s,\ell)$ for some tree $s\in\cT$ and subset $L\subset\Leaf(s)$ not containing the first or last leaf of $s$. Let $L':=\{\ell\in\Leaf(s):\min(L)\leqslant\ell\leqslant\max(L)\}$, write $A'$ for $\cup_{\ell\in L'}\Cone(s,\ell)$, and write $d$ for the cardinality of $|L'|$. Since $B$ is open we may pick a cone $B'\subset B$ with $o,\omega\notin B'$. Hence $B'=\Cone(r,j)$ for some tree $r\in\cT$ and leaf $j$ which is not the first or last leaf of $r$. We now let $f\in\cF$ be {\it any} forest composable with $r$ whose $j$th tree has $d$ leaves and set $t:=r\circ f$. This allows us to split $B'$ into $d$ subcones $B'=\cup_{k\in D}\Cone(t,k)$, where $D$ is the set of leaves of $f_j$ in $t$. We are now going to construct an element of $K$ that sends $A'$ into $B'$. We note that $L'$ and $D$ do not contain the first and last leaves of their respective trees. Hence, up to growing at the first and last leaves, having common right-multiples permits us to assume the trees $t$ and $s$ act the same at their first and last leaves (with respect to the canonical monoid action). Moreover, up to growing leaves not in $L'$ or $D$ we may assume that $L'=D$ and that the number of leaves of $t$ and $s$ is the same. With these assumptions, it makes sense to form the fraction $g:=[t,s]\in G$. Since $t$ and $s$ act the same at their first and last leaves we have $g\in K$, and since $L'=D$, we have that $g(A')=B'$, so $g(A)\subset B$.

	(2) We begin by improving Claim 1.
	
	\medskip{\bf Claim 2: The action $\al:[K,K]\act\fC^*$ is CO-transitive.}
	
	Let $A$ be compact and $B$ non-empty open. By Claim 1 pick $g\in K$ with $g(A)\subset B$. Fix $h\in K$ non-trivial. Since $\al$ is faithful, $\fC^*$ is Hausdorff, and $h$ is a homeomorphism, there is a cone $C\subset\fC^*$ with $h(C)\cap C=\varnothing$. Since $\supp(g)\cup A$ is compact, we use Claim 1 to pick $k\in K$ with $k^{-1}(\supp(g)\cup A)\subset C$. By item (1) of Proposition \ref{prop:simplicity-facts} we have $[g,h^k](A)\subset B$ and $[g,h^k]\in[K,K]$, as required.
	
	\medskip{\bf Claim 3: We have $[G,G]\cap K=[K,K]$.}
	
	The inclusion $[K,K]\subset[G,G]\cap K$ is clear. Fix $g\in[G,G]\cap K$ and a caret $Y\in\cF$ which we draw as a monochromatic caret for ease. Consider the trees
	\begin{align*}
		w=Y_{1,1}Y_{1,2}=\FSimpleA\quad\textnormal{and}\quad z=Y_{1,1}Y_{1,2}Y_{2,3}Y_{2,4}=\FSimpleB
	\end{align*}
	and the cones $W=\Cone(w,2)$ and $Z=\Cone(z,3)$. Recall $G_Z\subset G$ and $\eta_Z:G\to G_Z$ from Definition \ref{def:K-groups}. Since $\supp(g)\subset\fC^*$ is compact, by Claim 1 we pick $h\in K$ with $k:=g^h\in[G,G]\cap G_Z$. Since the canonical group action is faithful, we must have $k=\eta_Z(l)$ for some $l\in G$. Set $\gamma:=[Y_{1,1}Y_{1,2}Y_{2,3}Y_{2,4},Y_{1,1}Y_{1,2}Y_{1,3}Y_{4,4}]\in G$. We now conjugate $\eta_W(l)$ by $\gamma$ using the diagrammatic calculus. The group element $l$ is drawn in a diamond shaped cell to denote that it is a fraction of trees. We have
	\begin{align*}
		\gamma\circ\eta_W(l)\circ\gamma^{-1}=\FSimpleC=\FSimpleD=\FSimpleE=\eta_Z(l).
	\end{align*}
	Since $k=\eta_Z(l)$ is in $[G,G]$ and $[G,G]\lhd G$, the above calculation implies $\eta_W(l)\in[G,G]$. Moreover, by the definition of $Z$ and $W$, we have $k=\eta_Z(l)=\eta_W(\eta_W(l))$. It follows that  $k\in\eta_W([G,G])\subset[G_W,G_W]$, which is contained in $[K,K]$ since $o,\omega\notin W$. Finally, as $k=g^h$ where $h\in K$ and $[K,K]\lhd K$, it follows that $g\in[K,K]$, as required.
	
	We can now complete the proof of (2). If $G/K_o$ and $G/K_\omega$ are abelian, then $[G,G]\subset K$. By Claim 3 we have $[G,G]=[K,K]$. It follows by item (1) that $[G,G]$ is simple and by Claim 2 that the action $\al:[G,G]\act\fC^*$ is CO-transitive. Now fix $\Lambda\subset G$ a non-trivial subgroup normalised by $[G,G]$. Let $g,h\in[G,G]$ which we note are compactly supported in $\fC^*$. Let $\lambda\in\Lambda$ be non-trivial and let $C$ be a cone with $\lambda(C)\cap C=\varnothing$. By CO-transitivity we pick $k\in[G,G]$ with $k^{-1}(\supp(g)\cup\supp(h))\subset C$. Hence $\lambda^k(\supp(g))\cap\supp(h)=\varnothing$, so by item (2) of Proposition \ref{prop:simplicity-facts} we have $[g,h]=[[g,\lambda^k],h]$, which is in $\Lambda$ as $g,h,k\in[G,G]$. Since $[G,G]$ is simple this gives $[G,G]=[[G,G],[G,G]]\subset\Lambda$, as required.
\end{proof}

Here we witness an explicit example of an FS group having non-abelian germ groups.

\begin{example}\label{ex:germ-examples}
	In \cite{Brothier-Seelig24} we will show that when the canonical group action $\al:G\act\fC$ of an FS category $\cF$ is faithful, a presentation for the group of germs for the action $\al$ at points in $Q^+\subset\fC$ (resp. $Q^-\subset\fC$) can be read from an FS presentation of $\cF$ by pruning skein relations at the first (resp. last) leaves (see Section \ref{sec:germ-presentation} for a sketch). This has some interesting consequences:
	\begin{enumerate}
		\item {\bf An FS group with a non-abelian germ group.} With the above in mind, consider the FS category $\cF=\FS\la a,b|a_1a_1a_3=b_1b_2b_3\ra$, which we know by item (2) of Theorem \ref{theo:previous-FS-groups} is left-cancellative and has common right-multiples. In \cite{Brothier-Seelig24} we will show the corresponding canonical group action $\al:G\act\fC$ is faithful. Hence, by the above discussion
		\begin{align*}
			G/K_\omega&\simeq\Gr\la a,b|\prune(a_1a_1a_3,4)=\prune(b_1b_2b_3,4)\ra\\
			&=\Gr\la a,b|aa=bbb\ra
		\end{align*}
		and this group admits as a quotient the non-abelian group $\Gr\la a,b|a^2=b^3=e\ra\simeq\Z_2*\Z_3$. Hence $[G,G]$ is not contained in $K$ and so $[G,G]$ is {\it not} simple.
		\item {\bf A simple FS group with no piecewise linear action.} Consider the Ore FS category in the previous example and let $\pi:G\onto\Z_2*\Z_3$ be the quotient described above. We claimed that the canonical group action for $\cF$ is faithful, so by Theorem \ref{theo:simple-F}, the group $[K,K]$ is simple. Note that $\Z*\Z\into\Z_2*\Z_3$ (the derived subgroup for instance). This free group lifts via $\pi$ to a free subgroup of $G$. Since the derived subgroup of $\Z*\Z$ contains a copy of $\Z*\Z$ we have $\Z*\Z\into[G,G]$. Finally, for any cone $\Cone(z,\ell)\subset\fC^*$, conjugation by $\beta(z,\ell):\fC\to\Cone(z,\ell)$ witnesses an embedding $G\into K$. Hence $\Z*\Z\into[K,K]$. By the famous theorem of Brin and Squier \cite{Brin-Squier85}, the simple group $[K,K]$ does not admit any piecewise linear action on the real line.  
		This will be further investigated in \cite{Brothier-Seelig24}.
	\end{enumerate}
\end{example}

\subsubsection{Simplicity: $T$ and $V$-case} We now show that $G^T$ and $G^V$ have good simplicity properties when the canonical group action is faithful. No assumptions on the germ groups are necessary. The proof is similar to that above, although we also need to show that $G^T$ and $G^V$ are generated by elements of proper support \cite{Epstein70,Brin04,Matui15}.

\begin{theorem}\label{theo:simple-T-V} Let $\cF$ be an Ore FS category with FS groups $G^T$, $G^V$, canonical group action $\al:G^V\act\fC$, and canonical monoid action $\beta:\cT_p\act\fC$. If the $\al:G^V\act\fC$ is faithful, then for each $X\in\{T,V\}$, any non-trivial subgroup $\Lambda\subset G^X$ normalised by $[G^X,G^X]$ contains $[G^X,G^X]$.
\end{theorem}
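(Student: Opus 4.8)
The plan is to reproduce the Higman--Epstein scheme used for Theorem~\ref{theo:simple-F}, this time with no hypothesis on the germ groups. The one genuinely new ingredient is a generation lemma: \emph{for $X\in\{T,V\}$, the group $G^X$ is generated by its elements of proper support}. I would prove it by subdivision. Given $g\in G^X$, use common right-multiples to pass to a representative $[z\rho,z']$ in which $z,z'$ are trees with the same leaves and $\rho$ is a permutation of those leaves that is cyclic when $X=T$; then factor $\rho$ into permutations each having a fixed leaf $\ell$, hence each realised by an element of $G^X$ that fixes the proper cone $\Cone(z,\ell)$ pointwise and so has proper support. In the $T$-case one first grows $z$ enough that a cyclic permutation admits such a factorisation. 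This is standard, cf.~\cite{Epstein70,Brin04,Matui15}.

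Granting the lemma, fix a non-trivial subgroup $\Lambda\subset G^X$ normalised by $[G^X,G^X]$. Any two cones of $\fC$ are disjoint or nested, so a commutator of two elements of proper support again has proper support; together with the lemma this shows that $[G^X,G^X]$ is generated by $\bigcup_W[G^X_W,G^X_W]$ over proper cones $W$, where $G^X_W:=\{g\in G^X:\supp(g)\subset W\}$ (which contains the copy $\eta_W(G)$ of the $F$-type group from Definition~\ref{def:K-groups}). Hence it suffices to show $[G^X_W,G^X_W]\subset\Lambda$ for each proper cone $W$. Fix such a $W$ and a non-trivial $g_0\in\Lambda$. Since $\al:G^X\act\fC$ is faithful and $\fC$ is totally disconnected, some cone is moved off itself by $g_0$; since $V\subset G^X$ and $V$ is CO-transitive on $\fC$, so is $G^X\act\fC$, and the bootstrapping argument from Claim~2 of the proof of Theorem~\ref{theo:simple-F} upgrades this to CO-transitivity of $[G^X,G^X]$. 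In particular I obtain $k\in[G^X,G^X]$ with $D:=k(W)$ a cone, chosen small, satisfying $g_0D\cap D=\varnothing$ and leaving ample room in $\fC$.

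For $c\in[G^X_D,G^X_D]={}^k[G^X_W,G^X_W]$, conjugating $g_0\in\Lambda$ by $c\in[G^X,G^X]$ places the commutator $[c,g_0]=c\cdot{}^{g_0}(c^{-1})$ in $\Lambda$; since $D$ and $g_0D$ are disjoint, this element is supported on $D\sqcup g_0D$ and restricts to $c$ on $D$. Now conjugating by elements of $[G^X,G^X]$ which fix $g_0D$ pointwise while relocating $D$ within the spare room --- available exactly as in the fraction constructions of Claim~1 of Theorem~\ref{theo:simple-F} --- and forming products and commutators as governed by Proposition~\ref{prop:simplicity-facts}, one manufactures inside $\Lambda$ elements supported entirely in $D$ that agree with $c$ there; correcting with the same mechanism yields $c$ itself, so $[G^X_D,G^X_D]\subset\Lambda$, whence $[G^X_W,G^X_W]={}^{k^{-1}}[G^X_D,G^X_D]\subset\Lambda$ (as $k\in[G^X,G^X]$ normalises $\Lambda$). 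Assembling over all $W$ gives $[G^X,G^X]\subset\Lambda$, i.e.~(\ref{eqn:simple}) for $G^X$.

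I expect two points to require real work. The first is the generation lemma in the $T$-case, where the constraint that permutations be cyclic makes the factorisation into fixed-leaf permutations more delicate. The second, and the genuine departure from the $F$-case, is the final extraction: the commutator $[c,g_0]$ agrees with $c$ only on $D$ and carries a tail supported on $g_0D$, and one must argue carefully --- exploiting the freedom to take $D$ small and the richness of the $V$-action --- that enough such elements can be combined to recover all of $[G^X_W,G^X_W]$, rather than merely a proper derived subgroup of it.
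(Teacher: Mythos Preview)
Your overall Higman--Epstein strategy is sound and close in spirit to the paper's, but there is a genuine gap in the reduction step. The claim ``a commutator of two elements of proper support again has proper support'' is false: if $\supp(g)$ and $\supp(h)$ are proper closed sets whose union is all of $\fC$ (e.g.~$g$ supported on $\Cone(Y,1)$ and $h$ on $\Cone(Y,2)$), then $\supp([g,h])$ can be all of $\fC$. The disjoint-or-nested property of cones does not help here, since ``proper support'' does not mean ``supported in a single cone''. Even if the claim were true, it would not yield that $[G^X,G^X]$ is generated by the $[G^X_W,G^X_W]$ for proper cones $W$: an element of proper support need not lie in any $G^X_W$. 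So the localisation to cones breaks down, and with it the rest of your argument.

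The paper sidesteps this entirely. Instead of localising, it proves directly (its Claim~3) that for any two elements $g,h\in G^X$ of proper support, $[g,h]\in\Lambda$. The mechanism is a double application of Proposition~\ref{prop:simplicity-facts}(2): one writes $[g,h]=[[g,k],[h,l]]$ for suitably chosen $k,l$, and then uses CO-transitivity of $[G^X,G^X]$ on $\fC$ (established from CO-transitivity of $[K,K]$ on $\fC^*$) together with Proposition~\ref{prop:simplicity-facts}(1) to push the commutator into $\Lambda$. This is exactly the extraction you flag as the hard point, but done without ever needing the commutator to be cone-supported. The generation step is also handled differently and more simply than your permutation-factoring lemma: the paper shows only that each $g\in G^X$ decomposes as $g'g''$ with $g'\in[K,K]$ and $g''$ of proper support (its Claim~2), by matching $g$ on a single cone with an element of $[K,K]$. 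A commutator identity then reduces $[g,h]$ for arbitrary $g,h$ to commutators of proper-support elements conjugated by elements of $[K,K]$, which lie in $\Lambda$ by Claim~3.
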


\begin{proof}
Let $\cF$, $G^T$, $G^V$, $\al$, and $\beta$ be as above. Fix $X$ to be $T$ or $V$.
	
\medskip{\bf Claim 1: The action $\al:[G^X,G^X]\act\fC$ is CO-transitive.}

It is sufficient to prove it for $X=T$ since being CO-transitive is closed under taking larger groups.
For any proper compact subset $A\subset\fC$ one can construct $g\in G^T$ with $g^{-1}(A)\subset\fC^*$ by taking a fine enough partition of $\fC$ and cyclically permuting the cones. For any non-empty open $B\subset\fC$ we can find a cone $C\subset B$ with $g^{-1}(C)\subset\fC^*$. By CO-transitivity of $\al:[K,K]\act\fC^*$ (established in Theorem \ref{theo:simple-F}) we pick $k\in[K,K]$ with $k(g^{-1}(A))\subset g^{-1}(C)$. It follows that $k^g\in[G^T,G^T]$ and $k^g(A)\subset B$. It is also important for the coming arguments to note the support of $k^g$ is not too big in the following sense: we have $A\cup\supp(k^g)\not=\fC$ since $g^{-1}(A)\cup\supp(k)\not=\fC^*$.

\medskip{\bf Claim 2: For any $g\in G^X$ there is $g'\in[K,K]$ and $g''\in G^X$ of proper support with $g=g'g''$.}
	
	 Up to growing a tree pair diagram for $g\in G^X$ we may pick cones $A=\Cone(s,k)$ and $B=\Cone(t,j)$ in $\fC^*$ satisfying $g\restriction_A=\beta(t,j)\beta(s,k)^{-1}$. Applying the construction in Claim 1 of Theorem \ref{theo:simple-F} to the cones $A$ and $B$ produces an element of $K$ that acts  like $\beta(t,j)\beta(s,k)^{-1}$ on $A$. Then by Claim 2 of Theorem \ref{theo:simple-F} we may pick $g'\in[K,K]$ with
	\begin{align*}
	g'\restriction_A
	=\beta(t,j)\beta(s,k)^{-1}.
	\end{align*}
	Thus $g$ and $g'$ act the same on $A$ and so $g'':=g'^{-1}g$ acts trivially on $A$. Hence $g''$ has proper support and since $[K,K]\subset G^X$ we have $g''\in G^X$. This gives the desired decomposition.
	
	Now let us fix a non-trivial subgroup $\Lambda\subset G^X$ normalised by $[G^X,G^X]$.
	
	\medskip{\bf Claim 3: If $g,h\in G^X$ have proper support, then $[g,h]\in\Lambda$.}
	
	Let $g,h\in G^X$ have proper support. By Claim 1 we pick $k\in G^X$ with 
	\begin{align*}
	k(\supp(g))\cap\supp(h)=\varnothing\quad\textnormal{and}\quad\supp(g)\cup\supp(k)\not=\fC.
	\end{align*}
	The second condition implies $[g,k]$ has proper support. We may apply Claim 1 again to pick $l\in G^X$ satisfying
	\begin{align*}
	l(\supp(h))\cap(\supp(g)\cup\supp(k))=\varnothing.
	\end{align*}  
	Similarly, $[h,l]$ has proper support. It follows from two applications of item (2) of Proposition \ref{prop:simplicity-facts} that $[g,h]=[[g,k],[h,l]]$. Now let $\lambda\in\Lambda$ be non-trivial and $C$ a cone with $\lambda(C)\cap C=\varnothing$. By Claim 1 we pick $m\in[G^X,G^X]$ satisfying
	\begin{align*}
	m(\supp([g,k]))\cap\supp([h,l])=\varnothing\quad\textnormal{and}\quad\supp(m)\cup\supp([g,k])\not=\fC.
	\end{align*} 
	By Claim 1 again we may pick $n\in[G^X,G^X]$ satisfying
	\begin{align*}
	n^{-1}(\supp(m)\cup\supp([g,k]))\subset C.
	\end{align*}  
	It follows by item (1) of Proposition \ref{prop:simplicity-facts} that $m$ and $\gamma:=[m,\lambda^n]$ act the same on $\supp([g,k])$. Note also that $\gamma\in\Lambda$ as $m,n\in[G^X,G^X]$. Hence
	\begin{align*}
		\gamma(\supp[g,k])\cap\supp[h,l]=\varnothing
	\end{align*}
	and so by item (2) of Proposition \ref{prop:simplicity-facts} we have
	\begin{align*}
		[g,h]=[[g,k],[h,l]]=[[[g,k],\gamma],[h,l]]
	\end{align*}
	which is an element of $\Lambda$ as $[g,k],[h,l]\in[G^X,G^X]$. 
	
	We can now finish the proof. Let $g,h\in G^X$ be arbitrary. By Claim 2 we decompose $g$ as $g'g''$ and $h$ as $h'h''$ where $g',h'\in[K,K]$ and $g'',h''\in G^X$ have proper support. We have 
	\begin{align*}
		[g,h]=[g'g'',h'h'']=[g'g'',h'][g'g'',h'']^{h'}
		=[g'',h']^{g'}[g',h']([g'',h'']^{g'})^{h'}[g',h'']^{h'}.
	\end{align*}
	By Claim 3 and since $\Lambda$ is a subgroup normalised by $[K,K]\subset[G^X,G^X]$ we have $[g,h]\in\Lambda$. This completes the proof.
\end{proof}

\subsection{Simplicity implies faithfulness}\label{sec:simple-faithful}

In this section we give a converse to the results of the previous section --- existence of cetain simple groups implies the canonical group action is faithful. For simplicity for FS groups, this means the canonical group action is the ``right'' one to study. We write $\ker(\al)$ for the kernel of $\al:G^V\act\fC.$ Recall from Section \ref{sec:summary} that it is contained inside $G$ and equal to the kernel of the action $\al:G\act Q$.

\begin{lemma}
	If $L\subset G$ is a subgroup normalised by $G^T$, then $L\subset \ker(\alpha)$.
	In other words, $\ker(\al)$ is the largest subgroup of $G$ that is normalised by $G^T$.
\end{lemma}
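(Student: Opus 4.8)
The plan is to show that any subgroup $L\subset G$ normalised by $G^T$ acts trivially on the $Q$-space, which by Proposition \ref{prop:action-facts}(7) is equivalent to being contained in $\ker(\al)$. The key structural input is Theorem \ref{theo:previous-FS-groups}(1): the copy of Thompson's group $T\subset G^T$ acts on $Q$ (identified with $G^T/G$) restricting to the transitive action $T\act T/F$ on the dyadic rationals of the circle; in particular $T\act Q$ has a dense orbit, or more precisely the $T$-orbit of any $q\in Q$ is ``cofinal'' among the cones. First I would fix $g\in L$ and suppose, for contradiction, that $g\cdot q\neq q$ for some $q\in Q$. Since $\fC$ is Hausdorff and $g$ acts by a homeomorphism fixing $o$ and $\omega$, I can find a cone $C\subset\fC^*$ with $q\in C$ and $g(C)\cap C=\varnothing$ (shrinking $C$ using that cones form a clopen basis).

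Next I would exploit the transitivity of the $T$-action to move $C$ off itself: because $T\act Q$ is conjugate to $T\act T/F$, there exists $h\in T\subset G^T$ with $h(C)\cap C=\varnothing$ and moreover $h(C)$, $C$, $g(C)$ pairwise disjoint (this uses that $T$ can cyclically permute a fine enough partition of $\fC$ into cones, exactly as in the proof of Claim 1 of Theorem \ref{theo:simple-T-V}). Now consider the commutator $[g,h]=ghg^{-1}h^{-1}$. Since $L$ is normalised by $G^T\ni h$, we have $h g^{-1} h^{-1}\in L$, hence $[g,h]=g\cdot(hg^{-1}h^{-1})\in L$. On the other hand I would compute the action of $[g,h]$ near $q$: because $g$ is supported away from $h(C)$ on the relevant region and $h$ is supported so that $hg^{-1}h^{-1}$ is supported in $h(C)$, a short disjoint-supports computation (in the spirit of Proposition \ref{prop:simplicity-facts}) shows $[g,h]$ acts nontrivially on $C$ in a controlled way — it moves a point of $C$ into $h(C)$. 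Iterating, the set $\{[g,h^k(\cdot)] : k\}$ or repeated commutators produce infinitely many elements of $L$ with ``disjointly supported nontrivial pieces,'' which one leverages to derive a contradiction.

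The cleanest route, and the one I would actually pursue, avoids the iteration: I would instead argue that $\ker(\al)$ is normal in $G^T$ (immediate from Proposition \ref{prop:action-facts}(7), since $\ker(\al)=\cap_{h\in G^T}hGh^{-1}$), and that the quotient $G^T/\ker(\al)$ acts faithfully on $Q$; then the image $\bar L$ of $L$ in $G^T/\ker(\al)$ is a subgroup of $\bar G$ normalised by $\bar{G^T}$ whose action on $Q$ is faithful, so it suffices to treat the faithful case. In the faithful case, $G^T\act\fC$ is a Thompson-like action containing $T$, and a nontrivial $g\in L$ with $g(C)\cap C=\varnothing$ together with $h\in T$ chosen so that $h$ acts as a ``three-cone cycle'' supported on $C\cup h(C)\cup h^2(C)$ gives $[g,h]\in L$ with the property that it equals $g$ on part of $C$ but is the identity on another cone $C'\subset\fC^*$; conjugating $[g,h]$ by a further element of $T$ (again using transitivity) to slide its nontrivial part anywhere, and using that $L$ is $G^T$-invariant, one shows $L$ contains an element whose support is an arbitrarily small cone. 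Finally, a standard commutator-of-small-support argument (as in the Higman--Epstein scheme quoted before Theorem \ref{theo:simple-F}) forces $L$ to contain commutators of $G$ with arbitrarily small support, and since we assumed faithfulness these are nontrivial, but one can also make them move a fixed point $q$ — contradiction with $g$ having been the ``only'' obstruction, unless $L\subset\ker(\al)$.

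The main obstacle is the first reduction: making precise that $T\subset G^T$ acts transitively enough on cones of $\fC$ (not merely on $Q$) to realise the disjointness and ``cyclic permutation of cones'' moves used above. This is exactly the content implicit in Theorem \ref{theo:previous-FS-groups}(1) and the discussion of $\fC$ in Section \ref{sec:summary}, but it needs to be invoked carefully: one must check that a partition of $\fC$ into cones $\Cone(t,1),\dots,\Cone(t,n)$ can be cyclically permuted by an element of $G^T$ (the $T$-type structure guarantees exactly this, via a cyclic permutation $\pi\in C_n\subset\Sigma_n$ giving $[t\pi,t]\in G^T$), and that such elements suffice to move any given cone off itself and off $g(C)$. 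Once that is in hand, the commutator manipulations are routine and mirror those already appearing in the proofs of Theorems \ref{theo:simple-F} and \ref{theo:simple-T-V}.
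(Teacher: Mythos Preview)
Your plan is vastly more complicated than necessary, and more importantly it does not close: nowhere do you actually derive a contradiction. In the faithful case you produce elements of $L$ with arbitrarily small support and assert this ``contradicts $g$ having been the only obstruction,'' but there is no such obstruction---you only assumed $g$ moves \emph{some} point $q$, and showing $L$ contains further nontrivial elements moving other points is perfectly consistent with that. The Higman--Epstein machinery you invoke is designed to show a normal subgroup is \emph{large} (contains the derived subgroup), not that it is \emph{trivial}; you are running the argument in the wrong direction.

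The statement is in fact a one-line algebraic lemma, and you already wrote down the key fact. You cite Proposition~\ref{prop:action-facts}(7): $\ker(\alpha)=\bigcap_{h\in G^T} hGh^{-1}$. Now if $L\subset G$ is normalised by $G^T$, then for every $h\in G^T$ we have $L=hLh^{-1}\subset hGh^{-1}$, hence $L\subset\bigcap_{h\in G^T} hGh^{-1}=\ker(\alpha)$. Equivalently, and this is how the paper phrases it, identify $Q$ with $G^T/G$: for $g\in L$ and $h\in G^T$, compute $g\cdot hG=ghG=h(h^{-1}gh)G=hG$ since $h^{-1}gh\in L\subset G$. No dynamics, no cones, no commutators are needed.
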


\begin{proof}
	Consider $L$ as above and $g\in L$.
	Recall that $\al:G\act Q$ is conjugate to the homogeneous action $G\act G^T/G$.
	Let $h\in G^T$ and consider the coset $hG$.
	Observe that $$g\cdot hG = ghG = h (h^{-1}gh) G = h g' G$$
	where $g'$ is in $L$ and thus in $G$ by assumption.
	Therefore, $g\cdot hG = hg'G=hG$ and $g$ acts trivially on $G^T/G$.
\end{proof}

\begin{lemma}\label{lem:kernel-cell}
	Let $L\subset G$ be a subgroup that is normalised by $G^T$.
	For all $g\in L$, $n\geqslant 1$, and trees $t$ with $n$ leaves, there exist $h_1,\dots,h_n\in G$ satisfying
	$$g=t\circ (h_1\ot\cdots\ot h_n)\circ t^{-1}.$$
	Moreover, $h_j$ is in $\ker(\al)$ for all $1\leqslant j\leqslant n$.
\end{lemma}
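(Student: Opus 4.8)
The statement splits into two parts: first, a \emph{decomposition} claim that any $g \in L$ can be written as $t \circ (h_1 \otimes \cdots \otimes h_n) \circ t^{-1}$ for some tree $t$ with $n$ leaves and some $h_j \in G$; second, a \emph{refinement} that each $h_j$ in fact lies in $\ker(\alpha)$. I would treat them in that order. For the first part, fix $g = [p, q] \in L$ with $p, q \in \cT$ having the same number of leaves, and let $t$ be an arbitrary tree with $n$ leaves. By the common right-multiples property of $\cF$, grow $p$, $q$, and $t$ to a common tree: there are forests so that $t$ grows to some tree $r$, and simultaneously $p, q$ grow so that $g$ is represented by a fraction $[r', r'']$ where $r', r''$ are rooted subtrees $\geqslant t$ (growing $t$ further if necessary so that both the top and bottom tree of the fraction dominate $t$). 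Writing $t \circ f = r' $, $t \circ f' = r''$ exhibits $g$ as $[tf, tf'] = t \circ [f, f'] \circ t^{-1}$, and since $[f,f']$ decomposes as a disjoint union (tensor product) of fractions over the leaves of $t$, we get $g = t \circ (h_1 \otimes \cdots \otimes h_n) \circ t^{-1}$ with $h_j = [f_j, f'_j] \in G$. The key input here is simply that fractions over a tree $t$ restrict to fractions on each cone $\Cone(t,j)$, which is the content of the embeddings $\eta_C \colon G \to G_C$ and of item (9) of Proposition \ref{prop:action-facts}.

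For the refinement — showing $h_j \in \ker(\alpha)$ — the idea is to use that $L$ is normalised by $G^T$ and that $G^T$ can cyclically permute the cones $\Cone(t,1), \ldots, \Cone(t,n)$. Via the identification of $\eta_{\Cone(t,j)}$ with conjugation by $\beta(t,j)$ and the fact that $\alpha(g)\restriction_{\Cone(t,j)} = \beta(t,j) \circ \alpha(h_j) \circ \beta(t,j)^{-1}$ (again Proposition \ref{prop:action-facts}(9)), the claim $h_j \in \ker(\alpha)$ is equivalent to saying $g$ acts trivially on each cone $\Cone(t,j)$, i.e.\ $g \in \ker(\alpha)$ — which we already know from the previous lemma, since $L$ is normalised by $G^T$ hence $L \subset \ker(\alpha)$. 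So the honest content is: once $g \in \ker(\alpha)$, its ``local pieces'' $h_j$ are themselves in $\ker(\alpha)$. Concretely, $\alpha(g)\restriction_{\Cone(t,j)} = \beta(t,j)\alpha(h_j)\beta(t,j)^{-1}$, and since $\beta(t,j)$ is a homeomorphism $\fC \to \Cone(t,j)$ (Proposition \ref{prop:action-facts}(8)), $\alpha(g)\restriction_{\Cone(t,j)} = \id$ forces $\alpha(h_j) = \id$, i.e.\ $h_j \in \ker(\alpha)$.

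The main obstacle is getting the bookkeeping of the growing/common-multiple step right: one must grow $p$, $q$, \emph{and} $t$ compatibly so that a single tree $r$ dominates all three and both legs of the fraction for $g$ factor through $t$ in the same way at the top and the bottom. This requires invoking common right-multiples first to align $p$ and $t$, then aligning $q$ with the result, then possibly regrowing so that the bottom tree also dominates $t$ — and checking that the resulting fraction $[f,f']$ is genuinely ``block-diagonal'' over the leaves of $t$, which it is because growing $t$ only adds carets inside the individual subtrees hanging off its leaves. Everything else (the passage to $\ker(\alpha)$, the use of Proposition \ref{prop:action-facts}(8)--(9)) is then formal.
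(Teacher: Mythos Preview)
There is a genuine gap in your decomposition step. You write $g=[tf,tf']$ and then assert that $[f,f']\in\Frac(\cF,n)$ decomposes as a tensor product $h_1\ot\cdots\ot h_n$ with $h_j=[f_j,f'_j]\in G$. But for $[f_j,f'_j]$ to lie in $G=\Frac(\cF,1)$ you need $|\Leaf(f_j)|=|\Leaf(f'_j)|$ for each $j$, and this is \emph{not} automatic: all you know a priori is that the \emph{total} leaf counts of $f$ and $f'$ agree. Your justification (``growing $t$ only adds carets inside the individual subtrees'') conflates growing $t$ with the forests $f,f'$, which arise as remainders after factoring $t$ out of the grown $p$ and $q$; nothing in the common-right-multiple procedure forces the $j$th trees of these remainders to have matching leaf counts. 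Indeed, an arbitrary element of $\Frac(\cF,n)$ does \emph{not} decompose as a tensor product of elements of $G$.

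The paper's proof handles exactly this point, and it is where the hypothesis ``$L$ is normalised by $G^T$'' is used for the decomposition (not merely for the refinement). One conjugates $g=[tp,tq]$ by $s=[t,t\pi]\in G^T$ with $\pi$ cyclic; working through the Zappa--Sz\'ep relations, $s^{-1}gs\in L\subset G$ forces the permutation part to vanish, i.e.\ $\pi^p=\pi^q$. Reading off what $\pi^p$ is for the cycle $\pi:1\mapsto2$ gives $|\Leaf(p_1)|=|\Leaf(q_1)|$, and induction yields equality at every leaf. \emph{Only then} can one form $h_j=[p_j,q_j]\in G$. Your approach could be salvaged by using the previous lemma ($L\subset\ker(\alpha)$) \emph{before} the decomposition: since $g$ fixes each $[t,j]\in Q$, and $g\cdot[tf',j^{f'}]=[tf,j^{f'}]$, injectivity of $\ell\mapsto[tf,\ell]$ (from the total order on $Q$) forces $j^{f'}=j^{f}$, hence the cumulative leaf counts match and so $|\Leaf(f_j)|=|\Leaf(f'_j)|$. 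But as written, you invoke $L\subset\ker(\alpha)$ only after assuming the decomposition, which is circular.
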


\begin{proof}
	Consider $L$ as above and $g\in L$, $n\geqslant 1$, and $t\in\cT$ a tree with $n$ leaves.
	Having common right-multiples implies the existences of forests $p,q$ satisfying $g=[tp,tq]$.
	Consider now $s=t\pi^{-1} t^{-1}=[t,t\pi]\in G^T$ with $\pi$ a cyclic permutation. 
	Observe that \begin{align*}
		k & = t\circ\pi \circ p \circ (tp)^{-1} \circ g \circ (tq)\circ q^{-1}\circ \pi^{-1}\circ t^{-1}\\
		& = t\circ p^\pi \circ \pi^p \circ (tp)^{-1} \circ g \circ (tq)\circ (\pi^q)^{-1}\circ (tq^\pi)^{-1}\\
		& = t\circ p^\pi \circ \pi^p \circ (\pi^q)^{-1}\circ (tq^\pi)^{-1}.
	\end{align*}
	Note that $k:=s^{-1}gs$ is in $L$ by assumption (and thus in $G$).
	This implies that $\pi^p=\pi^q$ (see Section \ref{sec:FS-T-and-V} for the definition of $\pi^p$).
	If $\pi$ is the cyclic permutation sending $1$ to $2$, then $\pi^p$ is the cyclic permutation sending $1$ to $n_1+1$ where $n_1$ is the number of leaves of $p_1.$
	We deduce that $|\Leaf(p_1)|=|\Leaf(q_1)|$ and by an easy induction that 
	$$|\Leaf(p_j)|=|\Leaf(q_j)| \text{ for all } 1\leqslant j\leqslant n.$$
	Hence, it makes sense to consider $h_j:=[p_j,q_j]=p_j\circ q_j^{-1}$ which is an element of $G$ since $p_j,q_j$ are trees with same number of leaves.
	We deduce that 
	$$g=t\circ (h_1\ot\cdots \ot h_n)\circ t^{-1}$$
	with $h_j \in G$.
	Observe now that the restriction of $\alpha(g)$ to $\Cone(t,j)$ is equal to $\alpha(h_j)$ (up to conjugating it by $\beta(t,j):\fC\to \Cone(t,j)$). 
	Hence, $\alpha(h_j)=\id_\fC$ for all $1\leqslant j\leqslant n$.
\end{proof}

We are now able to prove the main theorem of this section showing that a simplicity assumption implies faithfulness of the canonical group action. 
The last part of the proof is diagrammatic in nature and is reminiscent of arguments present in Jones' planar algebra framework, particularly in the context of the Temperley--Lieb--Jones algebra.

\begin{theorem}\label{theo:simple-implies-faithful}
	If there exists an intermediate subgroup $[K,K]\subset \Ga\subset G^V$ that is simple, then $\al:G^V\act \fC$ is faithful.
\end{theorem}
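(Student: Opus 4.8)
The goal is to prove $\ker(\al)=\{e\}$, where $\ker(\al)$ denotes the kernel of $\al\colon G^V\act\fC$; by Proposition~\ref{prop:action-facts}(7) it is a subgroup of $G$ normalised by $G^V$. I would first record a fact used repeatedly: the monochromatic copy $V\subset G^V$ of Theorem~\ref{theo:previous-FS-groups}(1) acts faithfully on $\fC$. Indeed $\ker(\al)\cap V$ is normalised by $V$ and $V$ is simple, so it is $\{e\}$ or $V$; but the element $[Y_a(12),Y_a]\in V$ maps $\Cone(Y_a,1)$ homeomorphically onto $\Cone(Y_a,2)$ by Proposition~\ref{prop:action-facts}(9), and these cones are non-empty and disjoint because $\cF$ is Ore, so this element acts non-trivially. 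Hence $\ker(\al)\cap V=\{e\}$. Consequently $[K,K]$ cannot be contained in $\ker(\al)$: choosing an interior cone $C\subset\fC$ bounded away from $o$ and $\omega$, the homomorphism $\eta_C$ of Definition~\ref{def:K-groups} maps $G$ into $K$, so $\eta_C([F,F])\subset[K,K]$ for the monochromatic $F\subset G$; since $\al(\eta_C(f))$ is trivial exactly when $\al(f)$ is, $\eta_C([F,F])\subset\ker(\al)$ would force the infinite group $[F,F]\subset V$ to act trivially on $\fC$, contradicting the preceding sentence.

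Next I run the simplicity dichotomy. Since $\ker(\al)$ acts trivially on all of $\fC$ it is contained in $K$. As $\Ga\supset[K,K]$ normalises $\ker(\al)$, the subgroup $\ker(\al)\cap\Ga$ is normal in $\Ga$, hence $\{e\}$ or $\Ga$. If it equals $\Ga$ then $[K,K]\subset\Ga\subset\ker(\al)$, which is impossible by the first paragraph. So $\ker(\al)\cap\Ga=\{e\}$, and in particular $\ker(\al)\cap[K,K]=\{e\}$. Combining $\ker(\al)\lhd G\supset K$ with $\ker(\al)\subset K$ gives $[\ker(\al),K]\subset\ker(\al)\cap[K,K]=\{e\}$, so $\ker(\al)\subset Z(K)$. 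It remains to rule out a non-trivial $g\in\ker(\al)\cap Z(K)$.

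The heart of the argument — the diagrammatic step, reminiscent of Temperley--Lieb--Jones relations — is to manufacture from such a $g$ a non-trivial element of $\ker(\al)\cap[K,K]$, contradicting the previous paragraph. By Lemma~\ref{lem:kernel-cell} applied to $\ker(\al)$ (which is normalised by $G^T$), for every tree $z$ one may write $g=z\circ(h_1\ot\cdots\ot h_n)\circ z^{-1}$ with $h_i\in\ker(\al)$, where $h_i$ is the localisation of $g$ on the cone $\Cone(z,i)$. If some cell $h_i$ attached to an \emph{interior} leaf is non-trivial, pick a disjoint cone $C'=\psi(\Cone(z,i))$ with $\psi\in K$ and form $[\eta_{\Cone(z,i)}(h_i),\psi]=\eta_{\Cone(z,i)}(h_i)\cdot\eta_{C'}(h_i^{-1})$: this lies in $\ker(\al)\cap[K,K]$ and is non-trivial since $\eta$ is injective and the cones are disjoint. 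The remaining case is when, in every refinement, the only non-trivial cells sit at the first and last leaves; here one exploits $g\in Z(K)$ by commuting $g$ with cone-supported elements $\eta_C(k)$ and repeatedly applying the universal skein relations $Y_aY_a^{-1}=I$ and $Y_a^{-1}Y_a=I\ot I$ to transfer a non-trivial cell to an interior position, reducing to the previous case. This forces $g=e$, hence $\ker(\al)=\{e\}$ and $\al$ is faithful.

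The main obstacle is exactly this last diagrammatic step. A ghost element of $\ker(\al)$ leaves no trace on the dynamics $\al\colon G^V\act\fC$, so it can only be detected and dismantled through the relations of $\cF$ — the skein relations together with the universal ones — which is the spirit of the trace and bubble identities in the Temperley--Lieb--Jones calculus; carrying this out requires tracking carefully how such a ghost interacts, under composition in $G^V$, with cone-supported elements, and using centrality to push its cellular support off the two endpoints $o$ and $\omega$. Everything preceding it — the faithfulness of the monochromatic $V$, the simplicity dichotomy, and the reduction to a central ghost — is elementary.
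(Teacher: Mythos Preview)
Your first two paragraphs are correct and match the paper's Claims~1 and~2 exactly: the monochromatic copy of $[F,F]$ inside $[K,K]$ obstructs $\Gamma\subset\ker(\al)$, so $\ker(\al)\cap[K,K]=\{e\}$, whence $\ker(\al)\subset Z(K)$.

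The third paragraph, however, has a genuine gap at precisely the point you flag as the obstacle. Your commutator identity $[\eta_C(h_i),\psi]=\eta_C(h_i)\cdot\eta_{C'}(h_i^{-1})$ presupposes that $\psi\,\eta_C(h_i)\,\psi^{-1}=\eta_{C'}(h_i)$ \emph{as an equation in the group $G$}, not merely as an equality of homeomorphisms of $\fC$. Since $h_i\in\ker(\al)$, both sides act trivially on $\fC$, so the dynamical fact that $\psi$ carries $C$ to $C'$ tells you nothing; you must verify the identity diagrammatically, and this is exactly the hard computation you have not done. Likewise your non-triviality claim ``since $\eta$ is injective and the cones are disjoint'' is circular: disjointness of cones is a statement about $\fC$, and ghost elements leave no trace there. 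What is actually needed is the purely algebraic fact that $e\ot g\ot e\ot\cdots=e\ot e\ot g\ot\cdots$ in $\Frac(\cF,n)$ forces $g=e$ --- a consequence of left-cancellativity, not of the action. Finally, your boundary case (``transfer a non-trivial cell to an interior position'') is only a sketch; commuting with $\eta_C(k)\in K$ gives nothing new, since you already know $g\in Z(K)$.

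The paper sidesteps all three issues at once. Rather than decomposing $g$ and treating its cells, it first \emph{places} $g$ at an interior slot by forming $\widehat g:=s\circ(e\ot g\ot e\ot e)\circ s^{-1}\in\ker(\al)$, so no boundary case arises. It then upgrades $\ker(\al)\subset Z(K)$ to $\ker(\al)\subset Z(G)$ by a short nesting trick (embed both $g$ and an arbitrary $h\in G$ at the middle leaf of a tree with three leaves; the embedded $h$ lands in $K$). With centrality in $G$ available, it conjugates $\widehat g$ by the explicit element $h=[s(I^{\ot3}\ot Y_a),\,s(Y_a\ot I^{\ot3})]\in G\setminus K$ and performs the diagrammatic reduction showing $h\widehat g h^{-1}=s\circ(e\ot e\ot g\ot e)\circ s^{-1}$. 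Centrality then gives $e\ot g\ot e\ot e=e\ot e\ot g\ot e$ in $\Frac(\cF,4)$, and the algebraic tensor-factorisation yields $g=e$. The point is that the specific choice of $h$ makes the universal-skein cancellations transparent; an arbitrary $\psi\in K$ with $\psi(C)=C'$ does not, and your proposal never exhibits such a $\psi$ for which the \emph{group} identity can be checked.
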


\begin{proof}
	Fix an intermediate subgroup $[K,K]\subset \Ga\subset G^V$ that we assume to be simple.
	Recall that $\ker(\al)$ is a normal subgroup of $G^V$ that is contained in $G$.
	Moreover, note that by definition $\ker(\alpha)$ is a subgroup of $K$.
	
	\medskip{\bf Claim 1: We have that $\ker(\alpha)\cap \Ga=\{e\}.$} 
	
	Indeed, since $\Ga$ is simple either we have $\ker(\alpha)\cap \Ga=\{e\}$ or $\Ga\subset \ker(\alpha)$.
	Observe that $C_a([F,F])\subset [K,K]\subset \Ga$ where $a$ is a colour of $\cF$ and where $C_a:F\into G$ is the associated vertex-colouring embedding.
	We have that $C_a(F)\act \fC$ restricts into $F\act T/F$ (see Theorem \ref{theo:previous-FS-groups}) which is clearly faithful.
	This implies that $\Ga$ is not contained in $\ker(\alpha)$ and thus $\Ga\cap \ker(\alpha)=\{e\}$.
	
	\medskip{\bf Claim 2: We have that $\ker(\alpha)\subset Z(K)$ (where $Z(K)$ denotes the centre of $K$).}
	
	Indeed, fix $g\in \ker(\alpha)$ and $k\in K$.
	The commutator $gkg^{-1}k^{-1}$ is in $\ker(\alpha)\cap [K,K]$ since $\ker(\alpha)$ is a normal subgroup of $K$. 
	By the previous claim we have that $\ker(\alpha)\cap [K,K]=\{e\}$ since $\ker(\alpha)\cap \Ga=\{e\}$.
	We deduce that $g$ and $k$ commutes and thus $g\in Z(K)$.
	
	\medskip{\bf Claim 3: We have that $\ker(\alpha)\subset Z(G).$}
	
	Fix $g\in \ker(\alpha)$ and let $h\in G$ be arbitrary.
	Define $$\widehat g:= t\circ (e\ot g\ot e)\circ t^{-1} \text{ and } \widehat h:= t\circ (e\ot h\ot e)\circ t^{-1}$$ where $t$ is any tree with three leaves.
	Observe that $\widehat g$ is in $\ker(\alpha)$ and moreover $\widehat h$ is in $K$ since it acts trivially on $\Cone(t,1)$ and $\Cone(t,3).$
	By the previous claim $\widehat g$ and $\widehat h$ commute. By definition of $\widehat{g}$ and $\widehat{h}$ it follows that $g$ and $h$ commute. Hence $g\in Z(G)$.
	
	We can now conclude the proof. Consider $g\in \ker(\alpha)$ and define
	$$\widehat g:= s\circ (e\ot g\ot e\ot e)\circ s^{-1}$$ 
	where $s=Y_a(Y_a\ot Y_a)$ for a fixed colour $a$ of $\cF$ (in fact any tree $s$ with $4$ leaves would work). We have that $\widehat g$ is in $\ker(\al)$ and is thus inside $Z(G)$ by Claim 3.
	Define
	$$h:=[x,y] \text{ with } x:=s\circ (I\ot I\ot I\ot Y_a) \text{ and } y:=s\circ (Y_a\ot I\ot I\ot I).$$
	Observe that 
	$h\circ \widehat g \circ h^{-1} = s\circ (e\ot e\ot g\ot e)\circ s^{-1}.$ This is easy to using the diagrammatic calculus. Here the group element $e$ is identified with the trivial tree $I$, which is a vertical straight line, and $g$ is drawn as a cell of the form of a diamond. For ease, we suppress $a$-vertices. Applying universal skein relations, we have
	\begin{align*}
		h\circ\widehat{g}\circ h^{-1}=\SimpleFaithfulA=\SimpleFaithfulB=\SimpleFaithfulC=s\circ (e\ot e\ot g\ot e)\circ s^{-1}.
	\end{align*}
	Since $\widehat{g}$ is central, we deduce that $e\ot g\ot e\ot e=e\ot e\ot g\ot e$ in the fraction groupoid $\Frac(\cF)$. Hence $g=e$ and so $\ker(\al)$ is the trivial group.
\end{proof}

\begin{remark}
	In the last theorem we proved that if $\ker(\alpha)\subset Z(G)$, then necessarily $\ker(\alpha)$ is trivial.
	This does not mean that $\ker(\alpha)\cap Z(G)$ is always trivial.
	Indeed, consider the following example:
	$$\cF:=\FS\la a,b|a_1a_1=b_1b_1,a_1b_1=b_1a_1\ra$$
	which is an Ore FS category that we already encountered in \cite{Brothier22}.
	Its fraction group $G$ is isomorphic to the restricted permutational wreath product $\oplus_{\Q_2}\Z/2\Z\rtimes F$ where $\Q_2:=\Z[1/2]/\Z$ and $F$ acts by shifting indices via the usual action $F\act \Q_2.$
	Here, $\ker(\alpha)$ corresponds to $\oplus_{\Q_2}\Z/2\Z$. Moreover, the copy of $\Z/2\Z$ indexed by the dyadic rational $0$ is nothing else than the centre of $G$.
	Hence, $\Z/2\Z\simeq Z(G)\subset\ker(\alpha).$
\end{remark}

\section{Categorical characterisation of simplicity}\label{sec:categorical-characterisation}

\subsection{The category of forest-skein categories}

We write $\Forest$ (resp. $\ForestOre$) for the collection of all FS (resp. Ore FS) categories. 
Recall from \cite{Brothier22} that a morphism $\gamma:\cF\to\Tilde\cF$ is a monoidal functor sending the object $1$ to $1$, i.e.~a map preserving the composition, the tensor product, and the number of roots and leaves. 
This endows both $\Forest$ and $\ForestOre$ with category structures. 
We use set theoretical terminology such as injective, surjective to qualify morphisms. 
If $\gamma:\cF\to\ti\cF$ is injective (resp.~surjective), then we may say that $\cF$ {\it embeds} inside $\ti\cF$ (resp.~$\ti\cF$ is a {\it quotient} of $\cF$).

\subsection{Faithfulness and proper quotients}
The next proposition shows that if we have a morphism between Ore FS categories then it induces a map that intertwines the canonical group actions. This is rather immediate by functoriality.
What is interesting and useful to observe is that the intertwinner is always {\it injective}. 

\begin{proposition}\label{prop:gamma}
	Consider two Ore FS categories $\cF,\ti\cF$, their canonical category actions $\beta:\cF\to\cQ$, $\ti\beta:\ti\cF\to\ti\cQ$, and let $\gamma:\cF\to\ti\cF$ be a morphism.
	Write $\al:G\act Q$ and $\ti\al:\ti G\act \ti Q$ for the associated canonical group actions.
	Denote by $$\gamma_G:G\to\ti G, \ [t,s]\mapsto [\gamma(t),\gamma(s)]$$ the group morphism $\Frac(\gamma,1).$
	
	The morphism $\gamma$ induces an \emph{injective} map:
	$$\gamma_Q:Q\to\ti Q,\ [t,j]\mapsto [\gamma(t),j]$$
	that is equivariant with respect to the canonical actions $\beta,\ti\beta$ and $\al,\ti\al$.
	Hence, if $f$ is a forest of $\cF$ with $n$ roots and $m$ leaves and if $g\in G$, then we have the commutative diagrams:
	$$\begin{tikzcd}
		Q^n \arrow[r, "\beta(f)" ] \arrow[d, "\gamma_Q^n",swap]& Q^m \arrow[d,"\gamma_Q^m"]\\
		\ti Q^n \arrow[r, "\ti\beta(\gamma(f))",swap] & \ti Q^n
	\end{tikzcd}
	\text{ and } 
	\begin{tikzcd}
		Q \arrow[r, "\alpha(g)" ] \arrow[d,"\gamma_Q",swap]& Q\arrow[d,"\gamma_Q"]\\
		\ti Q \arrow[r, "\ti\alpha(\gamma_G(g))",swap] & \ti Q
	\end{tikzcd}.
	$$
	In particular, we have that $$\ker(\alpha)=\gamma_G^{-1}(\ker(\ti\alpha)).$$
\end{proposition}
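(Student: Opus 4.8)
The plan is to read off the equality $\ker(\al)=\gamma_G^{-1}(\ker(\ti\al))$ from the two properties of $\gamma_Q$ just established: that it is injective and equivariant for the group actions. Recall that the equivariance square for the group actions reads $\gamma_Q\circ\al(g)=\ti\al(\gamma_G(g))\circ\gamma_Q$ for every $g\in G$, and that by Proposition~\ref{prop:action-facts}(7) applied to $\ti{\cF}$ the kernel of $\ti\al:\ti G^V\act\ti{\fC}$ coincides with that of $\ti\al:\ti G\act\ti Q$; thus $\gamma_G(g)\in\ker(\ti\al)$ if and only if $\ti\al(\gamma_G(g))=\id_{\ti Q}$, while $g\in\ker(\al)$ if and only if $\al(g)=\id_Q$.

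First I would treat the inclusion $\gamma_G^{-1}(\ker(\ti\al))\subset\ker(\al)$. If $\ti\al(\gamma_G(g))=\id_{\ti Q}$, the equivariance square collapses to $\gamma_Q\circ\al(g)=\gamma_Q$, and since $\gamma_Q$ is injective this forces $\al(g)=\id_Q$, i.e. $g\in\ker(\al)$. This direction is immediate and is exactly where injectivity of $\gamma_Q$ is used.

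For the reverse inclusion, take $g\in\ker(\al)$, so $\al(g)=\id_Q$. The equivariance square now gives $\ti\al(\gamma_G(g))\circ\gamma_Q=\gamma_Q$, so the bijection $\ti\al(\gamma_G(g))$ of $\ti Q$ fixes the image $\gamma_Q(Q)=\{[\gamma(x),k]:x\in\cT,\ k\in\Leaf(x)\}$ pointwise. To upgrade this to $\ti\al(\gamma_G(g))=\id_{\ti Q}$, I would pass to the profinite completion: $\ti\al(\gamma_G(g))$ extends to a homeomorphism of the Hausdorff space $\ti{\fC}$, so it is enough to know that $\gamma_Q(Q)$ is dense in $\ti{\fC}$ --- its fixed-point set is then closed and dense, hence all of $\ti{\fC}$, and $\gamma_G(g)\in\ker(\ti\al)$. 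Density should follow from the fact that $\gamma$ sends carets to carets and carries a cofinal family of trees of $\cF$ (for an Ore FS category such a family is at hand, e.g. the complete trees in a fixed colour, as in Theorem~\ref{theo:previous-FS-groups}) into a family of trees of $\ti{\cF}$ whose descendant leaves meet every cone $\Cone(\ti r,\ell)$; concretely, given a cone one wants an $x$ with $\ti r\leqslant\gamma(x)$ together with a leaf $k$ of $\gamma(x)$ lying below the $\ell$-th leaf of $\ti r$, so that $[\gamma(x),k]\in\Cone(\ti r,\ell)\cap\gamma_Q(Q)$.

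The formal half --- both inclusions granting density --- is a routine chase using only that $\gamma_Q$ is injective and equivariant. The genuine point, and the step I expect to be the main obstacle, is the density of $\gamma_Q(Q)$ in $\ti{\fC}$, equivalently that $\gamma(\cF)$ is cofinal enough inside $\ti{\cF}$; this is where the Ore hypothesis on $\ti{\cF}$ enters essentially rather than merely formally.
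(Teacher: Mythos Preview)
Your treatment of the inclusion $\gamma_G^{-1}(\ker(\ti\al))\subset\ker(\al)$ is exactly right and matches the spirit of the paper: it is the formal consequence of equivariance together with injectivity of $\gamma_Q$. The paper's own proof is terser than yours --- it only argues injectivity of $\gamma_Q$ (via the identification $Q\simeq G^T/G$ and the observation that $\gamma_{G^T}^{-1}(\ti G)=G$, since $\gamma_{G^T}$ preserves the permutation part of $[t\pi,s]$) and declares the remainder, including the kernel equality, to be ``abstract nonsense''.

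You are also right that the reverse inclusion $\ker(\al)\subset\gamma_G^{-1}(\ker(\ti\al))$ does \emph{not} follow from injectivity and equivariance alone: one only gets that $\ti\al(\gamma_G(g))$ fixes $\gamma_Q(Q)$ pointwise, not all of $\ti Q$. Your proposed remedy via density of $\gamma_Q(Q)$ in $\ti\fC$ is the natural one, and you correctly flag it as the real obstacle. The difficulty is genuine: density can fail for non-surjective $\gamma$. For instance, take $\gamma:\FS\la a\ra\to\cH_3=\FS\la a,b\mid a_1b_2=b_1a_1\ra$, $Y\mapsto Y_a$. The skein relation preserves the number of $b$-carets, so no growth of $Y_b$ is ever an $a$-tree; the $a$-trees are not cofinal in $\cH_3$ and $\gamma_Q(Q)$ misses the cone $\Cone(Y_b,2)$. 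Your appeal to Theorem~\ref{theo:previous-FS-groups} for a cofinal family does not transport to $\ti\cF$ along a non-surjective $\gamma$.

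The resolution is that every application of this proposition in the paper (Proposition~\ref{prop:F-modulo-kerbeta-simple} and Theorem~\ref{theo:Ore-simple}) takes $\gamma$ to be \emph{surjective}. In that case every tree of $\ti\cF$ lies in the image of $\gamma$, so $\gamma_Q$ is bijective, and then equivariance gives $\ti\al(\gamma_G(g))=\gamma_Q\circ\al(g)\circ\gamma_Q^{-1}$, whence $\al(g)=\id_Q\iff\ti\al(\gamma_G(g))=\id_{\ti Q}$ and the full equality follows at once. So your instinct is sound: add the surjectivity hypothesis (or equivalently, the bijectivity of $\gamma_Q$) and the argument you sketch becomes a clean two-line proof, with no detour through $\ti\fC$ or density needed. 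The paper's ``abstract nonsense'' is really this surjective case.
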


\begin{proof}
	Most of the proposition follows from abstract nonsense.
	The only non-trivial part resides in the fact that $\gamma_Q:Q\to\ti Q$ is injective (even when $\gamma:\cF\to\ti\cF$ is not injective).
	To prove it we identify $Q$ with the homogeneous space $G^T/G$ (see Section \ref{sec:canonical-action}).
	The functor $\Frac(-,1)$ provides $\gamma_G:G\to\ti G$ and considering the $T$-version we obtain another group morphism $\gamma_{G^T}:G^T\to {\ti G}^T$ which extends $\gamma_G$.
	This provides a map 
	$$\gamma_{G^T/G}:G^T/G\to {\ti G}^T/\ti G, \ gG\mapsto \gamma_{G^T}(g) \ti G.$$
	Up to the identifying $Q$ with $G^T/G$, we have $\gamma_{G^T/G}=\gamma_Q.$
	We  now prove the injectivity of $\gamma_{G^T/G}.$
	Consider $g_1,g_2\in G^T$ and assume that $\gamma_{G^T}(g_1)\ti G = \gamma_{G^T}(g_2)\ti G$.
	Since $\gamma_{G^T}$ is a group morphism we deduce that $\gamma_{G^T}(g_1^{-1}g_2)\in \ti G$ and thus $g_1^{-1}g_2\in G$ implying that $g_1G=g_2G$.
	Hence, $\gamma_{G/G^T}$ is injective.
\end{proof}

\begin{proposition}\label{prop:F-modulo-kerbeta-simple}
	Let $\cF$ be an Ore FS category, $G:=\Frac(\cF,1)$ its fraction group, and $\alpha:G\act Q$ its canonical group action.
	The group $\ti G:=G/\ker(\alpha)$ is an FS group isomorphic to $\Frac(\ti \cF,1)$ where $\ti\cF:=\cF/\ker(\beta).$
	Moreover, the canonical group action $\ti G\act \ti Q$ is faithful.
	
	More precisely, the quotient $\ti\cF:=\cF/\ker(\beta)$ has an obvious structure of FS category. 
	It is an Ore category and the canonical map $\gamma:\cF\onto\ti\cF$ defines a surjective group morphism $\gamma_G:G\onto \ti G$ where $\ti G:=\Frac(\ti\cF,1).$
	This group morphism is conjugate to the canonical quotient morphism $G\onto G/\ker(\al)$ so that
	$$\Frac(\cF/\ker(\beta),1)\simeq \Frac(\cF,1)/\ker(\al).$$
	Finally, the map $\gamma_Q:Q\to\ti Q$ of Proposition \ref{prop:gamma} is a bijection that intertwines the $G$ and $\ti G$ actions. This implies that $\ti G\act \ti Q$ is faithful.
\end{proposition}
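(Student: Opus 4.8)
The plan is to unpack the claim into four tasks and dispatch each one using the machinery already assembled. First, I would verify that $\widetilde\cF := \cF/\ker(\beta)$ is genuinely an FS category and is Ore. The category $\ker(\beta)$ here means the congruence on $\cF$ identifying $p,q$ whenever $\beta(p)=\beta(q)$ as maps between powers of $Q$; since $\beta$ is a monoidal functor, this congruence is compatible with $\circ$ and $\otimes$, so the quotient inherits the monoidal/FS structure and the quotient map $\gamma:\cF\onto\widetilde\cF$ is a morphism of FS categories. That $\widetilde\cF$ has common right-multiples is immediate because this property passes to quotients (as noted in the discussion after Theorem \ref{theo:main-category}). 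Left-cancellativity of $\widetilde\cF$ is the one point that needs a short argument: if $\gamma(f)\circ\gamma(g)=\gamma(f)\circ\gamma(h)$ in $\widetilde\cF$, then $\beta(f)\circ\beta(g)=\beta(f)\circ\beta(h)$; using that each $\beta(f,\ell):Q\to\Cone(f,\ell)$ is a bijection onto a cone (Proposition \ref{prop:action-facts}(8), or its $Q$-level analogue), $\beta(f)$ is injective on the relevant product, forcing $\beta(g)=\beta(h)$, i.e. $\gamma(g)=\gamma(h)$. Hence $\widetilde\cF$ is Ore and its FS group $\widetilde G:=\Frac(\widetilde\cF,1)$ is defined.

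Next I would identify $\gamma_G:G\onto\widetilde G$ with the quotient $G\onto G/\ker(\alpha)$. Surjectivity of $\gamma_G$ follows since every element of $\widetilde G$ is a fraction of $\widetilde\cF$-trees and $\gamma$ is surjective on trees. To pin down the kernel, apply Proposition \ref{prop:gamma} with $\widetilde\cF$ in place of $\ti\cF$: it gives $\ker(\alpha)=\gamma_G^{-1}(\ker(\widetilde\alpha))$. So it remains to show $\ker(\widetilde\alpha)$ is trivial, which is precisely the faithfulness assertion and which I address below; granting it, $\ker(\gamma_G)=\gamma_G^{-1}(\{e\})=\gamma_G^{-1}(\ker(\widetilde\alpha))=\ker(\alpha)$, yielding the isomorphism $\Frac(\cF/\ker(\beta),1)\simeq\Frac(\cF,1)/\ker(\alpha)$.

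For faithfulness of $\widetilde G\act\widetilde Q$, the cleanest route is to show the map $\gamma_Q:Q\to\widetilde Q$ of Proposition \ref{prop:gamma} is a bijection. Surjectivity is clear since $\gamma$ is surjective on trees, so $[t,j]=\gamma_Q([s,j])$ whenever $t=\gamma(s)$. Injectivity is where the definition of $\ker(\beta)$ does the work: suppose $\gamma_Q([t,j])=\gamma_Q([s,k])$, i.e. $[\gamma(t),j]\sim_Q[\gamma(s),k]$ in $\widetilde Q$. Chasing the generating relations of $\sim_Q$ and using that $\widetilde\cF$ has common right-multiples, one reduces to the case where $\gamma(t)=\gamma(s)$ as trees of $\widetilde\cF$ (after growing) and $j=k$; but $\gamma(t)=\gamma(s)$ means $\beta(t)=\beta(s)$ by definition of $\ker(\beta)$, and in particular $\beta(t,j)=\beta(s,j)$ as maps into $Q$, whence $[t,j]=[s,k]$ in $Q$ by the compatibility of $\beta$ with $\sim_Q$ (Proposition \ref{prop:pointed-tree-action}). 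Now $\gamma_Q$ is an equivariant bijection intertwining $\alpha$ and $\widetilde\alpha$, so $\ker(\widetilde\alpha)$ is the image under $\gamma_G$ of the set of elements acting trivially on $Q$ via $\alpha$, namely $\gamma_G(\ker(\alpha))=\{e\}$. Thus $\widetilde G\act\widetilde Q$ is faithful.

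The main obstacle I anticipate is the careful bookkeeping in the injectivity step for $\gamma_Q$: one must argue that two $Q$-classes whose images agree in $\widetilde Q$ can be brought to a common pointed-tree representative, which requires combining the common-right-multiples property with the precise description of $\sim_Q$ and of $\ker(\beta)$ at the level of the functor $\beta$. Everything else is functoriality and the transfer results already in hand (Propositions \ref{prop:gamma}, \ref{prop:pointed-tree-action}, \ref{prop:action-facts}); the reader will want to see this one compatibility spelled out, ideally by reducing to the already-proven injectivity of $\gamma_Q$ in Proposition \ref{prop:gamma} and then checking surjectivity and triviality of $\ker(\widetilde\alpha)$ separately.
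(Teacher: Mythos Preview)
Your outline is correct and tracks the paper's proof closely: both establish that $\ti\cF$ is an Ore FS category, invoke Proposition~\ref{prop:gamma} to get $\ker(\alpha)=\gamma_G^{-1}(\ker(\ti\alpha))$, and show $\ker(\ti\alpha)=\{e\}$ to conclude. The one substantive difference is in the left-cancellativity step. You argue directly that $\beta(f)$ is injective (since each $\beta(t,\ell)$ is a bijection onto its cone by Proposition~\ref{prop:action-facts}(8)), so $\beta(f)\beta(g)=\beta(f)\beta(h)$ forces $\beta(g)=\beta(h)$. The paper instead reduces to trees and invokes Lemma~\ref{lem:kernel-cell}: from $[t\circ h,t\circ h']\in\ker(\alpha)$ it extracts that each pair of tree-components $(h_i,h_i')$ satisfies $[h_i,h_i']\in\ker(\alpha)$, hence $h_i\sim h_i'$. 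Your route is more elementary and stays entirely at the level of the category action $\beta$, avoiding the group-theoretic detour through Lemma~\ref{lem:kernel-cell}; the paper's route has the virtue of reusing a lemma already needed for Theorem~\ref{theo:simple-implies-faithful}. For faithfulness of $\ti\alpha$, the paper argues that $\ti\beta$ is injective straight from the quotient construction (via $\ti\beta\circ\gamma=\ov\gamma\circ\beta$ and injectivity of $\gamma_Q$), whereas you pass through bijectivity of $\gamma_Q$ and the equivariance; these amount to the same thing. Your sketch of injectivity of $\gamma_Q$ is redundant, as you note yourself at the end: it is already the content of Proposition~\ref{prop:gamma}, so only surjectivity needs checking.
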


\begin{proof}
	Denote by $\cC$ the set of all forests of $\cF$ mod out by the relation $\sim$ which is $f\sim f'$ if $\beta(f)=\beta(f')$.
	Write by $[f]$ the class of $f$ inside $\cC$.
	Note that if $f\sim f'$ then $f$ and $f'$ have same number of roots and same number of leaves.
	If $f\sim f'$ and $h\sim h'$, so that $f$ is composable with $h$, then $$\beta(f\circ h)=\beta(f)\circ \beta(h) = \beta(f')\circ \beta(h')=\beta(f'\circ h')$$ and thus $f\circ h\sim f'\circ h'$.
	Similarly, $f\ot k\sim f'\ot k'$ if $f\sim f'$ and $k\sim k'$.
	Moreover, $\beta(f)=\beta(f')$ is equivalent to have that $\beta(f_i,j)=\beta(f'_i,j)$ where $f_i$ is the $i$th tree of $f$ and $j$ is a leaf.
	This implies that the FS structure of $\cF$ passes through the quotient $\cC$ and thus the quotient set $\cC:=\cF/\ker(\beta)$ has a natural structure of FS category.
	Note that if $\cF$ admits the FS presentation $\la S|R\ra$, then $\cC$ admits the FS presentation $\la S| \ti R\ra$ where $\ti R$ is the set of pairs of trees $(t,s)$ satisfying $\beta(t)=\beta(s).$
	
	We now show that $\cC$ is an Ore category: it has common right-multiples and is left-cancellative.
	Since $\cF$ has common right-multiples, then so does any of its quotients, such as $\cC$.
	Let us show that $\cC$ is left-cancellative.
	Using tree decompositions of forests it is sufficient to show that if $t$ is a {\it tree} (rather than a {\it forest}) and $h,h'$ are forests, then $[t\circ h]=[t\circ h']$ implies $[h]=[h']$.
	Having $[t\circ h]=[t\circ h']$ is equivalent in having $g\in \ker(\al)$ where $g=[t\circ h,t\circ h'].$
	Let $n$ be the number of leaves of $t$ and write $h_i,h_i'$ for the $i$th tree of $h,h'$, respectively, with $1\leqslant i\leqslant n.$
	Lemma \ref{lem:kernel-cell} implies that $h_i$ and $h_i'$ have same number of leaves and moreover $[h_i,h_i']\in \ker(\al).$
	This implies that $h_i\sim h_i'$ for all $1\leqslant i\leqslant n$ and thus $h\sim h'.$
	Hence, $\cC$ is left-cancellative and thus is an Ore category.
	
	The category $\cC$ admits a calculus of fractions and we can define the fraction group $\ti G:=\Frac(\cC,1)$.
	Consider the quotient map $\gamma:\cF\to \cF/\ker(\beta)$. 
	Put $\ti\cF:=\cC$ and adopt the notation of the previous proposition.
	We have a commutative diagram given by $$\ti \beta\circ\gamma=\ov\gamma\circ \beta.$$
	Since $\ti\cF$ is precisely the quotient of $\cF$ by $\ker(\beta)$ we deduce that $\ti\beta$ is necessarily injective.
	Therefore, the canonical group action of the group $\ti G$ is injective, i.e. $\ker(\ti\al)=\{e\}$.
	
	By applying the functor $\Frac(-,1)$ to $\gamma:\cF\onto\ti\cF$ we obtain a group morphism $\gamma_G:G\to \ti G$ where $\ti G:=\Frac(\cF,1)$.
	Moreover, $\gamma_G$ is surjective because $\gamma$ is. 
	Hence, $\gamma_G$ factors through an isomorphism $G/\ker(\gamma_G)\to \ti G$.
	By the previous proposition we have that $\ker(\alpha)=\gamma_G^{-1}(\ker(\ti\alpha))$. Since $\ti\alpha:\ti G\act \ti Q$ has been proven to be faithful we deduce that $\ker(\alpha)=\ker(\gamma_G)$ and thus $\ti G$ is isomorphic to $G/\ker(\alpha)$.
\end{proof}

We deduce the main result of this section. 

\begin{theorem}\label{theo:Ore-simple}
	The canonical group action $\alpha:G\act Q$ is faithful if and only if $\cF$ has no proper quotient in the category of Ore FS categories.
\end{theorem}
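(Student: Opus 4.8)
The plan is to prove the two implications separately, using Proposition~\ref{prop:F-modulo-kerbeta-simple} as the main engine for the harder direction.

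\textbf{The easy direction ($\Leftarrow$), or rather its contrapositive.} Suppose $\cF$ admits a proper quotient $\gamma\colon\cF\onto\ti\cF$ in the category of Ore FS categories; here ``proper'' means $\gamma$ is not injective. I would apply the functor $\Frac(-,1)$ to obtain group morphisms $\gamma_G\colon G\onto\ti G$ and $\gamma_{G^T}\colon G^T\onto\ti G^T$, together with the equivariant map $\gamma_Q\colon Q\to\ti Q$ of Proposition~\ref{prop:gamma}. By that proposition, $\gamma_Q$ is injective and intertwines the canonical group actions, so $\ker(\alpha)=\gamma_G^{-1}(\ker(\ti\alpha))\supseteq\ker(\gamma_G)$. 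Thus it suffices to argue $\ker(\gamma_G)\neq\{e\}$, i.e.\ that $\gamma$ being non-injective on the category forces $\gamma_G$ to be non-injective on the fraction group. Since $\gamma$ is not injective there are distinct trees $t,s\in\cT$ with the same number of leaves such that $\gamma(t)=\gamma(s)$ (non-injectivity of a monoidal functor on forests reduces to trees of equal leaf-count via tree decompositions, and equal source/target is automatic from the definition of morphism). Then $g:=[t,s]\in G$ is non-trivial since $t\neq s$ in $\cF$ and $\cF$ is left-cancellative, while $\gamma_G(g)=[\gamma(t),\gamma(s)]=e$. Hence $\ker(\alpha)\neq\{e\}$, so $\alpha$ is not faithful.

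\textbf{The substantive direction ($\Rightarrow$).} This is immediate from Proposition~\ref{prop:F-modulo-kerbeta-simple}. Suppose $\alpha\colon G\act Q$ is not faithful, so $\ker(\beta)$ is a non-trivial congruence on $\cF$ (equivalently $\ker(\alpha)\neq\{e\}$). Set $\ti\cF:=\cF/\ker(\beta)$ with the quotient morphism $\gamma\colon\cF\onto\ti\cF$. Proposition~\ref{prop:F-modulo-kerbeta-simple} tells us exactly that $\ti\cF$ carries the structure of an FS category, that it is an \emph{Ore} category, and that $\gamma_G\colon G\onto\ti G=\Frac(\ti\cF,1)$ has kernel precisely $\ker(\alpha)$. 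Since $\ker(\alpha)\neq\{e\}$, the morphism $\gamma$ cannot be injective on $\cF$ (an injective morphism of Ore categories induces an injective map on fraction groups, as $[t,s]\mapsto[\gamma(t),\gamma(s)]$ is clearly injective when $\gamma$ is), so $\ti\cF$ is a proper Ore quotient of $\cF$. Therefore $\cF$ does admit a proper quotient in $\ForestOre$, completing the contrapositive.

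\textbf{Where the work really lies.} All the genuine content has been front-loaded into Proposition~\ref{prop:F-modulo-kerbeta-simple} (that $\cF/\ker(\beta)$ is left-cancellative, via Lemma~\ref{lem:kernel-cell}) and Proposition~\ref{prop:gamma} (injectivity of $\gamma_Q$, via the identification $Q\simeq G^T/G$). Granting those, the theorem is a short bookkeeping argument. The only point requiring a line of care is the back-and-forth between ``proper'' meaning non-injective on the category and meaning non-trivial on the fraction group: in one direction this is trivial (injective functor $\Rightarrow$ injective on fractions), and in the other it uses left-cancellativity of $\cF$ to see that $t\neq s$ as trees yields $[t,s]\neq e$ in $G$. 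I would state this equivalence once at the start and then invoke it in both implications.
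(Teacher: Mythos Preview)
Your argument follows the paper's route exactly, leaning on Propositions~\ref{prop:gamma} and~\ref{prop:F-modulo-kerbeta-simple} for the two implications. Two remarks.

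First, a cosmetic point: your direction labels are swapped. Assuming a proper quotient and deducing non-faithfulness is the contrapositive of the \emph{forward} implication (faithful $\Rightarrow$ no proper quotient), not of $\Leftarrow$; your ``substantive'' paragraph is likewise the contrapositive of $\Leftarrow$.

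Second, a genuine slip: in your ``easy direction'' you assert that $t\neq s$ in $\cF$ together with \emph{left}-cancellativity yields $[t,s]\neq e$ in $G$. That is not what left-cancellativity gives you. In the fraction group one has $[t,s]=e$ exactly when $tp=sp$ for some forest $p$, and cancelling $p$ on the right would require \emph{right}-cancellativity, which an Ore FS category need not satisfy (the paper itself notes this in the remark following Definition~\ref{def:FS-group}). The paper's own proof of this same implication simply asserts that $\ker(\gamma_G)$ is non-trivial without further comment, so the subtlety is shared; but you should not claim left-cancellativity does this work. Your other direction is unaffected: there you only use that an injective surjection of Ore FS categories is an isomorphism and hence induces an isomorphism on fraction groups, which is fine.
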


\begin{proof}
	Let $\cF$ be an Ore FS category with fraction group $G$ and canonical actions $\beta:\cF\to\cQ$ and $\alpha:G\act Q$.
	Assume that $\cF$ has no proper quotient in the category of Ore FS categories. We have proven that $\cF/\ker(\beta)$ is an Ore FS category. Hence, the quotient map $\cF\onto\cF/\ker(\beta)$ must be an isomorphism implying that $\beta$ is injective and thus the canonical group action $\al:G\act Q$ is faithful.
	
	Conversely, assume that there exists an Ore FS category $\ti\cF$ and a surjective morphism $\gamma:\cF\onto\ti\cF$ so that $\ker(\gamma)$ is non-trivial. 
	As above we write $\gamma_G:G\to\ti G$ for the induced group morphism from $G:=\Frac(\cF,1)$ to $\ti G:=\Frac(\ti\cF,1)$.
	By Proposition \ref{prop:gamma} we have that $\ker(\alpha)=\gamma_G^{-1}(\ker(\ti\alpha))$ which in particular contains the non-trivial kernel of $\gamma_G$. Hence, $\alpha:G\act Q$ is not faithful.
\end{proof}

\begin{example}\label{ex:non-faithful-action} Here are some explicit Ore FS categories whose canonical group action is not faithful.
	\begin{enumerate}
		\item {\bf Shape-preserving FS categories.} Any {\it shape-preserving} Ore FS category (one with all skein relations having the same underlying monochromatic tree, for the formal definition see \cite[Section 2]{Brothier23c}) admits a non-faithful canonical group action. Indeed, fix a shape-preserving Ore FS category $\cF=\FS\la S|R\ra$ so each skein relation is of the form $(t,c)\sim(t,\Tilde{c})$ where $c,\Tilde{c}:\Ver(t)\to S$ are colour functions for the monochromatic tree $t$. Let $\cU$ be the Ore FS category of monochromatic binary forests. Consider the surjective morphism $\gamma:\FS\la S\ra\onto\cU$ given by $Y_s\mapsto Y$ for all $s\in S$. Since
		\begin{align*}
			\gamma(t,c)=t=\gamma(t,\Tilde{c})
		\end{align*}
		we have that $\gamma$ factors through a surjective morphism $\gamma:\cF\onto\cU$. Hence, $\cF$ has a quotient in $\ForestOre$, and thus the corresponding canonical group action cannot be faithful.
		A large family of shape preserving Ore FS categories are given by the FS presentations $\la a,b|t(a)=t(b)\ra$ where $t$ is a monochromatic tree. 
		\item {\bf Quasi-shape-preserving FS categories.} Shape preserving FS categories are not the only type of Ore FS category admitting Ore FS quotients. Consider the Ore FS category $\FS\la a,b|a_1a_1a_3a_3=b_1b_2b_3b_4\ra$. Clearly this is not shape preserving, however, note that $a_1a_1a_3a_3$ is a quasi-right-vine of length $2$ with cell $a_1a_1$ and $b_1b_2b_3b_4$ is a quasi-right-vine of length $2$ with cell $b_1b_2$ (see Section \ref{sec:trees-and-forests} for terminology). Hence we call this category \emph{quasi-shape-preserving}. 
		Observe that 
		$\Tilde{\cF}=\FS\la a,b|a_1a_1=b_1b_2\ra$ 
		is a non-trivial quotient of $\cF.$ Hence, the canonical group action of $\cF$ is not faithful.
	\end{enumerate}
\end{example}

\begin{remark}\label{rem:quasi-shape}
	The above example easily generalises: any Ore FS category $\FS\la a,b|u(a)=v(b)\ra$, where $u$ and $v$ are quasi-trees with the same skeleton, admits as an Ore FS quotient the category $\FS\la a,b| \ti u(a)=\ti v(b)\ra$, where $\ti u,\ti v$ are the cells of the quasi-trees $u,v$, respectively. We do not know if this is the {\it only} way an Ore FS category of the form $\FS\la a,b|x(a)=y(b)\ra$ admits an Ore FS quotient, though we highly suspect that it is.
\end{remark}

\subsection{A dictionary between kernels of groups and categories}

\medskip{\bf Kernels of $\cF$ give normal subgroups of $G$.}
Consider a morphism $\theta:\cF\to\ti\cF$ between two Ore FS categories.
Recall that the kernel of $\theta$ is the following set of pairs:
$$\ker(\theta):=\{(f,f')\in\cF\times\cF:\ \theta(f)=\theta(f')\}.$$
The range of $\theta$ is isomorphic to the quotient $\cF/\ker(\theta)$. 
This is again an Ore FS category. Hence, we may assume for simplicity that $\theta$ is surjective so that $\cF/\ker(\theta)\simeq \ti\cF.$

Here are key properties of $\ker(\theta)$.

\begin{enumerate}
	\item $\ker(\theta)$ is the graph of an equivalence relation;
	\item if $f,f'$ are forests then $(f,f')\in\ker(\theta)$ if and only if $(f_i,f_i')\in \ker(\theta)$ for all $i$ where $f_i$ is the $i$th tree of $f$. In particular, $f,f'$ have same number of roots and leaves.
	\item if $(f,f')\in\ker(\theta)$ and $(h,h')\in \ker(\theta)$ so that $f$ is composable with $h$, then $(f\circ h,f'\circ h')\in \ker(\theta)$;
	\item if $(t\circ p, t\circ q)\in\ker(\theta)$, then $(p,q)\in\ker(\theta).$
\end{enumerate}

Indeed, $\theta$ being a map gives (1),  $\theta$ being monoidal gives (2), $\theta$ being a functor gives (3), and finally since the target of $\theta$ is left-cancellative we obtain (4). 
Conversely, consider now a subset $R_\cF\subset \cF\times\cF$ satisfying (1,2,3,4).
Take now $R_\cT$ the set of all pairs of trees $(t,s)$ that are in $R_\cF$. They generate all $R_\cF$ by taking composition and tensor product. Hence, taking the free FS category $\cUF$ and its quotient $\ti\cF$ given by all the relation $(t,s)$ of $R_\cT$ defines an FS category which as a set is equal to the quotient $\cUF/R_\cF.$
Since $R_\cT$ contains the relations of $\cF$ and $\cF$ has common right-multiples we deduce that $\ti\cF$ has common right-multiples. 
Now, condition (4) gives that $\ti\cF$ is left-cancellative and thus $\ti\cF$ is an Ore FS category.
We deduce the following.

\begin{proposition}
	Consider an Ore FS category $\cF$ and let $R_\cF$ be a subset of $\cF\times\cF$. 
	There exists a surjective morphsim $\theta:\cF\onto\ti\cF$ between Ore FS categories such that $R_\cF=\ker(\theta)$ if and only if $R_\cF$ satisfies (1,2,3,4).
\end{proposition}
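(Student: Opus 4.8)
The statement is an ``if and only if'' whose forward direction (a kernel of a surjection between Ore FS categories satisfies (1)--(4)) has essentially already been verified in the discussion immediately preceding the proposition. The plan is therefore to concentrate on the converse and to organize the argument so that the forward direction is merely a pointer back to that discussion. Concretely, I would open the proof by recalling: $\theta$ being a (well-defined) map gives (1); $\theta$ being monoidal (preserving $\ot$ and the number of roots and leaves) gives (2); $\theta$ being a functor gives (3); and the target $\ti\cF$ being left-cancellative, together with functoriality, gives (4). No computation is needed here beyond writing these four one-line implications.

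For the converse, suppose $R_\cF\subset\cF\times\cF$ satisfies (1)--(4). The plan is to build $\ti\cF$ as a quotient and check it is Ore. First I would isolate $R_\cT:=R_\cF\cap(\cT\times\cT)$, the pairs of \emph{trees} lying in $R_\cF$; using that every forest decomposes as a tensor product of trees (and as a composition of elementary forests), conditions (2) and (3) ensure that $R_\cT$ \emph{generates} $R_\cF$ under $\circ$ and $\ot$ --- i.e.\ $R_\cF=\ov{R_\cT}$ as a congruence. Now fix a skein presentation $\la S\mid R\ra$ for $\cF$ with quotient functor $P_\cF:\cUF\onto\cF$, and set $\ti\cF:=\FS\la S\mid R\cup P_\cF^{-1}(R_\cT)\ra$; equivalently $\ti\cF=\cF/R_\cF$ as a monoidal category. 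Let $\theta:\cF\onto\ti\cF$ be the canonical quotient functor. By construction $\theta$ is surjective, monoidal, sends $1$ to $1$, and $R_\cF\subseteq\ker(\theta)$. To get the reverse inclusion $\ker(\theta)\subseteq R_\cF$ I would use (1): since $R_\cF$ is the graph of a \emph{congruence} (an equivalence relation compatible with $\circ,\ot$ by (3) and (2)), the quotient $\cF/R_\cF$ is already a well-defined monoidal category and $\theta$ factors as $\cF\onto\cF/R_\cF\onto\ti\cF$ with the second map an isomorphism; hence $\ker(\theta)=R_\cF$ exactly.

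It remains to show $\ti\cF$ is an Ore FS category, i.e.\ has common right-multiples and is left-cancellative. Common right-multiples is inherited from $\cF$: the property ``for $f,g$ with the same source there exist $h,k$ with $fh=gk$'' is preserved under any quotient, since one can push the witnesses $h,k$ forward along $\theta$. Left-cancellativity is the heart of the matter and is precisely where condition (4) is used. By the tree-decomposition of forests it suffices to show that if $t$ is a \emph{tree} and $h,h'$ are forests with $\theta(t\circ h)=\theta(t\circ h')$, then $\theta(h)=\theta(h')$; but $\theta(t\circ h)=\theta(t\circ h')$ means $(t\circ h,t\circ h')\in R_\cF$, and (4) gives $(h,h')\in R_\cF$, i.e.\ $\theta(h)=\theta(h')$. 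Reducing a general left-cancellation statement $\theta(f\circ h)=\theta(f\circ h')\Rightarrow\theta(h)=\theta(h')$ with $f$ a forest to the case where $f$ is a single tree (by peeling off the carets of $f$ one at a time, using (2) to handle the tensor-product structure) is the one genuinely fiddly bookkeeping step --- this is the step I expect to be the main obstacle, though it is routine in spirit and parallels the left-cancellativity argument already carried out in the proof of Proposition~\ref{prop:F-modulo-kerbeta-simple}. With left-cancellativity established, $\ti\cF$ is Ore, $\theta:\cF\onto\ti\cF$ is the desired surjective morphism, and $\ker(\theta)=R_\cF$, completing the proof.
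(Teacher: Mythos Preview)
Your proposal is correct and follows essentially the same approach as the paper: the forward direction via the four one-line observations, and the converse by extracting $R_\cT$, forming the quotient FS category, inheriting common right-multiples, and using condition (4) to obtain left-cancellativity. You are somewhat more careful than the paper in spelling out why $\ker(\theta)=R_\cF$ exactly and in noting the reduction from forests to trees in the left-cancellativity step, but these are elaborations of the same argument rather than a different route.
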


As above we consider $\theta:\cF\onto\ti\cF$ a surjective morphism between Ore FS categories.
The functor $\Frac(-,1)$ yields a surjective group morphism
$$\theta_G:G\onto \ti G$$ where as usual $G:=\Frac(\cF,1), \ti G:=\Frac(\ti\cF,1).$
Define now 
$$N_\theta:=\{[t,s] \in G:\ (t,s)\in \ker(\theta)\}.$$
This is a subgroup of $G$ that is normalised by the larger group $G^V.$
Moreover, $\ker(\theta_G)=N_\theta$ and thus $G/N_\theta\simeq \ti G.$

Hence, kernels of the FS category $\cF$ gives subgroups of $G$ normalised by $G^V$ and quotients of $\cF$ gives quotients of $G.$
We investigate the converse.

\medskip{\bf Partial converse: from normal subgroups of $G$ to kernels of $\cF$.}

Consider $N\subset G$ a subgroup that is normalised by $G^T$.
We now define a subset of $\cF\times \cF$.
Let $R_\cT$ be the set of pairs of trees $(t,s)$ satisfying that $[t,s]$ is in $N$.
Now, define $R_\cF$ as above which is the closure of $R_\cT$ inside $\cF$ for the tensor product and for the composition. 
Using Section \ref{sec:simple-faithful} and the discussion of above we immediately obtain the following.

\begin{proposition}
	The set $R_\cF$ satisfies (1,2,3) of the key properties of above.
	This implies in particular that $N$ is normalised by $G^V.$
	If moreover $R_\cF$ satisfies (4) we have that $G/N$ is an FS group. 
\end{proposition}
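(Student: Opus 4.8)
The plan is to verify properties (1),(2),(3) of $R_\cF$ by direct, mostly combinatorial, means and then feed the result into the category--quotient machinery of the preceding subsection. As a preliminary, recall from the opening lemma of Section~\ref{sec:simple-faithful} that, since $N$ is normalised by $G^T$, we have $N\subseteq\ker(\al)$; in particular $N\lhd G$ and $N\subseteq K$. Property (3) then holds \emph{by construction}: $R_\cF$ is \emph{defined} as the closure of $R_\cT$ under $\circ$ (and $\ot$). For the reflexivity and symmetry in (1): the pairs $(I,I)$ and $(Y_a,Y_a)$ lie in $R_\cT$ because $[I,I]=[Y_a,Y_a]=e\in N$, and every forest is a $\circ/\ot$--combination of these, so $R_\cF$ is reflexive; symmetry of $R_\cT$ is clear ($[s,t]=[t,s]^{-1}\in N$) and passes to the $\circ/\ot$--closure. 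For (2), first note that any pair in $R_\cT$ consists of two trees with the same number of leaves (as $[t,s]\in G$), whence every member of $R_\cF$ has matching root-- and leaf--counts. Now bring $(f,f')\in R_\cF$ to ``layered form'' $f=\phi_1\circ\cdots\circ\phi_k$, $f'=\psi_1\circ\cdots\circ\psi_k$, where each pair $(\phi_\ell,\psi_\ell)$ is a tensor product of pairs from $R_\cT\cup\{(I,I)\}$; matching leaf--counts of corresponding entries force the block of leaves occupied by the $i$-th tree of $f$ after $\phi_1\circ\cdots\circ\phi_\ell$ to coincide with that occupied by the $i$-th tree of $f'$ after $\psi_1\circ\cdots\circ\psi_\ell$, so restricting the layered form to these blocks exhibits $(f_i,f_i')$ itself as a $\circ/\ot$--combination of pairs from $R_\cT\cup\{(I,I)\}$, hence $(f_i,f_i')\in R_\cF$; the converse implication in (2) is just closure under $\ot$.

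It remains to prove transitivity in (1), and for the second half of the statement it will be essential to know that $R_\cF$ introduces no new tree pairs, i.e.\ $R_\cF\cap(\cT\times\cT)=R_\cT$. Both follow from an intrinsic description of $R_\cF$. Let $R^*$ be the relation consisting of all $(f,f')$ with matching root/leaf counts (say $n$ roots) such that $[z\circ f,z\circ f']\in N$ for one, equivalently every, tree $z$ with $n$ leaves; the equivalence of ``one'' and ``every'' uses $N\lhd G$ via $[z_1 f,z_1 f']=[z_1,z_2]\circ[z_2 f,z_2 f']\circ[z_1,z_2]^{-1}$ with $[z_1,z_2]\in G$. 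Then $R^*$ is manifestly an equivalence relation, contains $R_\cT$, and --- reducing to the computation of the previous paragraph --- is closed under $\circ$ and $\ot$ \emph{provided} that $\eta_C(N)\subseteq N$ for every cone $C$; this gives $R_\cF\subseteq R^*$. Conversely, for a tree pair $(x,x')\in R^*$ the admissible $z$ is the trivial tree, so $[x,x']\in N$, i.e.\ $(x,x')\in R_\cT\subseteq R_\cF$; together with (2) this yields $R^*\subseteq R_\cF$. Hence $R_\cF=R^*$, which completes (1) and gives $R_\cF\cap(\cT\times\cT)=R_\cT$.

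With $R_\cF=R^*$ in hand the two final assertions are formal. Since $R_\cF$ is a congruence satisfying (1),(2),(3), the quotient $\cF/R_\cF$ is an FS category with common right--multiples and the quotient functor extends to the $V$-type categories; exactly as for $N_\theta$ above, $N=\{[t,s]:(t,s)\in R_\cF\}$ is the intersection with $G$ of the kernel of the induced morphism of $V$-type fraction structures, a normal subgroup of $G^V$, so $N\lhd G^V$. If moreover $R_\cF$ satisfies (4), then by the Proposition of the previous subsection there is a surjection $\theta:\cF\onto\ti\cF$ of Ore FS categories with $\ker\theta=R_\cF$; applying $\Frac(-,1)$ gives $\theta_G:G\onto\Frac(\ti\cF,1)$ with $\ker\theta_G$ equal to the set of $[t,s]$ with $(t,s)\in R_\cF$, which by $R_\cF\cap(\cT\times\cT)=R_\cT$ is precisely $N$; thus $G/N\simeq\Frac(\ti\cF,1)$ is an FS group.

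The genuine difficulty, which everything above is organised around, is the fact $\eta_C(N)\subseteq N$ for cones $C$ --- a converse to Lemma~\ref{lem:kernel-cell}. I would first reduce to the basic half--cones: conjugating by the order--two rotation in $T\subseteq G^T$ carries $\Cone(Y_a,1)$ to $\Cone(Y_a,2)$, and an induction on the cell of a general cone composes such embeddings, so only $C=\Cone(Y_a,1)$ needs treatment. For that case the plan is to use the cyclic rotations of $G^T$ over complete $a$-trees together with the decomposition of Lemma~\ref{lem:kernel-cell} to write $\eta_{\Cone(Y_a,1)}(g)$, for $g\in N$, as a product of $G^T$--conjugates of $g$ and their inverses, forcing it into $N$. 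This step --- extracting closure under cone--embeddings from mere normality under $G^T$ --- is where the real work lies; the combinatorial verification of (2),(3), the reflexivity and symmetry in (1), the transitivity via $R_\cF=R^*$, the $G^V$-normality, and the FS--group conclusion are all routine consequences of it and of the results of Sections~\ref{sec:preliminaries}--\ref{sec:categorical-characterisation}.
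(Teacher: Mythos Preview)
Your approach is far more detailed than the paper's, which disposes of the proposition in a single sentence (``Using Section~\ref{sec:simple-faithful} and the discussion above we immediately obtain the following''). Your verification of (3) (by construction), of both directions of (2) via the layered form, and of reflexivity and symmetry in (1) are all correct. Your reduction of the remaining issues --- transitivity of $R_\cF$, the equality $R_\cF\cap(\cT\times\cT)=R_\cT$, and the $G^V$-normality of $N$ --- to the single closure property $\eta_C(N)\subseteq N$ is also sound and is, in fact, the right way to organise the argument.

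The gap is that you do not prove $\eta_C(N)\subseteq N$. Your sketch (``use cyclic rotations of $G^T$ over complete $a$-trees together with Lemma~\ref{lem:kernel-cell}'') does not obviously work: Lemma~\ref{lem:kernel-cell} only places the cells $h_j$ in $\ker(\alpha)$, not in $N$; conjugation by $[t,t\tau]\in G^T$ permutes the tensor slots \emph{cyclically}, and combining this with $G$-conjugation $[t,s]$ (which only changes the ambient tree, keeping the slots fixed) still appears to give only cyclic permutations of the cells at each leaf-level, not the ability to isolate a single slot. So writing $Y_a(g\otimes e)Y_a^{-1}$ as a product of $G^T$-conjugates of $g$ is not established by your outline. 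This is a genuine gap in your argument, and you are right to flag it as the crux; the paper offers no further detail here either, so one cannot simply point to its proof to fill it.
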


We note that in general there exist subgroups $N\subset G$ normalised by $G^T$ whose associated $R_\cF$ does not satisfy (4) as illustrated in the next example.

\begin{example}\label{ex:large-kernel}
	Consider the FS category $\cF$ with two colours $a,b$ and two relations: 
	$$a_1b_1 = b_1 a_1 \text{ and } a_1a_1=b_1b_1.$$
	The category $\cF$ is constructed from the homogeneously presented monoid $\Mon\la a,b| ab=ba, aa=bb\rangle.$
	It is an Ore category with associated FS group $G$ isomorphic to the wreath product 
	$\Z/2\Z \wr_{\Q_2} F$ with $\Q_2:=\Z[1/2]/\Z$
	by \cite{Brothier23c}.
	Let $N$ be the kernel of:
	$$\oplus_{\Q_2}\Z/ 2\Z \to \Z/2\Z, \ f\mapsto \sum_{x\in \Q_2} f(x) .$$
	It corresponds to all finitely supported maps from $\Q_2$ to $\Z/2\Z$ with a support of even order. It is indeed a subgroup of $G$ that is normalised by $G^T\simeq \Z/2\Z \wr_{\Q_2} T.$
	From $N$ we define $R_\cF$ as explained above. 
	One can show that $R_\cF$ does not satisfy (4).
	Indeed, take $t=Y_a,p=Y_a \ot Y_a , q=Y_b\ot Y_b$, and note that $[t\circ p, t\circ q]\in N$ and thus $(t\circ p, t\circ q)\in R_{\cF}$.
	However, $(p_1,q_1)=(Y_a,Y_b)\notin R_{\cF}$. 
\end{example}

\section{Abelianisation}\label{sec:abelianisation} In this section we describe the abelianisations of $T$ and $V$-type FS groups in full generality. The $F$-case is more subtle and we don't treat it here. 
Fix a presented Ore FS category $\cF=\FS\la S|R\ra$ with FS groups $G$, $G^T$, and $G^V$. 
\subsection{Generators of $G^V$}\label{sec:generators}
We follow the notations for generators of $G$ from Section 4 of \cite{Brothier22}. 
For each colour $x\in S$ and $1\leqslant j\leqslant n$ recall that $x_{j,n}$ is the $n$-rooted forest with one $x$-caret at $j$. 
\begin{center}{\bf From now on fix a colour $a\in S$.}\end{center}
Write $\rho_j$ for the $a$-coloured right-vine of length $j$ (so has $j+1$ leaves). We write
\begin{align*}
	x_j:=[\rho_{j}\circ x_{j,j+1},\rho_{j+1}]\quad\textnormal{and}\quad \hat{x}_j:=[\rho_{j-1}\circ x_{j,j},\rho_{j}].
\end{align*}
It follows from Theorem 4.5 of \cite{Brothier22} that $G$ is generated by the $a_j,b_j,\hat b_j$ with $b\in S\setminus\{a\}$ and $j\geqslant 1$.
We also introduce the following notation for the $T$ and $V$-type groups. Recall that $\Sigma_n$ is the $n$th symmetric group and $C_n$ is the $n$th cyclic group. For each $1<j<k$ let $\tau_j\in C_j$ be the unique cyclic permutation sending $1$ to $j$ and $\sigma_k\in\Sigma_k$ the transposition that swaps $1$ and $2$. We write
\begin{align*}
	\widehat{\tau}_{j}:=[\rho_{j-1}\circ\tau_j,\rho_{j-1}]\quad\textnormal{and}\quad\widehat{\sigma}_{k}:=[\rho_{k-1}\circ\sigma_{k},\rho_{k-1}].
\end{align*}
Since $\sigma_k$ fixes the last leaf of $\rho_k$ it follows that $\widehat{\sigma}_k=\widehat{\sigma}_{k+1}$ for all $k\geqslant3$. We set this common element of $G^V$ to be $\widehat{\sigma}$. 

It is standard that $\Sigma_1=C_1=\{e\}$, $\Sigma_2=C_2$, $\tau_n$ generates $C_n$ for all $n\geqslant2$, and $\{\tau_m,\sigma_m\}$ generates $\Sigma_m$ for all $m\geqslant3$.

\subsection{Colour counting and length} 

Recall that $\cF=\FS\la S|R\ra$ is an Ore FS category. Any free forest $f\in \FS\la S\ra$ defines a function $\chi_0(f)\in\N^S$ by setting $\chi_0(f)(b)$ to be the number of $b$-vertices of $f.$ It follows that $\chi_0:\FS\la S\ra\onto\N^S$ is a surjective monoidal functor valued in the free commutative monoid $\N^S$ that we call the
{\it colour counting functor}. Consider the word-length morphism
$\Sigma:\N^S\to\N$
and note that $\Sigma(\chi(f))$ is the number of carets of $f.$ The colour counting functor factors through
\begin{align*}
	\chi:\FS\la S|R\ra\onto\Mon\la S|ab=ba,\chi_0(u)=\chi_0(v):a,b\in S,(u,v)\in R\ra,
\end{align*}
noting that the target monoid is $\N^S/(\chi_0\times\chi_0)(R)=:M.$
By setting $\chi(\pi)=0$ for every permutation $\pi$ we obtain the extension $\chi:\cF^V\onto M.$
By applying the universal enveloping groupoid functor $U:\Cat\to\Groupoid$ (and using that $U$ preserves presentations) we obtain the chain of morphisms:
$$\Frac(\cF^V)\overset{\chi}{\onto} \Z^S/(\chi_0\times\chi_0)(R)\overset{\Sigma}{\onto} \Z.$$
If $g=[t\pi,s]\in G^V:=\Frac(\cF^V,1)$, then $\Sigma\circ \chi(g)=\Sigma\circ \chi(t)-\Sigma\circ\chi(s)=0$ since $t$ and $s$ have same number of carets.
It is not hard to see that $\chi(G^V)=\ker(\Sigma)$, i.e.~we have an exact sequence $0\to G^V\to \Z^S/(\chi_0\times\chi_0)(R)\to \Z\to 0.$
This means that counting the number of vertices for {\it all} colours is redundant for group elements of $G^V$. Hence, we may count all but one colour. We take $a$ to be this specific colour (the same colour we used for defining generators for $G$, $G^T$, and $G^V$ in the preceding section).
Define $M_a:=M/\la a=0\ra$ the monoid obtained from $M$ by adding the relation $a=0$. 
Moreover, we write $q:M\onto M_a$ and $q:U(M)\onto U(M_a)$ for the associated quotient maps. 
From the discussion of above it is rather immediate that $q$ restricts into an isomorphism
\begin{equation}\label{eq:q-restriction}
	q\restriction_{\ker(\Sigma)}: \ker(\Sigma)\to U(M_a).
\end{equation}

\subsection{Abelianisation theorem}
We will show that $q\circ\chi:G^Y\onto U(M_a)$ is conjugated to the abelianisation map for $Y=T,V.$ 

\medskip{\bf Some relations in $G^T$ and $G^V$.} The following are lists of relations that hold in $G$, $G^T$, and $G^V$. These are all standard and can be verified easily using the diagrammatic calculus, but they are essentially due to Cannon--Floyd--Parry \cite[Sections 3,5,6]{Cannon-Floyd-Parry96}. We will use these to compute abelianisations. We do not claim these are sufficient to present the aformentioned groups. 
For $x,y\in S$ and $1\leqslant j<k$ we have:
\begin{enumerate}[label=(F\arabic*)]
\item \label{F1} $x_ky_j=y_jx_{k+1}$.
\item \label{F2} $\hat{x}_ky_j=y_j\hat{x}_{k+1}$.
\end{enumerate}
These follow from moving carets past each other by an isotopy.
Recall that the generators are expressed using a fixed colour $a\in S$ (see above).
In particular, this implies that $\widehat a_k=e$ for all $k\geqslant 1.$
For $G^T$ we have the following. Let $x\in S$ and $k\geqslant2$. 
Then:
\begin{enumerate}[label=(T\arabic*)]
\item \label{T1} $\widehat{\tau}_{k+1}x_k=x_{k-1}\widehat{\tau}_{k+2}$.
\item \label{T2} $\widehat{\tau}_k \widehat{x}_k=x_{k-1} \widehat{\tau}_{k+1}$.
\item \label{T3} $\widehat{\tau}_ka_1=\widehat{\tau}_{k+1}^2$.
\item \label{T4} $\widehat{\tau}_{k+1}=a_{k}\widehat{\tau}_{k+2}$.
\end{enumerate}
Note that \ref{T4} is a special case of \ref{T2} with $x=a$. For $G^V$ we have:
\begin{enumerate}[label=(V\arabic*)]
\item \label{V1} $\widehat{\sigma}a_1=a_2[\widehat{\sigma}^{-1},\widehat{\tau}_4^{-1}]$.
\end{enumerate}

\begin{theorem}\label{theo:abelianisation}
Let $\cF=\FS\la S|R\ra$ be an Ore FS category with FS groups $G$, $G^T$, and $G^V$. For all $a\in S$ and $Y\in\{T,V\}$ the map
\begin{align*}
	\chi_a^Y:G^Y\to\Z^{S}/\langle (\chi_0\times\chi_0)(R), a=0\rangle, \ [t\pi,s]\mapsto\chi(t)-\chi(s)
\end{align*}
induces an isomorphism on $G^Y_{\ab}$.
\end{theorem}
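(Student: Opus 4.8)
The plan is to show that $\chi_a^Y$ is a well-defined surjective group morphism onto an abelian group, and then that every homomorphism from $G^Y$ to an abelian group factors through it; equivalently, that the abelianisation map $G^Y \onto G^Y_{\ab}$ and $\chi_a^Y$ have the same kernel. The target group is $U(M_a) = \Z^S/\langle(\chi_0\times\chi_0)(R), a=0\rangle$ and I have already essentially identified it (via the restriction isomorphism \eqref{eq:q-restriction}) with $\ker(\Sigma) \subset \Z^S/(\chi_0\times\chi_0)(R)$; so the real content is that $q\circ\chi$, restricted to $G^Y$, induces an isomorphism on abelianisations.

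First I would note that $\chi_a^Y$ is well-defined and surjective: it is the composition of the colour-counting groupoid morphism $\chi\colon \Frac(\cF^V)\onto \Z^S/(\chi_0\times\chi_0)(R)$ (which is legitimate because $\chi_0$ is a monoidal functor killing exactly the skein relations, and $U=\Frac$ preserves presentations) with the further quotient $q$ by $a=0$, restricted to the subgroup $G^Y = \Frac(\cF^Y,1)$. Since the target is abelian, $\chi_a^Y$ factors through $G^Y_{\ab}$, giving a surjection $\overline{\chi_a^Y}\colon G^Y_{\ab}\onto U(M_a)$. It remains to produce an inverse, i.e.\ to show $\ker(\chi_a^Y)\subseteq [G^Y,G^Y]$. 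This is where the relations \ref{F1}--\ref{F2}, \ref{T1}--\ref{T4}, \ref{V1} enter. The strategy is the classical Cannon--Floyd--Parry computation adapted to the FS setting: work modulo $[G^Y,G^Y]$ and show that in $G^Y_{\ab}$ every generator is determined, up to the relations defining $U(M_a)$, by its colour-count image, so that $G^Y_{\ab}$ is generated by the images of the $a$-free colours and $\overline{\chi_a^Y}$ is injective.

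Concretely, in $G^T_{\ab}$: relation \ref{F1} (with $k\to\infty$) forces all $x_j$ for a fixed colour $x$ to become equal — call the common class $[x]$ — and similarly \ref{F2} makes all $\hat x_j$ equal to some $[\hat x]$; relation \ref{T4} with the convention $\hat a_k=e$ (hence $[a]$ trivial in the $T,V$ case after imposing $a=0$) together with \ref{T3} pins down $\widehat\tau_k$ modulo commutators in terms of $[a]$, and \ref{T1}, \ref{T2} then express $[\hat x]$ in terms of $[x]$ and $[a]$. Thus $G^T_{\ab}$ is generated by $\{[x] : x\in S\setminus\{a\}\}$ subject only to the relations read off from $R$ via colour-counting plus $a=0$, which is exactly a presentation of $U(M_a)$; this gives the inverse morphism $U(M_a)\to G^T_{\ab}$. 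For $G^V$ one adds the generator $\widehat\sigma$ and relation \ref{V1}: since \ref{V1} reads $\widehat\sigma a_1 = a_2[\widehat\sigma^{-1},\widehat\tau_4^{-1}]$, in the abelianisation it collapses to $[a_1]=[a_2]$ (already known) after killing the commutator, so $\widehat\sigma$ becomes trivial in $G^V_{\ab}$ — consistent with $\chi(\widehat\sigma)=0$ — and no new generator survives, giving $G^V_{\ab}\simeq U(M_a)\simeq G^T_{\ab}$. Finally, matching this with \eqref{eq:q-restriction} identifies $U(M_a)$ with $\Z^S/\langle (\chi_0\times\chi_0)(R), a=0\rangle$ as claimed.

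The main obstacle is the injectivity half, i.e.\ verifying that the listed relations genuinely \emph{suffice} to kill everything in $\ker(\chi_a^Y)$ modulo commutators. The paper explicitly disclaims that \ref{F1}--\ref{V1} present the groups, so one cannot simply quote a presentation; instead I would argue that an arbitrary $g=[t\pi,s]\in G^Y$ can be rewritten (using common right-multiples to grow $t,s$, then \ref{F1}, \ref{F2} to move all carets to standard right-vine position, and \ref{T1}--\ref{T4}, \ref{V1} to standardise the permutation part) into a canonical word whose image under $\chi_a^Y$ already records all data modulo $[G^Y,G^Y]$ — the key point being that two trees with the same colour-count vector, up to the skein relations, differ by an element of $[G,G]$, which is a separate lemma (provable by the usual "any two trees have a common expansion" plus an induction over carets, with the commutator absorbing reorderings). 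Granting that lemma, injectivity of $\overline{\chi_a^Y}$ follows. I would isolate and prove that lemma first, then assemble the $T$- and $V$-cases as above.
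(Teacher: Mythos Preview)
Your overall strategy matches the paper's: establish that $\chi_a^Y$ is a well-defined surjection onto an abelian group, then show that in $G^Y_{\ab}$ every generator collapses to its colour class, so that $G^Y_{\ab}$ is generated by $\{[x_1]:x\in S\setminus\{a\}\}$ and the inverse map $U(M_a)\to G^Y_{\ab}$ exists. The use of \ref{F1}--\ref{F2}, \ref{T1}--\ref{T4}, \ref{V1} to collapse the $x_j$, $\hat x_j$, $\widehat\tau_k$, $\widehat\sigma$ is exactly what the paper does.

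There is, however, a genuine gap: your argument does not handle the generator $\hat x_1=[Y_x,Y_a]$ for $x\neq a$. All of the relations \ref{T1}--\ref{T4} carry the hypothesis $k\geqslant 2$, so none of them says anything about $\hat x_1$; \ref{F2} only links $\hat x_k$ for $k\geqslant 2$. The paper treats $\hat x_1$ separately and this is the one place where the Ore property is essential: using common right-multiples one rewrites $\hat x_1=[Y_x,Y_a]=[Y_a\circ q,Y_a\circ p]$ for suitable forests $p,q$, and this new representative decomposes as a product of generators $y_j,\hat z_k$ with $k\geqslant 2$, to which the already-established collapses apply. Without this step you have not shown $\hat x_1\sim_T x_1$, and the generator set for $G^Y_{\ab}$ is not reduced to the colour classes.

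Your proposed ``separate lemma'' is both unnecessary and mis-aimed. Once the generator collapse is complete, no extra lemma is needed: the relations $\chi_0(u)=\chi_0(v)$ hold in $G^Y_{\ab}$ automatically, because $u=v$ in $\cF$ gives $[u,\rho]=[v,\rho]$ in $G$ for any $a$-vine $\rho$, and each side abelianises to $\sum_b\chi_0(\cdot)(b)[b_1]$ by the collapse. More seriously, you phrase the lemma as ``two trees with the same colour-count vector differ by an element of $[G,G]$'', but the paper explicitly warns that the $F$-case abelianisation is subtle; the statement you want is modulo $[G^T,G^T]$, and a proof by ``common expansion plus commutators absorbing reorderings'' would only land you in $[G,G]$, which need not suffice. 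The $T$-relations (and the Ore trick for $\hat x_1$) are doing real work that a purely $F$-type argument cannot replace.
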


\begin{proof}
Let $\cF$ be as above. Fix $a\in S$ and $Y\in\{T,V\}$ and consider the generating set described in Section \ref{sec:generators} given by the colour $a$. Let $\chi_{\ab}:G^Y_{\ab}\to\ker(\Sigma)$ be the map induced by $\chi:G^Y\to\ker(\Sigma)$.
Using Equation \ref{eq:q-restriction} it is sufficient to prove that $\chi_{\ab}$ is an isomorphism. We have already observed that $\chi$ is surjective and thus so is $\chi_{\ab}$. 
It remains to prove that $\chi_{\ab}$ is injective. 

To lighten notation we write $g\sim_X h$ to express that $g [G^X,G^X]=h [G^X,G^X]$ for $X\in\{F,T,V\}$. 
Note that $g\sim_Fh$ implies $g\sim_Th$ for $g,h\in G$ and $g'\sim_T h'$ implies $g'\sim_Vh'$ for $g',h'\in G^T$. 

To show $\chi_{\ab}$ is injective it is sufficient to witness the following:
\begin{enumerate}[label=(AB\arabic*)]
	\item \label{AB1} $a_i\sim_T 0$ for all $i\geqslant 1$;
	\item \label{AB2} $x_1\sim_T x_j\sim_T \hat x_{j}$ for all $x\in S, j\geqslant 1$;
	\item \label{AB3} $\widehat\tau_k\sim_T0$ for all $k\geqslant 2$; and
	\item \label{AB4} $\widehat \sigma\sim_V0$.
\end{enumerate}
Indeed, these exactly reflect the relations in $U(M_a)$. Relation (AB1) means we can forget the colour $a$, relation (AB2) means that counting colours is well-defined, and relations (AB3) and (AB4) mean that permutations are forgotten.

From \ref{F1} with $x=a$ we have
\begin{align}\label{eqn:abelianisation-1}
	a_{j}\sim_Fa_{j+1}    
\end{align}
for all $j\geqslant2$. Since $a_2\sim_Fa_3$ relation \ref{T1} with $k=3$ and $x=a$ implies $\widehat{\tau}_4\sim_T\widehat{\tau}_5$. With this, relation \ref{T4} with $k=3$ gives $a_3\sim_T0$, and so (\ref{eqn:abelianisation-1}) implies
\begin{align}\label{eqn:abelianisation-2}
	a_k\sim_T0    
\end{align}
for all $k\geqslant2$. This, with \ref{T4}, implies 
\begin{align}\label{eqn:abelianisation-3}
	\widehat{\tau}_{k+1}\sim_T\widehat{\tau}_{k+2}  
\end{align}
for all $k\geqslant2$. Now \ref{T1} with $k=2$ and $x=a$ gives $a_1\sim_Ta_2$ and so by (\ref{eqn:abelianisation-2}) we have 
\begin{align}\label{eqn:abelianisation-4}
	a_1\sim_T0.    
\end{align}
This, along with \ref{T3} with $k=3$, and (\ref{eqn:abelianisation-3}) with $j=2$, implies $\widehat{\tau}_3\sim_T0$. This, with (\ref{eqn:abelianisation-3}) gives 
\begin{align}
	\widehat{\tau}_k\sim_T0
\end{align}
for all $k\geqslant3$. 
Now, \ref{T3} with $k=2$, (\ref{eqn:abelianisation-3}), and (\ref{eqn:abelianisation-4}), gives $\widehat{\tau}_2\sim_T0$. 
Relation \ref{V1} along with (\ref{eqn:abelianisation-2}) and (\ref{eqn:abelianisation-4}) gives $\widehat{\sigma}\sim_V0$.
We have proven \ref{AB1}, \ref{AB3}, and \ref{AB4}.
Fix a colour $x\in S$. 
Relations \ref{F1} and \ref{F2} imply that $x_k\sim_F x_{2}$ and $\widehat{x}_k\sim_F \widehat{x}_2$ for all $k\geqslant 2.$ Moreover, \ref{AB3} and \ref{T2} imply that $x_1\sim_T \widehat{x}_2$ and $x_2\sim_T\widehat{x}_3$. It remains to show that $\widehat{x}_1\sim_T x_1.$
For this, we will need to use the existence of common right-multiples for $\cF.$
Indeed, $\widehat x_1$ is by definition the fraction of trees $[Y_x,Y_a]$.
There exist forests $p,q\in\cF$ satisfying that $Y_x\circ p=Y_a\circ q.$
We deduce that $$\widehat x_1 = [Y_x\circ p,Y_a\circ p]=[Y_a\circ q,Y_a\circ p].$$ 
It follows that $[Y_a\circ q,Y_a\circ p]$ decomposes as a product of the generators 
$y_j,\widehat z_k$ with $y,z\in S$, $z\not=a$, and $1\leqslant j<k$. Hence, we have written $\widehat x_1$ in terms of generators different from $\widehat y_1$ for $y\in S.$
Since $b_1\sim b_k\sim \widehat b_k$ for all $b\in S$ and $k\geqslant 2$ (as we just proved) we deduce that 
$$\widehat x_1= [Y_a\circ q,Y_a\circ p]\sim_T \prod_{b\neq a} b_1^{n_b} $$
where $n_b$ denotes the number of $b$-vertices in $q$ minus that for $p$.
Now, applying the colour counting map to both group elements above and evaluating at $b\not=a$ gives
\begin{align*}
	n_b=\chi(\hat{x}_1)(b)=
	\begin{cases}
		1&\textnormal{if $b=x$,}\\
		0&\textnormal{if $b\not=x$.}
	\end{cases}
\end{align*}
Hence $\widehat x_1\sim_T x_1$, which completes the proof.
\end{proof}

\begin{example}\label{ex:abelianisations}
We determine abelianisations for $T$ and $V$-type FS groups in two important cases.
\begin{enumerate}
	\item {\bf Monochromatic family.} Let $S$ be a colour set and $\tau=(t_s:s\in S)$ be a family of trees all with $n$ carets. Let $\cF_\tau=\FS\la S|R\ra$ be the corresponding Ore FS category as explained in Theorem \ref{theo:previous-FS-groups}. Write $G_\tau^T$ and $G_\tau^V$ for the corresponding FS groups and fix a colour $a\in S$. It follows that
	\begin{align*}
		(G_\tau^T)_{\ab}\simeq (G_\tau^V)_{\ab}\simeq \Z^S/\la ns=0, a=0: s\in S\ra\simeq(\Z/n\Z)^{S\setminus\{a\}}.
	\end{align*}
	In particular, if $S$ is finite, then the abelianisation is finite.
	\item {\bf Colour preserving.} A presented Ore FS category $\cF=\FS\la S|R\ra$, with FS groups $G^T$ and $G^V$, is called \emph{colour preserving} if for every skein relation $(r,s)\in R$ the number of $b$-vertices in $r$ is the same as that for $s$ for all $b\in S$. It follows that
	\begin{align*}
		(G^T)_{\ab}\simeq(G^V)_{\ab}\simeq\Z^{S\setminus\{a\}}.
	\end{align*}
\end{enumerate}
\end{example}

\section{Examples: Faithfulness implies simplicity}\label{sec:examples-via-dynamics}

In this section we prove Corollary \ref{maincor-Cleary}. The crux is to show elements of {\it certain} FS groups can be represented by tree pairs $(t,s)$ where $t$ and $s$ are so-called ``good'' trees. This provides a {\bf semi-normal form} for group elements similar to that of Thompson's group and the Cleary irrational slope Thompson group, see \cite{Cannon-Floyd-Parry96} and \cite[Section 7]{Burillo-Nucinkis-Reeves21}. From this we will be able to quickly deduce that the canonical group action for such FS groups is faithful. 

\begin{center}
{\bf This produces a large class of finitely presented simple groups.}
\end{center}

We now give the setup for this result. We will use heavily the tree notation introduced in Section \ref{sec:notation}, for instance, we write $a_1b_2c_1$ instead of $a_{1,1}b_{2,2}c_{1,3}$ or $Y_a(Y_c\otimes Y_b)$ and $a_1^n$ instead of $a_{1,1}a_{1,2}\cdots a_{1,n}$.

\subsection{Left-vine decomposition}\label{sec:left-vine-decomp}

Recall left-vines are defined inductively by starting from a caret and gluing carets onto the first leaf (refer to (\ref{eqn:vines}) for graphical representations of some left-vines). Now fix a colour set $S$. Any coloured tree $t$ can be written as a product of left-vines by applying moves $a_kb_j\to b_ja_{k+1}$ where $a,b\in S$ and $1\leqslant j<k$ throughout $t$. For example:
\begin{align*}
t=a_1b_2c_1a_3=\VineDecompositionA
\quad\longrightarrow\quad
\VineDecompositionB=a_1c_1b_3a_3.
\end{align*}
We write the composition of vines above as $t=\lambda^{(1)}_1\lambda^{(2)}_3$ where $\lambda^{(1)}_1=a_1c_1$ and $\lambda^{(2)}_3=b_3c_3$ are left-vines. In general we have
\begin{align}\label{eqn:vine-decomp}
t=\lambda^{(1)}_{k_1}\lambda^{(2)}_{k_2}\cdots\lambda^{(p)}_{k_p}
\end{align}
where $\lambda_{k_n}^{(n)}$ stands for a left-vine glued to the $k_n$th leaf of the tree made of the first $n-1$ left-vines. It can always be arranged that $1=k_{1}<k_{2}<\cdots<k_{p}$. For a free tree (no skein relations involved) this decomposition is unique, hence we call (\ref{eqn:vine-decomp}) the {\it left-vine decomposition} of $t$ and $p$ is called the {\it left-vine number} of $t$. When skein relations are involved, the notion of left-vine number is not well-defined. For example, if
\begin{align}\label{ex:skein-relation}
\FaithfulSkeinA
\quad\sim\quad
\FaithfulSkeinB
\end{align}
then the left and right tree represent the same tree, though the left tree has left-vine number $2$, while the right has left-vine number $1$.

\subsection{Assumptions on the skein presentation}\label{sec:faithful-assumptions} Consider a monochromatic tree $x$ of the form $x=Y(I\otimes z)$ where $z$ is an arbitrary monochromatic tree.
We also fix $y$ to be the left-vine of length $n\geqslant1$, that is
$$y=Y(Y\ot I)\cdots (Y\ot I^{\ot n-1}).$$
As usual we denote by $x(a)$ (resp. $y(b)$) the tree $x$ (resp. $y$) whose every vertex is coloured by $a$ (resp. $b$). In particular $y(b)=b_1^n$.
Write $\sim_z$ for the congruence on $(\FS\la a,b\ra,\circ,\ot)$ generated by $x(a)\sim_z y(b)$ and fix
\begin{align*}
\cF_z:=\FS\la a,b|x(a)=y(b)\ra
=\FS\la a,b\ra/\sim_z.
\end{align*}
A good example of such a skein relation to keep in mind is that in (\ref{ex:skein-relation}). Each generalised Cleary category also gives such an example (see \cite[Section 3.6.3]{Brothier22}). The category $\cF_z$ is Ore by item (3) of Theorem \ref{theo:previous-FS-groups}. We write $G_z$ for the fraction group of $\cF_z$ and $\al_z:G_z\act\fC_z$ for the canonical group action. 

\subsection{Good forests} Recall that forests in the free FS category $\FS\la a,b\ra$ are said to be {\it free}. A free left-vine $\lambda$ is called {\it good} if it is coloured from its root by a word of the form $a^pb^q$ where $0\leqslant p$ and $0\leqslant q<n$. 
A free tree $t$ is called {\it good} if all left-vines in the left-vine decomposition of $t$ are good (see Section \ref{sec:left-vine-decomp}). 
Recall that the natural number $n\geqslant 2$ is the number of vertices of the left-vine $y$.
Hence, bounding the power of $b$ by $n-1$ means that we demand that a good tree does not contain the pattern $y(b)$ but rather contains $x(a)$. To be precise, a free tree $t$ is good if it can be written as 
\begin{align}\label{eqn:good-tree}
t=a_{i_1}^{p_1}b_{i_1}^{q_1}a_{i_2}^{p_2}b_{i_2}^{q_2}\cdots a_{i_k}^{p_k}b_{i_k}^{q_k}
\end{align}
where 
\begin{enumerate}
\item $0\leqslant p_j$;
\item $0\leqslant q_j<n$;
\item $1\leqslant i_j\leqslant1+\sum_{l<j}(p_l+q_l)$; and
\item $i_1<i_2<\dots<i_k$
\end{enumerate}
for all $1\leqslant j\leqslant k$. Here the empty sum is taken to be $0$. 
Conditions (1) and (2) reflect the type of words we allow to colour the vines in $t$. Condition (3) guarantees the product (\ref{eqn:good-tree}) is a tree and condition (4) says (\ref{eqn:good-tree}) is a left-vine decomposition. 
Note that (3) implies $i_1=1.$
When we speak of a given good tree $t$ we will only write down an expression as in (\ref{eqn:good-tree}). It will be implicitly assumed that the numbers defining $t$ satisfy conditions (1) through (4) above. A free forest $f$ is called {\it good} if all of its trees are good.

\subsection{Semi-normal forms and faithfulness}

In the following theorem we will heavily use subtree and quasi-tree terminology introduced in Section \ref{sec:trees-and-forests}.

\begin{theorem}\label{theo:canonical-action-faithful} 
Let $z$ be a non-trivial monochromatic tree, $x:=Y(I\ot z)$, $y$ a left-vine, and $\cF_z:=\FS\la a,b| x(a)=y(b)\ra.$
The FS category $\cF_z$ is Ore and thus produces an FS group $G_z$ and a canonical group action $\alpha_z:G_z\act \fC_z.$
Write $\sim_z$ for the congruence relation on $(\FS\la a,b\ra,\circ,\ot)$ generated by the skein relation $x(a)\sim y(b).$
\begin{enumerate}
	\item For any free tree $t$ there exists a free $a$-forest $f$ for which $t\circ f$ is $\sim_z$-equivalent to a good tree.
	\item For any $g\in G_z$ there exists a good tree $t$ and a good $a$-tree $s$ with $g=[t,s]$.
	\item The canonical group action is faithful.
	\item We have $\Germ(G_z\act\fC_z,o)\simeq \Germ(G_z\act \fC_z,\omega)\simeq \Z.$
\end{enumerate}
\end{theorem}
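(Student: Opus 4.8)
The plan is to establish the four items in order, with items (1)–(2) forming the combinatorial heart and items (3)–(4) being quick consequences of the machinery already developed in Sections \ref{sec:actions}–\ref{sec:dynamical-characterisation}. For item (1), I would argue by induction on the left-vine number $p$ of the free tree $t$, using the left-vine decomposition $t=\lambda^{(1)}_{k_1}\cdots\lambda^{(p)}_{k_p}$ from (\ref{eqn:vine-decomp}). The key observation is that the skein relation $x(a)=y(b)$ lets one trade an occurrence of the $b$-coloured left-vine $y(b)=b_1^n$ for the $a$-coloured tree $x(a)=Y_a(I\ot z(a))$; since $z$ is non-trivial, $x(a)$ is \emph{not} a left-vine, so applying the relation in the direction $y(b)\to x(a)$ strictly ``splits off'' vine structure, while growing by an appropriate free $a$-forest $f$ lets us rewrite any leftover $b$-power exceeding $n-1$. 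Concretely: given a vine $\lambda$ coloured by a word $w$ in $\{a,b\}$, one grows its leaves by $a$-carets and repeatedly applies $b_1^n = x(a) = a_1(z(a))_2$ (and isotopy moves $a_kb_j\to b_ja_{k+1}$) to reduce $w$ to a concatenation of blocks $a^pb^q$ with $q<n$; doing this vine-by-vine, from the root outward, and absorbing the growth forests, produces the good form (\ref{eqn:good-tree}). The bookkeeping that conditions (1)–(4) on the indices $i_j,p_j,q_j$ are preserved is the routine-but-delicate part.

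For item (2), write $g=[t,s]$ with $t,s$ free trees having the same number of leaves. Apply item (1) to $s$ to get a free $a$-forest $f$ with $s\circ f\sim_z$ a good tree — but one must be careful: item (1) as stated gives a good tree, not a good \emph{$a$-tree}. So I would first prove the sharper statement that one can take $s\circ f$ to be $\sim_z$-equivalent to a good $a$-tree whenever $s$ is itself an $a$-tree, or alternatively use the colour generating property (CGP) at $a$ (Section \ref{sec:CGP}): since $G_z$ is generated in a way where right-hand trees can be taken $a$-coloured, represent $g=[t',s']$ with $s'$ an $a$-tree, then apply item (1) to $s'$, grow $t'$ by the same forest $f$, and apply item (1) again to $t'\circ f$. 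The output is $g=[t,s]$ with $s$ a good $a$-tree and $t$ a good tree, as required. I would need to check that growing $t'$ by $f$ and then making it good does not disturb the goodness of $s' \circ f$, which holds because making a tree good only involves growing by $a$-forests and applying $\sim_z$, operations under which the class $[t,s]$ is unchanged.

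For item (3), faithfulness of $\alpha_z$ is now immediate from the semi-normal form together with Proposition \ref{prop:action-facts}: if $g=[t,s]$ acts trivially on $\fC_z$ then, identifying the canonical action with the homogeneous action $G_z\act G_z^T/G_z$, the restriction of $\alpha_z(g)$ to each cone $\Cone(s,j)$ is $\beta(t,j)\circ\beta(s,j)^{-1}=\id$, so $\beta(t,j)=\beta(s,j)$ for every leaf $j$; using that $s$ is a good $a$-tree and $t$ a good tree one reads off, leaf by leaf along the left-vine decompositions, that $t$ and $s$ have literally the same left-vine combinatorics, hence are $\sim_z$-equal, so $g=e$. (The cleanest route is to observe that a good $a$-tree is $\sim_z$-reduced and the map from good trees to their $\beta$-data is injective on good $a$-trees.) Finally item (4): by the discussion in Section \ref{sec:germ-presentation}, once $\alpha_z$ is faithful the germ group at $o$ is $\Gamma_o=\Gr\la a,b\mid \prune(x(a),1)=\prune(y(b),1)\ra$ and at $\omega$ is $\Gamma_\omega=\Gr\la a,b\mid \prune(x(a),n+1)=\prune(y(b),n+1)\ra$. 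Since $x=Y(I\ot z)$, the path from the root to the first leaf of $x(a)$ is a single $a$-caret, so $\prune(x(a),1)=a$ as a one-generator word, while $\prune(y(b),1)=b^n$ (the whole $b$-left-vine is ``narrow'' at leaf $1$); thus $\Gamma_o=\Gr\la a,b\mid b^n=a\ra\simeq\Z$. At the last leaf: the path to leaf $n+1$ in $y(b)$ passes through no caret's left edge so $\prune(y(b),n+1)=b^n$ again, while $\prune(x(a),n+1)$ is the path to the rightmost leaf of $z(a)$ preceded by one $a$-caret, giving a single relator expressing one generator in terms of the other, again yielding $\Z$. Computing these two prune-words explicitly and checking the resulting one-relator groups are infinite cyclic is short.

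\textbf{Main obstacle.} The real work — and the place where errors hide — is item (1), specifically proving that the ``make good'' rewriting terminates and that the index conditions (1)–(4) of the good-tree definition are maintained throughout; this is the analogue of the Cannon–Floyd–Parry normal-form argument and its refinement in \cite[Section 7]{Burillo-Nucinkis-Reeves21}, and the presence of the auxiliary tree $z$ (rather than $z=I$, which would be the classical Cleary case) is exactly what forces the careful quasi-tree/subtree bookkeeping flagged in the theorem statement. Everything downstream of a correct item (1)–(2) is essentially formal.
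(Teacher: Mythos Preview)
Your overall architecture matches the paper's: induction on left-vine number for (1), semi-normal form for (2), the $\beta(t,j)=\beta(s,j)$ comparison for (3), and the prune-presentation of germ groups for (4). There are, however, three concrete gaps.

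For item (2), the CGP at $a$ is not what you want and has not been established for $\cF_z$; the CGP only controls the \emph{right side} of trees, not the whole tree. The paper instead uses that the complete $a$-trees $(c_k(a))_{k\geqslant1}$ are cofinal in $\cF_z$ (Theorem \ref{theo:previous-FS-groups}(2)): grow $s_0$ to some $c_k(a)$, which is entirely $a$-coloured, then apply (1) to the $t$-side with an $a$-forest $h_0$, so that $s_0f_0h_0$ remains an $a$-tree and is automatically good.

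For item (3), ``read off leaf by leaf'' hides the recursive step. After comparing $\beta(t,1)=\beta(s,1)$ as powers of $B=\beta(Y_b,1)$ (using $A=B^n$ and left-cancellativity of $\beta(\cT_p)$) to force $q_1=0$ and $p_1=r_1$, you must argue that the remaining forest pieces $t',s'$ again lie in $\ker(\alpha)$ tree-by-tree, and in particular that corresponding trees of $t',s'$ have the same number of leaves. The paper uses Lemma \ref{lem:kernel-cell} for exactly this decomposition; without it (or an equivalent argument via disjointness of cones), your ``injective on good $a$-trees'' claim is unproved.

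For item (4), your computation of $\prune(y(b),n{+}1)$ is wrong: $y(b)$ is a left-vine, so its last leaf $n{+}1$ is the right child of the \emph{root}, and the prune is a single $b$-caret, giving the word $b$, not $b^n$. The correct relation at $\omega$ is $a^m=b$ (with $m$ the number of vertices on the right side of $z$), which is indeed $\Z$; your version $a^k=b^n$ would in general not be.
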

\begin{proof}
Consider $z$ and $\cF_z$ as above.
The category $\cF_z$ is Ore by Theorem \ref{theo:previous-FS-groups}.
We obtain the FS group $G_z$ and the canonical group action $\alpha_z:G_z\act \fC_z.$
We now remove all subscripts $z$ for lighter notation.

(1) We begin with a claim.

\medskip{\bf Claim 1: Let $\lambda$ be a free left-vine and $k\geqslant 1$. There is an $a$-forest $f$ and a good tree $g$ with $p\geqslant k$ $a$-vertices in its first vine such that $\lambda\circ f\sim g$.}

Let $\lambda$ be a free left-vine with $N_a\geqslant0$ many $a$-vertices and $N_b\geqslant0$ many $b$-vertices and fix $k\geqslant1$. Let $f$ be the free forest composable with $\lambda$ whose $j$th tree is given by
\begin{align*}
	f_j:=
	\begin{cases}
		(x(a),1)^{k-1}\bullet x(a)&\textnormal{if $j=1$,}\\
		z(a)&\textnormal{if $j$ is right child of an $a$-vertex, and}\\
		I&\textnormal{otherwise.}
	\end{cases}
\end{align*}
Recall that $(x(a),1)^{k-1}\bullet x(a)$ is a quasi-left-vine with $k$ interior vertices and cell $x(a)$. Hence, the forest $f$ grows the right-child of every $a$-vertex $\nu$ in $\lambda$ by $z(a)$. This produces an $x(a)$ subtree rooted at $\nu$ in $\lambda\circ f$. Moreover, $f$ grows the first leaf of $\lambda$ by a quasi-left-vine whose skeleton has $k$ vertices and whose cell is $x(a)$. This is what allows us to control the number of $a$-vertices in the first vine of the resulting good tree. It follows that $\lambda\circ f$ is a composition of quasi-left-vines with cell $x(a)$ and left-$b$-vines. We now apply the skein relation $x(a)\sim y(b)$ to each of the $N_a+k$ many $x(a)$ subtrees inside $\lambda\circ f$. 
Since $y(b)$ is a left-vine $b$-vine, the resulting tree is a left-$b$-vine. Precisely, we have
\begin{align*}
	\lambda\circ f\sim b_1^{n(N_a+k)+N_b}.
\end{align*} 
Now divide $n(N_a+k)+N_b$ by $n$ so that $n(N_a+k)+N_b=np+q$ where $0\leqslant q<n$. In particular we note that $p\geqslant N_a+k\geqslant k$. By rewriting the above tree in terms of $p$ and $q$
\begin{align*}
	\lambda\circ f&\sim (b_1^n)^{p}b_1^{q}
	\sim a_1^{p}b_1^{q}\circ h
\end{align*} 
where $h$ is a free $a$-forest whose first $q+1$ trees are trivial (i.e. its non-trivial trees are attached to leaves of $a_1^p$ inside $a_1^pb_1^q$) and whose other trees are all $z(a)$. 

By definition the forest $h$ is good, and since the first tree of $h$ is trivial, all indices in $h$ are strictly greater than $1$. Hence $a^{p}_1b_1^{q}\circ h$ is a good tree whose first vine $a_1^{p}b_1^{q}$ has $p\geqslant k$ many $a$-vertices. This completes the claim.

\medskip{\bf Claim 2: For any good tree $t$ there exists an $a$-forest $f$ and a good forest $h$ for which $t\circ f\sim z(a)\circ h$.} 

Recall that $z$ is a monochromatic tree we have fixed and that $z(a)$ denotes $z$ with all vertices coloured by $a$. Fix a good tree $t$. By definition we may write $t$ as
\begin{align*}
	t=a_{i_1}^{p_1}b_{i_1}^{q_1}a_{i_2}^{p_2}b_{i_2}^{q_2}\cdots a_{i_k}^{p_k}b_{i_k}^{q_k}.
\end{align*}
Similar to as in the proof of Claim 1 we grow the first leaves of each left-vine in $t$ by an $a$-tree (a quasi-left-vine with cell $x(a)$ to be precise) so as to guarantee as many $a$-vertices in each vine as we like. Hence, up to growing $t$ and replacing by a $\sim$-equivalent tree, we may assume $p_j\geqslant|\Ver(x)|$ for all $1\leqslant j\leqslant k$, where we recall that $\Ver(x)$ is the set of interior vertices of the tree $x$. If $z(a)$ is already a rooted subtree of $t$, then we are done. Indeed, $f$ can be taken trivial and the unique forest $h$ obtained by removing $z(a)$ from $t$ is good because $t$ is a good tree.
 
If $z(a)$ is not already a rooted subtree of $t$, then there exists a vertex in $z(a)$ that is either: a $b$-vertex in $t$, or not a vertex of $t$ at all. Observe that the former case is impossible. Indeed, such a vertex $\nu$ (as a word in $\{l,r\}$, see Figure \ref{fig:infinite-binary-tree}) must have length $|\nu|$ strictly greater than some $p_j$, $1\leqslant j\leqslant k$, as this is where $b$-vertices first appear in $t$. But then
\begin{align*}
	|\Ver(z)|\leqslant|\Ver(x)|\leqslant p_j<|\nu|,
\end{align*}
which is absurd as $\nu$ is a vertex of $z$. As for the latter case, suppose $\nu$ is a vertex in $z(a)$ that is not a vertex of $t$. It follows that the path in $z(a)$ from the root $\varepsilon$ to $\nu$ must pass through a unique leaf $\ell_\nu$ of $t$. Since $|\ell_\nu|<|\nu|\leqslant|\Ver(z)|<|\Ver(x)|$ and, by assumption, all left-vines in $t$ start by at least $|\Ver(x)|$ many $a$-vertices, we have that $\ell_\nu$ is the child of an $a$-vertex in $t$. Growing the leaf $\ell_\nu$ of $t$ by a large enough $a$-coloured tree yields a good tree containing $\nu$ as an interior vertex. Doing this for all vertices of $z(a)$ that are not vertices of $t$ gives a forest $f$ such that $t\circ f$ contains $z(a)$ as a rooted subtree up to $\sim$-equivalence.

We can now complete the proof of item (1). We induct on left-vine number of $t$. If $t$ is a tree with left-vine number $1$, then $t$ is a left-vine, and by Claim 1 the result is true in this case. Assume the result holds true for trees with vine number no greater than $M\geqslant1$ and let $t$ be a tree with vine number $M+1$. We factor $t$ into $t_0\circ f_0$ where $t_0$ is the largest rooted left-vine subtree of $t$ and $f_0$ is a forest all of whose trees have vine number no greater than $M$. By definition of $t_0$ the first tree of $f_0$ is trivial. By the inductive hypothesis (applied to all non-trivial trees of $f_0$) we may pick an $a$-forest $h$ and a good forest $k$ for which $f_0\circ h\sim k$. In particular, the first tree of $k$ is trivial. Set $s:=t_0\circ k$ and let $\cA$ be the set of leaves in the left-vine $t_0$ which are right children of $a$-vertices in $t_0$. Define
\begin{align*}
	\cL:=\cA\cap\Leaf(s)\quad\textnormal{and}\quad \cM:=\cA\cap\Ver(s).
\end{align*}
Hence, elements of $\cL$ (resp. $\cM$) are vertices at which the corresponding tree in $k$ is trivial (resp. non-trivial). We now outline a growing procedure for the leaves of $s$.
\begin{enumerate}
\item (Leaves of $s$ which are descendants of $\cM$). Since $k$ is a good forest, for every $\nu\in\cM$ the tree $k_\nu$ of $k$ whose root corresponds to $\nu$ is a good tree. By Claim 2 we may grow $k_\nu$ by some $a$-forest to get, up to $\sim$-equivalence, a tree of the form $z(a)\circ l_\nu$, where $l_\nu$ is some good forest.
\item (Leaves of $s$ in $\cL$). Grow all leaves of $s$ in $\cL$ by the tree $z(a)$.
\item (All other leaves). What remains of $\Leaf(s)$ are descendants of right children of a $b$-vertices in $t_0$ and the first leaf of $s$. For such leaves we perform no growing.
\end{enumerate}
Let $\xi$ be the $a$-forest witnessing the above growings. 
It follows that $s\circ \xi\sim q\circ \eta$ where $\eta$ is a good forest and $q$ is a composition of quasi-left-vines whose cell is $x(a)$ with some left-$b$-vines. Using the skein relation $x(a)\sim y(b)$ on every $x(a)$ subtree in $q$, we deduce that $s\circ \xi\sim\lambda\circ\gamma$ where $\lambda$ is a left-$b$-vine $\lambda$ and $\gamma$ is a good forest with trivial first tree.
We now use the skein relation $y(b)\sim x(a)$ to transform the first occurrences of $y(b)$ inside $\lambda$ into $x(a)$ and leaving the remaining last (no more than $n-1$) many $b$-carets untouched.
We obtain a tree of the form $\lambda'\circ\gamma'$ where $\lambda'$ is now a good left-vine and $\gamma'$ is a good forest with trivial first tree. It follows that $\lambda'\circ\gamma'$ is a good tree and we have grown $t$ only by $a$-forests (namely $h$ and $\xi$) into a tree equivalent to $\lambda'\circ\gamma'$. This finishes the proof of the induction and the proof of (1).

(2) Recall from item (2) of Theorem \ref{theo:previous-FS-groups} that the sequence $(c_k(a))_{k\geqslant1}$ of complete $a$-trees is cofinal in $\cF$. Hence any tree in $\cF$ admits an $a$-coloured upper bound. With this in mind we fix an element $g\in G$ which we write as the fraction $g=[t_0,s_0]$ where $t_0,s_0\in\cT$. By the existence of the aforementioned cofinal sequence, we may pick a forest $f_0\in\cF$ for which $s_0f_0=c_k(a)$ for a certain $k\geqslant 1$. 
In particular, $s_0f_0$ is $a$-coloured.
By item (1) we may pick an $a$-forest $h_0$ for which $t_0f_0h_0$ is equal in $\cF$ to a good tree. Since $h_0$ is an $a$-forest and $s_0f_0$ is an $a$-tree $s_0f_0h_0$ is an $a$-tree. 
Since any $a$-tree is good by definition we have that $s_0f_0h_0$ is good. Setting $t=t_0f_0h_0$ and $s=s_0f_0h_0$ gives the result.

(3) Consider $g\in \ker(\alpha)$ and pick a good tree $t$ and a good $a$-tree $s$ for which $g=[t,s]$. Let $m$ be the common number of leaves of $t$ and $s$. We write
\begin{align*}
	t=a_{i_1}^{p_1}b_{i_1}^{q_1}a_{i_2}^{p_2}b_{i_2}^{q_2}\cdots a_{i_k}^{p_k}b_{i_k}^{q_k}\quad\textnormal{and}\quad s=
	a_{j_1}^{r_1}a_{j_2}^{r_2}\cdots a_{j_l}^{r_l}.
\end{align*}
Recall that in these decompositions we always have that $i_1=j_1=1.$
Let us assume that $g$ acts trivially on $\fC$. It follows that $\beta(t,\ell)=\beta(s,\ell)$ for all $1\leqslant\ell\leqslant m$ where $\beta:\cT_p\act\fC$ is the canonical monoid action of $\cF$. We notate the endomorphisms $\beta(Y_a,1)$ and $\beta(Y_b,1)$ by $A$ and $B$ respectively. Since $x(a)=Y_a(I\ot z(a))$ and $y(b)$ is a left-vine, we have that $(Y_a,1)$ is pruned equivalent to $(Y_b,1)^n$. Hence $A=B^n$. 
Now for $\ell=1$ we have
\begin{align*}
	B^{np_1+q_1}=A^{p_1}B^{q_1}
	=\beta(t,1)
	=\beta(s,1)
	=A^{r_1}
	=B^{nr_1}.
\end{align*}
Since $\beta(\cT_p)$ is a left-cancellative monoid and $B$ is non-trivial we have that $np_1+q_1=nr_1$. This implies $q_1$ is a multiple of $n$ but since $0\leqslant q_1<n$ we must have $q_1=0$. But then $np_1=nr_1$ and so $p_1=r_1$. Hence, the first vines in $t$ and $s$ must be equal. Therefore
\begin{align*}
	t=u \circ (a_{i_2}^{p_2}b_{i_2}^{q_2}\cdots a_{i_k}^{p_k}b_{i_k}^{q_k})\quad\textnormal{and}\quad s=u \circ (a_{j_2}^{r_2}\cdots a_{j_l}^{r_l})   
\end{align*}
where $u:=a_1^{r_1}.$
Now Lemma \ref{lem:kernel-cell} (applied to the subgroup $\ker(\alpha)\subset G$) implies that 
$$u^{-1}\circ t\circ s^{-1}\circ u=(a_{i_2}^{p_2}b_{i_2}^{q_2}\cdots a_{i_k}^{p_k}b_{i_k}^{q_k})\circ (a_{j_2}^{r_2}\cdots a_{j_l}^{r_l})^{-1}$$
is an element of the direct product $\ker(\al)^{r_1+1}$ (living inside $\Frac(\cF,r_1+1)$) where recall $r_1+1$ is the number of leaves of $u$.
This means that
$$(a_{i_2}^{p_2}b_{i_2}^{q_2}\cdots a_{i_k}^{p_k}b_{i_k}^{q_k})\circ (a_{j_2}^{r_2}\cdots a_{j_l}^{r_l})^{-1}$$ 
decomposes into a tensor product of cells in $\Frac(\cF)$
$$h_1\ot \cdots \ot h_{r_1+1}$$
where each $h_i$ is a group element that is necessarily in $\ker(\al).$
Moreover, we note that each $h_i$ is of the form $[t_i,s_i]$ where $t_i,s_i$ decompose as a product of left-vines of $t,s$, respectively.
In particular, each $t_i$ (resp.~$s_i$) is composed of strictly less left-vines than $t$ (resp.~$s$). 
Hence, we may continue to decompose each $h_i$ as we did for $g$ and eventually obtain trivial group elements.
We then deduce that $t\sim s$ and thus $g$ is the trivial group element.

(4) By item (3) the canonical group action of $\cF$ is faithful. 
Write $m$ for the number of vertices on the right side of the tree $z$ and recall that $y$ is a left-vine with $n$ vertices.
By Section \ref{sec:germ-presentation} we obtain that $\Germ(G\act\fC,o)\simeq\Gr\la a,b|a=b^n\ra\simeq\Z$ and $\Germ(G\act\fC,\omega)\simeq\Gr\la a,b|a^{m}=b\ra\simeq\Z$. 
\end{proof}

\begin{proof}[Proof of Corollary \ref{maincor-Cleary}]This follows from Theorems \ref{theo:canonical-action-faithful}, \ref{theo:simple-T-V}, and \ref{theo:simple-F}.
\end{proof}

\section{Examples: Simplicity implies faithfulness}\label{sec:examples-from-simple-groups}

Let $\cF$ be an Ore FS category, with FS groups $G$, $G^T$, $G^V$, and normal subgroup $K\lhd G$. 
Combining Theorems \ref{theo:simple-implies-faithful}, \ref{theo:simple-F}, \ref{theo:simple-T-V}, we deduce that if there is a simple intermediate subgroup $[K,K]\subset\Gamma\subset G^V$, then 
$[K,K]$, $[G^T,G^T]$, and $[G^V,G^V]$ are all simple.
This surprisingly provides new simple groups without making any computations (such as proving faithfulness of an action).
In this section we will construct a family of Ore FS categories $(\cH_n)_{n\geqslant2}$ whose $F$-type FS groups $H_n$ are known to have simple derived subgroups by other means, but whose $T$ and $V$-type groups are new. Along the way we will arrive at new embeddings of the $n$-ary Higman--Thompson groups.

\subsection{Higman--Thompson groups as forest-skein groups} For any $n\geqslant2$ the $n$-ary {\it Higman--Thompson group} $F_n$ is the group of piecewise linear homeomorphisms of $[0,1]$ with breakpoints in $\Z[1/n]$ and whose slopes are powers of $n$ (see \cite{Higman74,Brown87}). There are also $T$ and $V$-type $n$-ary groups by allowing for both cyclic and arbitrary permutations of $n$-adic intervals in $[0,1]$. These groups are denoted $T_n$ and $V_n$. Similar to Thompson's group $F$, the $n$-ary groups arise as the fraction groups of a monoid and a category. Let $\cF_n$ (resp. $\cF_{n,\infty})$ be the category (resp. monoid) of $n$-ary forests (resp. infinite forests). By $n$-ary forest we mean a forest whose non-leaf vertices have $n$ outgoing edges. It is standard that these structures are left-cancellative and have common right-multiples. Moreover, the monoid of infinite forests has presentation
\begin{align*}
\cF_{n,\infty}\simeq\Mon\la z_j:j\geqslant1|z_kz_j=z_jz_{k+(n-1)}:1\leqslant j<k\ra
\end{align*}
which can be seen by isotoping two $n$-carets past each other. By Brown's analysis \cite{Brown87} we have an isomorphism between the fraction group at $1$ of $\cF_n$ and that of $\cF_{n,\infty}$:
\begin{align*}
F_n\simeq\Frac(\cF_n,1)
\simeq\Frac(\cF_{n,\infty})
=:F_{n,\infty}.
\end{align*}
We henceforth identify all the groups above. 
It was shown by the first author in Section 3.6.2 of \cite{Brothier22} that the ternary Higman--Thompson group $F_3$ arises as the FS group of the {\it binary} Ore FS category
\begin{align}\label{eqn_ternary_category}
\cH_3:=\FS\la a,b|a_1b_2=b_1a_1\ra.
\end{align}
In this section we show that the $n$-ary Higman--Thompson group $F_n$ can be realised as the fraction group at $1$ of a binary Ore FS category for all $n\geqslant2$. 

\medskip{\bf Higman--Thompson forest-skein categories.} 
For every $n\geqslant2$ let
\begin{align*}
S_{n}&:=\{r\in\N:1\leqslant r<n\},\\
R_{n}&:=\{(Y_{r}(I\otimes Y_{s}), Y_{s}(Y_{r}\otimes I)):1\leqslant r<s<n\},
\end{align*}
and let $\cH_{n}:=\FS\la S_{n}|R_{n}\ra$. Hence, $S_n$ is the totally ordered set with $n-1$ elements and given any two $r<s$ inside $S_n$ we have one skein relation.
For convenience, we write $b$ for the last colour $n-1$ of $S_n.$
Observe that $\cH_2$ is the free FS category on one colour, so has FS group $F$, and $\cH_3$ is the category appearing in (\ref{eqn_ternary_category}), so has FS group $F_3$. In general we have the following result (see Section \ref{sec:CGP} and Section 3.4 of \cite{Brothier22} for the CGP).

\begin{proposition}\label{prop:Higman-Thompson-is-FS} Let $n\geqslant2$. The map
\begin{align*}
	\varphi_{n}:\cH_{n,\infty}\to\cF_{n,\infty}, \ r_j\mapsto z_{(j-1)(n-1)+r}, \ j\geqslant1, \ 1\leqslant r<n, 
\end{align*}
is a monoid isomorphism. Hence:
\begin{enumerate}
	\item $\cH_{n,\infty}$ is an Ore monoid and $\cH_n$ is an Ore category;
	\item The map $\varphi_n$ induces a group isomorphism 
	\begin{align*}
		\varphi_n:\Frac(\cH_{n,\infty})\to\Frac(\cF_{n,\infty}), \ x\circ y^{-1}\mapsto\varphi(x)\circ\varphi(y)^{-1}\textit{; \ and}
	\end{align*}
	\item The group $\Frac(\cH_{n},1)$ has the (right-)CGP at its last colour $b$. Hence, the map
	\begin{align*}
		\gamma:\cH_{n,\infty}&\to\Frac(\cH_{n},1),\\
		f\ot I^{\ot\infty}&\mapsto\rho_r\circ(f\otimes I)\circ\rho_{l}^{-1},
	\end{align*}
	where $\rho_k$ is the $b$-coloured right-vine with $k+1$ leaves and $f$ has $r$ roots and $l$ leaves,
	induces an isomorphism of fraction groups.
\end{enumerate}
\end{proposition}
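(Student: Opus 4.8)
The plan is to verify that $\varphi_n:\cH_{n,\infty}\to\cF_{n,\infty}$ is a well-defined monoid isomorphism by comparing presentations, and then to deduce the three numbered consequences using standard calculus-of-fractions machinery together with the CGP results of \cite{Brothier22}.

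\textbf{Step 1: $\varphi_n$ is a well-defined monoid homomorphism.} Recall the FS monoid $\cH_{n,\infty}$ is generated by the elementary forests $r_j$ (the $r$-caret at the $j$th root of an infinite forest) for $1\leqslant r<n$ and $j\geqslant 1$. First I would record the relations these generators satisfy. There are the ``isotopy'' relations coming from moving carets of colours $r,s$ past one another at distinct positions, and the relations in $\ov R_{n,\infty}$ coming from the skein relations $Y_r(I\ot Y_s)=Y_s(Y_r\ot I)$ for $r<s$. The key observation is that, just as in the homogeneously presented case \cite{Brothier23c}, the infinite FS monoid $\cH_{n,\infty}$ admits the monoid presentation with generators $\{r_j:1\leqslant r<n,\ j\geqslant 1\}$ and relations obtained by ``stabilising'' the defining relations: a short computation with the skein relation translated to elementary-forest notation gives, for $1\leqslant r<s<n$,
\begin{align*}
	s_j r_j = r_j s_{j+1},
\end{align*}
while isotopy gives $r_k s_j = s_j r_{k+1}$ for $j<k$ and all colours $r,s$. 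Combining these two families, one checks that for \emph{all} pairs of colours $u,v$ and all $j\leqslant k$ one has $u_k v_j = v_j u_{k+(n-1)}$ exactly when the indices match up under the reindexing $(j,r)\mapsto (j-1)(n-1)+r$; this is the substance of why the map is well-defined. Concretely, I would just check that $\varphi_n$ sends each defining relation of $\cH_{n,\infty}$ to a consequence of the defining relation $z_k z_j = z_j z_{k+(n-1)}$ of $\cF_{n,\infty}$, which is a direct index bookkeeping: if $r<s$ then $(j-1)(n-1)+s > (j-1)(n-1)+r$ and $(j-1)(n-1)+s = ((j+1)-1)(n-1)+(s-(n-1))$... the cleanest route is to note $\varphi_n(s_j)\varphi_n(r_j) = z_{(j-1)(n-1)+s} z_{(j-1)(n-1)+r}$ and apply the $\cF_{n,\infty}$ relation with $k = (j-1)(n-1)+s$, $j' = (j-1)(n-1)+r$, giving $z_{j'} z_{k+(n-1)} = z_{(j-1)(n-1)+r} z_{j(n-1)+s} = \varphi_n(r_j)\varphi_n(s_{j+1})$, matching $\varphi_n$ applied to $s_j r_j = r_j s_{j+1}$. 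The isotopy relations are handled identically.

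\textbf{Step 2: $\varphi_n$ is bijective.} For surjectivity, note the reindexing map $\psi:(j,r)\mapsto (j-1)(n-1)+r$ from $\{(j,r):j\geqslant 1,\ 1\leqslant r<n\}$ to $\{m:m\geqslant 1\}$ is a bijection, so every generator $z_m$ of $\cF_{n,\infty}$ is hit. For injectivity I would exhibit the inverse: define $\psi_n:\cF_{n,\infty}\to\cH_{n,\infty}$ on generators by $z_m\mapsto r_j$ where $(j,r)=\psi^{-1}(m)$, check it respects the $\cF_{n,\infty}$ relations by the reverse of the computation in Step 1, and verify $\psi_n\circ\varphi_n=\id$ and $\varphi_n\circ\psi_n=\id$ on generators, hence everywhere. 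This gives that $\cH_{n,\infty}\simeq\cF_{n,\infty}$ as monoids.

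\textbf{Step 3: the three consequences.} Item (1): $\cF_{n,\infty}$ is an Ore monoid (standard; $n$-ary forests are left-cancellative with common right-multiples), so the isomorphism of Step 2 transports this to $\cH_{n,\infty}$; by Observation \ref{obs:Ore-category} the category $\cH_n$ is then Ore as well. Item (2): an isomorphism of Ore monoids induces an isomorphism of their fraction groups via the functoriality of $\Frac$, applied to the one-object categories, with the explicit formula $x\circ y^{-1}\mapsto\varphi_n(x)\circ\varphi_n(y)^{-1}$; well-definedness is immediate from $\varphi_n$ being an isomorphism. Item (3): here I would invoke the colour generating property machinery from Section \ref{sec:CGP} and Section 3.4 of \cite{Brothier22}. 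One must check $\Frac(\cH_n,1)$ has the right-CGP at the last colour $b=n-1$, i.e.~every element of $\Frac(\cH_n,1)$ can be represented as a fraction of trees whose denominator is a $b$-tree. The point is that, using the skein relations $Y_r(I\ot Y_b)=Y_b(Y_r\ot I)$ (specialising $s=b$), one can push all non-$b$ carets to the numerator side and grow any tree to one admitting a $b$-coloured refinement; cofinality of $b$-complete trees (as in Theorem \ref{theo:previous-FS-groups}(2), noting the monochromatic family $(Y_s(s):s\in S_n)$ point of view, or directly) then yields CGP. Granting CGP at $b$, \cite[Section 3.4]{Brothier22} gives the isomorphism $\cH_{n,\infty}\to\Frac(\cH_n,1)$ of the stated form, completing the proof.

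\textbf{Main obstacle.} The genuinely delicate point is Step 3, verifying the CGP at the colour $b$: one must argue that the skein relations of $\cH_n$ are ``enough'' to rewrite an arbitrary tree-pair with a $b$-coloured denominator, and that the resulting monoid-to-group map is an isomorphism rather than merely a surjection. The presentation bookkeeping in Steps 1--2, while requiring care with the index shift $m=(j-1)(n-1)+r$, is routine; by contrast the CGP verification is where the specific structure of the relations $R_n$ (a skein relation for \emph{every} pair $r<s$, so in particular for every pair $r<b$) is essential, and it is the step most easily gotten wrong.
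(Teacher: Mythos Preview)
Your Steps 1--2 and items (1)--(2) of Step 3 are correct and essentially match the paper's proof: both arguments write down the monoid presentation of $\cH_{n,\infty}$ (isotopy relations $r_ks_j=s_jr_{k+1}$ for $j<k$, skein relations $r_js_{j+1}=s_jr_j$ for $r<s$), define mutually inverse maps $\varphi_n,\psi_n$ via the reindexing $(j,r)\leftrightarrow(j-1)(n-1)+r$, and check relations go to relations.

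Your treatment of item (3) has a real error. You misstate the CGP: the (right-)CGP at $b$ does \emph{not} ask that the denominator be a $b$-tree, but that both trees in the fraction have their \emph{right side} (the path from root to last leaf) coloured by $b$. This is what makes the map $\gamma$ surjective, since $\gamma$ sandwiches $f\ot I$ between $b$-coloured right-vines $\rho_r,\rho_l$. Your proposed route via ``cofinality of $b$-complete trees (as in Theorem \ref{theo:previous-FS-groups}(2))'' does not apply: that theorem concerns categories of the form $\FS\la S\mid t_r(r)=t_s(s)\ra$ for a fixed family of monochromatic trees, and $\cH_n$ is not of this form (its skein relations are two-coloured and not of the shape $t(r)=t(s)$). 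The paper instead argues directly: given $[t,s]$, grow the last leaf of each tree so it is the right child of a $b$-caret, then walk up the right side of $t$ (and of $s$) from the last leaf; at the first vertex coloured $r\neq b$ apply the skein relation $Y_r(I\ot Y_b)\sim Y_b(Y_r\ot I)$ to push the $b$-caret one step closer to the root along the right side. Iterating makes the entire right side of both $t$ and $s$ $b$-coloured, which is exactly the CGP at $b$. You correctly identified the relevant skein relation $Y_r(I\ot Y_b)=Y_b(Y_r\ot I)$, but the mechanism you describe (``push non-$b$ carets to the numerator side'') is not what this relation accomplishes.
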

\begin{proof} Given the FS presentation for $\cH_n$ Proposition 1.7 of \cite{Brothier22} implies the corresponding FS monoid $\cH_{n,\infty}$ admits a presentation with one generator $r_j$ for each colour $r\in S_n$ and $j\geqslant1$ (corresponding to the elementary forest $I^{\ot j-1}\ot Y_r\ot I^{\ot \infty}$), a relation
\begin{align}\label{eqn:Higman-Thompson}
	r_ks_j=s_jr_{k+1}
\end{align}
for every $r,s\in S_n$ and $1\leqslant j<k$, which correspond to moving carets past each other,
and a relation
\begin{align}\label{eqn:Higman-Skein}
	r_js_{j+1}=s_jr_j
\end{align}
for each $1\leqslant r<s<n$ and $j\geqslant1$, which correspond to skein relations. For each $r\in S_n$ and $x\geqslant1$ set
\begin{align*}
	\varphi_n(r_x):=z_{(x-1)(n-1)+r}
\end{align*}
and for any $j\geqslant1$ divide this by $n-1$ so as to write $j=(n-1)(x-1)+r$ for some $x\geqslant1$ and $1\leqslant r<n$ and set
\begin{align*}
	\psi_n(z_j)=r_x.    
\end{align*}
It is easy to see $\varphi_n$ and $\psi_n$ are mutually inverse. We now show they give a bijection between relator sets. Let $1\leqslant j<k$ and divide both these numbers by $n-1$, so that
\begin{align*}
	j=(n-1)(x-1)+r\quad\textnormal{and}\quad k=(n-1)(y-1)+s    
\end{align*}
where $1\leqslant x\leqslant y$ and $1\leqslant r,s<n$. There are two cases to consider. First suppose $x<y$. Then the $\cF_{n,\infty}$ relation corresponds to the skein relations (\ref{eqn:Higman-Thompson})
\begin{align*}
	(\psi_n(z_k)\psi_n(z_j),\psi_n(z_j)\psi_n(z_{k+(n-1)}))=(s_yr_x,r_xs_{y+1}).
\end{align*}
On the other hand if $x=y$, we have
\begin{align*}
	(\psi_n(z_k)\psi_n(z_j),\psi_n(z_j)\psi_n(z_{k+(n-1)}))=(s_xr_x,r_xs_{x+1}). 
\end{align*}
which correspond the relations (\ref{eqn:Higman-Thompson}). So $\varphi_n$ and $\psi_n$ are bijections between monoid presentations, so $\varphi_n$ extends to a monoid isomorphism.

(1) Since $\cF_{n,\infty}$ is an Ore monoid (this can be witnessed by combinatorics of $n$-ary forests) it follows that the isomorphic monoid $\cH_{n,\infty}$ is an Ore monoid also. It follows by Observation \ref{obs:Ore-category} that $\cH_n$ is an Ore category. We henceforth set
\begin{align*}
	H_n:=\Frac(\cH_n,1)\quad\textnormal{and}\quad H_{n,\infty}:=\Frac(\cH_{n,\infty}).
\end{align*}

(2) This follows from functoriality.

(3) Recall we notate the last colour $n-1\in S_n$ by the letter $b$.
In particular, for all other colours $r\in S_n$ we have the skein relation $Y_r(I\otimes Y_{b})\sim Y_{b}(Y_r\otimes I)$.
Recall the (right-)CGP at $b$ asserts that any $g\in H_n$ can be written as $g=[t,s]$ where the right sides of $t$ and $s$ are $b$-coloured (see Section \ref{sec:FS-groups}). 
This is what we now show. Fix $g=[t,s]\in H_n$. Up to growing the last leaf of $t$ and $s$ we may assume the last leaf of both $t$ and $s$ is the right child of a $b$-caret. At the first vertex (from leaf to root) in $t$ coloured by a non-$b$ caret, say $r$, we perform the skein relation $Y_r(I\otimes Y_{b})\sim Y_{b}(Y_r\otimes I)$. Now the right side of $t$ has one less non-$b$ caret. Repeat this until the right side of $t$ is made entirely of $b$-carets. Doing this also to the tree $s$ shows that $H_n$ has the CGP at the colour $b$. 
That the CGP permits the isomorphism described above is discussed in detail in Section 3.4 of \cite{Brothier22}. 
\end{proof}

\begin{remark}
\begin{enumerate}
	\item Since $\cH_n$ is an Ore FS category it admits $T$ and $V$-type Ore categories. Surprisingly, the fraction groups of these categories are {\it not} isomorphic to $T_n$ or $V_n$. Indeed Brown showed that the abelianisation of $T_n$ is trivial while that of $V_n$ is always finite (see Section 4D of \cite{Brown87}). However, in Example \ref{ex:abelianisations} we witnessed that
	\begin{align*}
		(H_n^T)_{\ab}\simeq\Z^{n-2} \quad \textnormal{and} \quad (H_n^V)_{\ab}\simeq\Z^{n-2}
	\end{align*}
	which are both infinite as long as $n\geqslant3$. The $n=2$ case corresponds to Thompson's classical groups $F,T,V$.
	\item Observe that the groupoid $\Frac(\cH^X_n)$ is connected for each $X=F,T,V$ (elementary forests give morphisms $k+1\to k$ for all $k\in\N$). This implies $\Frac(\cH^X_n,r)\simeq\Frac(\cH^X_n,1)$ for all $r\in\N$. This gives another difference between this class of FS groups and the Higman--Thompson groups: for the $T$ and $V$-type Higman--Thompson groups there is a dependence on $r$ (see \cite{Brown87,Pardo11}), but in the class of $T$ and $V$ ``Higman--Thompson'' FS groups, we only produce one isomorphism class per $n\geqslant2$.
\end{enumerate}
\end{remark}

\subsection{Simplicity and embeddings}

We now deduce some consequences of the FS realisation of the $n$-ary Higman--Thompson groups. First, we produce new simple groups. Indeed, recall the following result of Brown.

\begin{theorem}[{\cite[Theorem 4.13]{Brown87}}]\label{theo:Brown}
For any $n\geqslant2$ the group $[F_n,F_n]$ is simple.
\end{theorem}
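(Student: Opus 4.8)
This is due to Brown \cite{Brown87}; the plan is to recast it inside the framework developed above. By Proposition \ref{prop:Higman-Thompson-is-FS} we have $F_n\simeq H_n=\Frac(\cH_n,1)$ for the Ore FS category $\cH_n=\FS\la S_n|R_n\ra$, so it suffices to prove that the canonical group action $\al:H_n\act\fC_n$ is faithful: Theorem \ref{theo:simple-F} will then yield that $[K,K]$ is simple, and since the two endpoint germ maps factor through the abelianisation we get $[H_n,H_n]\subseteq K$, whence Claim 3 in the proof of Theorem \ref{theo:simple-F} forces $[H_n,H_n]=[K,K]$; transporting along $F_n\simeq H_n$ then finishes. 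Note that this route must not invoke the later parts of the paper that deduce faithfulness of $\al$ from the present theorem, so an independent argument for faithfulness is required.

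I would first dispatch the germ groups, which is bookkeeping. Pruning the skein relation $(Y_r(I\ot Y_s),Y_s(Y_r\ot I))\in R_n$ at the first leaf gives $\prune(Y_r(I\ot Y_s),1)=(Y_r,1)$ and $\prune(Y_s(Y_r\ot I),1)=(Y_s,1)\circ(Y_r,1)$, hence the relation $r=sr$ in $\Gr\la S_n|\prune(u,1)=\prune(v,1):(u,v)\in R_n\ra$; taking $r=1$ kills every colour $s\in\{2,\dots,n-1\}$, so this group is $\simeq\Z$. Pruning at the last leaf similarly gives $(Y_r,2)\circ(Y_s,2)=(Y_s,2)$ for $r<s$, killing every colour but $b=n-1$, again leaving $\Z$. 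Once faithfulness is known, Section \ref{sec:germ-presentation} identifies $H_n/K_o$ and $H_n/K_\omega$ with these groups, so both are abelian and Theorem \ref{theo:simple-F}(2) applies directly; as observed above, even this is not strictly necessary.

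The substantive step, and the main obstacle, is faithfulness of $\al:H_n\act\fC_n$. The cleanest route is to identify, through the monoid isomorphism $\varphi_n:\cH_{n,\infty}\to\cF_{n,\infty}$ of Proposition \ref{prop:Higman-Thompson-is-FS} and the $b$-CGP, the space $\fC_n=\varprojlim_t\Leaf(t)$ with the $n$-adic Cantor space $\{0,\dots,n-1\}^{\N}$, cones with $n$-adic intervals, and $\al$ with the usual prefix-replacement action of $F_n$, which is faithful because $F_n$ is by definition a group of homeomorphisms of $[0,1]$. Alternatively one can argue self-containedly along the lines of Theorem \ref{theo:canonical-action-faithful}: since $R_n$ is colour preserving, one shows every $g\in H_n$ admits a fraction $g=[t,s]$ with $t,s$ in a suitable ``good'' form (left-vine decompositions avoiding the forbidden pattern $Y_s(Y_r\ot I)$ with $r<s$), then reads the canonical monoid action $\beta:\cT_p\act\fC_n$ off the first and last vines and uses left-cancellativity of $\beta(\cT_p)$ together with Lemma \ref{lem:kernel-cell} to strip off vines one at a time, forcing $\ker(\al)=\{e\}$. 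With faithfulness established, the simplicity of $[F_n,F_n]=[K,K]$ is immediate from Theorem \ref{theo:simple-F}.
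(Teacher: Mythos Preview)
The paper does not prove this theorem: it is stated as a citation to Brown \cite[Theorem 4.13]{Brown87} and used as external input. In fact the paper's logic runs opposite to yours: it invokes Brown's simplicity of $[F_n,F_n]$ and Theorem \ref{theo:simple-implies-faithful} to \emph{deduce} that $\al:H_n\act\fC_n$ is faithful, and from there obtains the new simple groups $[H_n^T,H_n^T]$ and $[H_n^V,H_n^V]$. Your proposal inverts this, aiming to establish faithfulness first and then recover Brown's result via Theorem \ref{theo:simple-F}. That is a legitimate and interesting alternative, and your germ computations are correct, but the crucial step is left as a sketch rather than a proof.

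Specifically, neither route you offer for faithfulness is complete. Route 1 asserts that $\varphi_n$ and the $b$-CGP identify $\al:H_n\act\fC_n$ with the defining action $F_n\act\{0,\dots,n-1\}^{\N}$; but $\varphi_n$ is a monoid isomorphism $\cH_{n,\infty}\to\cF_{n,\infty}$ and the CGP gives only a group isomorphism $H_n\simeq H_{n,\infty}$, whereas $\fC_n$ and $\al$ are built from the \emph{category} $\cH_n$. You would need to construct an explicit $H_n$-equivariant homeomorphism $\fC_n\to\{0,\dots,n-1\}^{\N}$ (or at least show the cones of $\cH_n$ separate points compatibly with $n$-adic intervals), and this is not supplied. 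Route 2 appeals to the template of Theorem \ref{theo:canonical-action-faithful}, but that argument is tailored to presentations $\FS\la a,b\mid x(a)=y(b)\ra$ with $y$ a left-vine and relies on the specific prune identity $A=B^n$; the $\cH_n$ relations are of a different shape (colour-preserving, multi-colour, relating two trees of the same shape), so the ``good tree'' normal form and the vine-stripping induction would have to be redeveloped from scratch. Until one of these routes is actually carried out, your argument does not stand on its own.
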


Hence, by Theorem \ref{theo:simple-implies-faithful}, \ref{theo:Ore-simple}, and \ref{theo:simple-T-V} we have that
\begin{itemize}
\item $\cH_n$ admits no proper Ore FS quotients; and 
\item $[H_n^T,H_n^T]$ and $[H_n^V,H_n^V]$ are simple.
\end{itemize}

Hence, we have deduced Corollary \ref{maincor-Higman}. 

In a future article \cite{Brothier-Seelig24} we will use reconstruction theorems due to McCleary and Rubin \cite{McCleary78,Rubin89} to show these simple groups are different from any of the simple Higman--Thompson groups, as long as $n\geqslant3$. Since $(H_n^T)_{\ab}$ and $(H_n^V)_{\ab}$ are infinite when $n\geqslant3$, the topological finiteness properties for $[H^T_n,H^T_n]$ and $[H^V_n,H^V_n]$ are not obvious. It would be good to know if these groups are even finitely presented, though we do not address these questions here. 

Since the Ore FS categories $\cH_n$ admit no proper Ore FS quotients, we produce several embeddings between the corresponding FS groups. Let $n\leqslant m$. For any order preserving injection of $S_n$ into $S_m$ we obtain an injection of $R_n$ into $R_m$. This provides an embedding of the free FS categories $\FS\la S_n\ra\into\FS\la S_m\ra$ which factors through a morphism of Ore FS categories $\theta:\cH_n\to\cH_m$. Since $\cH_n$ has no proper Ore FS quotients we must have that $\theta$ is injective. We deduce group embeddings $H_n^X\into H_m^X$ for $X=F,T,V.$ By inspection, we observe that these embeddings between the $n$-ary Higman--Thompson groups $F_n$ are not the standard ones \cite{Brin-Guzman98,Burillo-Cleary-Stein01}.

\newcommand{\etalchar}[1]{$^{#1}$}

\end{document}